\tikzset{
        hatch distance/.store in=\hatchdistance,
        hatch distance=5pt,
        hatch thickness/.store in=\hatchthickness,
        hatch thickness=5pt,
        every point/.style = {circle, inner sep={.75\pgflinewidth}, opacity=1, draw, solid, fill=white},
        point/.style={insert path={node[every point, #1]{}}}, point/.default={}
        }
\pgfqpoint{\hatchdistance}{\hatchdistance}}
\titleformat{\section}[block]{\sffamily\scshape\filcenter\Large}{\thesection.\ }{0pt}{}
\renewcommand{\thesubsection}{(\thesection\alph{subsection})}
\renewcommand{\thesubsubsection}{(\thesection\alph{subsection}-\arabic{subsubsection})}
\titleformat{\subsection}[runin]{\normalfont\bfseries}{\indent\thesubsection\ }{0pt}{}
\titleformat{\subsubsection}[runin]{\normalfont\bfseries}{\indent\thesubsubsection}{0pt}{}
\DeclareTextFontCommand{\emph}{\bfseries}
\DeclareMathOperator{\Nbd}{\mathrm{Nbd}}
\DeclareMathOperator*{\hocolim}{\mathrm{hocolim}}
\DeclareMathOperator{\Skel}{\mathrm{Skel}}
\DeclareMathOperator{\Id}{\mathrm{Id}}
\DeclareMathOperator{\Mod}{\mathrm{Mod}}
\DeclareMathOperator{\rest}{\mathrm{rest}}
\DeclareMathOperator{\Hom}{\mathrm{Hom}}
\DeclareMathOperator{\im}{\mathrm{im}}
\DeclareMathOperator{\Res}{\mathrm{Res}}
\newtheoremstyle{them}
{4pt}
{4pt}
{\normalfont }
{}
{\scshape }
{.}
{.5em}
{}
\theoremstyle{them} 
\newtheorem{definition}{Definition}[section]
\newtheorem{theorem}[definition]{Theorem}
\newtheorem{lemma}[definition]{Lemma}
\newtheorem{remark}[definition]{Remark}
\newtheorem{example}[definition]{Example}
\newtheorem{proposition}[definition]{Proposition}
\newtheorem{corollary}[definition]{Corollary}
\newcommand{\C}{\mathbb{C}}
\newcommand{\R}{\mathbb{R}}
\DeclareMathOperator{\rRe}{\mathrm{Re}}
\DeclareMathOperator{\rIm}{\mathrm{Im}}
\newcommand{\Fix}{\operatorname{Fix}}
\title{Fukaya Categories of Hyperplane Arrangements}
\author{Sukjoo Lee, Yin Li, Si-Yang Liu and Cheuk Yu Mak}
\date{\today}
\begin{document}
    \maketitle
    \begin{abstract}
    To a simple polarized hyperplane arrangement (not necessarily cyclic) $\mathbb{V}$, one can associate a stopped Liouville manifold (equivalently, a Liouville sector) $\left(M(\mathbb{V}),\xi\right)$, where $M(\mathbb{V})$ is the complement of finitely many hyperplanes in $\mathbb{C}^d$, obtained as the complexifications of the real hyperplanes in $\mathbb{V}$. The Liouville structure on $M(\mathbb{V})$ comes from a very affine embedding, and the stop $\xi$ is determined by the polarization. In this article, we study the symplectic topology of $\left(M(\mathbb{V}),\xi\right)$. In particular, we prove that their partially wrapped Fukaya categories are generated by Lagrangian submanifolds associated to the bounded and feasible chambers of $\mathbb{V}$. A computation of the Fukaya $A_\infty$-algebra of these Lagrangians then enables us to identify the partially wrapped Fukaya categories $\mathcal{W}(M(\mathbb{V}),\xi)$ with the $\mathbb{G}_m^d$-equivariant hypertoric convolution algebras $\widetilde{B}(\mathbb{V})$ associated to $\mathbb{V}$. This confirms a conjecture of Lauda-Licata-Manion \cite{LLM2020} and provides evidence for the general conjecture of Lekili-Segal \cite{LS} on the equivariant Fukaya categories of symplectic manifolds with Hamiltonian torus actions.
    \end{abstract}

	
    \section{Introduction}

\subsection{Main Results.}\label{section:main}
Very affine varieties are local models of divisor complements in proper Calabi-Yau varieties, so they provide important examples for the study of mirror symmetry and symplectic topology, and are studied extensively in the literature \cite{Ab-Au21,abouzaid2013homological,auroux2017speculations,gammage2022functorial,Gammage-Shende,lekili2020homological}. In this paper, we consider a particular class of very affine varieties, defined via the combinatorial data of a polarized hyperplane arrangement.

A polarized hyperplane arrangement in $\mathbb{R}^d$ consists of a triple $\mathbb{V}=(V,\eta,\xi)$, where the $d$-dimensional subspace $V\subset\mathbb{R}^n$ and the vector $\eta\in\mathbb{R}^n/V$ determine $n$ hyperplanes $H_{\mathbb{R},i}\subset\mathbb{R}^d$, where $1\leq i\leq n$, and $\xi\in(\mathbb{R}^n)^\ast/V^\perp$ is the polarization (see Definition \ref{def:pol.hyp.arr}). Assume that $\mathbb{V}$ is simple, meaning that the non-empty intersection of any $k$ hyperplanes is codimension $k$, we can define a Weinstein sector $(M(\mathbb{V}), \xi)$ as follows. Let $M(\mathbb{V})$ be the complement
\begin{equation*}
M(\mathbb{V}):=\mathbb{C}^d\setminus\bigcup_{i=1}^nH_i,
\end{equation*}
where $H_i\subset\mathbb{C}^d$ is the complexification of the hyperplane $H_{\mathbb{R},i}\subset\mathbb{R}^d$ in the arrangement $\mathbb{V}$. It follows from the definition that we have an affine algebraic embedding $M(\mathbb{V})\hookrightarrow(\mathbb{C}^\ast)^n$, equipping $M(\mathbb{V})$ with the structure of a very affine variety. Symplectically, $M(\mathbb{V})$ is a Weinstein manifold whose Weinstein structure is inherited from the standard Stein structure on $(\mathbb{C}^\ast)^n$. Since the polarization $\xi$ can be interpreted as a superpotential $\xi:M(\mathbb{V})\rightarrow\mathbb{C}$, it defines a stop (i.e. a Weinstein hypersurface) in the ideal boundary $\partial_\infty M(\mathbb{V})$, which we will still denote by $\xi$ by abuse of notations. Our goal in this paper is to identify the partially wrapped Fukaya category $\mathcal{W}(M(\mathbb{V}), \xi)$ with a convolution algebra associated to the combinatorial data $\mathbb{V}$. In particular, we will find a generating set of objects in $\mathcal{W}(M(\mathbb{V}), \xi)$ and show that their endomorphism $A_\infty$-algebra in $\mathcal{W}(M(\mathbb{V}), \xi)$ is formal. 

To describe the generating Lagrangians in $\mathcal{W}(M(\mathbb{V}), \xi)$, note that the hyperplanes in the arrangement $\mathbb{V}$ divide $\mathbb{R}^d$ into a finite number of chambers. Since these hyperplanes come with their co-orientations, each of these chamber in $\mathbb{R}^d$ corresponds to a sign sequence $\alpha \in \{+,-\}^n$, which we call \textbf{feasible}. Among the set of feasible sign sequences $\mathscr{F}(\mathbb{V})$, those for which the linear functional $\xi:\Delta_\alpha\rightarrow\mathbb{R}$ is bounded above are called \textbf{bounded}, where $\Delta_\alpha\subset\mathbb{R}^d$ is the (closed) chamber corresponding to the sign sequence $\alpha\in\mathscr{F}(\mathbb{V})$. Denote by $\mathscr{P}(\mathbb{V})\subset\mathscr{F}(\mathbb{V})$ the set bounded-feasible sign sequences (or equivalently, chambers). See Figure \ref{fig:parrr} for a polarized hyperplane arrangement consisting of three lines in $\mathbb{R}^2$. To each $\alpha \in \mathscr{P}(\mathbb{V})$, we can associate a Lagrangian submanifold $L_\alpha\subset M(\mathbb{V})$, which is the connected component (open chamber) $\Delta_\alpha^\circ\subset\Delta_\alpha$ in the real locus
\begin{equation*}
M(\mathbb{V})\cap\mathbb{R}^d\subset\mathbb{C}^d.
\end{equation*}
This is a non-compact, exact Lagrangian submanifold which gives an object of the Fukaya category $\mathcal{W}(M(\mathbb{V}),\xi)$.

\begin{paragraph}{Conventions.}
In this paper, we will be working over the ring of integers $\mathbb{Z}$, which is where the $\mathbb{G}_m$-equivariant hypertoric convolution algebras are defined \cite{llm1}, although these algebras are originally introduced over $\mathbb{C}$ in \cite{BLPPW}. On the other hand, the work of Ganatra-Pardon-Shende \cite{ganatra2018sectorial}, which our work heavily relies on, is also done over $\mathbb{Z}$.
\end{paragraph}

We will now state the main results of this paper.

\begin{theorem}[Generation, Theorem \ref{thm:generation}]\label{theorem:generation1}
The partially wrapped Fukaya category $\mathcal{W}(M(\mathbb{V}),\xi)$ is generated by the Lagrangian submanifolds $\{L_\alpha\}_{\alpha\in\mathscr{P}(\mathbb{V})}$.
\end{theorem}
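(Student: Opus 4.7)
The plan is to deduce the theorem from the cocore generation theorem of Ganatra-Pardon-Shende, which asserts that for any Weinstein sector the partially wrapped Fukaya category is split-generated by the Lagrangian cocores of its top-index Weinstein handles. The task then reduces to exhibiting a Weinstein handle presentation of $(M(\mathbb{V}),\xi)$ whose top-index cocores are Hamiltonian isotopic, within the sector, to the chamber Lagrangians $\{L_\alpha\}_{\alpha\in\mathscr{P}(\mathbb{V})}$.

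Concretely, I would analyze the Liouville structure on $M(\mathbb{V})$ inherited from the very affine embedding $M(\mathbb{V})\hookrightarrow(\mathbb{C}^\ast)^n$, together with the stop $\xi$ coming from the polarization. The resulting skeleton should stratify according to the real hyperplane arrangement, and my aim is to show that its top-dimensional strata are indexed exactly by $\mathscr{P}(\mathbb{V})$: non-feasible sign sequences contribute nothing since they do not correspond to real chambers of the arrangement; feasible but unbounded-feasible chambers are swept off to infinity by the Liouville flow in directions not captured by the stop and therefore drop out of the skeleton of the sector; while the bounded-feasible chambers, whose extension at infinity is absorbed by the stop, give rise to top-dimensional skeleton pieces. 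After this identification, a direct geometric argument should show that the Lagrangian cocore at the index-$d$ critical point sitting inside $\Delta_\alpha^\circ$ is Hamiltonian isotopic in $(M(\mathbb{V}),\xi)$ to the chamber Lagrangian $L_\alpha=\Delta_\alpha^\circ$ itself, since both are invariant under the anti-holomorphic involution and occupy the same top stratum of the skeleton. Combined with GPS generation this yields the theorem.

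The main obstacle is the careful bookkeeping in the preceding step: one has to set up, and if necessary deform, the Liouville structure on $M(\mathbb{V})$ so that the real locus is compatible with the Liouville flow, ensure that the associated Weinstein Morse function has critical points precisely in the bounded-feasible chambers (with no extraneous ones contributed by the stop or by the hyperplanes at infinity), and verify the identification of cocores with the $L_\alpha$. A natural way to control this is through sectorial descent: cover $(M(\mathbb{V}),\xi)$ by local Weinstein sectors modeled on products of $T^\ast\mathbb{R}$-factors, punctured complex planes, and standard Lefschetz-type neighborhoods adapted to the polarization; verify the cocore identification in each local model, where the combinatorics collapses to a product of elementary pieces; and glue using the sectorial descent results of Ganatra-Pardon-Shende to obtain the global generation statement.
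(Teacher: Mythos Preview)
Your proposal has a genuine gap at its central step: the chamber Lagrangians $L_\alpha$ are not cocores of any readily available Weinstein structure on $(M(\mathbb{V}),\xi)$. The cocores that arise naturally---either from the very affine Weinstein structure or after sectorial descent into local $(\mathbb{C}^\ast)^d$ charts---are the \emph{standard Lagrangians}: one for each $0$-dimensional crossing $\beta$ of the arrangement, realized locally as a cotangent fibre (a real orthant) in the chart $(\mathbb{C}^\ast)^d\cong T^\ast T^d$ around $\beta$. As a subset of $\mathbb{R}^d$ such an orthant is determined by the signs at the $d$ hyperplanes through $\beta$; as it extends away from $\beta$ it crosses the remaining $n-d$ hyperplanes and is therefore a union of \emph{several} chambers $\Delta_\alpha$, not a single one. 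Your anti-holomorphic-involution argument cannot distinguish a single chamber from such a union, since both are $\tau$-fixed and sit over the same top stratum of the skeleton. The numerical coincidence $|\mathscr{P}(\mathbb{V})|=\#\{\text{crossings}\}$ holds, but it does not come from a bijection between chambers and cocores.

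The paper's proof reflects exactly this obstruction. After sectorial descent one obtains generation by the standard Lagrangians together with the linking disks attached to the sectorial hypersurfaces---not by chamber Lagrangians. Two further nontrivial steps are then required, neither of which your outline supplies. First, each standard Lagrangian is shown to be an iterated mapping cone of chamber Lagrangians, via clean-intersection Lagrangian surgery together with a vanishing lemma for Lagrangians that can be pushed off to $\{\rIm(\xi)\ge 0\}$ away from all crossings. Second, the linking disks are eliminated from the generating set by an inductive argument: the wrapping exact triangle expresses a linking disk at level $\xi^{-1}(r_j)$ as a cone of standard Lagrangians, and a sequence of ``chamber moves'' identifies linking disks at level $r_j$ with those at level $r_{j-1}$ up to cones on already-generated objects, until the bottom level is reached where all linking disks vanish. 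The passage from cocores to chamber Lagrangians is thus a genuine exact-triangle argument, not a Hamiltonian isotopy, and this is where the bulk of the work lies.
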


The key ingredient of the proof is the Sectoral Descent established in \cite{ganatra2018sectorial} (cf. Theorem \ref{Prelim:sectorial-descent}). we will present the main idea of the proof using the explicit example given in Figure \ref{fig:parrr}.

Theorem \ref{theorem:generation1} enables us to identify the partially wrapped Fukaya category $\mathcal{W}(M(\mathbb{V}), \xi)$ with the endomorphism $A_\infty$-algebra of its objects $\{L_\alpha\}_{\alpha\in\mathscr{P}(\mathbb{V})}$. Consider the Fukaya $A_\infty$-algebra
\begin{equation*}
\widetilde{\mathcal{B}}_\mathbb{V}:=\bigoplus_{\alpha,\beta\in\mathscr{P}(\mathbb{V})}\mathit{CW}^\ast(L_\alpha,L_\beta),
\end{equation*} 
where $\mathit{CW}^\ast(-,-)$ denotes the partially wrapped Floer cochain complex with respect to the stop $\xi$. We will show that $\widetilde{\mathcal{B}}_{\mathbb{V}}$ is quasi-isomorphic to a convolution algebra $\widetilde{B}(\mathbb{V})$ introduced in \cite{BLPPW}, which is the universal flat graded deformation of the hypertoric convolution algebra $B(\mathbb{V})$ \cite{BLPW2010}. More backgrounds and motivations about why this should be the case will be given in Section \ref{sec:Intro-motiv}.


\begin{theorem}[Formality, Theorem \ref{thm:formality}]\label{theorem:conv1}
There is a quasi-isomorphism between $A_\infty$-algebras over $\mathbb{Z}$:
\begin{equation*}
\widetilde{\mathcal{B}}_{\mathbb{V}}\cong\widetilde{B}(\mathbb{V}).
\end{equation*}
 In particular, the Fukaya $A_\infty$-algebra $\widetilde{\mathcal{B}}_{\mathbb{V}}$ is formal.
\end{theorem}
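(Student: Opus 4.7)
My plan is to establish the quasi-isomorphism $\widetilde{\mathcal{B}}_{\mathbb{V}}\cong\widetilde{B}(\mathbb{V})$ in two stages: first identify the cohomology algebras $H^\ast(\widetilde{\mathcal{B}}_{\mathbb{V}})\cong\widetilde{B}(\mathbb{V})$ via a direct Floer-theoretic computation, and then deduce the full $A_\infty$ equivalence from an intrinsic formality argument based on an auxiliary weight grading.

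For the cohomology computation, I would exploit the very affine embedding $M(\mathbb{V})\hookrightarrow(\mathbb{C}^\ast)^n$ and set up a Morse-Bott model of the wrapping Hamiltonian that is adapted to the stop $\xi$. Since the Lagrangians $L_\alpha$ are real open chambers, the Reeb chords at infinity from $L_\alpha$ to $L_\beta$ should be indexed by lattice points in a cone determined by the co-oriented hyperplanes and by the polarization $\xi$. A degree-parity argument would force the Floer differential to vanish, so that $\mathit{HW}^\ast(L_\alpha,L_\beta)$ is a free $\mathbb{Z}$-module, and I would match its generators bijectively with the standard basis of the $(\alpha,\beta)$-component of $\widetilde{B}(\mathbb{V})$ indexed by pairs of compatible chambers together with the polynomial equivariant parameters. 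To identify the product, I would count rigid pseudo-holomorphic triangles with boundary on triples of chamber Lagrangians; the polynomial coefficients in $\widetilde{B}(\mathbb{V})$ should arise from the asymptotic winding of disks around the torus directions of $(\mathbb{C}^\ast)^n$, matching the combinatorial convolution rule.

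For the formality step, the plan is to exploit an internal weight grading. The algebra $\widetilde{B}(\mathbb{V})$ carries a grading by equivariant parameters that refines the cohomological grading, and on the Floer side this should be visible either through an $(S^1)^d$-equivariant enhancement in line with the Lekili--Segal conjecture, or more directly via the winding numbers coming from the coordinate functions on $(\mathbb{C}^\ast)^n$. Once this refined internal grading is established on $\widetilde{\mathcal{B}}_{\mathbb{V}}$ and shown to be preserved by every $\mu^k$, a bidegree count on the generators would force $\mu^k=0$ for all $k\geq 3$ up to an $A_\infty$ gauge transformation, yielding intrinsic formality in the style of Lekili--Polishchuk and Seidel.

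The hardest step will be the precise identification of the product, namely reproducing the correct signs and polynomial coefficients of the convolution in $\widetilde{B}(\mathbb{V})$ from counts of pseudo-holomorphic polygons. This should require a judicious choice of almost complex structure, very likely the restriction of the standard integrable one on $\mathbb{C}^d\setminus\bigcup_i H_i$, so that all relevant disks are either controlled explicitly by the very affine embedding into $(\mathbb{C}^\ast)^n$ or tamed by an inductive argument on the number of hyperplanes, with low-rank arrangements providing a computable base case.
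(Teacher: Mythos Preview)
Your proposal is essentially aligned with the paper's approach, but you are overcomplicating both stages in ways worth flagging.

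For the cohomology and product, the paper follows Auroux's template for hypersurface complements: compactify to a pair $(X_\mathbb{V},D_\mathbb{V})$, use McLean-type neighborhoods of the normal crossing divisor, and take a wrapping Hamiltonian of the form $\sum_i\rho_\epsilon(h_i)$ near each stratum. Generators of $\mathit{CW}^\ast(L_\alpha,L_\beta)$ are then labeled by monomials $\prod_i u_i^{k_i}$ with $k_i\in\frac{1}{2}\mathbb{Z}_{\geq 0}$, the half-integers recording wrapping across hyperplanes where $\alpha(i)\neq\beta(i)$. The vanishing of the differential comes from the topological grading by $H_1(M(\mathbb{V});\tfrac{1}{2}\mathbb{Z})\cong(\tfrac{1}{2}\mathbb{Z})^n$, which is your winding-number grading. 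For the product, the paper does \emph{not} induct on the number of hyperplanes: instead it uses a Moser argument to make the symplectic form standard near each $0$-stratum, then applies the open mapping theorem to each coordinate projection $\ell_i\circ u$ to confine Floer triangles to a product chart $(\mathbb{C}^\ast)^d$, where the count factors into cylinder computations with a unique triangle per factor. This is sharper than your proposed inductive scheme and directly produces the convolution rule $f_{\alpha\beta}\cdot f_{\beta\gamma}=u_{S(\alpha\beta\gamma)}f_{\alpha\gamma}$.

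For formality, your invocation of intrinsic formality and gauge transformations is unnecessary. The key observation in the paper is that the $(\tfrac{1}{2}\mathbb{Z})^n$-grading \emph{determines} the cohomological grading: the generator with homology class $(k_1,\ldots,k_n)$ has degree $2\sum_i k_i$. Since any $\mu^j$ must preserve the homology class (output class equals sum of input classes), it automatically preserves total cohomological degree; but $\mu^j$ has degree $2-j$, so $\mu^j=0$ on the nose for $j\neq 2$. No Hochschild cohomology computation, no $A_\infty$ transfer, no appeal to Lekili--Polishchuk-style results is needed.
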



This proves \cite[Conjecture 1.2]{LLM2020} in the general case when the hyperplane arrangement $\mathbb{V}$ is not necessarily \textit{cyclic}. Refer to \cite{LLM2020} for the precise definition of a cyclic hyperplane arrangement. In particular, such a hyperplane arrangement cannot contain two parallel hyperplanes. When $\mathbb{V}$ is cyclic, the Weinstein manifold $M(\mathbb{V})$ has a convenient interpretation as a symmetric product of punctured spheres $\mathbb{C}\setminus\{z_1,\cdots,z_n\}$, and Theorem \ref{theorem:conv1} follows essentially from the computation of the (fully) wrapped Fukaya category $\mathcal{W}(M(\mathbb{V}))$ by Lekili-Polishchuk \cite{lekili2020homological}.

In \cite{BLPW2010, LLM2020}, some functorial properties of the category of perfect $\widetilde{B}(\mathbb{V})$-modules with respect to the deletion and restriction of hyperplane arrangements are discussed. 
In Section \ref{functoriality-section}, we discuss these functors and describe some other natural geometric functors between Fukaya categories.

\begin{figure}
	\centering
	\begin{tikzpicture}
	\filldraw [draw=black,color={black!20},opacity=0.5] (4,2)--(0,2)--(4,-2);
	\filldraw [draw=black,color={black!20},opacity=0.5] (4,-2)--(2,0)--(2,-2);

	\draw [thick](-2,2) to (4,2);
	\draw [thick] (2,-2) to (2,4);
	\draw [thick] (-2,4) to (4,-2);
	\draw [red, thick] (-2,3) to (2,5);
	\draw [red, thick ][->] (0.4,4.2) to (0.1,4.8);
	\draw [red] (2.2,5) node {$\xi$};
	\draw (3,2.5) node {$---$};
	\draw (0.8,2.5) node {$-+-$};
	\draw (-1.7,2.5) node {$-++$};
	\draw (1.2,1.5) node {$++-$};
	\draw (-.5,0) node {$+++$};
	\draw (3,1) node {$+--$};
	\draw (2.7,-1.5) node {$+-+$};
	\end{tikzpicture}
	\caption{A polarized hyperplane arrangement in $\mathbb{R}^2$,\\ with the bounded-feasible chambers shaded \label{fig:parrr}}
\end{figure}
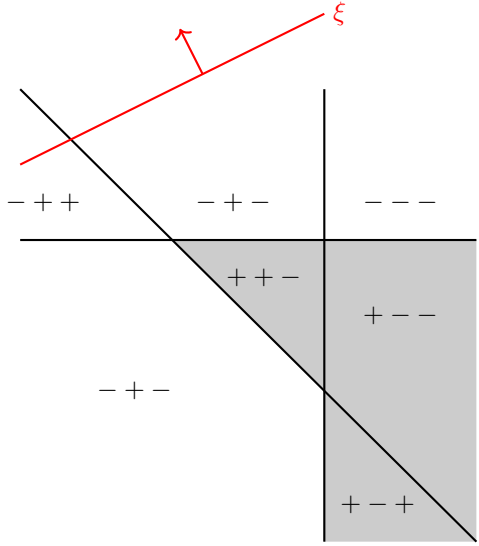

\subsection{Motivations.}\label{sec:Intro-motiv}
To a polarized hyperplane arrangement $\mathbb{V}$, one can associate another geometric object, namely a non-compact hyperk\"ahler variety $\mathfrak{M}_\mathbb{V}$, called a hypertoric variety \cite{BD2000}. Under certain assumptions on $\mathbb{V}$, the variety $\mathfrak{M}_\mathbb{V}$ is smooth. After a suitable hyperk\"ahler twist, one can equip $\mathfrak{M}_\mathbb{V}$ with the structure of a smooth affine variety. The induced Liouville structure on $\mathfrak{M}_\mathbb{V}$, together with a Weinstein hypersurface $\mathfrak{f}_\mathbb{V}$ in the ideal boundary $\partial_\infty\mathfrak{M}_\mathbb{V}$ determined by the polarization $\xi$, provides a stopped Liouville manifold $(\mathfrak{M}(\mathbb{V}), \mathfrak{f}_\mathbb{V})$. Moreover, there is the complex moment map \begin{equation*}
\bar{\mu}_\mathbb{C}:\mathfrak{M}_\mathbb{V}\rightarrow\mathbb{C}^d,
\end{equation*}
which defines an algebraic $(\mathbb{C}^\ast)^d$-fibration whose descriminant loci are precisely the union of the complex hyperplanes $\bigcup_{i=1}^nH_{i}$. In this situation, each feasible chamber $\Delta_\alpha$ gives rise to a Lagrangian $V_\alpha$ that is diffeomorphic to the toric variety associated to the moment polytope $\Delta_\alpha$. 

In \cite{BLPW2010}, the authors introduced the hypertoric convolution algebra $B(\mathbb{V})$, which can be interpreted as the topological convolution algebra formed by the cohomologies of the complex Lagrangians in $\mathfrak{M}_\mathbb{V}$ associated to the bounded feasible chambers in $\mathbb{V}$. They also predicted that $B(\mathbb{V})$ can be realized as an appropriate version of the Fukaya category of $\mathfrak{M}_\mathbb{V}$. In the ongoing project \cite{llm1}, we investigate this question by examining the infinitesimal Fukaya category $\mathcal{F}(\mathfrak{M}_\mathbb{V},\mathfrak{f}_\mathbb{V})$ (i.e. the forward stopped subcategory of the partially wrapped Fukaya category in the sense of \cite[Definition 6.4]{GPSmicrolocal}) and show that  $\mathcal{F}(\mathfrak{M}_\mathbb{V},\mathfrak{f}_\mathbb{V})$ is quasi-equivalent to the category of perfect modules over $B(\mathbb{V})$. 

It is a general philosophy due to Aganagic \cite{mag} and Teleman \cite{ct} that the symplectic topology of $\mathfrak{M}_\mathbb{V}$ is closely related to a half-dimensional space, which is exactly the Weinstein manifold $M(\mathbb{V})$ that we study in this paper. The precise connections between the Fukaya categories of $\mathfrak{M}_\mathbb{V}$ and $M(\mathbb{V})$ are explained in Lekili-Segal \cite[Conjecture A]{LS}. They conjectured that there should be a quasi-equivalence
\begin{equation}\label{eq:conj1}
\mathcal{W}_{\mathbb{G}_m^d}(\mathfrak{M}_\mathbb{V})_{-1}\cong\mathcal{W}(M(\mathbb{V})),
\end{equation}
where $\mathcal{W}_{\mathbb{G}_m^d}(\mathfrak{M}_\mathbb{V})_{-1}\subset\mathcal{W}_{\mathbb{G}_m^d}(\mathfrak{M}_\mathbb{V})$ is the spectral component of the $\mathbb{G}_m^d$-equivariant wrapped Fukaya category at $-1$. We refer the reader to \cite[Section 1.2]{LS} for more details (see also \cite{MSZ} for a related recent work on multiplicative hypertoric varieties). 

Here, what is relevant for us is a partially wrapped version of the conjectural equivalence (\ref{eq:conj1}). The corresponding conjecture relating partially wrapped Fukaya categories then takes the form
\begin{equation}\label{eq:conj2}
\mathcal{F}_{\mathbb{G}_m^d}(\mathfrak{M}_\mathbb{V},\mathfrak{f}_\mathbb{V})\cong\mathcal{W}(M(\mathbb{V}),\xi),
\end{equation}
where $\mathcal{F}_{\mathbb{G}_m^d}(\mathfrak{M}_\mathbb{V},\mathfrak{f}_\mathbb{V})$ consists of objects in $\mathcal{F}(\mathfrak{M}_\mathbb{V},\mathfrak{f}_\mathbb{V})$ which are invariant under the fiberwise $\mathbb{G}_m^d$-action with respect to the complex moment map $\bar{\mu}_\mathbb{C}$. Note that performing localizations (stop removals) on both sides of (\ref{eq:conj2}) would imply the equivalence (\ref{eq:conj1}) conjectured by Lekili-Segal.

\begin{remark}
There are two versions of equivariant Fukaya categories that can be associated to the hypertoric variety $\mathfrak{M}_\mathbb{V}$, defined by using the geometric Hamiltonian $T^d$-action of the real torus $T^d\subset\mathbb{G}_m^d$ on the Weinstein manifold $\mathfrak{M}_\mathbb{V}$, and the algebraic $\mathbb{G}_m^d$-action on the $A_\infty$-category $\mathcal{F}(\mathfrak{M}_\mathbb{V},\mathfrak{f}_\mathbb{V})$, respectively. For the former version, see \cite{gc,df,hls,yx, KLZ, LLL}, and for the latter one, see \cite{lpa,ps3,ss}. In our case, these two versions are expected to be the same, because under hyperk\"{a}hler twist, the fiberwise Hamiltonian $T^d$-action associated to the complex moment map $\bar{\mu}_\mathbb{C}:\mathfrak{M}_\mathbb{V}\rightarrow\mathbb{C}^d$ corresponds to the fiberwise $\mathbb{G}_m^d$-action on the mirror, and the latter is expected to induce a $\mathbb{G}_m^d$-action on the Fukaya category $\mathcal{F}(\mathfrak{M}_\mathbb{V},\mathfrak{f}_\mathbb{V})$. 
\end{remark}

Consider the Fukaya $A_\infty$-algebra
\begin{equation*}
\mathcal{B}_\mathbb{V}:=\bigoplus_{\alpha,\beta\in\mathscr{P}(\mathbb{V})}\mathit{CF}^\ast(V_\alpha,V_\beta),
\end{equation*}
where  $\mathit{CF}^\ast$ denotes the infinitesimally wrapped Floer cochain complexes with respect to the stop $\mathfrak{f}_\mathbb{V}\subset\partial_\infty\mathfrak{M}_\mathbb{V}$ mentioned above. In \cite{llm1}, we will show that there is a quasi-isomorphism between $\mathcal{B}_\mathbb{V}$ and the hypertoric convolution algebra $B(\mathbb{V})$, and that the Lagrangians $\{V_\alpha\}_{\alpha\in\mathscr{P}(\mathbb{V})}$ split-generate the Fukaya category $\mathcal{F}(\mathfrak{M}_\mathbb{V},\mathfrak{f}_\mathbb{V})$. Together with conjectural quasi-equivalences (\ref{eq:conj2}), it follows that there should be a quasi-isomorphism between the partially wrapped Fukaya $A_\infty$-algebra $\widetilde{\mathcal{B}}_\mathbb{V}$ of $(M(\mathbb{V}),\xi)$ and the $\mathbb{G}_m^d$-equivariant convolution algebra $\widetilde{B}(\mathbb{V})$. Theorem \ref{theorem:conv1} confirms that this is indeed the case.

\subsection{Idea of the Proof.}\label{sec:ideaofproof} 
In this subsection, we give an outline of the proof of our generation result (Theorem \ref{theorem:generation1}). Let us start with a  $1$-dimensional hyperplane arrangement, whose complexified complement $M(\mathbb{V})$ is $\mathbb{C}\setminus\{r_1, r_2,\dotsb, r_n\}$, where $-\infty<r_1<\cdots<r_n<\infty$, a complex plane with $n$ points on the real line removed. Note that Lagrangian chambers are the real line segments connecting the consecutive punctures in $\mathbb{R}$ (colored in blue in Figure \ref{fig:1dHA}), together with a ray to positive or negative infinity depending on the choice of polarization $\xi$. 

\begin{figure}[ht]
    \centering
    \begin{tikzpicture}[scale=0.5]
        \draw [ultra thick] (0,0) circle (4);
        \draw [blue!50!black,ultra thick] (-4,0) -- (-3,0);
        \foreach \y in {-2,-1,0,1} {
        \draw [blue!50!black,ultra thick] ({1.5*\y},0) -- ({1.5*\y+1.5},0);
        }
        \foreach \x in {-2,-1,0,1,2} {
            \node [circle,draw=black, fill=white, inner sep=0pt,minimum size=3pt,ultra thick] at ({1.5*\x},0) {};
        }
        \node [circle, fill=black, inner sep=0pt, minimum size=3pt, red,ultra thick] at (4,0) {};
        \node [red] at (4.5,0) {$\infty$};
        \draw [->, red] (3,4) -- (4,4) node [right] {$\xi$};
    \end{tikzpicture}
    \caption{One-dimensional hyperplane arrangement\label{fig:1dHA}}
\end{figure}

In this case, the generation is \textit{a priori} clear: we can prove generation via its Weinstein handle structure, by arguing that the real line segments connecting punctures are cocores for a given Weinstein structure on $M(\mathbb{V})$, and applying the generation result in \cite{ganatra2018sectorial}, which says that cocores of a Weinstein manifold generate the fully wrapped Fukaya category. To see that the line segments combined with the ray $(-\infty,r_1)$ generate the partially wrapped Fukaya category $\mathcal{W} (M(\mathbb{V}),\xi)$, we appeal to the computation of wrapped Floer cohomology shown in \cite{abouzaid2013homological}, where an exact triangle relating the blue arcs in Figure \ref{fig:1dHA} to the Lagrangians $L_1^{\pm}$ in Figure \ref{intro:wrapping-exact-triangle} was established via an explicit $A_{\infty}$-algebra computation.\par
To motivate, we prove the generation via a ``local-to-global'' approach: consider vertical real lines cutting $M(\mathbb{V})$ into union of ``sectors'', as shown in Figure \ref{fig:1dHA-descent}. Each piece can be identified with part of $\mathbb{C}^\ast$, together with a stop $\infty$ or two stops $\pm\infty$, so the wrapped Fukaya category of each sector is generated by the blue rays depicted in the figure, where the upper and lower ones are both Lagrangian cocores of $\mathbb{C}^\ast$, which together generate the linking disks to all the stops. Sectorial descent (cf. Theorem \ref{Prelim:sectorial-descent}) then allows us to obtain a collection of generators for $\mathcal{W}(M(\mathbb{V}),\xi)$, as the union of all the blue arcs. Observe that after gluing the sectors together to get the ambient punctured disk $(M(\mathbb{V}),\xi)$, the left most upper and lower blue Lagrangians are isomorphic via counterclockwise rotation, and the lower blue Lagrangians in the other sectors can be obtained from the upper ones by performing iterated surgeries. Therefore the upper collection of the blue arcs would suffice to generate $\mathcal{W} (M(\mathbb{V}),\xi)$.\par
\begin{figure}[ht]
    \centering
    \begin{tikzpicture}
        [scale=0.5]
        \draw [ultra thick] (0,0) circle (4);
        \foreach \y in {-2,0,2} {
            \draw [blue!50!black,ultra thick] ({1.5*\y},0) .. controls ({1.5*(\y+0.25)},0) .. ({1.5*(\y+0.5)},{sqrt(16-(1.5*(\y+0.5))^2)});
            \draw [blue!50!black,ultra thick] ({1.5*\y},0) .. controls ({1.5*(\y+0.25)},0) .. ({1.5*(\y+0.5)},{-sqrt(16-(1.5*(\y+0.5))^2)});
        }
        \foreach \x in {-2,0,2} {
            \node [circle,draw=black, fill=white, inner sep=0pt,minimum size=3pt,ultra thick] at ({1.5*\x},0) {};
        }
        \foreach \z in {-2,0} {
            \draw [red,ultra thick] ({1.5*\z + 1.5},{sqrt(16-(1.5*\z+1.5)^2)}) -- ({1.5*\z + 1.5},{-sqrt(16-(1.5*\z+1.5)^2)});
        }
        \node [circle, fill=black, inner sep=0pt, minimum size=3pt, red, ultra thick] at (4,0) {};
        \draw [->,red] (3,4) -- (4,4) node [right] {$\xi$};
    \end{tikzpicture}
    \caption{Cutting one-dimensional hyperplane arrangements into sectors\label{fig:1dHA-descent}}
\end{figure}
The final step to the proof is to show that the real lines depicted in Figure \ref{fig:1dHA} generate the upper blue arcs in Figure \ref{fig:1dHA-descent}. The left most Lagrangians are canonically identified by rotation, and we observe that the upper Lagrangians in Figure \ref{fig:1dHA-descent} can be obtained from inductively concatenating the real lines in Figure \ref{fig:1dHA} through the stop, or equivalently via ``wrapped Lagrangian surgery'' around the puncture (see Figure \ref{fig:wrapped-surgery-1d}). We therefore conclude that the blue Lagrangians in Figure \ref{fig:1dHA} generate $\mathcal{W} (M(\mathbb{V}),\xi)$.\par
\begin{figure}[ht]
    \centering
    \begin{tikzpicture}
        [scale=0.5]
        \draw [ultra thick] (0,0) circle (4);
        \draw [blue!50!black,ultra thick] (-2,0) .. controls (1.5,0) .. (3,{sqrt(7)});
        \draw [blue!50!black,ultra thick] (-2,0) -- (2,0);
        \draw [blue!50!black,ultra thick] (2,0) .. controls (3,0) .. (3.5, {sqrt(16-(3.5)^2)});
        \draw [orange!70!black,->] (1.5,0) arc [start angle=180, end angle =0, radius=0.5];
        \node [circle, fill=white, draw=black, inner sep=0pt, minimum size=3pt,ultra thick] at (-2,0) {};
        \node [circle, fill=white, draw=black, inner sep=0pt, minimum size=3pt,ultra thick] at (2,0) {};
        \node [circle, fill=black, inner sep=0pt, minimum size=3pt, red, ultra thick] at (4,0) {};
        \draw [->,red] (3,4) -- (4,4) node [right] {$\xi$};
    \end{tikzpicture}
    \caption{Wrapped surgery around a puncture\label{fig:wrapped-surgery-1d}}
\end{figure}

Things get more complicated as the dimension increases, but the key phenomenon can already be seen in complex dimension $2$. To illustrate, we will explain our proof when $M(\mathbb{V})$ is the 2-dimensional pair-of-pants associated with the hyperplane arrangement shown in Figure \ref{fig:parrr}. As a symmetric product of two $1$-dimensional pair-of-pants, the generation of its wrapped Fukaya category can be computed from dimension $1$ cases as explained in \cite{Auroux-H}, but our alternative proof sketched here will also apply to general two-dimensional hyperplane arrangements. Using the polarization $\xi$, we can cut the Weinstein sector $(M(\mathbb{V}),\xi)$ into four subsectors as shown in Figure \ref{sector:decompose-pair-of-pants} (cf. Propositions \ref{sector:CompactWeinsteinDeformation}, \ref{sector:CompactWeinsteinDeformationMultiple}). For $i=1,\dots, 4$, we denote by $\mathfrak{s}_i$ the $i$-th sectorial hypersurface and by $X_{i,i+1}$ the subsectors after cutting. Each of these subsectors looks pretty similar in the figure, containing of a single crossing point and several non-intersecting line segments. After curving the picture, we get four sectors on the right-hand side of Figure \ref{sector:decompose-pair-of-pants}, and the generation problem in this case reduces to that of the wrapped Fukaya category for each subsector.

\begin{figure}[ht]
    \centering
    \begin{tikzpicture}[scale=0.5]
        \draw [ultra thick] (-4,0) -- (4,0);
        \draw [ultra thick] (0,-5) -- (0,4);
        \draw [ultra thick] (-4,2) -- (3,-5);
        \draw [->, red] (4,2) -- (3,4) node[above] {$\xi$};
        \draw [ultra thick, red] (-4,-5) -- (4,-1);
        \draw [ultra thick, red] (-4,-3) -- (4,1);
        \draw [ultra thick, red] (-4,-1.5) -- (4,2.5);
        \draw [ultra thick, red] (-4,1) -- (2,4);
        \node at (5,0) {\Large $\Rightarrow$};
        \draw [red, ultra thick] (6,1) -- (14,1) node [right] {$\mathfrak{s}_2$};
        \draw [red, ultra thick] (6,-1) -- (14,-1) node [right] {$\mathfrak{s}_3$};
        \draw [red, ultra thick] (6,3) -- (14,3) node [right] {$\mathfrak{s}_1$};
        \draw [red, ultra thick] (6,-3) -- (14,-3) node [right] {$\mathfrak{s}_4$};
        \draw [ultra thick] (8,3) -- (12,-1) -- (12,-5);
        \draw [ultra thick] (10,3) -- (8,1) -- (8,-1) -- (10,-3) -- (10,-5);
        \draw [ultra thick] (12,3) -- (12,1) -- (8,-3) -- (8,-5);
        \node at (14,2) {$X_{12}$};
        \node at (14,0) {$X_{23}$};
        \node at (14,-2) {$X_{34}$};
        \node at (14,-4) {$X_{45}$};
    \end{tikzpicture}
\caption{Decomposing $2$-dimensional pair-of-pants into sectors\label{sector:decompose-pair-of-pants}}
\end{figure}
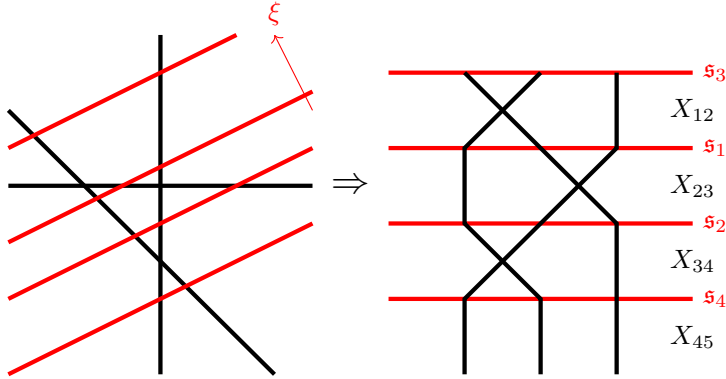

For each of these subsectors, we perform a further cut by hyperplanes which are transverse to the hyperplane defined by the polarization $\xi$, cutting each sector into ``squares'' as shown in Figure \ref{sector:second-cut}. Squares that surround an intersection point of the hyperplanes can be identified with $(\mathbb{C}^{\times})^2$ with four stops given by the two linear functions, and the other squares are products of the red boundary with $T^{\ast} [0,1]$, whose Fukaya category is equivalent to the Fukaya category of one of the two red boundaries \cite{ganatra2018sectorial}. It is well-known that the wrapped Fukaya category of $(\mathbb{C}^{\ast})^2$ is generated by one of the four real quadrants (a cotangent fiber) \cite{abouzaid2011cotangent}, and adding stops adds linking disks associated to these stops as new generators. Moreover, the linking disks correspond to generating Lagrangians of the stops, which are $M(\mathbb{V})$ for some one-dimensional hyperplane arrangement, and from the previous paragraph on dimension $1$ hyperplane arrangements we know that they are generated by real lines connecting punctures.

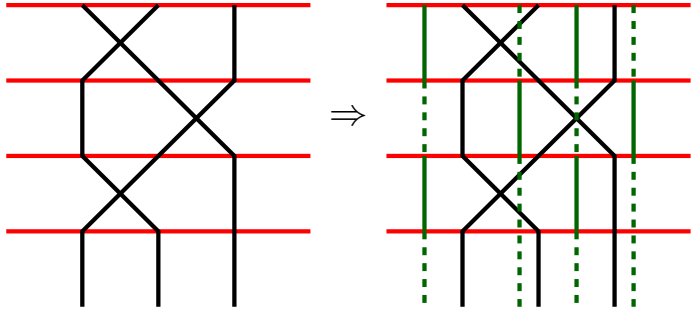
\begin{figure}[ht]
    \centering
    \begin{tikzpicture}[scale=0.5]
        \draw [red, ultra thick] (-4,1) -- (4,1);
        \draw [red, ultra thick] (-4,-1) -- (4,-1);
        \draw [red, ultra thick] (-4,3) -- (4,3);
        \draw [red, ultra thick] (-4,-3) -- (4,-3);
        \draw [ultra thick] (-2,3) -- (2,-1) -- (2,-5);
        \draw [ultra thick] (0,3) -- (-2,1) -- (-2,-1) -- (0,-3) -- (0,-5);
        \draw [ultra thick] (2,3) -- (2,1) -- (-2,-3) -- (-2,-5);
        \node at (5,0) {\Large $\Rightarrow$};
        \draw [red, ultra thick] (6,1) -- (14,1);
        \draw [red, ultra thick] (6,-1) -- (14,-1);
        \draw [red, ultra thick] (6,3) -- (14,3);
        \draw [red, ultra thick] (6,-3) -- (14,-3);
        \draw [ultra thick] (8,3) -- (12,-1) -- (12,-5);
        \draw [ultra thick] (10,3) -- (8,1) -- (8,-1) -- (10,-3) -- (10,-5);
        \draw [ultra thick] (12,3) -- (12,1) -- (8,-3) -- (8,-5);
        \draw [black!60!green, ultra thick] (7,3) -- (7,1);
        \draw [black!60!green, ultra thick] (7,-1) -- (7,-3);
        
        \draw [black!60!green, ultra thick] (11,3) -- (11,1);
        \draw [black!60!green, ultra thick] (11,-1)--(11,-3);
        
        \draw [black!60!green, ultra thick] (9.5,1) -- (9.5,-1);
        
        \draw [black!60!green, ultra thick] (12.5,1) -- (12.5,-1);
    \end{tikzpicture}
    \caption{Second cut of the $P_j$'s into the standard pieces.\label{sector:second-cut}}
\end{figure}

By Sectorial Descent (Theorem \ref{Prelim:sectorial-descent}), we get a collection of generating Lagrangians, which are depicted in Figure \ref{intro:generating-lags-for-2d-pairs-of-pants}. 
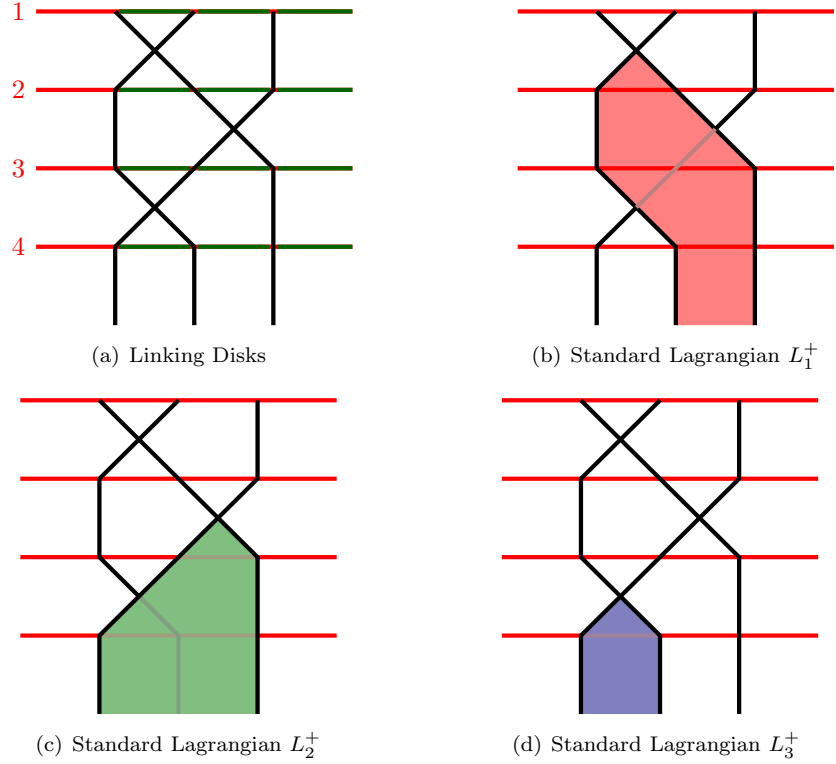
\begin{figure}[ht]
	\centering
	\subfigure[Linking Disks\label{intro:linking-disks}]{\begin{tikzpicture}[scale=.52]
		\draw [red, ultra thick] (-4,1) node [left] {$2$} -- (4,1);
        \draw [red, ultra thick] (-4,-1) node [left] {$3$} -- (4,-1);
        \draw [red, ultra thick] (-4,3) node [left] {$1$} -- (4,3);
        \draw [red, ultra thick] (-4,-3) node [left] {$4$} -- (4,-3);
        \draw [ultra thick] (-2,3) -- (2,-1) -- (2,-5);
        \draw [ultra thick] (0,3) -- (-2,1) -- (-2,-1) -- (0,-3) -- (0,-5);
        \draw [ultra thick] (2,3) -- (2,1) -- (-2,-3) -- (-2,-5);
		
		\draw [black!60!green, ultra thick] (-1.9,3) -- (-0.1,3);
		\draw [black!60!green, ultra thick] (0.1,3) -- (1.9,3);
		\draw [black!60!green, ultra thick] (2.1,3) -- (4,3);

		\draw [black!60!green, ultra thick] (-1.9,1) -- (-0.1,1);
		\draw [black!60!green, ultra thick] (0.1,1) -- (1.9,1);
		\draw [black!60!green, ultra thick] (2.1,1) -- (4,1);

		\draw [black!60!green, ultra thick] (-1.9,-1) -- (-0.1,-1);
		\draw [black!60!green, ultra thick] (0.1,-1) -- (1.9,-1);
		\draw [black!60!green, ultra thick] (2.1,-1) -- (4,-1);

		\draw [black!60!green, ultra thick] (-1.9,-3) -- (-0.1,-3);
		\draw [black!60!green, ultra thick] (0.1,-3) -- (1.9,-3);
		\draw [black!60!green, ultra thick] (2.1,-3) -- (4,-3);
	\end{tikzpicture}}\hspace{2cm}
	\subfigure[Standard Lagrangian $L_1^+$]{\begin{tikzpicture}[scale=0.52]
        \fill [red!50] (-1,2) -- (-2,1) -- (-2,-1) -- (0,-3) -- (0,-5) -- (2,-5) -- (2,-1) -- (-1,2);
		\draw [red, ultra thick] (-4,1) -- (4,1);
        \draw [red, ultra thick] (-4,-1) -- (4,-1);
        \draw [red, ultra thick] (-4,3) -- (4,3);
        \draw [red, ultra thick] (-4,-3) -- (4,-3);
        \draw [ultra thick] (-2,3) -- (2,-1) -- (2,-5);
        \draw [ultra thick] (0,3) -- (-2,1) -- (-2,-1) -- (0,-3) -- (0,-5);
        \draw [ultra thick] (2,3) -- (2,1) -- (1,0);
        \draw [black!50!red!50, ultra thick] (1,0) -- (-1,-2);
        \draw [ultra thick] (-1,-2) -- (-2,-3) -- (-2,-5);
	\end{tikzpicture}}\\
	\subfigure[Standard Lagrangian $L_2^+$]{\begin{tikzpicture}[scale=0.52]
        \fill [green!50!black!50] (1,0) -- (-2,-3) -- (-2,-5) -- (2,-5) -- (2,-1) -- (1,0);
		\draw [red, ultra thick] (-4,1) -- (4,1);

        \draw [red, ultra thick] (-4,-1) -- (0,-1);
        \draw [red!50!green!50!black!50, ultra thick] (0,-1) -- (2,-1);
        \draw [red, ultra thick] (2,-1) -- (4,-1);

        \draw [red, ultra thick] (-4,-3) -- (-2,-3);
        \draw [red!50!green!50!black!50, ultra thick] (-2,-3) -- (2,-3);
        \draw [red, ultra thick] (2,-3) -- (4,-3);

        \draw [red, ultra thick] (-4,3) -- (4,3);
        \draw [ultra thick] (-2,3) -- (2,-1) -- (2,-5);

        \draw [ultra thick] (0,3) -- (-2,1) -- (-2,-1) -- (-1,-2);
        \draw [black!50!green!50!black!50, ultra thick] (-1,-2) -- (0,-3) -- (0,-5);

        \draw [ultra thick] (2,3) -- (2,1) -- (-2,-3) -- (-2,-5);
	\end{tikzpicture}}
	\hspace{2cm}\subfigure[Standard Lagrangian $L_3^+$ ]{\begin{tikzpicture}[scale=0.52]
        \fill [blue!50!black!50] (-1,-2) -- (-2,-3) -- (-2,-5) -- (0,-5) -- (0,-3) -- (-1,-2);
		\draw [red, ultra thick] (-4,1) -- (4,1);
        \draw [red, ultra thick] (-4,-1) -- (4,-1);
        \draw [red, ultra thick] (-4,3) -- (4,3);

        \draw [red, ultra thick] (-4,-3) -- (-2,-3);
        \draw [red!50!blue!50!black!50, ultra thick] (-2,-3) -- (0,-3);
        \draw [red, ultra thick] (0,-3) -- (4,-3);
        
        \draw [ultra thick] (-2,3) -- (2,-1) -- (2,-5);
        \draw [ultra thick] (0,3) -- (-2,1) -- (-2,-1) -- (0,-3) -- (0,-5);
        \draw [ultra thick] (2,3) -- (2,1) -- (-2,-3) -- (-2,-5);
	\end{tikzpicture}}
	\caption{Generating Lagrangians for Two-Dimensional Pairs of Pants.\label{intro:generating-lags-for-2d-pairs-of-pants}}
\end{figure}
The green line segments in Figure \ref{intro:linking-disks} represent the linking disks associated to the corresponding generating Lagrangians of the stop, and the remaining three pictures describe the three generating Lagrangians obtained from each crossing point (after gluing together all the sectors). We label the three standard Lagrangians by $L_1^+, L_2^+, L_3^+$, respectively, where the $+$ indicates that the Lagrangian is the one shifted upwards (under the Hamiltonian flow) in direction of $\mathrm{Im}(\xi)$ in $\mathbb{C}^2$. We denote the linking disks by $D_{ij}$, where $1\leq i\leq 4$ is the index of the stop and $1\leq j\leq 3$ is the order of cocores, from left to right. It remains to show that the Lagrangians associated to the bounded-feasible chambers $L_{++-}$, $L_{+--}$ and $L_{+-+}$, as depicted in Figure \ref{intro:three-lag-chambers}, generate all the standard Lagrangians $L_1^+, L_2^+, L_3^+$ and the linking disks $D_{ij}$ in the partially wraped Fukaya category $\mathcal{W}(M(\mathbb{V}), \xi)$. Applying the surgery exact triangle (cf. Proposition \ref{prop:flow-surgery-cobordism}) we have: 

\begin{lemma}[cf. Section \ref{sec:gluingchambers}]\label{lem:intro-Lag gluing}
There are quasi-isomorphisms 
\[
L_1^+\simeq \mathit{Cone}(L_{++-} \to  L_{+--}),\quad L_2^+\simeq \mathit{Con}e(L_{+--}\to L_{+-+}),\quad L_3^+\simeq L_{+-+}
\]
between objects in the triangulated $A_\infty$-category $\mathcal{W}(M(\mathbb{V}),\xi)^\mathit{perf}$ (cf. Section \ref{section:pr-wrap}), where $\mathit{Cone}(\bullet\rightarrow\bullet)$ stands for the mapping cone.
\end{lemma}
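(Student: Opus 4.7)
The plan is to apply the flow-surgery cobordism of Proposition \ref{prop:flow-surgery-cobordism}, which identifies a (partial) Lagrangian surgery of two exact Lagrangians at a transverse intersection point with a mapping cone in $\mathcal{W}(M(\mathbb{V}),\xi)^\mathit{perf}$. The three quasi-isomorphisms will each be obtained by realizing the standard Lagrangian $L_j^+$ as a surgery of the chamber Lagrangians $L_\alpha$ at a positively $\xi$-wrapped intersection point, and then verifying that the surgery output is Hamiltonian isotopic to $L_j^+$ inside $(M(\mathbb{V}),\xi)$.

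The simplest case is $L_3^+ \simeq L_{+-+}$, which involves no surgery at all. Looking at Figure \ref{intro:generating-lags-for-2d-pairs-of-pants}(d), the underlying support of $L_3^+$ is the chamber $\Delta_{+-+}^\circ$ together with a tail that wraps outward toward the stop $\xi$. This tail is exactly the image of $L_{+-+}$ under a small positive push-off in the direction of the $\xi$-flow, so the two Lagrangians are Hamiltonian isotopic. I would make this precise by writing down an explicit Hamiltonian isotopy inside the subsector containing the crossing labelled by this chamber in Figure \ref{sector:decompose-pair-of-pants}.

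For $L_2^+$, I would first positively wrap $L_{+--}$ in the $\xi$-direction, producing a Hamiltonian isotopic copy $L_{+--}^+$ that meets $L_{+-+}$ transversely in a single intersection point $p$ located near the crossing of the two real hyperplanes bordering both chambers. The point $p$ represents the generator of $\mathit{CW}^0(L_{+--},L_{+-+})$ corresponding to the adjacency of these two chambers, and the flow-surgery cobordism yields a Lagrangian $L_{+--}\#_p L_{+-+}$ whose class in $\mathcal{W}(M(\mathbb{V}),\xi)^\mathit{perf}$ is $\mathit{Cone}(L_{+--}\to L_{+-+})$. An explicit Hamiltonian isotopy in the stopped sector then identifies $L_{+--}\#_p L_{+-+}$ with $L_2^+$, matching the shaded region depicted in Figure \ref{intro:generating-lags-for-2d-pairs-of-pants}(c). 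The case of $L_1^+$ is entirely analogous, performed on the pair $L_{++-}, L_{+--}$ across their common hyperplane.

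The main obstacle will be the geometric identification step: one must locate the correct transverse intersection point after wrapping (additional intersections may appear that have to be cancelled or argued to be irrelevant in the stopped sector), choose the handedness of the surgery so that the cone direction matches the one stated in the lemma rather than a shift or its dual, and exhibit an explicit Hamiltonian isotopy in the stopped Weinstein sector between $L_\alpha\#_p L_\beta$ and the sectorial standard Lagrangian $L_j^+$. Once these geometric details are pinned down for the configuration of Figure \ref{fig:parrr}, the three quasi-isomorphisms follow formally from the surgery exact triangle of Proposition \ref{prop:flow-surgery-cobordism}.
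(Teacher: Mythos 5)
Your proposal has a genuine geometric gap in the intersection analysis. You treat the surgery of adjacent chamber Lagrangians as a point surgery at a transverse intersection, but in $M(\mathbb{V}) \subset \mathbb{C}^2$ the two Lagrangians $L_{+--}$ and $L_{+-+}$ are real $2$-dimensional surfaces whose closures share a codimension-one facet. After the paper's modification (Section~\ref{sec:gluingchambers}), $\tilde{L}_{\alpha_1}$ and $L_{\alpha_2}$ meet \emph{cleanly} along a positive-dimensional submanifold --- not transversely at a single point. This is precisely why the paper develops and invokes the clean surgery framework of Mak--Wu in Section~\ref{sec:clean-surgery} (Proposition~\ref{prop:flow-surgery-cobordism} is stated for clean intersection loci $D$, not transverse points). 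Your plan as stated would not produce the required Lagrangian cobordism.

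The wrapping mechanism you propose is also not the one the paper uses. The paper does not push $L_{+--}$ in the $\xi$-direction; it chooses a fiberwise Hamiltonian $\rho(r)$ supported in the McLean tubular neighborhood $U_k$ of the \emph{shared hyperplane} $H_k$, whose Hamiltonian vector field $X_{\rho(r)}=\frac{\rho'(r)}{r}\partial_\theta$ rotates the disk fibres of $U_k \to H_k$ so that $\tilde{L}_{\alpha_1}$ coincides with $L_{\alpha_2}$ inside a shrunken neighborhood (Proposition~\ref{prop:Mclean}(v)). A small positive $\xi$-flow, which is purely imaginary and applied uniformly, would simply displace $L_{+--}$ off the real locus and is not the correct perturbation here.

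Finally, the paper never establishes a direct Hamiltonian isotopy between the surgery output and $L_j^+$, and this step is not as innocuous as your proposal makes it sound. Instead the paper argues indirectly through Proposition~\ref{surgery:decomposing-standard-Lag}: the \emph{iterated} surgery $L_{\beta^-}\#\bigodot_\alpha L_\alpha$ lies in $\{\mathrm{Im}(\xi)\geq 0\}$ and avoids both the $0$-dimensional strata and the stop, so it is the zero object by Lemma~\ref{lemma:zero}. The surgery exact triangle then yields $L_{\beta^-}\simeq \bigodot_\alpha L_\alpha[\pm 1]$, and the flipping of Section~\ref{sec:flippingLagrangians} identifies $L_{\beta^-}$ with the standard Lagrangian $L_j^+ = L^+_{\beta^+}$. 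Your approach, even after repairing the clean-versus-transverse issue, would require verifying the identification $L_\alpha \# L_\beta \simeq L_j^+$ by hand; the paper chooses the triviality argument precisely to avoid that verification.
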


Thus it remains to show that the linking disks $D_{ij}$ are generated by the ``chamber Lagrangians" $L_{++-}$, $L_{+--}$ and $L_{+-+}$. To prove this, let us first focus on the top sector $X_{12}$ in Figure \ref{intro: X_12}.

\begin{figure}[ht]
	\centering
	\begin{tikzpicture}[scale=0.5]
	\draw [red, ultra thick] (-5,1) node [left] {$\mathfrak{s}_1$} -- (5,1);
	\draw [red, ultra thick] (-5,-3) node [left] {$\mathfrak{s}_2$} -- (5,-3);
	\draw [black, ultra thick] (-3,1) -- (1,-3);
	\draw [black, ultra thick] (1,1) -- (-3,-3);
	\draw [black, ultra thick] (3,1) -- (3,-3);
	
	\draw [black!60!green, ultra thick] (-2.9,1) -- (0.9,1);
	\draw [black!60!green, ultra thick] (1.1,1) -- (2.9,1);
	\draw [black!60!green, ultra thick] (3.1,1) -- (4.9,1);
	
	\node at (-1, 1.5) [black!60!green] {$D_{11}$};
	\node at (2, 1.5) [black!60!green] {$D_{12}$};
	\node at (4, 1.5) [black!60!green] {$D_{13}$};
	
	\draw [black!60!green, ultra thick] (-2.9,-3) -- (0.9,-3);
	\draw [black!60!green, ultra thick] (1.1,-3) -- (2.9,-3);
	\draw [black!60!green, ultra thick] (3.1,-3) -- (4.9,-3);
	
	\node at (-1, -3.5) [black!60!green] {$D_{21}$};
	\node at (2, -3.5) [black!60!green] {$D_{22}$};
	\node at (4, -3.5) [black!60!green] {$D_{23}$};
	
	\node at (-1, 0.2) {$L_1^+$};
	\node at (-1, -2.2) {$\widetilde{L}_1^+$};
	\end{tikzpicture}
	\caption{Description of $X_{12}$. $\tilde{L}_1^+$ is a flipping of $L_1^+$. \label{intro: X_12}}
\end{figure}
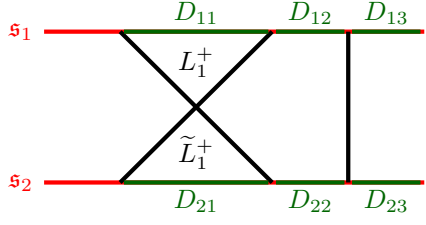
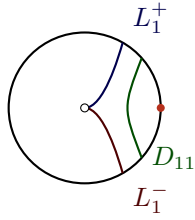
\begin{figure}[ht]
	\centering
	\begin{tikzpicture}[scale=0.5]
	\draw [thick] (0,0) circle [radius=2];
	\draw [thick, red!30!black] (0,0) parabola bend (0,0) (1,{-sqrt(3)}) node [below right] {$L_1^-$};
	\draw [thick, blue!30!black] (0,0) parabola bend (0,0) (1,{sqrt(3)}) node [above right] {$L_1^+$};
	\draw [thick, green!30!black] (1.5,{sqrt(7)/2}) .. controls (1,0) .. (1.5, {-sqrt(7)/2}) node [right] {$D_{11}$};
	\node at (2,0) [circle,fill=black,inner sep=0pt,minimum size=3pt,red] {};
	\node at (0,0) [circle,draw=black, fill=white, inner sep=0pt,minimum size=3pt] {};
	\end{tikzpicture}
	\caption{\label{intro:wrapping-exact-triangle}The Wrapping Exact Triangle}
\end{figure}

Consider the Lagrangian $L_1^+$ in this sector. We have the wrapping exact triangle for the linking disk $D_{11}$
\[
L_1^-\to D_{11}\to L_1^+\to L_1^- [1],
\]
\noindent where $L_1^-$ is the triangular Lagrangian obtained by shifting $L_1$ down in the direction of $\mathrm{Im}(\xi)$, see Figure \ref{intro:wrapping-exact-triangle}. This implies that $D_{11}$ is generated by $L_1^-$ and $L_1^+$. Consider the second sectorial hypersurface $\mathfrak{s}_2$, regarded as a stop in $X_{12}$, there is a similar wrapping exact triangle  
\[
\widetilde{L}_1^-\to D_{21}\to \widetilde{L}_1^+\to \widetilde{L}_1^-[1].
\]
Here, $\widetilde{L}_1^{\pm}$ is a flipping of $L_1^{\pm}$ (cf. Section \ref{sec:flippingLagrangians}). Since there is a quasi-isomorphism $\tilde{L}_1^{\pm} \cong L_1^{\pm}$ in $\mathcal{W}(X_{12})$, we can abuse notations and write both as $L_1^{\pm}$. This shows that we could get $L_1^-$ from $D_{21}$, therefore $D_{11}$ is generated by $L_1^+$ and $D_{21}$, hence it can be removed from the collection of generating objects for $\mathcal{W}\left(X_{12}\right)$. For the remaining linking disks in the first stop $\mathfrak{s}_1$, we can show via Lagrangian surgery that 

\begin{lemma}[cf. Section \ref{s:chamber_moves}]\label{lemma:qis}
We have quasi-isomorphisms
\[
D_{22}\simeq\mathit{Cone}(D_{11} \to  D_{12}) \quad\text{and}\quad D_{13}\simeq D_{23}
\]
in the Fukaya category $\mathcal{W}(X_{12})^\mathit{perf}$. 
\end{lemma}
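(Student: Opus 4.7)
The plan is to prove each quasi-isomorphism via the Lagrangian surgery exact triangle of Proposition \ref{prop:flow-surgery-cobordism}, applied to the explicit standard pieces produced by the second sectorial cut of Figure \ref{sector:second-cut}.

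For $D_{13}\simeq D_{23}$, I first observe that the square containing these two linking disks lies to the right of the vertical black Lagrangian and is of the non-crossing type: up to Weinstein deformation, it is a product of $T^*[0,1]$ with a one-dimensional red piece. Its partially wrapped Fukaya category is therefore equivalent to that of the one-dimensional factor, and under this equivalence both $D_{13}$ and $D_{23}$ correspond to cotangent fibres over a connected base. Since any two cotangent fibres over a connected Lagrangian base are canonically quasi-isomorphic in the wrapped Fukaya category, the desired equivalence follows. Equivalently, one can realize it as the continuation map induced by a Hamiltonian isotopy sliding along the vertical Lagrangian.

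For $D_{22}\simeq\mathit{Cone}(D_{11}\to D_{12})$, I zoom into the standard crossing piece, which after the second sectorial cut is $(\mathbb{C}^*)^2$ with four stops, corresponding to the four components of $\mathfrak{s}_1\cup\mathfrak{s}_2$ separated by the two X-lines. The strategy is to positively Hamiltonian-wrap $D_{11}$ along $\mathfrak{s}_1$ across the puncture where one X-line meets $\mathfrak{s}_1$, so that it acquires a single transverse intersection with $D_{12}$. The continuation morphism associated to this wrapping is the map $f:D_{11}\to D_{12}$, and by Proposition \ref{prop:flow-surgery-cobordism} the mapping cone $\mathit{Cone}(f)$ is represented by the Lagrangian surgery of the wrapped $D_{11}$ and $D_{12}$ at this intersection point. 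A direct Hamiltonian isotopy inside $X_{12}$ should then show that the surgered Lagrangian, which now begins on the middle component of $\mathfrak{s}_1$ and ends on the middle component of $\mathfrak{s}_2$ after going around the X-crossing, is Hamiltonian isotopic to $D_{22}$.

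The main obstacle will be this last identification: checking that the Lagrangian produced by the wrapping and surgery is genuinely Hamiltonian isotopic to the standard linking disk $D_{22}$ inside $X_{12}$, and not to a twisted variant arising from spurious extra wrapping at either stop. In practice I would carry this out by tracking the isotopy explicitly in the $(\mathbb{C}^*)^2$-local model, and matching the result against the algebraic cone formula in the known quiver presentation of the wrapped Fukaya category of $(\mathbb{C}^*)^2$ with four stops, where the linking disks furnish the standard generating set and the surgery exact triangle translates directly into the required cone relation.
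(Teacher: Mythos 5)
Your argument for $D_{13}\simeq D_{23}$ matches the paper's: the square containing both linking disks has no crossing, so Proposition \ref{sector:GenerationResult}(ii) identifies it as a stabilization $\left(\text{stop piece}\right)\times T^{\ast}[c_1^-,c_1^+]$, and under the resulting K\"unneth equivalence the two linking disks correspond to the same object.

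For $D_{22}\simeq\mathit{Cone}(D_{11}\to D_{12})$ you take a genuinely different route from the paper, and your version has an unresolved gap. The paper (Section \ref{sec:Identifying linkingdisc}) never wraps a linking disk directly. It works with the pushed-off chamber Lagrangians $L^{\pm}_{\beta^{+}}$ and $L^{\pm}_{\gamma}$, obtains $D_{11}$ and $D_{12}\oplus D_{22}$ as clean surgeries of these (Section \ref{sec:wrappingtriangle}), then shows via Lemma \ref{lemma:zero} that the cross-chamber surgeries $L^{+}_{\beta^{+}}\#L^{+}_{\gamma}$ and $L^{-}_{\beta^{+}}\#L^{-}_{\gamma}$ are zero objects — they avoid the crossing, so the Hamiltonian flow of $\rRe(\xi)$ carries them off to infinity. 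The cone relation drops out of the resulting diagram of exact triangles, with no need to match a surgered Lagrangian against $D_{22}$ by hand. Your proposal, by contrast, wraps $D_{11}$ through $\mathfrak{s}_1$ and surgers the result with $D_{12}$. Two issues are left open. First, the exact triangle you would get from Proposition \ref{prop:flow-surgery-cobordism} has the \emph{wrapped} $D_{11}$ as its first term, and since the isotopy passes through a stop, this is not isomorphic to $D_{11}$ in $\mathcal{W}(X_{12})$; reconciling the surgery triangle with the asserted $\mathit{Cone}(D_{11}\to D_{12})$ therefore needs an additional wrapping exact triangle that you do not supply, and getting the gradings to agree is a genuine concern. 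Second, the final step — that "a direct Hamiltonian isotopy inside $X_{12}$ should then show that the surgered Lagrangian \ldots is Hamiltonian isotopic to $D_{22}$" — is exactly the content of the claim, and you flag it as the obstacle without carrying it out. The paper's detour through chamber Lagrangians exists precisely so that this identification is replaced by the much softer vanishing criterion of Lemma \ref{lemma:zero}.
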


\begin{corollary}\label{cor:intro-X12}
The partially wrapped Fukaya category $\mathcal{W}(X_{12})$ is generated by the Lagrangian $L_1^+$ and the linking disks $D_{21}, D_{22}$ and $D_{23}$. 
\end{corollary}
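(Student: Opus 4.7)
The plan is to start from the generating set for $\mathcal{W}(X_{12})$ supplied by sectorial descent (Theorem \ref{Prelim:sectorial-descent}) applied to the second cut shown in Figure \ref{sector:second-cut}, and iteratively remove the three linking disks $D_{11}, D_{12}, D_{13}$ at the stop $\mathfrak{s}_1$. The single crossing inside $X_{12}$ contributes the standard Lagrangian $L_1^+$, while the surrounding square pieces are of the form $B\times T^\ast[0,1]$ and hence contribute no new generators beyond linking disks to the stops; after regluing, the latter are precisely the six Lagrangians $D_{1j}, D_{2j}$ for $j=1,2,3$. Thus the initial generating set is $\{L_1^+\}\cup\{D_{1j},D_{2j}\}_{j=1,2,3}$, and the corollary reduces to showing that $D_{11}, D_{12}, D_{13}$ all lie in the thick subcategory generated by $L_1^+, D_{21}, D_{22}, D_{23}$.

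First I would eliminate $D_{11}$ by combining the two wrapping exact triangles
\begin{equation*}
L_1^-\to D_{11}\to L_1^+\to L_1^-[1]\quad\text{and}\quad \widetilde{L}_1^-\to D_{21}\to \widetilde{L}_1^+\to \widetilde{L}_1^-[1]
\end{equation*}
at the stops $\mathfrak{s}_1$ and $\mathfrak{s}_2$ respectively. Using the identifications $\widetilde{L}_1^\pm\simeq L_1^\pm$ in $\mathcal{W}(X_{12})$ established in Section \ref{sec:flippingLagrangians}, the second triangle places $L_1^-$ in the subcategory $\langle L_1^+, D_{21}\rangle$, and feeding this back into the first triangle yields $D_{11}\in\langle L_1^+, D_{21}\rangle$. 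For $D_{12}$, Lemma \ref{lemma:qis} supplies the exact triangle $D_{11}\to D_{12}\to D_{22}\to D_{11}[1]$, placing $D_{12}\in\langle D_{11}, D_{22}\rangle\subseteq\langle L_1^+, D_{21}, D_{22}\rangle$. Finally, the second half of Lemma \ref{lemma:qis} gives $D_{13}\simeq D_{23}$, so $D_{13}$ is redundant as well. The four objects $L_1^+, D_{21}, D_{22}, D_{23}$ therefore generate $\mathcal{W}(X_{12})$.

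The main subtlety lies in the identification $\widetilde{L}_1^\pm\simeq L_1^\pm$ inside $\mathcal{W}(X_{12})$: this is not automatic from the Lagrangians as drawn in Figure \ref{intro: X_12}, but requires the flipping construction of Section \ref{sec:flippingLagrangians} to produce a morphism (or zigzag of morphisms) realizing the equivalence inside this particular sector, so that both wrapping triangles can legitimately be combined within $\mathcal{W}(X_{12})$ rather than only in some ambient category. Once that identification is in hand, the remaining argument is a purely formal three-step elimination using the two wrapping triangles and the two cone relations of Lemma \ref{lemma:qis}.
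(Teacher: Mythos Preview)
Your proposal is correct and follows essentially the same argument as the paper: start from the generating set $\{L_1^+\}\cup\{D_{1j},D_{2j}\}_{j=1,2,3}$ coming from sectorial descent and the local generation results, then eliminate $D_{11}$ via the two wrapping triangles combined with the flipping identification $\widetilde{L}_1^\pm\simeq L_1^\pm$, and finally eliminate $D_{12}$ and $D_{13}$ using the cone relations of Lemma~\ref{lemma:qis}. You have also correctly flagged the one non-formal input, namely that the flipping identification must hold inside $\mathcal{W}(X_{12})$ itself, which is exactly what the paper addresses in Section~\ref{sec:flippingLagrangians}.
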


Note that the quasi-isomorphisms in Lemma \ref{lemma:qis} also hold in the wrapped Fukaya category of the ambient Liouville sector $(M(\mathbb{V}),\xi)$, by the existence of an inclusion functor associated to the embedding $X_{12}\hookrightarrow (M(\mathbb{V}),\xi)$. Under this inclusion, $L_1^+$ in $\mathcal{W}(X_{12})$ is sent to the standard Lagrangian in Figure \ref{intro:generating-lags-for-2d-pairs-of-pants} (b). 

Similar arguments can be carried out for the Liouville sectors $X_{23}$ and $X_{34}$. We can identify linking disks at the top and the bottom via surgery exact sequences, therefore inductively replacing linking disks from the top with those at the bottom in the collection of generating objects. Finally, it remains to deal with the linking disks $D_{41}, D_{42}$ and $D_{43}$. To do this, note that the subsector $X_{45}$ at the bottom has a geometric decomposition:

\begin{lemma}
We have an isomorphism of Liouville sectors $X_{45}\cong\mathfrak{s}_4\times\mathbb{C}_{\rRe\geq 0}$.
\end{lemma}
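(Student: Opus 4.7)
The plan is to exhibit an explicit Liouville sector isomorphism using the polarization $\xi$ as a holomorphic coordinate on $\mathbb{C}^d$. First, I would choose the cutoff level defining $\mathfrak{s}_4$ to be $\{\rRe(\xi)=c\}$ for $c$ strictly below $\rRe(\xi)$ on every real crossing of the arrangement $\mathbb{V}$; such a $c$ exists because the arrangement is finite. Then $X_{45}$ is identified with a standardization of $M(\mathbb{V})\cap\{\rRe(\xi)\leq c\}$ near $\mathfrak{s}_4$, and by construction contains no real crossings of $\mathbb{V}$.

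Next, I would extend $\xi$ to linear complex coordinates $(\xi,\eta)$ on $\mathbb{C}^d$. Since the polarization is generic, each complex hyperplane $H_i$ is transverse to the fibers of $\xi$, so $H_i$ meets each fiber $\xi^{-1}(w)$ transversally in a single affine subspace depending holomorphically on $w$. The projection $\xi\colon X_{45}\to\mathbb{C}_{\rRe\leq c}$ is therefore a locally trivial fibration of Liouville manifolds, and since the base is contractible, parallel transport along straight rays in the $\xi$-plane produces a smooth trivialization $\Phi\colon X_{45}\xrightarrow{\sim}\mathfrak{s}_4\times\mathbb{C}_{\rRe\geq 0}$ after the substitution $\xi\mapsto c-\xi$.

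To upgrade $\Phi$ to an isomorphism of Liouville sectors, I would apply a Moser-type argument. The Liouville form on $M(\mathbb{V})$ pulled back from $(\mathbb{C}^\ast)^n$ restricts on $X_{45}$ to a primitive which, outside a compact subset, differs from the product Liouville form $\lambda_{\mathfrak{s}_4}+\lambda_{\mathbb{C}_{\rRe\geq 0}}$ by an exact form whose primitive decays in the $\xi$-direction. A compactly supported Moser flow then identifies the two, making $\Phi$ a strict isomorphism of Liouville sectors in the sense of Ganatra--Pardon--Shende.

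The main obstacle is this final Moser step: one must verify that the interpolating Liouville vector fields remain transverse to the sectorial boundary $\mathfrak{s}_4$ and outward-pointing along the ideal contact boundary throughout the deformation, so that the sector structure is preserved. This should, however, reduce to the kinds of sectorial Liouville deformations already established in Propositions \ref{sector:CompactWeinsteinDeformation} and \ref{sector:CompactWeinsteinDeformationMultiple} cited in the excerpt, where analogous cuts of $M(\mathbb{V})$ by level sets of $\rRe(\xi)$ were shown to yield honest Liouville sector decompositions.
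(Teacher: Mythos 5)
Your overall picture is right—$X_{45}$ contains no crossing points of $\mathbb{V}$, and one wants to exhibit a trivialization of the $\rRe(\xi)$-projection—but your argument has a gap exactly at the step where you assert local triviality of the fibration. The fibers $M(\mathbb{V})\cap\xi^{-1}(w)$ are \emph{non-compact} hyperplane complements, so transversality of each $H_i$ to the $\xi$-fibers does not give local triviality via Ehresmann; one must show the chosen parallel transport is complete. If you mean symplectic parallel transport, this is precisely the technical content of Lemma \ref{sector:TrivialSymplecticSplitting} and Lemma \ref{sector:GramSchmidtProcess}: the authors bound $\lVert X_{\rRe(\xi)}\rVert$ uniformly away from the $0$-dimensional strata, show the transverse vector field $V=-JX_{\rRe(\xi)}/\lVert X_{\rRe(\xi)}\rVert^2$ extends smoothly over $\mathbb{C}^d$ tangent to the strata, and then invoke Gr\"onwall's inequality and Lemma \ref{sector:CompletenessCriterion} to get completeness. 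If you instead mean the Euclidean/holomorphic parallel transport in linear $(\xi,\eta)$-coordinates, completeness is automatic, but the trivialization is not symplectic and you are stuck with a Moser step whose difficulty you seriously underestimate.

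That Moser step is not ``compactly supported'' in any naive sense: the pulled-back form $\iota^*\omega_\phi$ is genuinely different at infinity from the product form, and the assertion that the primitives ``differ by an exact form whose primitive decays in the $\xi$-direction'' is not justified—it is essentially the statement of the long computational argument in Proposition \ref{sector:CompactWeinsteinDeformation} (Steps 2--4), where one has to tune the parameter $\alpha$ in the splitting $\lambda_\mathbb{C}^\alpha$ and verify, via the estimate of Lemma \ref{l:Iestimate}, that the interpolating family $\lambda_t$ has no critical points escaping to infinity. The paper avoids a second Moser step at the level of the symplectic form altogether: the proof of Proposition \ref{sector:GenerationResult}(ii) builds a global symplectic product decomposition directly, by taking the local commuting pairs $(X_{\rRe(\xi_1)},V)$ supplied by Corollary \ref{c:TrivialSymplecticSplitting}, patching the $V$-components with a partition of unity (this is where it matters that the hypersurfaces meet the removed hyperplanes transversely everywhere on $P_\vert$ and there are no $0$-strata), and checking that the resulting $V$ still satisfies $d\rRe(\xi_1)(V)=1$ and is complete. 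To make your proposal rigorous you would need to supply completeness estimates equivalent to the ones in Lemma \ref{sector:TrivialSymplecticSplitting}, at which point it is cleaner to follow the paper's route and construct the symplectic trivialization directly rather than trivialize smoothly and try to repair the form afterward.
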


This implies that all the linking disks $D_{41}, D_{42}$ and $D_{43}$ become zero objects after pushing forward to the ambient sector $(M(\mathbb{V}),\xi)$. They can thus be removed from the collection of generating objects. We conclude from Lemma \ref{lem:intro-Lag gluing} and corollary \ref{cor:intro-X12} that 

\begin{proposition}[cf. Theorem \ref{thm:generation}]
The partially wrapped Fukaya category $\mathcal{W} (M(\mathbb{V}),\xi)$ is generated by the Lagrangians $L_{++-}, L_{+--}$ and $L_{+-+}$ corresponding to the bounded and feasible chambers in the polarized hyperplane arrangement $\mathbb{V}$.
\end{proposition}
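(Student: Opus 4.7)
The plan is to execute the local-to-global strategy outlined in Section \ref{sec:ideaofproof}, combining Sectorial Descent with a sequence of wrapping exact triangles, Lagrangian surgery identifications, and a product-decomposition argument.

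First, I would apply Theorem \ref{Prelim:sectorial-descent} to the decomposition of $(M(\mathbb{V}),\xi)$ by the four sectorial hypersurfaces $\mathfrak{s}_1,\ldots,\mathfrak{s}_4$ transverse to the polarization direction, yielding the subsectors $X_{12}, X_{23}, X_{34}, X_{45}$ of Figure \ref{sector:decompose-pair-of-pants}. A second cut by hyperplanes orthogonal to $\xi$ further breaks each $X_{i,i+1}$ into either a standard ``crossing square'', which can be identified with $(\mathbb{C}^\ast)^2$ equipped with four stops, or a product of a red sectorial face with $T^\ast[0,1]$. For the former, the cotangent-fiber generation result of \cite{abouzaid2011cotangent} combined with the linking disks at the added stops gives an explicit generating set; for the latter, generation reduces to the one-dimensional case discussed at the beginning of the subsection. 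Reassembling these via Sectorial Descent produces the generating collection of Figure \ref{intro:generating-lags-for-2d-pairs-of-pants}, consisting of the linking disks $D_{ij}$ at each $\mathfrak{s}_i$ and the standard Lagrangians $L_1^+, L_2^+, L_3^+$.

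Second, I would systematically eliminate the linking disks at the upper stops. Inside each $X_{i,i+1}$, the wrapping exact triangle
\[
L_j^- \longrightarrow D_{ij} \longrightarrow L_j^+ \longrightarrow L_j^-[1]
\]
at the top face, together with its analogue $\widetilde{L}_j^- \to D_{i+1,j} \to \widetilde{L}_j^+$ at the bottom face and the quasi-isomorphism $\widetilde{L}_j^\pm \simeq L_j^\pm$ in $\mathcal{W}(X_{i,i+1})$, allow every upper linking disk $D_{ij}$ to be rewritten in terms of $L_j^+$ and the lower linking disk $D_{i+1,j}$. Combining this with the surgery-induced quasi-isomorphisms of Lemma \ref{lemma:qis} applied within each $X_{i,i+1}$ propagates the identifications inductively, leaving only $\{D_{4j}\}_{j=1,2,3}$ and $\{L_j^+\}_{j=1,2,3}$ as generators of $\mathcal{W}(M(\mathbb{V}),\xi)$ after push-forward. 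The bottom linking disks are then killed by the product structure $X_{45} \cong \mathfrak{s}_4 \times \mathbb{C}_{\rRe\geq 0}$: since $\mathbb{C}_{\rRe\geq 0}$ is a Liouville sector whose partially wrapped Fukaya category vanishes, each $D_{4j}$, being supported inside $X_{45}$, is sent to a zero object by the inclusion functor into $\mathcal{W}(M(\mathbb{V}),\xi)$ and may be removed from the generating set.

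Finally, I would invoke Lemma \ref{lem:intro-Lag gluing} to rewrite the remaining three standard Lagrangians as iterated cones of the chamber Lagrangians,
\[
L_3^+ \simeq L_{+-+}, \qquad L_2^+ \simeq \mathit{Cone}(L_{+--}\to L_{+-+}), \qquad L_1^+ \simeq \mathit{Cone}(L_{++-}\to L_{+--}),
\]
in $\mathcal{W}(M(\mathbb{V}),\xi)^{\mathit{perf}}$. Since mapping cones lie in the triangulated envelope of their summands, this shows that $L_{++-}, L_{+--}, L_{+-+}$ generate every remaining object and concludes the proof. The main obstacle, in my view, is the compatibility step: verifying that the local wrapping exact triangles and the flipped/unflipped identifications $\widetilde{L}_j^\pm \simeq L_j^\pm$ used inside each $X_{i,i+1}$ behave correctly under the gluing functors of Sectorial Descent, and that the product decomposition of $X_{45}$ is genuinely a sectorial isomorphism so that the push-forward really does send the $D_{4j}$ to zero. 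Once these compatibilities are in hand, the remaining steps are formal consequences of surgery exact triangles and the descent theorem.
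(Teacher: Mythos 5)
Your proposal is correct and follows essentially the same local-to-global strategy as the paper: Sectorial Descent on the two-stage cut, wrapping exact triangles and surgery quasi-isomorphisms to propagate linking disks from top to bottom of the sector stack, the product decomposition $X_{45}\cong\mathfrak{s}_4\times\mathbb{C}_{\rRe\geq 0}$ (whose wrapped Fukaya category indeed vanishes by K\"unneth) to kill the bottom linking disks, and finally Lemma \ref{lem:intro-Lag gluing} to resolve the standard Lagrangians into cones of chamber Lagrangians. The compatibility issues you flag at the end — that the wrapping triangles, flipping identifications, and the product decomposition of $X_{45}$ persist under the stopped inclusion functors — are exactly the points the paper addresses in Sections \ref{sec:gluingchambers}--\ref{s:chamber_moves} and in the inductive proof of Theorem \ref{thm:generation}.
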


\begin{figure}[ht]
	\centering
	\begin{tikzpicture}[scale=0.5]
		\draw [red, ultra thick] (-4,1) -- (4,1);
        \draw [red, ultra thick] (-4,-1) -- (4,-1);
        \draw [red, ultra thick] (-4,3) -- (4,3);
        \draw [red, ultra thick] (-4,-3) -- (4,-3);
		\fill [red!50] (-1,2) -- (-2,1) -- (-2,-1) -- (-1,-2) -- (1,0) -- (-1,2);
		\fill [green!50!black] (1,0) -- (-1,-2) -- (0,-3) -- (0,-5) -- (2,-5) -- (2,-1) -- (1,0);
		\fill [blue!50!black] (-1,-2) -- (-2,-3) -- (-2,-5) -- (0,-5) -- (0,-3) -- (-1,-2);
        \draw [ultra thick] (-2,3) -- (2,-1) -- (2,-5);
        \draw [ultra thick] (0,3) -- (-2,1) -- (-2,-1) -- (0,-3) -- (0,-5);
        \draw [ultra thick] (2,3) -- (2,1) -- (-2,-3) -- (-2,-5);
	\end{tikzpicture}
	\caption{\label{intro:three-lag-chambers}The Bounded Feasible Chambers.\\ The red one is $L_{++-}$, the green one is $L_{+--}$ and the blue one is $L_{+-+}$}
\end{figure}

\subsection{Outline.}
The paper is organized as follows. In Section \ref{Topology-of-Hyperplane-Arrangements}, we recall the basics of Liouville sectors and Fukaya categories based on the works \cite{ganatra2018sectorial,ganatra2020covariantly}. Section \ref{sec:symplecticgeometry of hypargg} contains the main technical input of this paper. We first recall the notion of a polarized hyperplane arrangement in Section \ref{sec:hyparr}, and then establish some basic facts about the symplectic geometry of the Weinstein manifold $M(\mathbb{V})$ in Section \ref{ss:essential}. The sectorial cut of $\left(M(\mathbb{V}),\xi\right)$ is carried out rigorously in Sections \ref{section:homotopy} and \ref{sec:sectorial decomposition}. This is followed by a discussion about the generation for each subsector in the sectorial decomposition in Section \ref{section:piece}. In Section \ref{Exact triangle, surgery and isomorphic objects}, we prove Theorem \ref{theorem:generation1} using the surgery exact triangle for Lagrangian submanifolds with clean intersections. Using a computational technique similar to that of \cite{auroux2017speculations}, Theorem \ref{theorem:conv1} is proved in Section \ref{sec:computation}. As an application, we study the functoriality of Fukaya categories with respect to the deletion and restriction of hyperplane arrangements in Section \ref{functoriality-section}.

\subsection*{Acknowledgements}
We thank Sheel Ganatra for numerous discussions on his works \cite{ganatra2020covariantly}, \cite{ganatra2018sectorial}. We also thank Aaron Lauda and Nick Sheridan for helpful communications. We thank the anonymous referees for carefully reading our paper and providing useful suggestions.

We thank Heilbronn focused research grants for the support on this project. S.L. was supported by the Leverhulme Prize award from the Leverhulme Trust and the Enhancement Award from the Royal Society University Research Fellowship, both awarded to Nick Sheridan. Y.L. is partially supported by Simons grant \#385571. S.-Y. L. was partly supported by NSF Career Award \# DMS-2048055, awarded to Sheel Ganatra. C.M. was partly supported by the Royal Society University Research Fellowship while working on this project.

\section{Recollement of Sector Theory}\label{Topology-of-Hyperplane-Arrangements}

In this section, we recall the definitions and some important structural results for Fukaya categories of Weinstein manifolds.

\subsection{Liouville Sectors.} Recall that a \emph{symplectic manifold} is a pair $(X,\omega)$, where $X$ is an even dimensional real manifold and $\omega\in\Omega^2 (X)$ is a closed nondegenerate $2$-form on $X$. In this paper, we will study a particular class of non-compact symplectic manifolds.

\begin{definition}\label{Prelim:Liouville-manifolds}
 A symplectic manifold $(X,\omega)$ is a \emph{Liouville manifold} if $\omega$ is exact and there exists a vector field $Z$ on $X$ such that 
    \begin{enumerate}[\indent (i)]
       \item $Z$ is complete;
       \item $\mathcal{L}_Z\omega =\omega$, or equivalently, $Z$ is dual to a primitive $\lambda$ of $\omega$;
       \item there exists a (codimension $0$) compact symplectic submanifold with boundary $(X_0 ,\omega)\subset(X,\omega)$ such that $Z$ is outward pointing along $\partial X_0$ and $Z$ has no critical points outside $X_0$.
    \end{enumerate}
We call this vector field $Z$ a \emph{Liouville vector field} and the pair $(\omega ,Z)$ a \emph{Liouville structure} on $X$. The primitive $\lambda$ of $\omega$ is called a \emph{Liouville $1$-form}.
\end{definition}

In this paper, all Liouville manifolds are assumed to have vanishing first Chern class, i.e. $c_1(X)=0$.

The compact symplectic submanifold $(X_0,\omega)$ in the item (iii) is called a \emph{Liouville domain}. We can also construct a Liouville manifold $X$ out of the given Liouville domain $(X_0,\omega)$ by taking its \emph{completion}:
\[
    X =X_0\cup_{\partial X_0}\partial X_0\times [1,+\infty),
\]
where the Liouville structure on $\partial X_0\times [1,+\infty)_r$ is $\left(\omega =d(r\lambda\vert_{\partial X_0}),Z=r\partial_r\right)$.

\begin{definition}
Let $(X,\omega, Z)$ be a Liouville manifold. We call $X$ \emph{Weinstein} if there exists an exhausting Morse function $\phi\colon X\to\mathbb{R}$ that is Lyapunov for $Z$. The triple  $(\omega ,Z,\phi)$ is called a \emph{Weinstein structure} on $X$.
\end{definition} 

\begin{definition}
 A \emph{Stein manifold} is a complex manifold $(X,J)$ which admits a proper holomorphic embedding $i\colon X\to\mathbb{C}^N$ for some $N$.
\end{definition}
 Given a Stein manifold $(X,J)$ equipped with an embedding $i\colon X\hookrightarrow\mathbb{C}^N$, one can consider the squared distance function  $\phi_{\mathbb{C}^N} (z)=\displaystyle\sum_{i=1}^N\vert z_i\vert^2$. The pull-back function $\phi\coloneqq\phi_{\mathbb{C}^{N}}\circ i$ defines a pluri-subharmonic function on $X$, hence also a symplectic form $\omega_{\phi}=-dd^\mathbb{C}\phi$ and a Liouville vector field $\nabla\phi$ dual to $-d^\mathbb{C}\phi$. This assignment defines a map from the category of Stein manifolds with a fixed potential $\phi$ to the category of Weinstein manifolds with the same potential $\phi$. It is shown in \cite[Theorem 1.1]{cieliebak2012stein} that this map is a weak homotopy equivalence.

A submanifold $S\subseteq X$ is \emph{coisotropic} if $\dim S\geq\frac{1}{2}\dim X$ and for every $s\in S$, $\ker\omega\vert_{T_sS}$ has constant positive rank. It follows directly that every real hypersurface of $X$ is coisotropic. If $S\subseteq X$ is coisotropic, write $\mathcal{D}_S\coloneqq\ker\omega\vert_S\subseteq TS$ for the \emph{characteristic foliation} of $S$.

\begin{definition}\label{Prelim:Liouville-sectors}
Let $(X,\omega,Z)$ be a Liouville manifold. A real oriented hypersurface $S\subseteq X$ is called \emph{sectorial} if the following conditions are satisfied:
    \begin{enumerate}[\indent (i)]
        \item $Z$ is tangent to $S$ near infinity;
        \item There exists a $Z$-invariant neighbourhood $\Nbd^Z (S)$ of $S\subset X$ and functions
        \begin{equation} \nonumber
        R,I\colon\Nbd^Z (S)\to\mathbb{R}
        \end{equation}
        such that $R^{-1}(0)=S$ and 
        \begin{equation}\nonumber
        (R,I)\colon\Nbd^Z (S)\to\mathbb{C}_{-\varepsilon <\rRe<\varepsilon}
        \end{equation}
        is a trivial symplectic fibration onto the open strip in $\mathbb{C}$ consisting of complex numbers with real part constrained in $(-\varepsilon,\varepsilon)$, where $\mathbb{C}_{-\varepsilon <\rRe<\varepsilon}$ is equipped with the standard symplectic form restricted from $\mathbb{C}$;
        \item Under the above decomposition $\Nbd^Z (S)\xrightarrow{\simeq}(R,I)^{-1} (0)\times\mathbb{C}_{-\varepsilon <\rRe<\varepsilon}$, the Liouville $1$-form $\lambda_X$ on $X$ is sent to $\lambda_X\vert_{(R,I)^{-1} (0)} +\lambda_{\mathbb{C}}^{\alpha}+df$ for some $0<\alpha<1$, where 
\begin{align}
    \lambda_{\mathbb{C}}^{\alpha} =-\alpha ydx +(1-\alpha)xdy, \label{eq:lambdaa}
\end{align}
 and $f$ is some real-valued function with compact support.
    \end{enumerate}
    A \emph{Liouville sector} $Y$ is the closure of one connected component of $X\backslash\displaystyle\bigsqcup_{i=1}^k S_i$, where the $S_i$'s are all disjoint sectorial hypersurfaces.
\end{definition}

Alternatively, we can describe Liouville sectors in terms of pairs of Liouville manifolds \cite{eliashberg2017weinstein}. A \emph{Liouville pair} is a pair $(X,F)$ consisting of a Liouville domain $(X,\omega,Z)$ and a Liouville hypersurface $(F,\omega\vert_F, Z_F)$ in $\partial X$. We say the pair is \emph{Weinstein}, if in addition $X$ is a Weinstein domain and $F\subseteq \partial X$ is Weinstein with respect to the induced Weinstein structure of $X$. Ganatra-Pardon-Shende \cite{ganatra2020covariantly} showed that we have a canonical way to construct a Liouville sector out of a Liouville pair, so that $F$ arises as one of the fibers $(R,I)^{-1} (0)$ for some chosen $R$- and $I$-functions. Such a hypersurface $F\subseteq \partial X$, or more generally a boundary component of a Liouville sector, is usually referred to as a \emph{stop} of the sector.

\begin{remark}
This definition is different from the one in \cite{ganatra2020covariantly}, but by Proposition 2.25 in \textit{loc.cit.}, they are equivalent as definitions for Liouville sectors. In other words, it is not hard to show that our definition gives a Liouville sector, and conversely, for any given Liouville sector $X$ as in \cite{ganatra2020covariantly}, there is a convex completion of $X$ into a Liouville manifold $\widetilde{X}$, so that $\partial X\hookrightarrow\widetilde{X}$ is a sectorial hypersurface in $\widetilde{X}$.
\end{remark}

\begin{definition}[cf. \cite{ganatra2018sectorial} Definition 1.32]\label{prelim:cornered-Liouville-sectors}
    Let $S_1,\dotsb ,S_k$ be a collection of transversely intersecting sectorial hypersurfaces in $X$. The collection $\{S_1,\dotsb ,S_k\}$ is \emph{sectorial} if there is a collection of pairs of real-valued functions $\{(R_i ,I_i)\}_{1\leq i\leq k}$ together with positive real numbers $(\varepsilon_i)_{1\leq i\leq k}$ such that $R_i^{-1}(0)=S_i$ and:
    \begin{enumerate}[\indent (i)]
        \item The intersections $\displaystyle\bigcap_{i\in J} S_i\eqqcolon S_J$ for all $J\subseteq\{1,\dotsb ,k\}$ are either empty or coisotropic;
        \item When $S_J$ is non-empty, there exists a $Z$-invariant neighbourhood $\Nbd^Z (S_J)$ of $S_J$ so that the functions $R_i$ and $I_i$ for $i\in J$ define a trivial symplectic fibration
        \begin{equation}\label{eq:decomp}
        \left((R_i ,I_i)_{i\in J}\right):\Nbd^Z (S_J)\rightarrow\mathbb{C}_{-\varepsilon_i <\rRe<\varepsilon_i}^{|J|};
        \end{equation}
        \item Under the decomposition (\ref{eq:decomp}), the Liouville $1$-form $\lambda_X$ of $X$ projects to
        \begin{equation}\nonumber
        \lambda_X\vert_{\left((R_i ,I_i)_{i\in J}\right)^{-1} (0)} +\sum_{i\in J}\lambda_{\mathbb{C}}^{\alpha_i} +df
        \end{equation} 
        for some finite collection of positive numbers $0<\alpha_i<1$ and a real-valued function $f$ with compact support.
    \end{enumerate}
    Similar to Definition \ref{Prelim:Liouville-sectors}, we define a \emph{cornered Liouville sector} to be the closure of one connected component of the complement $\displaystyle X\backslash\bigcup_{i=1}^k S_i$, where $\{S_i\}_{1\leq i\leq k}$ is a transversely intersecting sectorial collection of real hypersurfaces in $X$.
\end{definition}

Given a Liouville sector $(X,\omega ,Z)$ and a submanifold $S\subseteq X$ which is cylindrical near infinity, we define the \emph{boundary at infinity} of $S$ to be the manifold 
\[
    \partial_{\infty} S\coloneqq\left\{[x]\middle\vert x\in S\setminus X_0,\ x\sim y\Leftrightarrow\exists t\in\mathbb{R},\ \varphi^t_Z (x)=y\right\},
\]
where $X_0\hookrightarrow X$ is some subdomain such that $Z$ has no zeroes outside $X_0$ (c.f. Definition \ref{Prelim:Liouville-manifolds}) and $\varphi_Z^t$ is the time-$t$ flow of the Liouville vector field $Z$. In particular, $\partial_{\infty} X$ is a contact manifold with boundary.

\subsection{Wrapped Fukaya Categories.}\label{section:pr-wrap}
In this subsection, we briefly recall the definition of wrapped Fukaya categories associated to a cornered Liouville sector, following \cite{ganatra2020covariantly} and \cite{ganatra2018sectorial}. Given a Liouville sector $(X,\omega ,Z)$, recall that a \emph{Lagrangian submanifold} of $X$ is a submanifold $L\subseteq X$ of half dimension such that $\omega\vert_L=0$.

\begin{definition}
    A Lagrangian submanifold $L\subseteq X$ is \emph{exact} if the restrction of the primitive $\lambda\vert_L$ is exact on $L$. It is \emph{cylindrical near infinity} if $Z$ is tangent to $L$ near infinity.
\end{definition}

To any two transversely intersecting exact cylindrical Lagrangian submanifolds $L, K\subseteq X$ equipped with $\mathit{Spin}$ structures, we can associate a $\mathbb{Z}$-graded free $\mathbb{Z}$-module called the Floer cochain complex
\[
    \mathit{CF}^{\ast} (L,K)\coloneqq\bigoplus_{p\in L\cap K}\mathfrak{o}_{p},
\]
where $\mathfrak{o}_{p}$ is the orientation line associated to $p\in L\cap K$.  $\mathit{CF}^{\ast} (L,K)$ admits a differential defined by an oriented count of holomorphic strips with boundary on $L\cap K$, whose cohomology gives the Lagrangian Floer cohomology group $\mathit{HF}^{\ast} (L,K)$.

\begin{definition}
An isotopy $\{L_t\}$ of exact cylindrical Lagrangian submanifolds $L_t\subseteq X$ is called \emph{positive} if for some, equivalently any, contact form $\alpha$ on $\partial_{\infty} X$, we have $\displaystyle\alpha\left(\partial_t\partial_{\infty} L_t\right)>0$.
\end{definition}

A positive isotopy $(L_0\leadsto L_1)\coloneqq\{L_t\}_{0\leq t\leq 1}$ of Lagrangian submanifolds $L_t$ such that $L_0, L_1$ are transversal to $K$ induces a chain map
\[
    \mathit{CF}^{\ast} (L_0 ,K)\to\mathit{CF}^{\ast} (L_1,K)
\]
between Floer cochain complexes. We define the \emph{wrapped Floer cohomology} associated to $L,K$ by the homotopy colimit over positive isotopies
\[
    \mathit{HW}^{\ast} (L,K)\coloneqq\varinjlim_{L\leadsto L'} \mathit{HF}^{\ast} (L',K).
\]

Suppose we have a sequence of positive isotopies $L_0 \leadsto L_1 \leadsto \dots$ and let $(L_t)_{t \in \mathbb{R}_{\ge 0}}$ be the concatenation of them.
Ganatra-Pardon-Shende \cite[Lemma 3.29]{ganatra2020covariantly} show that the sequence is cofinal if 
\begin{align}\label{eq:cofinal}
\int_0^{\infty} \min_{\partial_{\infty} L_t} \alpha\left(\partial_t\partial_{\infty} L_t\right)=\infty.
\end{align}
In other words, when \eqref{eq:cofinal} is satisfied, we can compute the wrapped Floer cohomology as
\[
    \mathit{HW}^{\ast} (L_0,K) = \varinjlim_{n} \mathit{HF}^{\ast} (L_n,K).
\]

For a smooth function $H:[0,1] \times X\to\mathbb{R}$, we denote $H(t, \cdot)$ by $H_t$.
The \emph{Hamiltonian vector field} $X_H=(X_{H_t})_{t \in [0,1]}$ is the unique $t$-dependent vector field on $X$ so that $\omega (-,X_{H_t}) =dH_t(-)$. We denote the time $1$ map of the flow by $\phi_H$. 
As positive isotopies are in particular exact, for each $n \in \mathbb{N}$, we can write $L_n$ as $\phi_{H_n}(L_0)$ for some Hamiltonian functions $H_n \in C^{\infty}([0,1] \times X)$.
The generators of $\mathit{CF}^{\ast} (L_n,K)$ can then be identified with time-$1$ Hamiltonian chords of $X_{H_n}$ from $L_0$ to $K$.
Moreover, it is well-known that for suitably defined $\omega$-tamed almost complex structures, there is a bijective correspondence between finite energy pseudo-holomorphic strips with boundary on $L_n, K$ and finite energy solutions of the $X_{H_n}$-perturbed Cauchy-Riemann equation, so one can define a chain complex  $\mathit{CF}^{\ast} (L_0,K;H_n)$ using Hamiltonian chords as generators and there is a canonical isomorphism
$\mathit{CF}^{\ast} (L_n,K) \cong \mathit{CF}^{\ast} (L_0,K;H_n)$.
We will use these identifications interchangably when we compute the wrapped Fukaya cohomology (see Section \ref{sec:computation}).

The \emph{wrapped Fukaya category} $\mathcal{W}(X)$ of a Liouville sector $X$ is an $A_{\infty}$-category first introduced by Auroux \cite{Auroux-H}, and later rigorously defined in \cite[Definition 2.20, 2.32]{ganatra2018sectorial} and \cite{ZS}. The objects of $\mathcal{W}(X)$ are exact cylindrical Lagrangian submanifolds $L\subseteq X$ equipped with $\mathit{Spin}$ structures and the morphism space between $L$ and $K$ is the localization of 
$\mathit{CF}^{\ast} (L,K)$ along continuation elements. In particular, the cohomology of the morphism space is $\mathit{HW}^{\ast} (L,K)$.

The $A_\infty$-category $\mathcal{W}(X)$ becomes easier to study from an algebraic perspective if we pass to its enlargement $\mathcal{W}(X)^\mathit{perf}$, the $A_\infty$-category of perfect modules over $\mathcal{W}(X)$. $\mathcal{W}(X)^\mathit{perf}$ is a triangulated $A_\infty$-category and there is a fully faithful embedding $\mathcal{W}(X)\hookrightarrow\mathcal{W}(X)^\mathit{perf}$. We write $D^\mathit{perf}\mathcal{W}(X)=H^0\left(\mathcal{W}(X)^\mathit{perf}\right)$ for the derived category of $\mathcal{W}(X)$. This is a genuine triangulated category.

\subsection{Generation of Wrapped Fukaya Categories.} 
In this section we recall the generation results for wrapped Fukaya categories of Weinstein sectors proved in \cite{chantraine2017geometric,ganatra2018sectorial}. A Liouville sector $(X,\omega ,Z)$ is said to be \emph{Weinstein} if the convex completion of $X$ is Weinstein and there exists a symplectic decomposition $\Nbd^Z_X (\partial X)\simeq F\times T^{\ast} (-\varepsilon,0]$ such that $F$ is a Weinstein hypersurface in $\partial X$. Similarly, a cornered Liouville sector $(X,\omega ,Z)$ is Weinstein if each sectorial corner $\partial_I X$, which is the intersection of $|I|=k$ sectorial boundaries, decomposes into a product of $T^{\ast} (-\varepsilon ,0]^k$ and a Weinstein submanifold $F_I\subset\partial_IX$. 

The wrapped Fukaya categories of Weinstein manifolds behave nicely since the Lagrangian cocores provide a natural collection of generating objects.

\begin{proposition} [\cite{cieliebak2012stein}, Proposition 11.9]
If $(X,\omega ,Z,\phi)$ is a Weinstein manifold, then all the unstable manifolds of the pair $(\phi ,Z)$ are coisotropic. In particular, if the index of a critical point of $\phi$ is $n=\frac{1}{2}\dim X$, then a corresponding unstable manifold is Lagrangian.
\end{proposition}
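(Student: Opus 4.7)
The plan is to work locally near each critical point of $\phi$ and then propagate globally using the Liouville flow. At a critical point $p$, the vector field $Z$ vanishes, so we have a well-defined linearization $A:=DZ_p$ acting on $T_pX$, and evaluating $\mathcal{L}_Z\omega=\omega$ at $p$ yields the algebraic identity $\omega_p(Av,w)+\omega_p(v,Aw)=\omega_p(v,w)$. Decomposing into generalized eigenvectors with eigenvalues $\lambda,\mu$, I would deduce $(\lambda+\mu-1)\omega_p(v,w)=0$; in particular the negative eigenspace $E^-$ of $A$ is isotropic (since $\lambda+\mu<0\neq 1$ for $\lambda,\mu<0$). The Weinstein assumption that every Morse index is at most $n$ therefore manifests itself as $\dim E^-\leq n$.

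Next, after reducing $Z$ to its standard Weinstein normal form near $p$ (a standard construction from \cite{cieliebak2012stein}), the spectrum of $A$ is organized into $n$ $\omega$-dual pairs $(\lambda_i,1-\lambda_i)$: the stable directions assemble into $E^-$ with $\dim E^-=k$ equal to the Morse index, and the unstable directions assemble into $E^+$ with $\dim E^+=2n-k$, which is the tangent space to $W^u(p)$ at $p$. A direct bookkeeping with the Lie identity then shows that $\omega|_{E^+}$ has rank exactly $2(n-k)$: the ``saddle-unstable'' vectors (eigenvalue $>1$) pair non-degenerately only with $E^-$ and contribute nothing to $\omega|_{E^+}$, while the ``source'' symplectic pairs inside $E^+$ (with eigenvalues in $(0,1)$ summing to $1$) carry the full rank. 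Since $\ker(\omega|_{E^+})$ has dimension $k=\mathrm{codim}_X W^u(p)$ and coincides with $(E^+)^\omega$, the subspace $E^+\subset T_pX$ is coisotropic.

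To conclude, the unstable manifold $W^u(p)$ is a union of $Z$-orbits emanating from $p$, and the scaling relation $(\phi^t_Z)^*\omega=e^t\omega$ shows that the rank of $\omega|_{W^u}$ is constant along $Z$-orbits, while the inclusion $(T_xW^u)^\omega\subseteq T_xW^u$ established at $x=p$ is transported by $d\phi^t_Z$ to every point of $W^u(p)$. This gives the coisotropy of $W^u(p)$ globally. For a critical point of index exactly $n$, $\dim W^u(p)=n$, and a coisotropic submanifold of half dimension is automatically Lagrangian, completing the ``in particular'' statement.

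The main obstacle is the reduction in the second step, namely bringing $Z$ into Weinstein normal form so that the $(\lambda,1-\lambda)$ pairing structure becomes manifest on a neighborhood of each critical point; this requires a local symplectic change of coordinates (possibly combined with a small deformation of the Weinstein structure, which does not alter the topology of $W^u$) to diagonalize $A$ with the asserted saddle-plus-source eigenvalue configuration. Once the normal form is in hand, the remainder of the argument is a clean combination of the Lie derivative identity at $p$ and the scaling property of the Liouville flow.
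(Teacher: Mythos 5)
The paper does not prove this proposition; it cites it directly from Cieliebak--Eliashberg (Proposition 11.9 of \cite{cieliebak2012stein}), so there is no ``paper's own proof'' to compare against. The review below is therefore against correctness and against the standard argument in that reference.

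Your linear algebra at the critical point is sound: linearizing $\mathcal{L}_Z\omega=\omega$ gives that $A-\tfrac12\mathrm{Id}\in\mathfrak{sp}(\omega_p)$, hence the spectrum pairs as $(\lambda,1-\lambda)$, so each pair has either both members with positive real part (the ``source'' pairs, which pair nondegenerately inside $E^+$) or one member with negative real part and the partner with real part $>1$ (the ``saddle'' pairs, where the $>1$-part lies in $E^+$ but pairs only with $E^-$). This correctly shows $\ker(\omega_p|_{E^+})=(E^+)^{\omega_p}$ has dimension $k=\mathrm{codim}\,E^+$, so $E^+=T_pW^u(p)$ is coisotropic. (One small caveat: you phrase things as though the eigenvalues are real and you can immediately pass to a ``Weinstein normal form''; the pointwise linear-algebra conclusion at $T_pX$ actually needs no normal form at all and works for complex eigenvalues with ``Re'' inserted throughout.)

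The gap is in the propagation step. You write that the coisotropy ``established at $x=p$ is transported by $d\phi^t_Z$ to every point of $W^u(p)$.'' But $p$ is a fixed point of the Liouville flow, so $d\phi^t_Z$ transports $T_pW^u$ only to $T_pW^u$; it never reaches $T_xW^u$ for $x\neq p$. What the conformal scaling $(\phi^t_Z)^*\omega=e^t\omega$ actually gives is that coisotropy of $T_{y}W^u$ is constant along each $Z$-orbit in $W^u(p)\setminus\{p\}$, which reduces the problem to a punctured neighborhood of $p$ --- but does not resolve it there. Lower semicontinuity of rank only bounds $\mathrm{rank}(\omega|_{T_yW^u})$ from below by $2(n-k)$ for $y$ near $p$, which is the wrong direction: coisotropy is the statement that the rank attains that minimum, and for index $k\geq 2$ the dimension count alone does not force it. Your fallback --- ``a small deformation of the Weinstein structure, which does not alter the topology of $W^u$'' --- does not close the gap either, because a Weinstein homotopy changes $\omega$, so coisotropy of the deformed $W^u$ with respect to the deformed form does not imply coisotropy of the original $W^u$ with respect to the original $\omega$. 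A correct finish is a genuinely dynamical argument: for $v\in(T_xW^u)^{\omega}$, the backward flow gives $d\phi^{-t}_Z(v)\in\bigl(T_{\phi^{-t}_Z(x)}W^u\bigr)^{\omega}$; as $t\to\infty$ the base point converges to $p$ and $T_{\phi^{-t}_Z(x)}W^u\to E^+$, so any subsequential limit of the normalized vectors lies in $(E^+)^\omega\subset E^+$; but if $v\notin T_xW^u$ the normalized backward orbit must limit into $E^-$, and $E^-\cap E^+=0$, a contradiction. This is essentially the argument Cieliebak--Eliashberg carry out via their local analysis near the critical point, and it is the piece your write-up is missing.
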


\begin{definition}
We call an unstable manifold of a Weinstein manifold $(X^{2n},\omega,Z,\phi)$ with index $n$ a \emph{cocore} of $X$.
\end{definition}

\begin{theorem}[\cite{chantraine2017geometric,ganatra2018sectorial}]
The wrapped Fukaya category of a Weinstein manifold $(X,\omega ,Z,\phi)$ is generated by its cocores.
\end{theorem}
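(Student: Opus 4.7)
The plan is to combine a Weinstein handle decomposition of $X$ with the stop-removal formalism of Ganatra-Pardon-Shende, using Cieliebak's vanishing theorem for subcritical Weinstein manifolds as an auxiliary input. By Cieliebak-Eliashberg, we may assume $X$ is obtained from a subcritical Weinstein subdomain $X_{\mathrm{sub}}$ by attaching finitely many critical handles $H_1,\ldots,H_k$ of index $n=\tfrac{1}{2}\dim X$ along a Legendrian link $\Lambda=\Lambda_1\sqcup\cdots\sqcup\Lambda_k\subset\partial_\infty X_{\mathrm{sub}}$. Each handle $H_i$ carries a Lagrangian cocore disk $D_i$ whose Legendrian boundary in $\partial_\infty X$ is a small unknot linking the image of $\Lambda_i$ in the ideal contact boundary.

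Next, I would realize $X$ as the stop-removal of a suitable partially wrapped model supported on $X_{\mathrm{sub}}$: the critical cores correspond, after reversing the handle attachment, to Legendrian stops in $\partial_\infty X_{\mathrm{sub}}$, and ``removing'' these stops geometrically recovers $\partial_\infty X$. The stop-removal theorem of \cite{ganatra2020covariantly} then yields a localization
\[
\mathcal{W}(X)\simeq\mathcal{W}(X_{\mathrm{sub}},\Lambda)/\langle D_1,\ldots,D_k\rangle,
\]
where the quotient is by the triangulated subcategory generated by the linking disks of $\Lambda$, which coincide with the cocores $D_i$ under the standard geometric identification between a Weinstein handle cocore and the linking disk of its attaching sphere.

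The final ingredient is Cieliebak's vanishing: $\mathcal{W}(X_{\mathrm{sub}})\simeq 0$ for any subcritical Weinstein manifold, because every exact cylindrical Lagrangian can be displaced from itself by a Hamiltonian modification of the Liouville flow along an extra subcritical direction, forcing all wrapped Floer complexes to be acyclic. Combined with the general fact that $\mathcal{W}(X_{\mathrm{sub}},\Lambda)$ is generated by $\mathcal{W}(X_{\mathrm{sub}})$ together with the linking disks of the stops in $\Lambda$, the vanishing above forces $\mathcal{W}(X_{\mathrm{sub}},\Lambda)$ itself to be generated by the linking disks alone. Passing to the Verdier quotient, the images of these linking disks---namely the cocores $\{D_1,\ldots,D_k\}$---generate $\mathcal{W}(X)$.

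The main obstacle is supplying the two deep technical black boxes: the Ganatra-Pardon-Shende stop-removal exact triangle relating partially wrapped and fully wrapped Fukaya categories, which is the technical heart of \cite{ganatra2018sectorial,ganatra2020covariantly}, and the sectorial version of Cieliebak's vanishing. The remaining geometric step, namely matching cocores of critical handles with linking disks of their Legendrian attaching spheres, is routine via the standard Weinstein handle model. Given these inputs, the generation statement follows essentially formally from the handle decomposition and the quotient formula above.
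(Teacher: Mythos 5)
Your stop-removal formula is applied in the wrong direction and, taken literally, would prove that $\mathcal{W}(X)$ vanishes. Removing the Legendrian stop $\Lambda$ from the stopped Liouville manifold $(X_{\mathrm{sub}},\Lambda)$ does not change the underlying sector: the stop-removal localization of \cite{ganatra2018sectorial,ganatra2020covariantly} gives
\[
\mathcal{W}(X_{\mathrm{sub}}) \simeq \mathcal{W}(X_{\mathrm{sub}},\Lambda)\big/\langle D_1,\ldots,D_k\rangle,
\]
and since $X_{\mathrm{sub}}$ is subcritical both sides are zero. This carries no information about $\mathcal{W}(X)$. Moreover, even granting your displayed formula at face value, your last step fails on its own terms: in a Verdier quotient by $\langle D_1,\ldots,D_k\rangle$ the images of the $D_i$ are zero objects, so ``their images generate the quotient'' is either vacuous or false; it certainly does not yield cocores generating $\mathcal{W}(X)$.

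What actually connects $\mathcal{W}(X_{\mathrm{sub}},\Lambda)$ to $\mathcal{W}(X)$ is not stop removal but the Weinstein handle-attachment result of \cite{ganatra2018sectorial}: the stopped inclusion $(X_{\mathrm{sub}},\Lambda)\hookrightarrow X$ induces a quasi-equivalence $\mathcal{W}(X_{\mathrm{sub}},\Lambda)\xrightarrow{\sim}\mathcal{W}(X)$ sending the linking disk of each component $\Lambda_i$ to the cocore of the handle $H_i$ attached along it. Replacing your quotient by this equivalence repairs the skeleton of the argument: subcritical vanishing forces $\mathcal{W}(X_{\mathrm{sub}},\Lambda)$ to be generated by linking disks, and transporting across the handle-attachment equivalence identifies them with the cocores of $X$. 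You would still need to justify the claim that the stopped subcritical category $\mathcal{W}(X_{\mathrm{sub}},\Lambda)$ is generated by $\mathcal{W}(X_{\mathrm{sub}})$ together with linking disks without circularity, since that is exactly a stopped instance of the generation theorem you are trying to establish; in \cite{ganatra2018sectorial} this is handled by a careful induction and by wrapping exact triangles, and it is a genuine piece of work rather than a ``general fact'' one may quote.
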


In particular, wrapped Fukaya categories of Weinstein manifolds are finitely generated.

We then consider the more general case of a cornered Weinstein sector. In this case, each corner stratum is a product $\Nbd^Z_X (\partial_I X)\simeq F_I\times T^{\ast} [-\varepsilon ,0]^{\vert I\vert}$ for some $I\subseteq\{1,\dotsb ,n\}$, hence for $L\subseteq F_I$ a cocore of $F_I$ and $\gamma\subseteq T^{\ast} [-\varepsilon ,0]^{\vert I\vert}$ a product of cotangent fibers, we get an object $L\times\gamma$ in the wrapped Fukaya category of $X$, which we call a \emph{linking disk} associated to the corner $\partial_I X$.

\begin{theorem}[\cite{chantraine2017geometric,ganatra2018sectorial}]\label{Prelim:Generation-for-Weinstein-sectors}
 The wrapped Fukaya category of a cornered Weinstein sector $(X,\omega ,Z,\phi)$ is generated by its cocores and linking disks associated to all its corner strata.
\end{theorem}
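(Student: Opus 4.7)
The plan is to combine sectorial descent with a K\"unneth computation at each corner stratum, inducting on the number $k$ of sectorial hypersurfaces cutting out $X$.

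First, I would handle the base case $k=0$, i.e.\ a Weinstein manifold with no sectorial boundary. The Weinstein structure $(\omega, Z, \phi)$ provides a handle decomposition in which each index-$n$ critical point contributes a Lagrangian cocore. The key Floer-theoretic input, due independently to Chantraine--Dimitroglou Rizell--Ghiggini--Golovko \cite{chantraine2017geometric} and Ganatra--Pardon--Shende \cite{ganatra2018sectorial}, is that wrapping an arbitrary exact cylindrical Lagrangian past an index-$n$ handle produces continuation elements that are detected by the corresponding cocore; iterating this yields a twisted complex of cocores quasi-isomorphic to the given Lagrangian.

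Second, I would treat the case $k=1$ of a single sectorial boundary $S$ with neighborhood $F \times T^*[-\varepsilon, 0]$, where $F$ is Weinstein. Stop removal realises the Weinstein completion $\widehat{X}$ (obtained by filling in $S$) as the localisation of $\mathcal{W}(X)$ at the subcategory generated by the linking disks to $S$. Combining this localisation sequence with the base case applied to $\widehat{X}$ shows that the cocores of $X$ together with the linking disks $L \times \gamma$, where $L$ ranges over cocores of $F$ and $\gamma$ is a cotangent fibre in $T^*[-\varepsilon, 0]$, generate $\mathcal{W}(X)$.

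Third, for the general case $k \geq 1$, I would apply sectorial descent (Theorem \ref{Prelim:sectorial-descent}) to a cover of $X$ by open neighborhoods of its corner strata $\partial_I X$. Each such neighborhood splits as $F_I \times T^*[-\varepsilon, 0]^{|I|}$, and its wrapped Fukaya category is computed by the K\"unneth formula as $\mathcal{W}(F_I) \otimes \bigotimes_{i \in I} \mathcal{W}(T^*[-\varepsilon, 0])$. Since $F_I$ is itself a cornered Weinstein sector of lower depth, the inductive hypothesis supplies generators for $\mathcal{W}(F_I)$; taking products with cotangent fibres yields precisely the cocores and linking disks of $X$ at corners $\partial_J X$ for $J \supseteq I$. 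Sectorial descent then assembles these local generators into a generating set for $\mathcal{W}(X)$.

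The main obstacle is the compatibility of sectorial descent with the K\"unneth computations at corners of different depths: one must verify that linking disks attached at deeper corners agree, up to quasi-isomorphism, with those inherited from shallower corners through the overlap of cover pieces, so that the collection indexed by all corner strata is non-redundant and no extra generators are required. This compatibility is the technical heart of the sectorial cover formalism in \cite{ganatra2018sectorial}, and once it is in place the generation statement follows formally from the descent theorem and the inductive hypothesis.
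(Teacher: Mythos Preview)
The paper does not give its own proof of this theorem: it is stated as a background result and attributed to \cite{chantraine2017geometric,ganatra2018sectorial}. So there is no ``paper's proof'' to compare against; what follows are comments on your sketch as a standalone argument.

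Your Steps~1 and~2 are essentially correct and match the approach in the cited references. The base case is the main theorem of \cite{chantraine2017geometric} and \cite{ganatra2018sectorial}, and for $k=1$ the stop-removal theorem of \cite{ganatra2018sectorial} indeed exhibits $\mathcal{W}(\widehat{X})$ as the quotient of $\mathcal{W}(X)$ by the linking disks; a standard Verdier-quotient argument then gives that cocores together with linking disks generate $\mathcal{W}(X)$.

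Step~3, however, has a genuine gap. The collection of neighborhoods $\Nbd(\partial_I X)\cong F_I\times T^*[-\varepsilon,0]^{|I|}$ for nonempty $I$ does \emph{not} cover $X$: it misses the interior $X\setminus\bigcup_i\Nbd(\partial_{\{i\}}X)$, which carries all the critical points of $\phi$ and hence all the cocores. If you include $I=\emptyset$ you are including $X$ itself as a cover element, and the descent statement becomes vacuous. Sectorial descent is therefore not the right tool here, and in fact in \cite{ganatra2018sectorial} generation is established independently of (and prior to) descent.

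The fix is simply to iterate your Step~2 argument. Removing one sectorial boundary component $S_i$ from a $k$-cornered sector produces a $(k-1)$-cornered sector; by the stop-removal theorem, $\mathcal{W}(X)$ localizes at the linking disks of $S_i$ to yield $\mathcal{W}(X\setminus S_i)$, which by induction is generated by its cocores and its remaining linking disks. The same Verdier-quotient logic as in Step~2 then shows $\mathcal{W}(X)$ is generated by cocores together with the linking disks of all $k$ boundary components. (One can also proceed, as \cite{ganatra2018sectorial} do, by smoothing corners to reduce directly to the single-stop case.)
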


We review some examples that will become the building blocks in the proof of our generation result (cf. Proposition \ref{sector:GenerationResult}).

\begin{example}\label{Prelim:generation-cotangent-fiber}
In \cite{abouzaid2011cotangent}, the author proves that for the cotangent bundle $T^{\ast}Q$ over a compact manifold $Q$ which is $\mathit{Spin}$, the wrapped Fukaya category $\mathcal{W}(T^\ast Q)$ is generated by a single cotangent fiber. This can be generalized to the case when the manifold $Q$ is compact with corners, in which case the cotangent bundle of $Q$ is a cornered Weinstein sector. In particular, consider the cotangent bundle of an interval, as drawn in Figure \ref{Prelim:cotangent-bundle-of-interval}.

    \begin{figure}[ht]
        \centering
        \begin{tikzpicture}[scale=0.5]
            \draw [red, thick] (-3,1) -- (3,1);
            \draw [red, thick] (-3,-1) -- (3,-1);
            \draw [thick] (0,1) node [above] {$0$} -- (0,-1) node [below] {$1$};
            \draw [green, thick] (-3,0) -- (3,0);
        \end{tikzpicture}
        \caption{The cotangent bundle of $[0,1]$.\label{Prelim:cotangent-bundle-of-interval}}
    \end{figure}
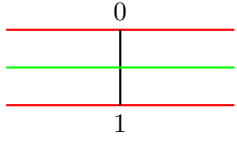

The red boundary components are sectorial boundaries, and the wrapped Fukaya category $\mathcal{W} (T^{\ast} [0,1])$ is generated by the cotangent fiber $L\coloneqq T^{\ast}_0 [0,1]$, which is colored in green in the picture. From this figure, it is not hard to see that $\mathit{CW}^{\ast} (L,L)\cong\mathbb{Z}(0)$, therefore $\mathcal{W} (T^{\ast} [0,1])\simeq\Mod_{\mathbb{Z}}$ is quasi-equivalent to the category of modules over $\mathbb{Z}$. By K\"unneth theorem, it follows that for any cornered Weinstein sector $X$, we have $\mathcal{W} (X\times T^{\ast} [0,1])\simeq\mathcal{W} (X)$.
\end{example}

\begin{example}\label{Prelim:generation-algebraic-torus}
Let $X=\left(\mathbb{C}^{\ast}\right)^d$ be an algebraic torus, which we can regard symplectically as the cotangent bundle $T^{\ast} T^d$ of the real torus $T^d$. By Example \ref{Prelim:generation-cotangent-fiber}, we know that $\mathcal{W} (T^{\ast} T^d)$ is generated by one cotangent fiber, which corresponds to one of the $2^d$ quadrants of the real locus of $(\mathbb{C}^{\ast})^d$. Cutting $(\mathbb{C}^{\ast})^d$ by sectorial hypersurfaces simply adds linking disks associated to these sectorial hypersurfaces (and their intersections) to the generating set.
\end{example}

It is a convenient viewpoint to regard linking disks as generalizations of cocores. To make it precise, we introduce the notion of a relative core.

\begin{definition}
Let $(X,\omega ,Z,\phi )$ be a Weinstein manifold, then the union of the stable manifolds of all critical points of $\phi$ is called the \emph{core} of $X$. For a Weinstein pair $(X,F)$, the \emph{relative core} of $(X,F)$ is the union $\mathfrak{c}_X\cup(\mathfrak{c}_F\times\mathbb{R})$, where $\mathfrak{c}_F$ is the core of $F$ and $\mathfrak{c}_F\times\mathbb{R}\subset X$ consists of points whose positive Liouville flow converge to $\mathfrak{c}_F$.  
\end{definition}

\begin{definition}[\cite{ganatra2018sectorial}]
Let $\mathfrak{c}_{X,F}$ be the relative core of a Weinstein sector $(X,F)$, a \emph{generalized cocore} of $(X,F)$ associated to a component of $\mathfrak{c}_{X,F}$ is a cylindrical exact Lagrangian submanifold intersecting $\mathfrak{c}_{X,F}$ transversely at a single point in this component.
\end{definition}

Theorem \ref{Prelim:Generation-for-Weinstein-sectors} remains valid for generalized cocores: the wrapped Fukaya category of a cornered Weinstein sector $(X,\omega ,Z,\phi)$ is generated by a collection of generalized cocores, one for each component of the relative core.

In particular, instead of picking the standard generator of $T^{\ast} [0,1]$, we can pick any curve $\gamma$ in $T^{\ast} [0,1]$ with a single transverse intersection with the zero section, and the product $\gamma\times L$ for any generating Lagrangian $L$ in $\mathcal{W} (F)$ will give us a linking disk in $\mathcal{W} (X)$ to associated to $F$. Figure \ref{Prelim:generalized-cocore} shows one possible choice of a generalized cocore in $T^\ast[0,1]$.

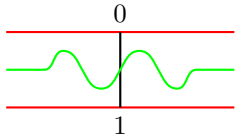
\begin{figure}[ht]
    \centering
    \begin{tikzpicture}[scale=0.5,domain=-3:3,samples=200]
        \draw [red, thick] (-3,1) -- (3,1);
        \draw [red, thick] (-3,-1) -- (3,-1);
        \draw [thick] (0,1) node [above] {$0$} -- (0,-1) node [below] {$1$};
        \draw [green, thick] (-3,0) -- (-2,0) to [in=180,out=0] (-1.5, 0.5) to [in=120,out=0] (-1,0) to [in=180,out=-60] (-0.5,-0.5) to [in=-120,out=0] (0,0) to [in=180,out=60] (0.5,0.5) to [in=120,out=0] (1,0) to [in=180,out=-60] (1.5,-0.5) to [in=180,out=0] (2,0) to (3,0);
    \end{tikzpicture}
    \caption{An example of generalized cocore.}
\label{Prelim:generalized-cocore}
\end{figure}

\subsection{Stopped Inclusion and the Viterbo Restriction.} Let $(X,\lambda)$ and $(X',\lambda')$ be two Liouville sectors. A  \emph{(stopped) inclusion} $i\colon X'\hookrightarrow X$ is a proper map that is a diffeomorphism onto its image, such that $i^{\ast}\lambda =\lambda' +df$ for some compactly supported function $f:X'\rightarrow\mathbb{R}$. Ganatra-Pardon-Shende \cite{ganatra2020covariantly} showed that an inclusion of Liouville sectors induces an $A_\infty$-functor between the wrapped Fukaya categories
\begin{equation}\nonumber
\mathcal{W} (X')\to\mathcal{W} (X),
\end{equation}
which we call a \emph{stopped inclusion functor}. On the other hand, given an embedding of Liouville domains $X_0\hookrightarrow X_1$, one would expect that there is a \emph{Viterbo restriction functor}
\[
\mathcal{W} (X_1)\to\mathcal{W} (X_0)
\]
between wrapped Fukaya categories, which is contravariant with respect to Liouville embedding. The geometric construction, under additional assumptions on cylindrical exact Lagrangians, has been carried out by Abouzaid-Seidel \cite{abouzaid2010open}. Algebraically, the existence of such a restriction functor for $X_0$ a Weinstein domain is proved by Ganatra-Pardon-Shende \cite{ganatra2018sectorial}. The proof in \cite[Section 11.1]{ganatra2018sectorial} suggests that such an $A_\infty$-functor also exists for embeddings of Weinstein sectors $W_0\hookrightarrow W_1$, provided that the Weinstein embedding is a diffeomorphism on connected components of sectorial boundaries. We record this fact in the following lemma.

\begin{lemma}
Let $X_0\hookrightarrow X_1$ be a Weinstein embedding of Weinstein sectors that is a diffeomorphism on the boundary, then there exists a Viterbo restriction functor $\mathcal{W} (X_1)\to\mathcal{W} (X_0)$.
\end{lemma}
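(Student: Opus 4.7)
The plan is to mimic the construction of the Viterbo restriction functor for Weinstein manifolds given in Section 11.1 of \cite{ganatra2018sectorial}, while keeping track of the behaviour along the sectorial boundaries. First I would thicken the Weinstein embedding $X_0\hookrightarrow X_1$ by pushing $\partial X_0\setminus\partial_\text{sect}X_0$ slightly outward along the Liouville flow of $X_1$, producing a sectorial hypersurface $S\subset X_1$ that cuts $X_1$ into two Weinstein sectors: a small Liouville deformation of $X_0$, and a complementary cornered Weinstein sector $Y$ whose core is the ``complement'' of the image of the core of $X_0$ in the core of $X_1$. The hypothesis that the embedding restricts to a diffeomorphism on each connected component of sectorial boundary is what allows $S$ to be arranged transverse to $\partial X_1$, so that $S\cup\partial X_1$ forms a sectorial collection in the sense of Definition \ref{prelim:cornered-Liouville-sectors}.

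Next I would apply sectorial descent (Theorem \ref{Prelim:sectorial-descent}) to the resulting cover $\{X_0,Y\}$ of $X_1$, which identifies $\mathcal{W}(X_1)$ with a homotopy pushout of $\mathcal{W}(X_0)$ and $\mathcal{W}(Y)$ glued along $\mathcal{W}(\Nbd^Z(S))$. By the K\"unneth statement of Example \ref{Prelim:generation-cotangent-fiber}, the overlap category is quasi-equivalent to $\mathcal{W}$ of the fibre of $S$, and the desired Viterbo restriction functor $\mathcal{W}(X_1)\to\mathcal{W}(X_0)$ is defined as the projection from this pushout onto its $\mathcal{W}(X_0)$-factor. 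Equivalently, one realises it as the Lagrangian-localisation of $\mathcal{W}(X_1)$ at the linking disks of $S$, after which the stopped inclusion functor $\mathcal{W}(X_0)\to\mathcal{W}(X_1)$ becomes a quasi-equivalence; this last point is precisely the content of the argument in \cite[Section 11.1]{ganatra2018sectorial}, which only invoked the fact that in a Weinstein setting the pushed-off sector has trivial relative core near $S$.

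The main obstacle I expect is purely geometric: arranging the cut $S$ compatibly with the existing corner structure of $X_1$ so that $\partial_\infty Y$ is itself a cornered Weinstein sector with stops matching those of $X_1$ along $S$. Here the boundary-diffeomorphism hypothesis is indispensable, for it supplies a neighbourhood of $\partial_\text{sect}X_0=\partial_\text{sect}X_1$ of the form $F\times\mathbb{C}_{\rRe\geq 0}$ in which the ambient Liouville flow of $X_1$ splits as the flow on $F$ times the standard flow on $\mathbb{C}_{\rRe\geq 0}$. In this model $S$ can be taken to be $F\times\{\rIm=c\}$ for a suitable curve, which is manifestly sectorial and transverse to $\partial_\text{sect}X_1$. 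Once this local model is established, the remainder of the construction, as well as the verification that the resulting $A_\infty$-functor is independent of the choice of $S$ and of the auxiliary wrapping data, is formal and proceeds exactly as in \cite{ganatra2018sectorial}.
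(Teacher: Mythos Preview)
Your proposal is correct and follows exactly the approach the paper intends: the paper does not actually supply a proof of this lemma, but merely ``records'' it as a fact, remarking that the argument in \cite[Section 11.1]{ganatra2018sectorial} adapts once the embedding is a diffeomorphism on connected components of the sectorial boundary. Your outline is a faithful and more detailed rendering of that adaptation, correctly identifying the role of the boundary-diffeomorphism hypothesis in making the cut $S$ compatible with the existing corner structure so that $S\cup\partial X_1$ is a sectorial collection.

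One minor point of phrasing: describing the functor as ``the projection from this pushout onto its $\mathcal{W}(X_0)$-factor'' is imprecise, since homotopy pushouts of $A_\infty$-categories do not come with projections. Your subsequent reformulation in terms of localisation at the linking disks of $S$ is the correct statement and is what the GPS argument actually produces; I would lead with that and drop the projection language.
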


\subsection{The Orlov Functor.\label{prelim:Orlov-functor}} Recall from Definition \ref{Prelim:Liouville-sectors} that given a Liouville sector $(X,\omega ,Z)$, the boundary $\partial X$ admits a $Z$-invariant neighbourhood $\Nbd^Z (\partial X)\cong F\times T^{\ast} [-\varepsilon ,0]$ for some $\varepsilon >0$, where $F\subset\partial X$ is a Liouville hypersurface. When $F$ happens to be Weinstein, Theorem \ref{Prelim:Generation-for-Weinstein-sectors} tells us that the wrapped Fukaya category $\mathcal{W} (F)$ is generated by its cocores. Example \ref{Prelim:generation-cotangent-fiber} implies that the wrapped Fukaya category of $\Nbd^Z (\partial X)$, regarded as a Weinstein sector, is quasi-equivalent to $\mathcal{W} (F)$. Together with the stopped inclusion $\Nbd^Z (F)\hookrightarrow X$, we obtain an $A_\infty$-functor 
\begin{equation*}
\mathsf{Or}:\mathcal{W} (F)\rightarrow\mathcal{W} (X), 
\end{equation*}
which is usually known as the \emph{Orlov functor}. Note that the images of the Orlov functor is exactly the linking disks associated to the stop $F$.

\subsection{Sectorial Descent.}

Suppose a Liouville sector $X$ is covered by the other two Liouville sectors $X_1$ and $X_2$ such that the intersection $X_{12}:=X_1\cap X_2$ becomes a sectorial hypersurface of both $X_1$ and $X_2$ (see Definition \ref{prelim:cornered-Liouville-sectors}). Then the stopped inclusion functor gives us a diagram of wrapped Fukaya categories 
\begin{equation}\label{eq:diag}
    \begin{tikzcd}
        &\mathcal{W} (X_1)\arrow[rd]&\\ 
        \mathcal{W} (X_{12})\arrow[ru]\arrow[rd]&&\mathcal{W} (X)\\ 
        &\mathcal{W} (X_2)\arrow[ru]
    \end{tikzcd}.
\end{equation}
In general, this diagram is an almost homotopy pushout, and if all the sectors are Weinstein, then it is a genuine homotopy pushout \cite[Theorem 1.28]{ganatra2018sectorial}. More generally, suppose that $X$ can be covered by a collection of cornered Liouville sectors, i.e. $\displaystyle X=\bigcup_{i=1}^k X_i$, the intersection poset of the cornered Liouville sectors $\{X_i\}_{i=1}^k$ gives a diagram of wrapped Fukaya categories, leading to the following is a generalization of (\ref{eq:diag}). 

\begin{theorem}[Sectorial Descent \cite{ganatra2018sectorial}, Theorem 1.35]\label{Prelim:sectorial-descent}
 For any Weinstein sectorial covering $X_1,\dotsb ,X_k$ of a Liouville sector $X$, the induced functor 
    \[
        \hocolim_{\emptyset\neq I\subseteq\{1,\dotsb ,k\}}\mathcal{W}\left(\bigcap_{i\in I} X_i\right)\xrightarrow{\simeq}\mathcal{W} (X) 
    \]
    is a pre-triangulated equivalence.
\end{theorem}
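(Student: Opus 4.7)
The plan is to reduce to a two-sector cover by induction on $k$, and then to prove the two-sector case via a Mayer--Vietoris argument at the level of wrapped Floer cochains. For the inductive step, given a Weinstein sectorial cover $X=X_1\cup\cdots\cup X_k$, set $X':=X_1\cup\cdots\cup X_{k-1}$; by the inductive hypothesis, $\mathcal{W}(X')$ is computed as the homotopy colimit over the restricted intersection poset of $\{X_1,\ldots,X_{k-1}\}$, and the full homotopy colimit over the intersection poset of $\{X_1,\ldots,X_k\}$ reassembles, by a standard cofinality argument, into the two-sector homotopy pushout of $\mathcal{W}(X')$ and $\mathcal{W}(X_k)$ along $\mathcal{W}(X'\cap X_k)$. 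So it suffices to treat $k=2$.

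For $k=2$, write $X_{12}:=X_1\cap X_2$; near $X_{12}$ each $X_i$ splits as $X_{12}^\circ\times T^\ast[-\varepsilon,0]$, where $X_{12}^\circ$ is the Weinstein hypersurface associated with this sectorial boundary. By Example~\ref{Prelim:generation-cotangent-fiber}, the stopped inclusion $\mathcal{W}(X_{12})\hookrightarrow\mathcal{W}(X_i)$ coincides with the Orlov functor $\mathsf{Or}$, sending cocores of $X_{12}$ to the family of linking disks along the sectorial boundary of $X_i$. The geometric content of the theorem is that inside the ambient $\mathcal{W}(X)$ these two families of linking disks, one from each side, become cobordant through the wall $X_{12}$ and are formally identified by the pushout.

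For essential surjectivity I would invoke Theorem~\ref{Prelim:Generation-for-Weinstein-sectors}: $\mathcal{W}(X)$ is generated by cocores and linking disks of the Weinstein sector $X$, and each such generator either lies entirely in one $X_i$ or meets $X_{12}$ cleanly and arises as a product of a cocore or linking disk of $X_{12}^\circ$ with a transverse cotangent fiber, hence is hit by a generator from the pushout. For fully faithfulness, given generators $L\in\mathcal{W}(X_1)$ and $K\in\mathcal{W}(X_2)$, I would choose a cofinal sequence of wrapping Hamiltonians on $X$ compatible with the splitting near $X_{12}$, and filter the resulting time-$1$ Hamiltonian chords from $L$ to $K$ by the number of transverse crossings of $X_{12}$. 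A chord with $n$ crossings breaks into $n{+}1$ pieces alternating between $X_1$ and $X_2$, glued along $n$ short Hamiltonian chords in $X_{12}$, which is precisely the combinatorial shape of the length-$n$ stratum of the two-sided bar complex computing morphisms in the pushout $\mathcal{W}(X_1)\otimes_{\mathcal{W}(X_{12})}\mathcal{W}(X_2)$.

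The main obstacle is upgrading this crossing filtration to a genuine chain-level quasi-isomorphism with the bar complex. This requires transversality arguments for pseudo-holomorphic strips meeting $X_{12}$ cleanly, a neck-stretching argument along $X_{12}$ to realise the bar decomposition on moduli spaces, and control preventing escape through the sectorial boundaries using the product structure $X_{12}^\circ\times T^\ast[-\varepsilon,0]$ guaranteed by Definition~\ref{Prelim:Liouville-sectors}. Once the Mayer--Vietoris quasi-isomorphism is in hand, passing to perfect modules on both sides promotes the chain-level statement to the desired pre-triangulated equivalence.
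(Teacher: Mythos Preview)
The paper does not prove this theorem at all: Theorem~\ref{Prelim:sectorial-descent} is stated as a black-box citation from Ganatra--Pardon--Shende \cite[Theorem~1.35]{ganatra2018sectorial}, with no accompanying argument. There is therefore nothing in the paper to compare your proposal against.

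As a standalone sketch, your reduction to $k=2$ by cofinality is standard and fine, and the essential surjectivity step via Theorem~\ref{Prelim:Generation-for-Weinstein-sectors} is also the right idea. The crossing-number filtration and the proposed identification with the two-sided bar complex, however, is not the route taken in \cite{ganatra2018sectorial}, and carrying it out would require substantial new analysis (neck-stretching along a sectorial hypersurface, gluing, and compactness for broken configurations) that is neither in this paper nor in GPS. The actual GPS argument for full faithfulness of the pushout comparison is more categorical: it leverages the cocore generation result together with the wrapping exact triangle and stop-removal localization, rather than a direct chain-level Mayer--Vietoris on Floer complexes. So while your outline is plausible in spirit, the hard step you flag as ``the main obstacle'' is genuinely hard and is circumvented, not confronted, in the cited proof.
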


    \section{Symplectic Geometry of Hyperplane complements and Generating Lagrangians}\label{sec:symplecticgeometry of hypargg}

In this section, we first introduce our main object of study, a polarized hyperplane arrangement $\mathbb{V}=(V, \eta, \xi)$ (see Definition \ref{def:hyparr}) and establish some basic geometric properties for the complement $M(\mathbb{V})$ of the complexified hyperplanes in $\mathbb{V}$. Then we apply Theorem \ref{Prelim:sectorial-descent} together with the generation results reviewed in Examples \ref{Prelim:generation-cotangent-fiber} and \ref{Prelim:generation-algebraic-torus} to get a generating set of Lagrangians for the partially wrapped Fukaya category $\mathcal{W} (M(\mathbb{V}),\xi)$.
%

\subsection{Hyperplane Arrangements.}\label{sec:hyparr} Let $(V,\eta)$ be a pair of $d$-dimensional subspace $V \subset \mathbb{R}^n$ and an element  $\eta \in \mathbb{R}^n/V$. This determines a hyperplane arrangement of $n$ hyperplanes in $V \cong \mathbb{R}^d$. If we present $V$ as the image of a linear map $\mathbb{R}^d \to \mathbb{R}^n$ induced by an $n \times d$ matrix $A=(a_{ij})$ and $w=(w_1, \dots, w_n) \in \R^n$ respresents $\eta$, then the hyperplanes in the arragement are given by 
\begin{equation}\label{eq:description of hyperplane}
H_{\mathbb{R},i}=\left\{s=(s_1, \dots, s_d) \in \R^d\ \middle|\ \sum_{j=1}^da_{ij}s_j+w_i=0\right\},
\end{equation}
for $i=1, \dots, n$. 
We assume that $(V,\eta)$ is chosen such that $H_{\mathbb{R},i}\neq\emptyset$ for all $i$ so none of the hyperplane is redundant.

The positive half-spaces of $\R^d$ induce co-orientations on the hyperplanes so that we can assign a region (possibly empty) of the arragement a sequence $\alpha$ of signs $\{+,-\}$ of length $n$. More precisely, given $\alpha=(\alpha(1), \dots, \alpha(n)) \in \{+,-\}^n$, the corresponding region (also called chamber) $\Delta_\alpha \subset V + \eta$ is the set of points $s$ with 
\begin{equation*}
\alpha(i)\left(w_i+\sum_{j=1}^da_{ij}s_j \right) \geq 0, \quad 1 \leq i \leq n.
\end{equation*} 
Such a sign sequence $\alpha$ is called \emph{feasible}, if $\Delta_\alpha \neq \emptyset$. We denote the set of feasible sign sequences for $\mathbb{V}$ by $\mathscr{F}(\mathbb{V})$. Given two sign sequences $\alpha$ and $\beta$, define
\begin{equation}\label{eq:d}
d_{\alpha\beta}:=\#\left\{1\leq i\leq n|\alpha(i)\neq\beta(i)\right\}
\end{equation}
to be the number of sign changes required to turn $\alpha$ into $\beta$. Due to the bijective correspondence, sign sequences and chambers will often be mentioned interchangeably.

\begin{definition}\label{def:pol.hyp.arr}
Let $\mathbb{V}=(V, \eta)$ be an arrangement of $n$ hyperplanes as above. A \emph{polarization} is an element $\xi \in V^{\ast} \cong (\mathbb{R}^n)^\ast/V^{\perp}$ such that each element of $(\mathbb{R}^n)^\ast$ representing $\xi$ has at least $d$ non-zero entries. We call the triple $\mathbb{V}=(V,\eta, \xi)$ a \emph{polarized hyperplane arrangement} of $n$ hyperplanes in $V$. 
\end{definition}

\begin{remark}
The condition that $\xi$ has at least $d$ non-zero entries is added here to ensure that $\xi$ is generic enough for our later purposes. This is usually not required in the literature.
\end{remark}

Choose a lift $\tilde{\xi} \in (\mathbb{R}^n)^\ast$ of $\xi$. We say a sign sequence $\alpha \in \{+,-\}^n$ is \emph{bounded} if the affine linear functional $\tilde{\xi}$ on $V+\eta$ is bounded above on $\Delta_\alpha$. Note that this is independent of the choice of the lift $\tilde{\xi}$. Denote the set of bounded sign sequences for $\mathbb{V}$ by $\mathscr{B}(\mathbb{V})$. Let $\mathscr{P}(\mathbb{V}):=\mathscr{B}(\mathbb{V})\cap \mathscr{F}(\mathbb{V})$ be the set of \emph{bounded and feasible} sign sequences.
%
%

\begin{definition}\label{definition:simple}
A hyperplane arrangement $(V,\eta)$ is called \emph{simple} if for every subset of $k$ hyperplanes $H_{\mathbb{R},i_1}, \dots, H_{\mathbb{R},i_k}$, the intersection $\bigcap_{l=1}^k H_{\mathbb{R},i_l}$ is either empty or has codimension $k$.
\end{definition}

In fact, this is equivalent to saying that the intersection of hyperplanes is either transverse or empty (when they are parallel). The following combinatorial lemma will play a crucial role in the proof of our main result. 

\begin{lemma}\label{Prelim:NumberOfCrossings}
Let $\mathbb{V}$ be a simple hyperplane arrangement equipped with a generic polarization $\xi$. Let $\mathcal{H}=\{H_{\mathbb{R},1}, \dots, H_{\mathbb{R},n}\}$ be the corresponding collection of hyperplanes in $\R^d$. Then the number of bounded and feasible chambers is equal to the number of $0$-dimensional strata in the union of all hyperplanes $\bigcup_{i=1}^n H_{\mathbb{R},i}$.
\end{lemma}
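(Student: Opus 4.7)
The plan is to construct a natural bijection between the set $\mathscr{P}(\mathbb{V})$ of bounded-feasible chambers and the set of $0$-dimensional strata of $\bigcup_i H_{\mathbb{R},i}$, from which the claimed equality follows by counting.

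First I would define a map $\Phi:\mathscr{P}(\mathbb{V})\to\{\text{$0$-dim strata}\}$ by sending $\Delta_\alpha$ to the point $v_\alpha\in\Delta_\alpha$ at which the affine-linear functional $\xi$ attains its maximum. Checking that $\Phi$ is well-defined is the main technical step and requires unpacking the genericity hypothesis on $\xi$. The assumption that every representative of $\xi$ in $(\mathbb{R}^n)^\ast$ has at least $d$ non-zero entries translates, in $V^\ast=(\mathbb{R}^n)^\ast/V^\perp$, to the condition that $\xi$ does not lie in the span of the images of any $d-1$ of the basis covectors $[e_i^\ast]$, which are precisely the hyperplane co-normals. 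From this I would extract two consequences: (a) restricted to any positive-dimensional face of $\Delta_\alpha$, the functional $\xi$ is non-constant, so the maximum on $\Delta_\alpha$ (if attained) must be attained at a vertex; and (b) if the co-normals spanned a proper subspace of $V^\ast$, then $\xi$ would fail to be perpendicular to the common kernel of the $a_i$, producing a line contained in every chamber along which $\xi$ is unbounded, contradicting bounded-feasibility --- so the co-normals span $V^\ast$, every chamber has vertices, and the LP maximum is attained. By the simplicity of $\mathbb{V}$, a vertex of $\Delta_\alpha$ is exactly a $0$-dimensional stratum of the arrangement.

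Next I would construct the inverse $\Psi$. Given a $0$-dimensional stratum $v$, simplicity of $\mathbb{V}$ says exactly $d$ hyperplanes $H_{\mathbb{R},i_1},\dots,H_{\mathbb{R},i_d}$ meet transversely at $v$, subdividing a small neighbourhood of $v$ in $V$ into $2^d$ local chambers. Applying the genericity condition to each $(d-1)$-subset $\{i_1,\dots,\hat{i}_k,\dots,i_d\}$ shows that $\xi$ has a non-zero component along every coordinate direction transverse to $H_{\mathbb{R},i_k}$, so exactly one of the $2^d$ local chambers --- the \emph{descending} one, on which $\xi<\xi(v)$ pointwise near $v$ --- has $v$ as a local $\xi$-maximum. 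Set $\Psi(v):=\Delta_\alpha$ to be the global chamber extending this local descending chamber. To see $\Delta_\alpha\in\mathscr{P}(\mathbb{V})$, I would use convexity: for any $p\in\Delta_\alpha$ the segment $[v,p]$ lies in $\Delta_\alpha$, and $\xi$ is linear and initially decreasing along it, hence $\xi(p)\leq\xi(v)$ globally.

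Finally I would verify that $\Phi\circ\Psi=\mathrm{id}$ (tautological, since $v$ is by construction the global $\xi$-maximum of $\Psi(v)$) and that $\Psi\circ\Phi=\mathrm{id}$ (near $v_\alpha$ the chamber $\Delta_\alpha$ must coincide with the descending chamber at $v_\alpha$, otherwise one could perturb slightly into $\Delta_\alpha$ along a direction where $\xi$ strictly increases, contradicting maximality of $v_\alpha$). The hard part is really just the clean translation of the ``at least $d$ non-zero entries'' condition on $\xi$ into the geometric statements about span and non-vanishing of components; once this translation is in place, both $\Phi$ and $\Psi$ are essentially forced by elementary linear-programming considerations, and the bijection follows.
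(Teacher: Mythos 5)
Your proposal is correct, but it takes a genuinely different route from the paper's proof. The paper argues by a double induction on the pair $(d,n)$: the base cases are $d=1$ (where $N_\mathbb{V}=n$ is immediate) and $n=d$, and the inductive step adds one hyperplane $H_{\mathbb{R}}$ in general position and observes that, by the induction hypothesis applied to the restricted arrangement on $H_{\mathbb{R}}\cong\mathbb{R}^{d-1}$, each new bounded-feasible chamber of $\mathbb{V}^{H_\mathbb{R}}$ either splits an old bounded-feasible chamber or creates a new one, matching the new $0$-dimensional strata produced by $H_\mathbb{R}\cap\bigcup_i H_{\mathbb{R},i}$. You instead construct an explicit bijection $\Phi:\mathscr{P}(\mathbb{V})\to\{\text{$0$-dimensional strata}\}$, $\Delta_\alpha\mapsto\arg\max_{\Delta_\alpha}\xi$, with inverse $\Psi$ sending a vertex $v$ to the unique local descending orthant at $v$ and extending to the global chamber. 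Your argument is closer in spirit to classical linear-programming/shelling arguments (unique-sink orientations of arrangements), and it buys something the paper's argument does not: an explicit, canonical bijection rather than a mere equality of cardinalities. It also makes the role of the genericity hypothesis on $\xi$ transparent in two distinct places (non-constancy of $\xi$ on positive-dimensional faces, and all ``local slopes'' $b_i\neq 0$ at each vertex), whereas the paper's inductive step compresses this into the unproved assertion that a new chamber ``either divides one of the bounded and feasible chambers or adds a new bounded and feasible chamber.'' Conversely, the paper's inductive framework is more easily adapted to deletion/restriction arguments used later in the paper. One small point to tighten in your write-up: in step (b), you do not actually need to separately argue that the co-normals span $V^\ast$; once you know (a) that $\xi$ is non-constant on every positive-dimensional face, the LP fact that a linear functional bounded above on a non-empty polyhedron attains its maximum on a face, combined with (a), already forces the maximizing face to be $0$-dimensional, hence a vertex of the arrangement by simplicity.
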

\begin{proof}
The proof is based on double induction on the pair $(d,n)\in\mathbb{N}^2$. Denote $N_\mathbb{V}$ the number of bounded-feasible chambers of $\mathbb{V}$. Due to the genericity of $\xi$, it is straightforward that $N_\mathbb{V}=0$ if and only if there is no $0$-dimensional strata in $\bigcup_{i=1}^n H_{\mathbb{R},i}$. Also, when $d=1$, it is clear that $N_\mathbb{V}=n$. 
	%
	
We first perform induction on $d$. Suppose that the claim holds when $d=r$. Consider a polarized arrangement $\mathbb{V}$ of $n\geq r$ hyperplanes in $\mathbb{R}^{r+1}$ and apply the induction on $n$. The initial step $n=r$ is obvious. Suppose that the claim holds for $n =k>r+1$. We add a new hyperplane $H_\mathbb{R}$ to $\mathcal{H}$ which is in general position, so that the restrictions of the hyperplanes in $\mathcal{H}$ to $H_\mathbb{R}$ is still a hyperplane arrangement in $H_\mathbb{R}\cong\mathbb{R}^r$. By induction hypothesis, we know that the number of bounded and feasible chambers on the induced hyperplane arrangement in $H_\mathbb{R}$ is the same as the number of 0-dimensional strata. Moreover, the simplicity assumption implies that each such chamber in $H_\mathbb{R}$ either divides one of the bounded and feasible chambers in $\mathbb{V}$ or adds a new bounded and feasible chamber to $\mathbb{V}$. This implies that after adding $H_\mathbb{R}$, the number of bounded and feasible chambers in the hyperplane arrangement is increased by the number of 0-dimensional strata in $H_\mathbb{R}$ created by the intersection $H_\mathbb{R}\cap \left(\bigcup_{i=1}^kH_{\mathbb{R},i}\right)$. 
\end{proof}

\begin{definition}\label{def:hyparr}
Let $\mathbb{V}=(V, \eta, \xi)$ be a polarized hyperplane arrangement. The complexified polarized hyperplane arrangement associated to $\mathbb{V}$ is a linear extension of $\mathbb{V}$ over $\C$, denoted by  $\mathbb{V}_\C=(V_\C, \eta_\C, \xi_\C)$. If it is clear from the context, we simply write it as $\mathbb{V}$. 
\end{definition}

For example, if we write one of the hyperplanes $H_{\mathbb{R},i} \subset \R^d$ as in (\ref{eq:description of hyperplane}), then its complexification $H_i\subset\mathbb{C}^d$ is given by 
\begin{equation}\label{eq:hyp}
H_i=\left\{z \in \C^d\ \middle|\ \sum_{j=1}^da_{ij}z_j+w_i=0\right\},
\end{equation}
where $a_{ij}, w_j\in\mathbb{R}$.

\subsection{Essential Geometry.}\label{ss:essential} 
In this subsection, we describe the Weinstein structure of $M(\mathbb{V})$ and show that $M(\mathbb{V})$ is of finite type (Corollary \ref{generation:finiteness-critical-points}).

Let $\ell_i(z)$ be the defining equation of the (complex) hyperplane $H_i$ in the arrangement $\mathbb{V}$ and $\xi (z)=\xi \cdot z$.  We assume that the polarization $\xi$ satisfies the following properties:
\begin{enumerate}[\indent (i)]
	\item $\xi$ does not belong to the span of the normal vectors of any $k$ hyperplanes in $\mathbb{V}$ if $k \leq d-1$;
	\item no two $0$-dimensional strata in the intersections of the complex hyperplanes $\{H_i\}_{i=1}^n$ lie in the same level set $\mathrm{Re}(\xi)^{-1}(c)$ for some $c \in \mathbb{R}$. 
\end{enumerate}

\noindent The set of polarizations satisfying these two properties are generic in the space of all linear forms on $V^\ast$. Consider the affine canonical embedding  $\iota:\mathbb{C}^d \to \mathbb{C}^n$ given by $\iota(z)=(\ell_1(z), \dots, \ell_n(z))$. This induces an embedding of the complement $M(\mathbb{V}):=\mathbb{C}^d \setminus \bigcup_{i=1}^nH_{i}$ into $ (\mathbb{C}^\ast)^n$, which we will denote by $\iota:M(\mathbb{V}) \to (\mathbb{C}^\ast)^n$. 

Next, we equip $M(\mathbb{V})$ with a symplectic form $\omega$ as follows: First, let $\omega_\phi$ e the symplectic form on $(\mathbb{C}^\ast)^n$ defined by the pullback of the standard symplectic form on $\mathbb{C}^{2n}$ via the embedding $i\colon (\mathbb{C}^\ast)^n \hookrightarrow \mathbb{C}^{2n}$ given by $i(z_1,\dots,z_n)=(z_1,1/z_1,\dots,z_n,1/z_n)$. We have
\begin{align*}
\omega_{\phi}&=2\sqrt{-1}\sum_{i=1}^n\left(d z_i\wedge d\bar{z}_i +d\left(\frac{1}{z_i}\right) \wedge d\left(\frac{1}{\bar{z}_i}\right)\right)  =2\sqrt{-1}\sum_{i=1}^n\left(1+\frac{1}{\vert z_i\vert^4}\right) d z_i\wedge d\bar{z}_i \nonumber\\
&=4\sum_{i=1}^n\left(1+\frac{1}{(x_i^2+y_i^2)^2}\right) d x_i\wedge d y_i.
\end{align*}
Equivalently, we equip $(\mathbb{C}^\ast)^n$ with the K\"{a}hler potential $\phi=\sum_{i=1}^n \left( |z_i|^2+\frac{1}{|z_i|^2}\right)$ so that
\begin{align*}
-d^\mathbb{C}\phi&=2\sum_{i=1}^n\left(1-\frac{1}{(x_i^2+y_i^2)^2}\right) x_i d y_i-2\sum_{i=1}^n\left(1-\frac{1}{(x_i^2+y_i^2)^2}\right) y_i d x_i.
\end{align*}
and $\omega_{\phi}=-dd^\mathbb{C} \phi$.
We then take $\omega$ to be the pullback of the form $\omega_\phi$ under the embedding $\iota:M(\mathbb{V}) \hookrightarrow (\C^\ast)^n$, which gives
\[\omega:=\iota^*\omega_{\phi}=4\sum_{i=1}^n\left(1+\frac{1}{\vert \ell_i\vert^4}\right) d \mathrm{Re}(\ell_i)\wedge d \mathrm{Im}(\ell_i).
\]
Notice that $\omega$ depends on $M(\mathbb{V})$ as well as the choice of the embedding $\iota:\mathbb{C}^d \to \mathbb{C}^n$. However, different choices yield deformation-equivalent Weinstein structures.

\begin{lemma}\label{l:rescaling}
For any $\vec{c}=(c_1,\dots,c_n) \in \mathbb{R}_{>0}^n$, the rescaled embedding $\iota_{\vec{c}}:=\vec{c}\iota=(c_1 \ell_1,\dots,c_n\ell_n): M(\mathbb{V}) \to (\mathbb{C}^\ast)^n$ gives us a new Weinstein structure $(M(\mathbb{V}), \iota_{\vec{c}}^*\phi)$ that is Weinstein deformation equivalent to $(M(\mathbb{V}),\iota^*\phi)$.
\end{lemma}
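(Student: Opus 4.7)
The plan is to realize $(M(\mathbb{V}),\iota^*\phi)$ and $(M(\mathbb{V}),\iota_{\vec{c}}^*\phi)$ as the two endpoints of a smooth one-parameter family of Stein structures on the fixed complex manifold $M(\mathbb{V})$, and then invoke the Cieliebak--Eliashberg equivalence \cite{cieliebak2012stein} between Stein and Weinstein structures to upgrade this to a Weinstein deformation equivalence. Concretely, I would set $\vec{c}(t):=(1-t)(1,\dots,1)+t\vec{c}$, a straight-line path in $\mathbb{R}_{>0}^n$ from $(1,\dots,1)$ to $\vec{c}$, and consider the associated family of embeddings $\iota_{\vec{c}(t)}=T_{\vec{c}(t)}\circ \iota$, where $T_{\vec{a}}(z_1,\dots,z_n):=(a_1z_1,\dots,a_nz_n)$ is the biholomorphism of $(\mathbb{C}^*)^n$ given by componentwise multiplication. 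Pulling back the K\"ahler potential $\phi$ yields the smoothly varying family
\begin{equation*}
\phi_t:=\iota_{\vec{c}(t)}^*\phi=\sum_{i=1}^n\left(c_i(t)^2|\ell_i|^2+c_i(t)^{-2}|\ell_i|^{-2}\right),\qquad t\in[0,1],
\end{equation*}
interpolating between $\phi_0=\iota^*\phi$ and $\phi_1=\iota_{\vec{c}}^*\phi$.

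The next step is to verify that each $\phi_t$ is a Stein structure on $M(\mathbb{V})$. Strict plurisubharmonicity is automatic: $\iota_{\vec{c}(t)}$ is a holomorphic immersion and $\phi$ is strictly plurisubharmonic on $(\mathbb{C}^*)^n$. For exhaustion, I would observe that on a sublevel set $\{\phi_t\leq C\}$ both $|\ell_i(z)|$ and $|\ell_i(z)|^{-1}$ are bounded in terms of $C$ and of upper/lower bounds for $c_i(t)$, which are themselves uniform over $t\in[0,1]$ since $\vec{c}(t)$ stays in a compact subset of $\mathbb{R}_{>0}^n$. The bound on $|\ell_i(z)|^{-1}$ keeps $z$ away from every hyperplane $H_i$, while the bound on $|\ell_i(z)|$, combined with the fact that the linear parts of the $\ell_i$ span the dual $V^*$ (so that $z\mapsto(\ell_1(z),\dots,\ell_n(z))$ is proper), confines $z$ to a bounded region of $\mathbb{C}^d$. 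Hence the sublevel sets are compact in $M(\mathbb{V})$, uniformly in $t$.

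Finally I would invoke \cite[Theorem 1.1]{cieliebak2012stein}, which says that the natural map from the space of Stein structures on a fixed smooth manifold to the space of Weinstein structures is a weak homotopy equivalence. In particular, a smooth path $\{\phi_t\}_{t\in[0,1]}$ of plurisubharmonic exhausting functions lifts to a Weinstein homotopy between the two endpoint Weinstein structures (after a generic arbitrarily small perturbation that turns $\phi_t$ into a family of Morse functions whose gradient is Lyapunov for the associated Liouville vector field). This yields a Weinstein deformation equivalence between $(M(\mathbb{V}),\iota^*\phi)$ and $(M(\mathbb{V}),\iota_{\vec{c}}^*\phi)$, as required. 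The only delicate point is controlling the critical locus along the deformation, which is where the uniform compactness of sublevel sets from the previous paragraph is needed: it guarantees that the critical points of $\phi_t$ remain in a common compact subdomain, so that the Liouville homotopy is of finite type and produces a bona fide Weinstein deformation rather than merely a formal one.
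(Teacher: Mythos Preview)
Your proposal is correct and takes essentially the same approach as the paper: the paper also uses the straight-line interpolation $\iota_t(z)=t\vec{c}\iota(z)+(1-t)\iota(z)$, which is exactly your $\iota_{\vec{c}(t)}$ with $\vec{c}(t)=(1-t)(1,\dots,1)+t\vec{c}$. The paper's proof is terser---it simply asserts that the critical points of $d\phi_t$ lie in a compact set ``by rescaling the previous critical points''---whereas you supply the explicit justification via uniform compactness of sublevel sets and the Cieliebak--Eliashberg correspondence, which is a cleaner way to organize the same idea.
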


\begin{proof}
Consider an isotopy of Stein embeddings $\iota_t\colon M(\mathbb{V})\to\mathbb{C}^n$ via 
    \[
        \iota_t (z)=t\vec{c}\iota(z)+(1-t)\iota(z)
    \]
    for $ t \in [0,1]$. Then we have $\iota_t^{\ast}\phi (z)=\phi (t\vec{c}\iota(z)+(1-t)\iota(z))$. 
It is straightforward to compute the differential and easy to see that the critical points of $d\phi_t (z)$, for all $0\leq t\leq 1$, lie in a compact subset (by rescaling the previous critical points). Therefore $(M(\mathbb{V}),\iota^*\phi)$ and $\left(M(\mathbb{V}),\iota_{\vec{c}}^*\phi\right)$ are Weinstein deformation equivalent.
\end{proof}

From now on, if the context is clear, we will abuse notation by denoting the restriction of $\phi$ to $M(\mathbb{V})$ simply as $\phi$. 

Let $J \subset \{1, \dots, n\}$ be an index set such that $\bigcap_{i \in J} H_i \neq \emptyset$. Since all the equation $\ell_i$'s are linear, near the intersection $\bigcap_{i \in J} H_i$, the symplectic form $\omega$ can be written as the sum of the dominant terms and bounded terms:
\[\omega=4\sum_{i\in J}\left(1+\frac{1}{\vert \ell_i\vert^4}\right) d \mathrm{Re}(\ell_i)\wedge d \mathrm{Im}(\ell_i) + 4\sum_{i\notin J}\left(1+\frac{1}{\vert \ell_i\vert^4}\right) d \mathrm{Re}(\ell_i)\wedge d \mathrm{Im}(\ell_i).
\]

Geometrically, the symplectic form $\omega$ asymptotically behaves like the standard symplectic form near $\bigcap_{i \in J} H_i \neq \emptyset$. 
The following lemma gives precise formulae for this fact.

\begin{lemma}\label{sector:Standardization}
Let $J\subset\{1, \dots, n\}$ with $\vert J\vert=d$. Enumerate the elements in $J$ by $1,\dots,d$.
Consider the linear change of coordinates $\Phi: \mathbb{C}^d \to \mathbb{C}^d$ which pulls $\ell_i$ back to $z_i$ for $i \in J$. Write 
\begin{align*}
\ell_j =\displaystyle\sum_{i\in J} a_{ij}z_i,
\end{align*}
where the coefficients $a_{ij} \in \mathbb{R}$ are as in (\ref{eq:hyp}). 

Then $M(\Phi^{-1}(\mathbb{V}))=\Phi^{-1}\left(M(\mathbb{V})\right)$ and
we have the following formulae:
\begin{equation}\label{eq:sympform}
\begin{split}
\Phi^{\ast}\omega&=4\sum_{i\in J}\left(1+\frac{1}{\vert z_i\vert^4} +\sum_{j\not\in J}a_{ij}^2\left(1+\frac{1}{\vert\ell_j\vert^4}\right)\right)dx_i\wedge dy_i \\
&+4\sum_{\substack{i\neq j\\ i,j\in J}}\sum_{k\not\in J} a_{ik} a_{jk}\left(1+\frac{1}{\vert\ell_k\vert^4}\right) dx_i \wedge dy_j. 
\end{split}
\end{equation}

\begin{equation*} 
\begin{split}
\Phi^*d\phi &=\sum_{i=1}^d2\left(\left(1-\frac{1}{\vert z_i\vert^4}\right)x_i+\sum_{j\not\in J}\left(1-\frac{1}{\vert\ell_j\vert^4}\right)\rRe(\ell_j) a_{ij}\right)dx_i \\
&+\sum_{i=1}^d2\left(\left(1-\frac{1}{\vert z_i\vert^4}\right)y_i+\sum_{j\not\in J}\left(1-\frac{1}{\vert\ell_j\vert^4}\right)\rIm(\ell_j)a_{ij}\right)dy_i, 
\end{split}
\end{equation*}

\begin{equation} \label{eq:dphic}
\begin{split}
\Phi^*d^\mathbb{C}\phi&=-\sum_{i=1}^d2\left(\left(1-\frac{1}{\vert z_i\vert^4}\right)x_i+\sum_{j\not\in J}\left(1-\frac{1}{\vert\ell_j\vert^4}\right)\rRe(\ell_j) a_{ij}\right)dy_i \\
&+\sum_{i=1}^d2\left(\left(1-\frac{1}{\vert z_i\vert^4}\right)y_i+\sum_{j\not\in J}\left(1-\frac{1}{\vert\ell_j\vert^4}\right)\rIm(\ell_j)a_{ij}\right)dx_i.
\end{split}
\end{equation}
\end{lemma}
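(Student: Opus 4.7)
The lemma is a direct pullback computation, so my plan is to unravel the definitions of $\omega$, $d\phi$, and $d^\mathbb{C}\phi$ along the composite map $\iota\circ\Phi:\mathbb{C}^d\to(\mathbb{C}^\ast)^n$ and then collect terms. The identity $M(\Phi^{-1}(\mathbb{V}))=\Phi^{-1}(M(\mathbb{V}))$ is immediate because $\Phi$ is a linear isomorphism mapping the complexified hyperplanes of $\Phi^{-1}(\mathbb{V})$ bijectively onto those of $\mathbb{V}$. After enumerating $J$ as $\{1,\dots,d\}$, the $k$-th component of $\iota\circ\Phi$ equals $z_k$ when $k\in J$ and equals $\sum_{i\in J}a_{ik}z_i$ (up to an additive real constant that disappears under $d$ and $d^\mathbb{C}$ and that may be absorbed into the notation $\ell_k$) when $k\notin J$.

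For the symplectic form I will compute $(\iota\circ\Phi)^{\ast}(du_k\wedge dv_k)$ term by term, where $(u_k,v_k)$ are standard coordinates on the $k$-th copy of $\mathbb{C}^\ast$. This equals $dx_k\wedge dy_k$ for $k\in J$, while for $k\notin J$ the $\mathbb{R}$-linearity of $\ell_k$ gives $(\iota\circ\Phi)^{\ast}du_k=\sum_i a_{ik}\,dx_i$ and $(\iota\circ\Phi)^{\ast}dv_k=\sum_i a_{ik}\,dy_i$, hence
\begin{equation*}
(\iota\circ\Phi)^{\ast}(du_k\wedge dv_k)=\sum_{i\in J}a_{ik}^2\,dx_i\wedge dy_i+\sum_{\substack{i,j\in J\\i\neq j}}a_{ik}a_{jk}\,dx_i\wedge dy_j.
\end{equation*}
Multiplying each of these by the function $4(1+|(\iota\circ\Phi)_k|^{-4})$, summing over $k\in\{1,\dots,n\}$, and regrouping the diagonal contributions $dx_i\wedge dy_i$ (which come from both $k\in J$ and $k\notin J$) separately from the cross terms $dx_i\wedge dy_j$ with $i\neq j$ (which come only from $k\notin J$) produces formula~(\ref{eq:sympform}). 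For $\Phi^{\ast}d\phi$, the pullback of the K\"ahler potential is $\sum_{k\in J}(|z_k|^2+|z_k|^{-2})+\sum_{k\notin J}(|\ell_k|^2+|\ell_k|^{-2})$, and differentiating each summand via $d|\ell_k|^2=2\,\rRe(\ell_k)\,d\rRe(\ell_k)+2\,\rIm(\ell_k)\,d\rIm(\ell_k)$ together with $d\rRe(\ell_k)=\sum_i a_{ik}\,dx_i$ and $d\rIm(\ell_k)=\sum_i a_{ik}\,dy_i$, and then grouping by $dx_i$ and $dy_i$, yields the stated expression. The formula for $\Phi^{\ast}d^\mathbb{C}\phi$ follows from the same identities because $\Phi$ is holomorphic, so $\Phi^{\ast}$ commutes with $d^\mathbb{C}$, and $d^\mathbb{C}$ acts on real $1$-forms in these coordinates by the sign-twisted swap of $dx_i$ and $dy_i$ fixed by the convention in the excerpt.

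There is no conceptual obstacle — the entire proof is bookkeeping — but care is required to keep the hyperplane index $k\in\{1,\dots,n\}$ and the coordinate index $i\in\{1,\dots,d\}$ distinct, and to notice that the mixed wedge products $dx_i\wedge dy_j$ with $i\neq j$ in $\Phi^{\ast}\omega$ arise solely from the $k\notin J$ contributions, while the correction terms $\sum_{j\notin J}a_{ij}^2(1+|\ell_j|^{-4})$ that enlarge the diagonal coefficient at $dx_i\wedge dy_i$ likewise come only from those hyperplanes not used to define the new coordinate chart.
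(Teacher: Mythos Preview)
Your proposal is correct and is precisely the straightforward pullback computation the paper has in mind; the paper itself omits all details, saying only that the lemma ``follows from a straightforward computation.'' Your bookkeeping of the diagonal versus off-diagonal contributions to $\Phi^{\ast}\omega$ and your use of the holomorphicity of $\Phi$ to pass from $d\phi$ to $d^{\mathbb{C}}\phi$ are exactly the expected steps.
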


The proof of Lemma \ref{sector:Standardization} follows from a straightforward computation, so we omit the details. 
For simplicity, we will drop $\Phi^*$ from the notations if it is clear from the context. We will find it convenient to use Lemma \ref{sector:Standardization} when doing computations.

To write the formulae in the lemma above in more compact forms, let $A$ be the $(n-d)\times d$-matrix whose $(i,j)$-th entry is $\left(a_{ji}\sqrt{1+\dfrac{1}{\vert\ell_i\vert^4}}\right)$ and let $D=\left(\left(1+\dfrac{1}{\vert z_i\vert^4}\right)\delta_{ij}\right)$ be the diagonal matrix. Under the standard basis $\left\{\dfrac{\partial}{\partial\vec{x}},\dfrac{\partial}{\partial\vec{y}}\right\}$, we can write the symplectic form $\omega$ in the matrix form
\begin{align}\label{eq:symp}
    \omega=\begin{pmatrix}
        0&4(D+A^TA)\\ 
        -4(D+A^TA)&0
    \end{pmatrix},
\end{align}
where $D+A^TA$ is invertible.

Similarly, consider the matrices
    \[
        \tilde{A}=\left(a_{ji}\sqrt{1-\frac{1}{\vert\ell_i\vert^4}}\right),\quad \tilde{D}=\left(\left(1-\dfrac{1}{\vert z_i\vert^4}\right)\delta_{ij}\right),
    \]
and 
    \[
        [d^\mathbb{C}\phi]_x=2\vec{x}^T(\tilde{D}+\tilde{A}^T\tilde{A}), \quad [d^\mathbb{C}\phi]_y =2\vec{y}^T\left(\tilde{D}+\tilde{A}^T\tilde{A}\right),
    \]
then we have $d^\mathbb{C}\phi=- [d^\mathbb{C}\phi]_x d\vec{y} + [d^\mathbb{C}\phi]_y d\vec{x}$. We can also compute the Hamiltonian vector fields $X_{\rRe(\xi)}$ and $X_{\rIm(\xi)}$ associated to the real and imaginary parts of $\xi$ as follows.

\begin{lemma}
Write $\xi=\sum_{i=1}^d b_i z_i$, where $b_i \in \mathbb{R}$.
Then  $X_{\rRe(\xi)}$ is a purely imaginary vector field given by the matrix formula 
\begin{align}
X_{\rRe(\xi)} =\frac{1}{4}\left(\frac{\partial}{\partial \vec{y}}\right)^T(D+A^TA)^{-1}\vec{b}, \label{eq:Xreal}
\end{align}
where $\vec{b}$ refers to the vector $(b_1, \dots, b_d)^T$.  Similarly, $X_{\rIm(\xi)}$ is given by
\begin{align}
X_{\rIm(\xi)} =-\frac{1}{4}\left(\frac{\partial}{\partial \vec{x}}\right)^T\left(D+A^TA\right)^{-1}\vec{b}.  \label{eq:Xim}
\end{align}
As a consequence, the square of the length of the vector field $X_{\rRe(\xi)}$ is
\[
    \left\Vert X_{\rRe(\xi)}\right\Vert^2=\omega\left(X_{\rRe(\xi)}, X_{\rIm(\xi)}\right) =\frac{1}{4}\left(\vec{b}\right)^T\left(D+A^TA\right)^{-1}\vec{b}.
\]
\end{lemma}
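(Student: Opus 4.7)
The plan is a direct linear-algebra computation using the block matrix form (\ref{eq:symp}) of $\omega$. Since the coefficients $b_i$ are real, $\rRe(\xi) = \vec{b}^T\vec{x}$ and $\rIm(\xi) = \vec{b}^T\vec{y}$, so $d\rRe(\xi) = \vec{b}^T d\vec{x}$ and $d\rIm(\xi) = \vec{b}^T d\vec{y}$. Making the ansatz $X_{\rRe(\xi)} = \vec{u}^T \partial/\partial\vec{x} + \vec{v}^T \partial/\partial\vec{y}$ and imposing $\omega(Y, X_{\rRe(\xi)}) = d\rRe(\xi)(Y)$ for an arbitrary test vector $Y$, the block anti-diagonal shape of $\omega$ decouples the equation into $4(D+A^TA)\vec{v} = \vec{b}$ and $\vec{u} = 0$; invertibility of $D + A^TA$ then yields (\ref{eq:Xreal}). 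The identical argument applied to $d\rIm(\xi)$ produces (\ref{eq:Xim}), the sign flip arising because the differential of $\rIm(\xi)$ pairs the Hamiltonian against $d\vec{y}$ rather than $d\vec{x}$.

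The norm identity exploits the K\"ahler structure underlying our setup: $\omega$ is the restriction of the K\"ahler form associated to the potential $\phi$ on $(\mathbb{C}^\ast)^n$ via the holomorphic embedding $\iota$, hence is compatible with the restricted complex structure $J$ on $M(\mathbb{V}) \subset \mathbb{C}^d$. Because $\xi$ is holomorphic and $\mathbb{C}$-linear, the Cauchy--Riemann equations give $X_{\rIm(\xi)} = JX_{\rRe(\xi)}$, which can also be verified directly from (\ref{eq:Xreal}) and (\ref{eq:Xim}) using $J\partial_{y_k} = -\partial_{x_k}$. The associated K\"ahler metric satisfies $g(v,v) = \omega(v, Jv)$, so
\[
\|X_{\rRe(\xi)}\|^2 = \omega(X_{\rRe(\xi)}, JX_{\rRe(\xi)}) = \omega(X_{\rRe(\xi)}, X_{\rIm(\xi)}),
\]
which is the first equality in the lemma.

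To obtain the explicit scalar, I substitute (\ref{eq:Xreal}) and (\ref{eq:Xim}) into $\omega$ and evaluate using (\ref{eq:symp}); setting $\vec{w} := \tfrac{1}{4}(D+A^TA)^{-1}\vec{b}$ this collapses to $4\vec{w}^T (D+A^TA)\vec{w}$. Symmetry of $D + A^TA$, which is immediate from its definition, then telescopes the product $(D+A^TA)^{-1}(D+A^TA)(D+A^TA)^{-1}$ to a single $(D+A^TA)^{-1}$, leaving $\tfrac{1}{4}\vec{b}^T(D+A^TA)^{-1}\vec{b}$. There is no serious obstacle anywhere in the argument; the only item requiring care is the sign bookkeeping when pairing the antisymmetric $\omega$ against vectors expressed in the $(\vec{x},\vec{y})$ basis, and checking that $\xi$ being holomorphic and $J$-compatibility of $\omega$ really do deliver the identification $X_{\rIm(\xi)} = J X_{\rRe(\xi)}$ needed to convert the metric norm into a symplectic pairing.
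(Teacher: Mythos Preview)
Your proposal is correct and takes essentially the same approach as the paper: both compute $X_{\rRe(\xi)}$ and $X_{\rIm(\xi)}$ by pairing the defining equation $\omega(-,X_H)=dH$ against the coordinate vector fields and inverting the block matrix $4(D+A^TA)$. Your treatment of the norm identity via the K\"ahler relation $X_{\rIm(\xi)}=JX_{\rRe(\xi)}$ is slightly more detailed than the paper's, which simply appeals to \eqref{eq:symp} and non-degeneracy, but the underlying computation is the same.
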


\begin{proof}
Since $d\rRe(\xi) =\displaystyle\sum_{i=1}^d b_i dx_i$ and $d\rIm(\xi)=\displaystyle\sum_{i=1}^d b_i dy_i$, we get
\begin{equation} \nonumber
\begin{split}
b_i&=d\rRe(\xi)\left(\frac{\partial}{\partial x_i}\right)=\omega\left(\frac{\partial}{\partial x_i},X_{\rRe(\xi)}\right) \\ &=4\sum_{j=1}^d\left(1+\frac{1}{\vert z_j\vert^4}+\sum_{k\not\in I}\left(1+\frac{1}{\vert\ell_k\vert^4}\right)a_{ik} a_{jk}\right) dy_j\left(X_{\rRe(\xi)}\right),
\end{split}
\end{equation}
\begin{equation} \nonumber
\begin{split}
0&=d\rRe(\xi)\left(\frac{\partial}{\partial y_i}\right)=\omega\left(\frac{\partial}{\partial y_i}, X_{\rRe(\xi)}\right) \\
&=-4\sum_{j=1}^d\left(1+\frac{1}{\vert z_j\vert^4}+\sum_{k\not\in I}\left(1+\frac{1}{\vert\ell_k\vert^4}\right)a_{ik}a_{jk}\right)dx_j\left(X_{\rRe(\xi)}\right),
\end{split}
\end{equation}
Using the matrices $A$ and $D$, the equations above can be rewritten as 
\[
    \vec{b}=4\left(D+A^TA\right)d\vec{y}\left(X_{\rRe(\xi)}\right),\quad\vec{0}=4\left(D+A^TA\right)d\vec{x}\left(X_{\rRe(\xi)}\right).
\]
The case of $X_{\mathrm{Im}(\xi)}$ can be dealt with similarly. Using \eqref{eq:symp} and the non-degeneracy of the matrix $D+A^TA$, we conclude the result.
\end{proof}

\begin{lemma}\label{l:Iestimate_pre}
Suppose that  
\begin{align}
\bigcap_{\substack{j\in K\\ \vert K\vert=d+1}}\left\{\vert\ell_j\vert\leq 1\right\}=\emptyset. \label{eq:convercontrol}
 \end{align}
Then there are functions $C_I(z), C_R(z)>0$ and two constants $0<c_1<c_2$ such that for $\vert \rIm \xi \vert$ sufficiently large, we have $c_1<C_I(z)<c_2$ and 
    \[
        -d^\mathbb{C}\phi (X_{\rIm(\xi)})=C_I(z) \rIm (\xi).
    \]
Similarly, for $\vert \rRe(\xi)\vert$ sufficiently large, we have $c_1<C_R(z)<c_2$ and 
    \[
        -d^\mathbb{C}\phi (X_{\rRe(\xi)})=C_R(z) \rRe (\xi).
    \]
\end{lemma}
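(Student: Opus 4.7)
The plan is to extract the leading term $\tfrac12\mathrm{Im}(\xi)$ from the matrix formula for $-d^\mathbb{C}\phi(X_{\mathrm{Im}(\xi)})$ produced by Lemma \ref{sector:Standardization}, then bound the remainder uniformly on $M(\mathbb{V})$. Setting $M_1 := D + A^TA$, $M_2 := \tilde D + \tilde A^T\tilde A$, and $\vec v := M_1^{-1}\vec b$, a direct contraction of (\ref{eq:Xim}) with (\ref{eq:dphic}) should give
\begin{equation*}
-d^\mathbb{C}\phi(X_{\mathrm{Im}(\xi)}) \;=\; \tfrac12\vec y^{\,T} M_2\vec v \;=\; \tfrac12\mathrm{Im}(\xi) + \tfrac12\vec y^{\,T}(M_2-M_1)\vec v,
\end{equation*}
the last equality using $\mathrm{Im}(\xi)=\vec y^{\,T}\vec b=\vec y^{\,T}M_1\vec v$. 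The statement then reduces to producing a uniform bound $|\mathcal{E}(z)|\leq C'$ for the error $\mathcal{E}(z):=\vec y^{\,T}(M_2-M_1)\vec v$: once this is known, $C_I(z)=\tfrac12+\mathcal{E}(z)/(2\mathrm{Im}(\xi))$ automatically falls in $(1/4,3/4)$ as soon as $|\mathrm{Im}(\xi)|>2C'$, delivering both positivity and admissible bounds $c_1,c_2$.

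To establish the uniform bound, I will use hypothesis \eqref{eq:convercontrol} to cover $M(\mathbb{V})$ by the finitely many open sets
\begin{equation*}
U_J := \{z\in M(\mathbb{V}) : |\ell_j(z)|>1 \text{ for all } j\notin J\}, \qquad J\subset\{1,\dots,n\},\ |J|=d,
\end{equation*}
where the condition precisely says at most $d$ of the $|\ell_j(z)|$ can simultaneously be $\leq 1$. On each $U_J$ I apply Lemma \ref{sector:Standardization} with this particular $J$; a short calculation should yield
\begin{equation*}
M_2-M_1 \;=\; -2\,\mathrm{diag}(|z_i|^{-4}) - 2B, \qquad B_{ik}:=\sum_{j\notin J}\frac{a_{ij}a_{kj}}{|\ell_j|^4},
\end{equation*}
separating $\mathcal{E}(z)$ into a coupling piece $-2\vec y^{\,T}B\vec v$ and a diagonal piece $-2\sum_i y_iv_i/|z_i|^4$. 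The coupling piece rewrites via $\mathrm{Im}(\ell_j)=\sum_i a_{ij}y_i$ as $\vec y^{\,T}B\vec v=\sum_{j\notin J}|\ell_j|^{-4}\mathrm{Im}(\ell_j)\sum_k a_{kj}v_k$, and since $M_1\geq I$ forces $|\vec v|\leq|\vec b|$ while $|\mathrm{Im}(\ell_j)|/|\ell_j|^3\leq 1$ holds throughout $U_J$, this piece is bounded by a chart constant.

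The diagonal piece is the main obstacle: its coefficient $1/|z_i|^4$ genuinely diverges as $z$ approaches the hyperplane $\{\ell_i=0\}$. The key observation will be that the corresponding component $v_i$ must vanish at the matching rate. Indeed, solving the $i$-th row of $M_1\vec v=\vec b$ scalar-wise should give
\begin{equation*}
v_i \;=\; \frac{\bigl(b_i-\sum_{k\neq i}(A^TA)_{ik}v_k\bigr)|z_i|^4}{1+(1+(A^TA)_{ii})|z_i|^4},
\end{equation*}
so $|v_i|/|z_i|^4\leq C/(1+|z_i|^4)$ for a constant depending only on $\mathbb{V}$ and $J$. Combined with the elementary inequality $|y_i|\leq|z_i|$ (from $|z_i|^2=x_i^2+y_i^2$), this produces $|y_iv_i/|z_i|^4|\leq C|z_i|/(1+|z_i|^4)$, uniformly bounded in $z_i$. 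Summing over $i$ and taking the maximum over the finitely many $U_J$ yields the required constant $C'$. The corresponding statement for $C_R(z)$ follows from an identical argument with $\vec y,\mathrm{Im}$ replaced by $\vec x,\mathrm{Re}$ throughout.
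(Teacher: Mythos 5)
Your proof is correct and follows the same global strategy as the paper: cover $M(\mathbb{V})$ by the finitely many charts where at most $d$ of the $|\ell_j|$ are small (hypothesis \eqref{eq:convercontrol} ensures this cover is finite), contract \eqref{eq:Xim} against the expression for $d^{\mathbb{C}}\phi$, split the result into a leading multiple of $\mathrm{Im}(\xi)$ plus a remainder, and argue the remainder is uniformly bounded. (You recover the factor $\tfrac12$ in the leading term, which the paper's displayed computation elides; this is harmless since the lemma only claims existence of some $c_1<c_2$.) Where you genuinely diverge is in the argument that kills the $|z_i|^{-4}$ blow-up in the remainder. The paper's proof writes $R=\vec y^{\,T}(D-\tilde D+A^TA-\tilde A^T\tilde A)(D+A^TA)^{-1}\vec b$ and invokes the $2\times2$ block-matrix inversion formula of \cite[Theorem 2.1]{inversematrix2000}: partitioning by the index set $K$ of coordinates $z_i\to 0$, the top-left block $E^{-1}$ of $(D+A^TA)^{-1}$ tends to zero at exactly the rate that the diagonal $1/|z_i|^4$ in $D-\tilde D$ diverges, and one checks the products stay bounded. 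Your argument avoids the block formula entirely: you solve the $i$-th scalar equation of $(D+A^TA)\vec v=\vec b$ to extract the explicit vanishing $|v_i|\leq C|z_i|^4/\bigl(1+|z_i|^4\bigr)$ (using $D+A^TA\geq I$ to bound $\|\vec v\|\leq\|\vec b\|$ and the chart condition $|\ell_j|>1$, $j\notin J$, to bound $(A^TA)_{ik}$), then couple it with the elementary $|y_i|\leq|z_i|$ to get $|y_iv_i|/|z_i|^4\lesssim |z_i|/(1+|z_i|^4)$. The coupling piece $\vec y^{\,T}B\vec v$ you bound using $|\mathrm{Im}(\ell_j)|/|\ell_j|^3\leq 1$ on the chart, which is parallel to the paper's observation that these off-diagonal contributions are bounded when the $|\ell_j|$ are away from zero. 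Your route is more elementary (no block inverse formula needed), gives a clean quantitative decay rate in each coordinate, and as a bonus exhibits the uniformity in $z$ more transparently than the paper's limit-taking argument. A minor notational point: the paper's displayed definition of $M(\mathbb{V})_{J,\epsilon}$ writes the condition for $j\in J$ when it must mean $j\notin J$ (as the subsequent identification of $\ell_j$, $j\in J$, with the coordinates $z_1,\dots,z_d$ and the worry about $|z_i|\to 0$ make clear); your $U_J$ uses the intended convention.
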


\begin{proof}
For any $J \subset \{1,\dots,n\}$ such that $|J|=d$ and $\epsilon>0$, we define
\begin{align}
M(\mathbb{V})_{J,\epsilon} =\{ z \in M(\mathbb{V})| |\ell_j(z)|>\epsilon \text{ for all }j \in J\}. \label{eq:covering}
\end{align}
By assumption \eqref{eq:convercontrol}, the open sets $\{M(\mathbb{V})_{J,1} | |J|=d\}$ form a finite cover of $M(\mathbb{V})$.
Consider a single open set $M(\mathbb{V})_{J,1}$ and let $\{\ell_j | j \in J\}$ be the coordinate functions $\{z_i|i=1,\dots,d\}$.


By  \eqref{eq:dphic}, \eqref{eq:Xreal} and \eqref{eq:Xim}, we can write $d^\mathbb{C}\phi (X_{\rRe(\xi)})$ and $d^\mathbb{C}\phi(X_{\rIm(\xi)})$ as 
    \begin{align*}
&        d^\mathbb{C}\phi (X_{\rRe(\xi)})=-\frac{1}{2}\vec{x}^T\left(\tilde{D}+\tilde{A}^T\tilde{A}\right)\left(D+A^TA\right)^{-1}\vec{b},\\
& d^\mathbb{C}\phi (X_{\rIm(\xi)})=-\frac{1}{2}\vec{y}^T\left(\tilde{D}+\tilde{A}^T\tilde{A}\right)\left(D+A^TA\right)^{-1}\vec{b}.
    \end{align*}
    When $\vert \rIm(\xi)\vert\to\infty$, the inequality $|\rIm(\xi)|=\displaystyle\left\vert\sum_{i=1}^d b_{i}y_i\right\vert\leq\left(\sum b_{i}^2\right)^{1/2}\left(\sum y_i^2\right)^{1/2}$ implies that there must be at least one coordinate $y_i$ with $y_j\to\infty$. Using the decomposition \[
        \tilde{D}+\tilde{A}^T\tilde{A} =\left(D+A^TA\right)-\left(D-\tilde{D}+A^TA-\tilde{A}^T\tilde{A}\right),
    \] we can write $-d^\mathbb{C}\phi (X_{\rIm(\xi)})$ as 
\begin{align*}
        -d^\mathbb{C}\phi (X_{\rIm\xi})&=\vec{y}^T\vec{b}-\underbrace{\vec{y}^T\left(D-\tilde{D}+A^TA-\tilde{A}^T\tilde{A}\right)\left(D+A^TA\right)^{-1}\vec{b}}_{\text{remainder term }R} =\rIm(\xi)-R.
    \end{align*}
Note that the term
\[
        \vec{y}^T\left(D-\tilde{D}+A^TA-\tilde{A}^T\tilde{A}\right)=\left(\frac{2y_i}{\vert z_i\vert^4}+\sum_{j\not\in J}\frac{2}{\vert\ell_j\vert^4}\rIm(\ell_j)a_{ij}\right)
    \] 
in the remainder matrix $R$ is bounded when all the $\vert z_i\vert$'s are bounded away from $0$ since $\vert\ell_j\vert\geq 1$ for all $j\not\in J$. Since the vector $\left(D+A^TA\right)^{-1}\vec{b}$ is also bounded, it follows that the remainder term is bounded as $\vert z_i\vert$ are bounded away from $0$. 
Let $K \subset \{1,\dots,d\}$ be a subset such that 
 $\vert z_i\vert\to 0$ for $i \in K$ and $|z_j|$ is bounded away from $0$ for $j \notin K$, we can write $D+A^TA$ as a $2 \times 2$ block matrix
	\begin{equation*}
	D+A^TA={\begin{pmatrix}
		E & F \\
		G & H 
		\end{pmatrix}}
	\end{equation*}
 where the first block $E$ records all the entries coming from $K$. By \cite[Theorem 2.1]{inversematrix2000}, the inverse $(D+A^TA)^{-1}$ is given by 
	\begin{equation}\label{eq:inverse}
	\begin{pmatrix}
	E^{-1}+E^{-1}F(H-GE^{-1}F)^{-1}GE^{-1} & -E^{-1}F(H-GE^{-1}F)^{-1} \\
	-(H-GE^{-1}F)^{-1}GE^{-1} & (H-GE^{-1}F)^{-1}
	\end{pmatrix}
	\end{equation} 
When the coordinates $(z_i)_{i \in K}$ are sufficiently small, $E^{-1}$ is close to a zero matrix and the determinants $|F|, |G|, |H|$ are bounded so $(D+A^TA)^{-1}$ is close to a $2 \times 2$ block matrix where only the bottom right block is non-zero (except when $K=\{1,\dots,d\}$, in which case the whole matrix $(D+A^TA)^{-1}$ is close to the zero matrix).
 On the other hand, when $(z_i)_{i \in K}$ are sufficiently small, the blow-up entries in $\left(D-\tilde{D}+A^TA-\tilde{A}^T\tilde{A}\right)$ are the diagonal entries $\frac{1}{\vert z_i\vert^4}$ for $i \in K$, which go to infinity at the same rate that $E^{-1}$ is going to $0$.
As a result, the terms in $\vec{y}^T\left(D-\tilde{D}+A^TA-\tilde{A}^T\tilde{A}\right)(D+A^TA)^{-1}$ are still bounded even when some $z_i$'s are approaching $0$.
 Therefore $R$ is bounded for all possible $\vert z_i\vert$'s, which implies that as $\vert \rIm(\xi)\vert\to\infty$, $-d^\mathbb{C}\phi (X_{\rIm(\xi)})\approx \rIm(\xi)$.
This proves the claim for $-d^\mathbb{C}\phi (X_{\rIm(\xi)})$. The case of $-d^\mathbb{C}\phi (X_{\rRe(\xi)})$ can be dealt with in a  similar way.
\end{proof}

\begin{corollary}\label{generation:finiteness-critical-points}
The critical points of the function $\phi\colon M(\mathbb{V})\to\mathbb{R}$ lie in a compact subset of $M(\mathbb{V})$. As a consequence, $(M(\mathbb{V}),\omega,\nabla\phi ,\phi)$ is a Weinstein manifold of finite type. 
\end{corollary}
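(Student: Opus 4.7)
The corollary reduces to showing that $-d^{\mathbb{C}}\phi$ (equivalently, the Liouville vector field $Z=\nabla\phi$) is nowhere zero outside a relatively compact subset of $M(\mathbb{V})$; the Weinstein finite-type assertion is then immediate from Definition \ref{Prelim:Liouville-manifolds}(iii). By Lemma \ref{l:rescaling} the Weinstein deformation class is unchanged by rescaling $\ell_i\mapsto c_i\ell_i$, and the simplicity of $\mathbb{V}$ (which forces any $d+1$ hyperplanes to have empty common intersection) allows the constants $c_i$ to be chosen large enough that the hypothesis \eqref{eq:convercontrol} of Lemma \ref{l:Iestimate_pre} holds. I assume this from now on.

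Inspecting its proof, Lemma \ref{l:Iestimate_pre} uses only that $\xi$ is a nonzero real linear form on $V$, so it applies verbatim with $\xi$ replaced by any such form. Applying it to each coordinate function $z_j$ for $j=1,\dots,d$ yields, for $|\rIm(z_j)|$ sufficiently large,
\begin{equation*}
-d^{\mathbb{C}}\phi(X_{\rIm(z_j)})=C_j(z)\,\rIm(z_j), \qquad c_1<C_j(z)<c_2,
\end{equation*}
and similarly with $\rRe$ in place of $\rIm$. At a critical point of $\phi$ the left-hand side vanishes, so every $|\rRe(z_j)|$ and $|\rIm(z_j)|$ is uniformly bounded, confining the critical locus to a bounded region of $\mathbb{C}^d$.

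To bound the critical locus away from the hyperplanes, I use the explicit formula $\partial\phi/\partial\bar z_j=\sum_i(1-|\ell_i|^{-4})\ell_i\,a_{ij}$, which converts the critical-point equation into the vector equation $\sum_i(1-|\ell_i|^{-4})\ell_i\,a_i=0$ in $\mathbb{C}^d$, where $a_i=(a_{ij})_{j=1}^d\in\mathbb{R}^d$. For a putative sequence of critical points with $|\ell_i|\to0$ on a subset $I$ of indices, the coefficient $(1-|\ell_i|^{-4})\ell_i$ has leading order $-\ell_i/|\ell_i|^4$ of magnitude $|\ell_i|^{-3}$ for $i\in I$, and is bounded for $i\notin I$. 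Hypothesis \eqref{eq:convercontrol} gives $|I|\le d$, and simplicity of $\mathbb{V}$ then forces $\{a_i\}_{i\in I}$ to be linearly independent in $\mathbb{R}^d$ (the intersection $\bigcap_{i\in I}H_i$ being approached has codimension $|I|$). Separating the critical-point equation into real and imaginary parts, linear independence of $\{a_i\}_{i\in I}$ implies $\rRe(\ell_i)/|\ell_i|^4$ and $\rIm(\ell_i)/|\ell_i|^4$ are bounded for each $i\in I$, so $|\ell_i|^2=\rRe(\ell_i)^2+\rIm(\ell_i)^2=O(|\ell_i|^8)$, a contradiction as $|\ell_i|\to0$. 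Combined with the previous step, the critical locus lies in a compact subset of $M(\mathbb{V})$.

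The main obstacle is the analysis near the hyperplanes: Lemma \ref{l:Iestimate_pre} alone only detects nonvanishing of $d^{\mathbb{C}}\phi$ in directions where the chosen linear form is large, so it cannot directly rule out critical points where $|z|$ is bounded but some $|\ell_i|\to0$. The additional combinatorial inputs of simplicity together with \eqref{eq:convercontrol} are exactly what force the required non-cancellation of the leading-order terms of the critical-point equation.
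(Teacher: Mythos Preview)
Your proof is correct, and the first half matches the paper's argument exactly: rescale via Lemma \ref{l:rescaling} to arrange \eqref{eq:convercontrol}, then apply Lemma \ref{l:Iestimate_pre} with $\xi$ varying over a spanning set of linear forms to confine the critical locus to a bounded region of $\mathbb{C}^d$. The paper's proof stops there, asserting compactness without further comment.

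Your second step is a genuine addition. The paper does not explicitly rule out critical points accumulating on the hyperplanes within the bounded region, whereas you do so directly from the critical-point equation $\sum_i(1-|\ell_i|^{-4})\ell_i\,a_i=0$: boundedness in $\mathbb{C}^d$ plus $|\ell_i|\to 0$ for $i\in I$ forces (after passing to a subsequence) a limit point in $\bigcap_{i\in I}H_i$, so this intersection is nonempty, and simplicity together with $|I|\le d$ from \eqref{eq:convercontrol} gives linear independence of $\{a_i\}_{i\in I}$; dualising then bounds each blowing-up coefficient, yielding the contradiction $|\ell_i|^2=O(|\ell_i|^8)$. This is a clean self-contained argument that makes explicit what the paper leaves implicit, and it relies on exactly the combinatorial inputs (simplicity and \eqref{eq:convercontrol}) already in play.
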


\begin{proof}
By Lemma \ref{l:rescaling}, at the cost of replacing $\ell_i$ by $c_i\ell_i$, we can assume that \eqref{eq:convercontrol} is satisfied without changing the Weinstein deformation type.
Then by Lemma \ref{l:Iestimate_pre}, $d\phi$ is non-zero when $|\rRe(\xi)|$ or $|\rIm(\xi)|$ is large. 
This is true for any polarization $\xi$.
Since the zeros of $d\phi$ do not depend on $\xi$, we conclude that they lie in a compact set. 
\end{proof}

\subsection{Some Useful Lemmas.} Before discussing the symplectic geometry of $M(\mathbb{V})$, we collect here some lemmas that will be used later. We start with a simple fact in linear algebra.

\begin{lemma}\label{Prelim:linear-algebra-fact}
The (pointwise) inverse matrix $\omega^{-1}$ of $\omega$, as a section of $\Hom(T^*M(\mathbb{V}), TM(\mathbb{V}))$ over $M(\mathbb{V})$, extends smoothly over $\mathbb{C}^d$.
Moreover, for a point $p \in \bigcap_{i \in J} H_{i}$, the image of $\omega^{-1}|_{p}$ lies in $\bigcap_{i \in J} T_pH_{i}$.
\end{lemma}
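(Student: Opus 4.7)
The plan is to reduce the statement to a local computation near a point $p\in\bigcap_{i\in J}H_i$ by pulling back to standard coordinates via Lemma \ref{sector:Standardization}, and then inverting the matrix (\ref{eq:symp}) for $\omega$. Since $\mathbb{V}$ is simple, the existence of $p$ forces $|J|\leq d$, so I would first complete $J$ to an index set $J'\supseteq J$ with $|J'|=d$ and apply Lemma \ref{sector:Standardization} with $J'$. In the resulting coordinates one may assume $\ell_i=z_i$ for $i\in J'$, so that $J\subseteq\{1,\dots,d\}$ indexes coordinate hyperplanes $\{z_i=0\}_{i\in J}$.

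In these coordinates, (\ref{eq:symp}) represents $\omega$ in the basis $\{\partial/\partial\vec{x},\partial/\partial\vec{y}\}$ by the antisymmetric matrix built from $4(D+A^TA)$, and its pointwise inverse is
\[
\omega^{-1}=\tfrac14\begin{pmatrix}0 & -(D+A^TA)^{-1}\\ (D+A^TA)^{-1} & 0\end{pmatrix}.
\]
So everything reduces to showing that $(D+A^TA)^{-1}$ extends smoothly across $\bigcap_{i\in J}\{z_i=0\}$ and identifying its value at $p$. I would reuse the block decomposition from the proof of Lemma \ref{l:Iestimate_pre}: writing
\[
D+A^TA=\begin{pmatrix} E & F\\ G & H\end{pmatrix}
\]
with $E$ the sub-block indexed by $J$, the block-inverse formula (\ref{eq:inverse}) expresses $(D+A^TA)^{-1}$ in terms of $E^{-1}$ and $(H-GE^{-1}F)^{-1}$. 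Splitting $E=E_0+E_1$ with $E_0=\mathrm{diag}(1/|z_i|^4)_{i\in J}$ and $E_1$ smooth and bounded near $p$ (its entries involve only $|z_i|^4$ or $|\ell_j|$ for $j\notin J'$, which are bounded away from $0$), the identity
\[
E^{-1}=(I+E_0^{-1}E_1)^{-1}E_0^{-1}
\]
realizes $E^{-1}$ as a smooth matrix-valued function times $E_0^{-1}=\mathrm{diag}(|z_i|^4)_{i\in J}$, which is manifestly polynomial in $(z_i,\bar z_i)$ and vanishes on the hyperplanes indexed by $J$. Substituting into (\ref{eq:inverse}), each of the four blocks of $(D+A^TA)^{-1}$ extends smoothly to a neighbourhood of $p$ in $\mathbb{C}^d$, and the three blocks containing a factor of $E^{-1}$ vanish at $p$, leaving only $H^{-1}|_p$ in the bottom-right.

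Given this explicit limit, the ``moreover'' part is immediate: $\omega^{-1}|_p$ sends every cotangent vector into $\mathrm{span}\{\partial/\partial x_k,\partial/\partial y_k\}_{k\notin J}$, which in the chosen coordinates is exactly $\bigcap_{i\in J}T_p H_i$, since $T_p\{z_i=0\}=\mathrm{span}\{\partial/\partial x_k,\partial/\partial y_k\}_{k\neq i}$. The main obstacle I anticipate is the algebraic verification that the $1/|z_i|^4$ singularities in $D$ cancel precisely against the $|z_i|^4$ factors from $E_0^{-1}$, producing genuinely smooth (and not merely continuous) extensions; once the identity for $E^{-1}$ above is recorded, the rest follows from tracking vanishing orders in the block-inverse formula.
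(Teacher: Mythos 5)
Your proposal is correct and follows essentially the same route as the paper's proof: both reduce the claim to inverting the $d\times d$ matrix $D+A^TA$ via the block-inverse formula of \cite{inversematrix2000}, with the potentially singular block $E$ indexed by $J$. The only differences are minor --- the paper factors $\omega^{-1}=J_{\mathbb{V}}\circ g^{-1}$, checks that $g^{-1}$ extends, and then invokes $J_{\mathbb{V}}$-invariance of $\bigcap_{i\in J}T_pH_i$ for the ``moreover'' clause, whereas you invert the explicit block form \eqref{eq:symp} of $\omega$ directly; and your factorization $E^{-1}=(I+E_0^{-1}E_1)^{-1}E_0^{-1}$ with $E_0^{-1}=\mathrm{diag}(|z_i|^4)_{i\in J}$ makes the \emph{smoothness} (not merely the pointwise continuity) of the extension genuinely explicit, which the paper's argument via taking the limit of the block inverse leaves somewhat implicit.
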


\begin{proof}
Since the complex structure $J_\mathbb{V}$ on $M(\mathbb{V})$ is $\omega$-compatible, we have $-g \circ J_\mathbb{V}=\omega:TM(\mathbb{V}) \to T^*M(\mathbb{V})$, where $g$ is the Riemannian metric.
It follows that $\omega^{-1}=J_\mathbb{V} \circ g^{-1}:  T^*M(\mathbb{V}) \to TM(\mathbb{V})$.
Since $J_\mathbb{V}$ extends smoothly over $\mathbb{C}^d$, it suffices to check that  $g^{-1}$ extends smoothly over $\mathbb{C}^d$.

Let $p \in \bigcap_{i \in J} H_{i}$ and $p \notin H_{i}$ if $i \notin J$. 
In a neighborhood of $p$, we write $g=(g_{ij})$ as a  $2 \times 2$ block matrix
	\begin{equation*}
	R={\begin{pmatrix}
		E & F \\
		G & H 
		\end{pmatrix}}
	\end{equation*}
such that $E=(g_{ij})_{i,j \in J}$. Note that $R$ is only well-defined on the complement $M(\mathbb{V})$ of the hyperplanes. By \cite[Theorem 2.1]{inversematrix2000}, $R^{-1}$ is given by \eqref{eq:inverse}.
	
On the other hand, we know that as we approach the point $p$ from the complement $M(\mathbb{V})$,  $E^{-1}$ converges to the zero matrix and the determinants $|F|, |G| \text{ and } |H|$ are bounded. Thus the inverse matrix $R^{-1}$ converges to $\begin{pmatrix}
	0 & 0 \\
	0 & H^{-1} 
	\end{pmatrix}$,
which clearly extends smoothly over $p$.

Moreover, the image of $R^{-1}$ at $p$ is spanned by $\{\partial_{x_i}, \partial_{y_i}\}_{i \notin J}$, which gives precisely the subspace $\bigcap_{i \in J} T_pH_{i}$. Since $\bigcap_{i \in J} T_pH_{i}$ is $J_\mathbb{V}$-invariant, the image of $\omega^{-1}$ is contained in $\bigcap_{i \in J} T_pH_{i}$.
\end{proof}

\begin{corollary}\label{c:extend}
For any smooth function $H:[0,1] \times \mathbb{C}^d \to \mathbb{R}$, the Hamiltonian vector field of $H|_{M(\mathbb{V})}$ on $M(\mathbb{V})$ extends to a smooth vector field over $\mathbb{C}^d$ that is tangent to the strata in the union $\bigcup_{i=1}^nH_{i}$.
\end{corollary}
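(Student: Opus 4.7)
The plan is to express the Hamiltonian vector field invariantly as $X_H = \omega^{-1}(dH)$, viewing $\omega^{-1}$ as a section of $\mathrm{Hom}(T^\ast M(\mathbb{V}), TM(\mathbb{V}))$, and then invoke Lemma~\ref{Prelim:linear-algebra-fact} directly. Since $H$ is given as a smooth function on all of $[0,1]\times\mathbb{C}^d$, its differential $dH$ is a smooth section of $T^\ast\mathbb{C}^d$ on the ambient space. The only potentially singular piece is $\omega^{-1}$, and this is precisely what Lemma~\ref{Prelim:linear-algebra-fact} controls.

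First, I would note that $X_{H|_{M(\mathbb{V})}}$, which is \emph{a priori} only defined on $M(\mathbb{V})$, coincides with the composition $p \mapsto \omega^{-1}|_p(dH_p)$, where $dH_p$ makes sense for every $p\in\mathbb{C}^d$. By Lemma~\ref{Prelim:linear-algebra-fact}, the map $\omega^{-1}$ extends to a smooth section of $\mathrm{Hom}(T^\ast\mathbb{C}^d, T\mathbb{C}^d)$ over all of $\mathbb{C}^d$. Composing two smooth objects gives a smooth vector field $\widetilde{X}_H$ on $\mathbb{C}^d$ that restricts to the Hamiltonian vector field on $M(\mathbb{V})$, proving the smooth extension.

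For the tangency statement, let $p \in \bigcap_{i\in J} H_i$ lie in a stratum of the union $\bigcup_{i=1}^n H_i$ (where $J$ is the maximal index set with $p\in H_i$ for $i\in J$). By the second assertion of Lemma~\ref{Prelim:linear-algebra-fact}, the image of $\omega^{-1}|_p : T_p^\ast\mathbb{C}^d \to T_p\mathbb{C}^d$ is contained in $\bigcap_{i\in J} T_p H_i$, which is exactly the tangent space of the stratum through $p$. Hence
\begin{equation*}
\widetilde{X}_H(p) = \omega^{-1}|_p(dH_p) \in \bigcap_{i\in J} T_p H_i,
\end{equation*}
so $\widetilde{X}_H$ is tangent to every stratum, as required.

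I expect no serious obstacle here: the entire content has been pushed into Lemma~\ref{Prelim:linear-algebra-fact}, and the corollary is essentially a reformulation of that lemma in terms of Hamiltonian dynamics. The only minor point to be careful about is the time parameter: one simply applies the argument above pointwise in $t \in [0,1]$, and smoothness in $t$ follows from smoothness of $H$ in $t$ combined with the $t$-independence of $\omega^{-1}$.
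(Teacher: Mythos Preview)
Your proposal is correct and follows exactly the same approach as the paper: the paper's proof is a single sentence noting that $dH_t$ is smooth over $\mathbb{C}^d$ and then invoking Lemma~\ref{Prelim:linear-algebra-fact}, which is precisely what you have spelled out in detail. Your added remarks on the tangency to strata and the time parameter are accurate elaborations of what the paper leaves implicit.
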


\begin{proof}
The $1$-form $dH_t$ is smooth over $\mathbb{C}^d$, so the result follows from Lemma \ref{Prelim:linear-algebra-fact}.
\end{proof}

The following lemma deals with the completeness of vector fields away from lower-dimensional strata.

\begin{lemma}\label{sector:CompletenessCriterion}
Let $X\subseteq\bar{X}$ be an open submanifold of a smooth manifold $\bar{X}$ (possibly with boundary) so that $\bar{X}\setminus X$ is a stratified space of lower dimension and $V$ a vector field on $X$. If $V$ can be extended to a complete $C^1$ (e.g. smooth) vector field $\bar{V}$ on $\bar{X}$ such that $\bar{V}\vert_{\bar{X}\setminus X}$ is tangent to the strata of $\bar{X}\setminus X$, then the flow of $V$ on $X$ will not escape to $\bar{X}\setminus X$.
\end{lemma}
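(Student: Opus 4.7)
The plan is to argue by contradiction, exploiting uniqueness of integral curves for $C^1$ vector fields together with the tangency hypothesis on strata. Suppose the flow of $V$ starting at some $x_0 \in X$ escapes from $X$, and let $\bar{\phi}_t$ denote the global flow of $\bar{V}$ on $\bar{X}$ (which exists for all $t \in \mathbb{R}$ by completeness and agrees with the flow of $V$ as long as the orbit stays in $X$). I would set
\[
T := \sup\bigl\{t \geq 0 \;\big|\; \bar{\phi}_s(x_0) \in X \text{ for all } s \in [0,t]\bigr\}.
\]
Since the flow escapes, $T<\infty$. Because $X$ is open, $\bar{\phi}_T(x_0)$ cannot lie in $X$ (otherwise a neighborhood of $T$ would map into $X$, contradicting the supremum), so $\bar{\phi}_T(x_0) \in \bar{X}\setminus X$ and in particular belongs to a well-defined stratum $S$.

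The key step is to apply tangency at $\bar{\phi}_T(x_0)$. Since $\bar{V}$ is tangent to $S$, its restriction $\bar{V}|_S$ is a $C^1$ vector field on the smooth manifold $S$, and the local ODE existence theorem furnishes an integral curve $\gamma\colon (-\epsilon,\epsilon) \to S$ with $\gamma(0) = \bar{\phi}_T(x_0)$. Viewed inside $\bar{X}$, the curve $\gamma$ solves the same ODE as $t \mapsto \bar{\phi}_{T+t}(x_0)$, so uniqueness of integral curves for the $C^1$ field $\bar{V}$ on $\bar{X}$ (which holds because $C^1$ implies locally Lipschitz) forces the two curves to coincide on $(-\epsilon,\epsilon)$. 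In particular $\bar{\phi}_{T-\epsilon/2}(x_0) \in S \subset \bar{X}\setminus X$, which directly contradicts the definition of $T$.

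In essence, tangency to each stratum plus $C^1$-uniqueness forces the global flow of $\bar{V}$ to preserve the stratification, and from this the non-escape statement is immediate. I do not anticipate a substantial obstacle; the only minor points to check are that $\bar{\phi}_T(x_0)$ indeed lands on a single smooth stratum (using that $\bar{X}\setminus X$ is stratified so each point lies in a unique open stratum) and that ODE uniqueness applies on both $S$ and $\bar{X}$ for a $C^1$ vector field, both of which follow immediately from the hypotheses.
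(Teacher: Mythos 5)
Your proof is correct. Both your argument and the paper's rest on the same core fact---that tangency of $\bar V$ to the strata, combined with uniqueness of integral curves for $C^1$ vector fields, forces the completed flow to preserve $\bar X\setminus X$---but you organize the conclusion differently. The paper states this invariance as a one-line assertion and then exploits that the completed time-$t$ flow $\psi^t$ is a diffeomorphism: a bijection that maps $\bar X\setminus X$ into itself (for all $t$, positive and negative) necessarily preserves the complement $X$ as well, and restricting $\psi^t$ to $X$ recovers the flow of $V$. You instead run a contradiction at the first escape time $T$: $\bar\phi_T(x_0)$ lies on a single stratum $S$, the restriction $\bar V|_S$ is a $C^1$ field on the smooth manifold $S$, and $C^1$-uniqueness of integral curves in $\bar X$ forces $\bar\phi_{T+t}(x_0)\in S\subset\bar X\setminus X$ for all small $|t|$, including $t<0$, contradicting $\bar\phi_s(x_0)\in X$ for $s<T$. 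Your route has the merit of spelling out exactly why tangency yields invariance of the strata (which the paper takes as read) and of avoiding the ``bijectivity lets you pass to the complement'' observation; the paper's version is more compact once that step is accepted. Both are valid.
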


\begin{proof}
Since $\bar{V}$ is complete in $\bar{X}$, for every $t>0$ the flow $\psi^t\colon\bar{X}\to\bar{X}$ is a well-defined diffeomorphism sending the stratified space $\bar{X}\setminus X$ to itself. Therefore $\psi^t$ restricts to a diffeomorphism on $X$, which is also the flow of $V$ on $X$. This shows that the flow of $V$ on $X$ will not escape to $\bar{X}\setminus X$.
\end{proof}

\subsection{Homotopy Sectoriality.}\label{section:homotopy}
In Section \ref{ss:essential}, we equipped $M(\mathbb{V})$ with a natural Weinstein structure. In order to apply the Sectorial Descent (Theorem \ref{Prelim:sectorial-descent}) to study the partially wrapped Fukaya category $\mathcal{W}(M(\mathbb{V}),\xi)$, we need to find a sectorial collection of real hypersurfaces in $M(\mathbb{V})$. In this subsection, we will prove a slightly more general result, which is stated as follows. 

Given any collection of transversely intersecting $k ( \leq d)$ complex hyperplanes $\left\{S_i=\xi_i^{-1}(c_i)\right\}_{i=1}^k$ in $\mathbb{C}^d$ for some $c_i \in \mathbb{R}$, which are also transverse to the hyperplanes at infinity i.e. the complex hyperplanes $H_{1},\cdots,H_{n}\subset\mathbb{C}^d$, there exists a Liouville deformation of $M(\mathbb{V})$ such that the real hypersurfaces $\rRe(\xi_i)^{-1} (c_i)$'s form a sectorial collection after deformation.

The proof is based on a lengthy calculation, so it is helpful to start with a sketch of the argument. Pick a subcollection $\left\{S_{i_1},\cdots,S_{i_s}\right\}$ of the hyperplanes $\{S_i\}_{1\leq i\leq k}$ such that the intersection $\bigcap_{j=1}^s S_{i_j}\neq\emptyset$, we explain how to deform the Liouville structure near $\bigcap_{j=1}^s S_{i_j}$ in a way that is compatible with all other subcollections with non-empty intersections.
We first construct a collection of commuting vector fields $\{X_{\rRe(\xi_j)}, V_j\}_{j=1}^s$ near $\bigcap_{j=1}^sS_{i_j}$ such that $\omega(V_j,X_{\rRe(\xi_{j'})})=\delta_{j,j'}$,
and $\omega(X_{\rRe(\xi_j)}, X_{\rRe(\xi_{j'})})=\omega(V_j, V_{j'})=0$ (cf. Lemma \ref{sector:TrivialSymplecticSplitting} and \ref{sector:GramSchmidtProcess}).
By integrating along (the normalization of) these vector fields, it gives the corresponding $s$ pairs of $R$ and $I$ functions (in the sense of Definition \ref{prelim:cornered-Liouville-sectors}), denoted by $(R_j,I_j)_{1\leq j\leq s}$, which in turn produce a symplectic trivial fibration from a neighborhood of 
$\bigcap_{j=1}^s S_{i_j}$ to $\mathbb{C}_{\varepsilon}^s:=((-2\varepsilon ,2\varepsilon) \times \mathbb{R})^s$ (cf. Corollary \ref{c:TrivialSymplecticSplitting}).
We will also show that $I_j$ is close to $\rIm(\xi_j)$ when $\varepsilon$ is small (cf. Corollary \ref{c:compareI} and Lemma \ref{l:Iestimate}). We can then apply Lemma \ref{l:Iestimate_pre} to get control of $d^\mathbb{C}\phi$ in terms of $I_{j}$ outside of a compact subset. We write down an explicit deformation of the Liouville structure using the product coordinates obtained from the trivial fibration to $\mathbb{C}_{\varepsilon}^s$ (cf. \eqref{eq:explicit1} and \eqref{eq:explicit2}), after an explicit calculation together with the estimates of $d^{\mathbb{C}}\phi$, we show that the deformation is a Liouville deformation such that the skeleton lies in a compact set for all time (cf. Proposition \ref{sector:CompactWeinsteinDeformation} and \ref{sector:CompactWeinsteinDeformationMultiple}). The construction of the commuting vector fields and the explicit deformation of the Liouville structure are canonical enough in the sense that they are compatible when we consider different subcollections of the complex hyperplanes $\{S_i\}_{1\leq i\leq k}$.

We start with the case of a single hyperplane.

\begin{proposition}\label{sector:CompactWeinsteinDeformation}
Let $c\in\mathbb{R}$ be a real number so that $\rRe(\xi)^{-1} (c)$ intersects all hyperplanes $H_{1},\cdots,H_{n}$ transversely. Then there is a Liouville deformation $(M(\mathbb{V}),\omega, Z_t)$ with $Z_0=\nabla\phi$ and $Z_1$ is tangent to the sectorial hypersurface $\{\rRe(\xi)=c\}$ outside of a compact subset. Moreover, the skeleton $\displaystyle\overline{\bigcup_{0\leq t\leq 1}\Skel (M(\mathbb{V}),Z_t)}$ is compact.
\end{proposition}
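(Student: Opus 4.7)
The plan is to construct an explicit family of Liouville forms $\lambda_t$ with $\lambda_0 = -d^\mathbb{C}\phi$ by first producing a symplectic trivialization of a tubular neighborhood of $S := \{\rRe(\xi)=c\}$ and then interpolating to a model Liouville form in those coordinates.

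I would set $R := \rRe(\xi) - c$, so that $S = R^{-1}(0)$. By \eqref{eq:Xreal} together with the genericity hypothesis on $\xi$, the Hamiltonian vector field $X_R$ is nowhere vanishing, and by Corollary \ref{c:extend} it extends smoothly over $\mathbb{C}^d$ tangent to every stratum of $\bigcup_i H_i$. A standard Darboux-type argument then produces a conjugate function $I$ with $\omega(X_R, X_I) = 1$ and $[X_R, X_I]=0$ in a neighborhood of $S$: pick a local transversal to $X_R$, define $I$ there with the appropriate initial condition, and extend it by requiring $X_R \cdot I = 0$. The pair $(R, I)$ then yields a symplectic identification
\begin{equation*}
\Psi : \Nbd^Z(S) \xrightarrow{\cong} S \times \mathbb{C}_{-\varepsilon < \rRe < \varepsilon}
\end{equation*}
for some small $\varepsilon > 0$.

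Using $\Psi$, I would write down the explicit interpolation
\begin{equation*}
\lambda_t = -d^\mathbb{C}\phi + t\,\rho \cdot \bigl(\Psi^\ast \lambda_{\mathbb{C}}^{\alpha} + df_0\bigr),
\end{equation*}
where $\rho$ is a cutoff function equal to $1$ outside a large compact subset and supported inside the product neighborhood, $\alpha \in (0,1)$ is a fixed constant, $\lambda_\mathbb{C}^{\alpha}$ is the model form from \eqref{eq:lambdaa} pulled back from the $\mathbb{C}$-factor, and $df_0$ is a compactly supported exact correction chosen so that $d\lambda_t = \omega$ for every $t \in [0,1]$ (this is possible because on the contractible product neighborhood the difference of the two candidate primitives is closed, hence exact). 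By construction, near infinity $\lambda_1$ matches the shape $\lambda|_S + \lambda_\mathbb{C}^\alpha + df$ required by Definition \ref{Prelim:Liouville-sectors}(iii), so the dual vector field $Z_1 := \omega^{-1}(\lambda_1)$ is tangent to $S$ outside a compact subset.

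The main obstacle is to verify that the total skeleton $\overline{\bigcup_t \Skel(M(\mathbb{V}), Z_t)}$ is compact, equivalently that the zero set of $Z_t = \omega^{-1}(\lambda_t)$ lies in a fixed compact subset uniformly in $t \in [0,1]$. On compact subsets this follows from Corollary \ref{generation:finiteness-critical-points} together with continuity of the deformation. At infinity the key input is Lemma \ref{l:Iestimate_pre}, which yields $-d^\mathbb{C}\phi(X_{\rRe(\xi)}) \approx \rRe(\xi)$ and $-d^\mathbb{C}\phi(X_{\rIm(\xi)}) \approx \rIm(\xi)$ up to bounded multiplicative factors outside a compact subset. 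When $\varepsilon$ is chosen small enough, the coordinate $I$ differs from $\rIm(\xi)$ only by a bounded remainder on $\Nbd^Z(S)$, so these estimates transfer to the $(R,I)$-coordinates produced by $\Psi$. Combined with the explicit behavior of $\lambda_\mathbb{C}^{\alpha}$ in the slice direction, this forces $\lambda_t$ to be nowhere vanishing outside a fixed compact subset for every $t$, yielding the required uniform compactness of the skeleta.
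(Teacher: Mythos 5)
Your interpolation formula has a basic problem: for a Liouville deformation one needs $d\lambda_t = \omega$ for every $t$, but the term $t\,\rho\cdot\bigl(\Psi^{\ast}\lambda_{\mathbb{C}}^{\alpha} + df_0\bigr)$ you add to $-d^{\mathbb{C}}\phi$ is not closed. Indeed
\[
d\bigl(\rho\,\Psi^{\ast}\lambda_{\mathbb{C}}^{\alpha}\bigr) \;=\; d\rho\wedge\Psi^{\ast}\lambda_{\mathbb{C}}^{\alpha} + \rho\,\Psi^{\ast}\omega_{\mathbb{C}},
\]
and the second summand does not vanish on the region where $\rho\equiv 1$, while $d(\rho\,df_0) = d\rho\wedge df_0$ is supported only where $d\rho\neq 0$, so no compactly supported $f_0$ can cancel it. The correct mechanism is structurally different: on the (contractible) product neighborhood one first writes $\lambda_{\mathbb{V}} = \lambda_F + \lambda_{\mathbb{C}}^{\alpha} + df$ for some \emph{not} compactly supported function $f$ — this identity already holds since the two primitives of $\omega$ differ by a closed, hence exact, $1$-form — and then replaces $f$ by a modified function $\tilde f$ obtained by precomposing with a one-variable bump in the $R$-direction, so that $d\tilde f$ restricted to each slice $\{z\}\times\mathbb{C}_{\varepsilon}$ vanishes near $R=0$. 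The deformation is then $\lambda_t = \lambda_F + \lambda_{\mathbb{C}}^{\alpha} + d\bigl(t\tilde f + (1-t) f\bigr)$, which is automatically a family of primitives of $\omega$.

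Even granting that this is what you intended, the verification that the zero set of $Z_t = \omega^{-1}(\lambda_t)$ stays in a fixed compact set is the heart of the proposition, and your sketch is too thin there. Because $\tilde f - f$ is not compactly supported, the estimates of Lemma \ref{l:Iestimate_pre} do not by themselves force compactness; what one actually finds is that a zero of $\lambda_t$ with $|I|\gg 1$ imposes an explicit algebraic relation involving the parameter $\alpha$, the time $t$, the derivatives $\rho', \rho''$ of the bump function, and the function $C(z)$ from Lemma \ref{l:Iestimate}. The point of the argument is that this relation has \emph{no} solutions once $\alpha$ is taken sufficiently close to $1$ (and $\rho'$ close to $1$ in the outer region $|R|\ge\varepsilon$), and proving this requires separating the cases $|R|\le\varepsilon$ and $|R|\ge\varepsilon$ and ruling out the limiting regimes $A := t(1-\rho')\to 0$ and $A\to 1$. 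Your claim that the estimates "force $\lambda_t$ to be nowhere vanishing outside a fixed compact subset for every $t$" is not true for an arbitrary fixed $\alpha\in(0,1)$; the choice of $\alpha$ must be made explicit and tied to the estimates.
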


As explained earlier, the first step is to construct the commuting vector fields $X_R$ and $V$.

\begin{lemma}\label{sector:TrivialSymplecticSplitting}
Let $c\in\mathbb{R}$  so that $\rRe(\xi)^{-1} (c)$ intersects all the hyperplanes $H_{1},\cdots,H_{n}$ transversely. 
Then there is a small neighborhood $\Nbd\left(\left\{\rRe(\xi)=c\right\}\right)$ of $\{ \rRe(\xi)=c\}$ and a vector field $V$ in $\Nbd\left(\left\{\rRe(\xi)=c\right\}\right)$
such that $\omega(V,X_{\rRe(\xi)})=1$ and the flow of $V$ is well-defined in the punctured neighborhood $\Nbd\left(\left\{\rRe(\xi)=c\right\}\right) \setminus \left(\bigcup_{i=1}^nH_i \right)$ for some positive and negative time.
\end{lemma}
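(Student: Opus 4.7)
The plan is to construct $V$ as an explicit normalized scalar multiple of the Hamiltonian vector field $X_{\rIm(\xi)}$, then use the explicit form to verify the algebraic identity and to control the local flow. Concretely, I would set
\[
V := -\frac{X_{\rIm(\xi)}}{\|X_{\rRe(\xi)}\|^2}
\]
on $M(\mathbb{V})$. Since $D+A^TA$ is positive definite there and $\vec{b}\neq 0$ by the polarization hypothesis, the denominator $\|X_{\rRe(\xi)}\|^2 = \tfrac{1}{4}\vec{b}^T(D+A^TA)^{-1}\vec{b}$ is smooth and strictly positive on $M(\mathbb{V})$, so $V$ is smooth on $M(\mathbb{V})$. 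A direct computation using \eqref{eq:Xreal}, \eqref{eq:Xim} and the matrix expression \eqref{eq:symp} of $\omega$ yields
\[
\omega(X_{\rIm(\xi)}, X_{\rRe(\xi)}) = -\tfrac{1}{4}\vec{b}^T(D+A^TA)^{-1}\vec{b} = -\|X_{\rRe(\xi)}\|^2,
\]
from which $\omega(V,X_{\rRe(\xi)})=1$ on $M(\mathbb{V})$.

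Next I would analyze the behavior of $V$ near the hyperplane strata using the block-inverse formula \eqref{eq:inverse}. As $|z_i|\to 0$ for $i$ in an index set $K$ with $|K|<d$, the rows and columns of $(D+A^TA)^{-1}$ indexed by $K$ vanish at the rate $|z_i|^4$, while the remaining block converges to a positive-definite matrix $H^{-1}$. Condition (i) of Section \ref{ss:essential} ensures that $\vec{b}_{K^c}\neq 0$, so the limiting denominator $\tfrac{1}{4}\vec{b}_{K^c}^T H^{-1}\vec{b}_{K^c}$ is strictly positive, while the components of the numerator in the directions $\partial/\partial x_i$ for $i\in K$ vanish at rate $|z_i|^4$. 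Hence $V$ extends smoothly across the stratum $\bigcap_{i\in K}H_i$ with components in the $\partial/\partial x_i$ directions for $i\in K$ vanishing on the stratum, so that $V$ is tangent to it. At an isolated zero-dimensional stratum $p\in\bigcap_{i=1}^dH_i$, $V$ remains bounded by $\max_i 1/|b_i|$ but may fail to be continuous; however, such $p$ are isolated.

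For the flow, since $V$ is smooth on the open submanifold $M(\mathbb{V})\cap \Nbd(\{\rRe(\xi)=c\})$, standard ODE theory yields a local flow through each point. The identity $V(\rRe(\xi)) = \omega(V,X_{\rRe(\xi)}) = 1$ implies the flow increases $\rRe(\xi)$ at unit rate, so it remains in a slab of width $\delta$ about $\{\rRe(\xi)=c\}$ for time $|t|\le\delta$. The smooth tangency to hyperplane strata of codimension less than $d$ combined with Lemma \ref{sector:CompletenessCriterion} applied to a compactly supported extension of $V$ prevents escape to those strata; and the boundedness of $V$ near any isolated zero-dimensional stratum $p$ shows that a trajectory starting at distance $r$ from $p$ cannot reach $p$ within time $r/\sup\|V\|$. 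Therefore the flow is well-defined in the punctured neighborhood for some positive and negative time. The principal technical point is establishing the matching rates of vanishing of $X_{\rIm(\xi)}$ and $\|X_{\rRe(\xi)}\|^2$ along the hyperplanes via the block-inverse analysis, which prevents the normalization from blowing up along the hyperplane arrangement.
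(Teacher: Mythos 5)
Your choice of $V$ coincides with the paper's (since $JX_{\rRe(\xi)}=X_{\rIm(\xi)}$, the paper's $V=-JX_{\rRe(\xi)}/\|X_{\rRe(\xi)}\|^2$ is the same vector field), and your verification of $\omega(V,X_{\rRe(\xi)})=1$ and your block-inverse analysis near codimension-$<d$ strata are all in the same spirit as the paper's proof. However, there is a genuine error in your handling of the zero-dimensional strata. You claim that near a point $p\in\bigcap_{i=1}^d H_i$ the field $V$ remains bounded by $\max_i 1/|b_i|$, and you use this boundedness to argue that a trajectory cannot reach $p$ in finite time. This is false: since $J$ is an isometry, $\|X_{\rIm(\xi)}\|=\|X_{\rRe(\xi)}\|$, so $\|V\|=1/\|X_{\rRe(\xi)}\|$; but $\|X_{\rRe(\xi)}\|^2=\tfrac{1}{4}\vec{b}^T(D+A^TA)^{-1}\vec{b}\to 0$ as all $|z_i|\to 0$ (the whole inverse matrix converges to zero there, as your own block analysis shows in the case $K=\{1,\dots,d\}$), so $\|V\|\to\infty$ at the zero-dimensional strata. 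The boundedness claim and the ensuing time-to-escape estimate therefore both fail.

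The correct route, which the paper takes, is to observe that the transversality hypothesis on $c$ forces $\{\rRe(\xi)=c\}$ to avoid every zero-dimensional stratum $\bigcap_{i\in J'}H_i$ with $|J'|=d$, so a sufficiently small neighborhood $\Nbd(\{\rRe(\xi)=c\})$ can be covered by regions $\mathcal{D}_K$ with $K\ne\{1,\dots,d\}$ only. On such a neighborhood your block-inverse estimate does give uniform two-sided bounds $0<c_R<\|X_{\rRe(\xi)}\|<C_R$, hence a uniform bound on $\|V\|$, and then Gr\"onwall (together with Lemma \ref{sector:CompletenessCriterion} and the tangency of the extension of $V$ to the positive-codimension strata) yields the existence of the flow for some time. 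So the fix is simply to delete the zero-dimensional-stratum discussion entirely rather than patch it: it never arises in the relevant region.
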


\begin{proof}
Take $V=-\frac{JX_{\rRe(\xi)}}{\| X_{\rRe(\xi)}\|^2}$, then
\begin{equation}\nonumber
\omega\left(V,X_{\rRe(\xi)}\right)=g\left(\frac{X_{\rRe(\xi)}}{\| X_{\rRe(\xi)}\|^2}, X_{\rRe(\xi)}\right)=1.
\end{equation}
Thus it suffices to show that
\begin{itemize}
	\item[(i)] $\|X_{\rRe(\xi)}\|$ is non-vanishing;
	\item[(ii)] the flow of $V$ is well-defined for some small time $t\neq0$
\end{itemize}
in $\Nbd\left(\left\{\rRe(\xi)=c\right\}\right) \setminus \bigcup_{i=1}^nH_i$.

To see this,  we consider the open cover $\left\{M(\mathbb{V})_{J,\epsilon}\right\}$ of $M(\mathbb{V})$ defined by \eqref{eq:covering}
and identify the linear forms in $\ell_j$, $j\in J$ with the coordinate functions $z_1,\dotsb ,z_d$. Then the $1$-form $d^\mathbb{C}\phi$ on $M(\mathbb{V})_{J,\epsilon}$ takes the form \eqref{eq:dphic}. Looking at the entries, we see that there is a constant $c>0$ such that the matrix $(D+A^TA)-c \Id$ is positive definite. Recall that we are working over the subset $M(\mathbb{V})_{J,\epsilon}$, where $|\ell_j| >\varepsilon$ for $j \notin J$, therefore the matrix $D+A^TA$ blows up only when some of the $z_i$'s approaches $0$. We apply the same trick used in the proof of Lemma \ref{l:Iestimate_pre}. 

Let $K \subset \{1,\dots,d\}$ and consider a region $\mathcal{D}_K\subset M(\mathbb{V})$ where $(z_i)_{i\in K}$ can be arbitrarily close to $0$ but $(z_i)_{i \notin K}$ are bounded away from $0$. Write $D+A^TA$ as a $2 \times 2$ block matrix, where the first block records all the coordinates with indices in $K$. As in the proof of Lemma \ref{Prelim:linear-algebra-fact}, we can apply \cite[Theorem 2.1]{inversematrix2000} to compute the inverse of 
$D+A^TA$.
When the coordinates $(z_i)_{i \in K}$ are sufficiently small, $(D+A^TA)^{-1}$ converges to a $2 \times 2$ block matrix where only the bottom right block, denoted by $P$, is non-zero (except when $K=\{1,\dots,d\}$, in which case $(D+A^TA)^{-1}$ converges to the $0$ matrix). In fact, we also know that there are constants $C'>c'>0$ (depending on $\mathcal{D}_K$) such that $P - c'\Id$ and $C'\Id -P$ are positive definite.
The genericity of our choice of the polarization $\xi=\sum_{i=1}^db_iz_i$ (see Definition \ref{def:pol.hyp.arr}) implies that all the $b_i$ are non-zero.
It follows that we have
\begin{equation} \label{eq:equality}
0<(\min_i |b_i|)\sqrt{c'} \le \|X_{\rRe(\xi)}\| \le (\max_i |b_i|)\sqrt{C'}
\end{equation}
in $\mathcal{D}_K$ as long as $K \neq \{1,\dots,d\}$.
The transversality assumption on $\{ \rRe(\xi)=c\}$ implies that $\{ \rRe(\xi)=c\} \subset \mathbb{C}^d$ is away from $0$-dimensional strata (i.e $\bigcap_{i \in J'} H_i$ for some $|J'|=d$) formed by the hyperplanes $\{H_i\}_{i=1}^n$.
As a consequence, we can cover a small neighborhood of $\{ \rRe(\xi)=c\}$ using the regions $\mathcal{D}_K$ with $K \neq \{1,\dots,d\}$. Altogether, they give us a uniform upper bound and a uniform lower bound on the norm $\|X_{\rRe(\xi)}\|$. This proves the existence of an open neighborhood of $\{ \rRe(\xi)=c\}$ satisfying (i).

By Corollary \ref{c:extend}, we know that $JX_{\rRe(\xi)}$ extends to a vector field on $\mathbb{C}^d$. By abuse of notations, we denote its extension by $JX_{\rRe(\xi)}$.
Moreover, by (\ref{eq:equality}) and the $2 \times 2$ block matrix argument above, we see that $\|X_{\rRe(\xi)}\|$ also extends to $\mathbb{C}^d$ and it is $0$ only at the $0$-dimensional strata.
As a result, $V=\frac{-JX_{\rRe(\xi)}}{\| X_{\rRe(\xi)}\|^2}$ extends to a well-defined vector field in $\mathbb{C}^d$ except possibly at the $0$-dimensional strata, and it is tangent to the strata in $\bigcup_{i=1}^nH_i$.
By Lemma \ref{sector:CompletenessCriterion}, to show the completeness of $V$ on $M(\mathbb{V})$, it suffices to exclude the possibility that its flowlines escape from the open subset $M(\mathbb{V})\subset\mathbb{C}^d$ in a finite time. Since in a neighborhood of $\{ \rRe(\xi)=c\}$, the norm $\|V\|=\frac{1}{\|X_{\rRe(\xi)}\|}$ is uniformly bounded above by a constant, the flow of  $V$ on $M(\mathbb{V})$ is complete by the Gr\"onwall's inequality \cite[Theorem 2.8]{teschl2012ordinary}.
\end{proof}

\begin{remark}\label{r:all time}
The proof of Lemma \ref{sector:TrivialSymplecticSplitting} shows that there are positive constants $c_R$ and $C_R$ such that $c_R < \|X_{\rRe(\xi)}\| <C_R$ in $\Nbd\left(\left\{\rRe(\xi)=c\right\}\right)$ (see \eqref{eq:equality}).
Moreover, if we choose $\Nbd\left(\left\{\rRe(\xi)=c\right\}\right)$ such that it is a union of $X_{\rRe(\xi)}$-integral curves, then the flow of $X_{\rRe(\xi)}$ in $\Nbd\left(\left\{\rRe(\xi)=c\right\}\right)$ exist for all positive and negative time.
\end{remark}

\begin{corollary}\label{c:TrivialSymplecticSplitting}
Under the assumption of Lemma \ref{sector:TrivialSymplecticSplitting}, there is a constant $\varepsilon>0$ and a pair of functions
\begin{align}
(R,I)\colon\Nbd\left(\left\{\rRe(\xi)=c\right\}\right)\to \mathbb{C}_{\varepsilon}:=(-2\varepsilon ,2\varepsilon) \times \mathbb{R} \label{eq:PComm}
\end{align}
such that $(R,I)^{-1}(0)=\{\xi=c\}$, $\nabla I=X_{\rRe(\xi)}$ and $\nabla R=V$.

In particular, the vector fields $X_R$ and $X_I$ communte and hence \eqref{eq:PComm} defines a trivial symplectic fibration.
\end{corollary}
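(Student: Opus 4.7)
The plan is to define $I$ and $R$ separately, and let the commutativity and the trivial symplectic fibration structure follow automatically from the normalization $\omega(V, X_{\rRe(\xi)}) = 1$ supplied by Lemma \ref{sector:TrivialSymplecticSplitting}.

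First, I would set $I := \rRe(\xi) - c$, which tautologically satisfies $X_I = X_{\rRe(\xi)}$ and $I^{-1}(0) = \{\rRe(\xi) = c\}$. The remaining work is to produce $R$ with $X_R = V$. The central step is to realize (a possibly modified version of) $V$ as the Hamiltonian vector field of a function on a neighborhood of $\{\xi = c\}$. If the specific $V = -JX_{\rRe(\xi)}/\|X_{\rRe(\xi)}\|^2$ from the proof of Lemma \ref{sector:TrivialSymplecticSplitting} is not already symplectic, I would replace it by $V' := V + fX_{\rRe(\xi)}$ for a suitable function $f$ chosen so that $\iota_{V'}\omega$ is closed; this modification preserves the normalization because $\omega(X_{\rRe(\xi)}, X_{\rRe(\xi)}) = 0$, so that $\omega(V', X_{\rRe(\xi)}) = \omega(V, X_{\rRe(\xi)}) = 1$. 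On the contractible tubular neighborhood of $\{\xi = c\}$, the closed $1$-form $\iota_{V'}\omega$ is then exact, and I take $R$ to be a primitive normalized to vanish on $\{\xi = c\}$. By construction, $X_R = V'$, and the joint vanishing $R^{-1}(0) \cap I^{-1}(0)$ equals $\{\xi = c\}$.

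Once $R$ and $I$ are in place, the Poisson bracket $\{R, I\} = \omega(X_R, X_I) = \omega(V', X_{\rRe(\xi)}) = 1$ is identically constant, so $[X_R, X_I] = X_{\{R,I\}} = 0$, which is the commutativity asserted in the corollary. The commuting Hamiltonian flows then assemble into a diffeomorphism $\Psi \colon \mathbb{C}_{\varepsilon} \times \{\xi = c\} \xrightarrow{\sim} \Nbd(\{\xi = c\})$ given by $\Psi(r, i, p) := \phi^r_{X_R} \circ \phi^i_{X_I}(p)$, for $\varepsilon > 0$ chosen small using the uniform bounds on $\|V\|$ and $\|X_{\rRe(\xi)}\|$ from the proof of Lemma \ref{sector:TrivialSymplecticSplitting}. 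Under $\Psi$, the map $(R, I)$ becomes the projection onto $\mathbb{C}_{\varepsilon}$, while the symplectic form decomposes as $dR \wedge dI + \omega_{\mathrm{fiber}}$ (the $\omega$-orthogonal complement of $\langle X_R, X_I\rangle$ is integrable as $\ker dR \cap \ker dI$ and is a symplectic subbundle), yielding the trivial symplectic fibration. I expect the main obstacle to be the construction of $f$: verifying existence amounts to showing that $d(\iota_V\omega)$ is locally of the form $df \wedge d(\rRe(\xi))$, and the cleanest route is probably to work in local coordinates adapted to the fibration $\rRe(\xi) \colon \mathbb{C}^d \to \mathbb{R}$ together with a slice argument along the $X_{\rRe(\xi)}$-flow.
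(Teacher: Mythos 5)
There is a genuine gap: you have read the two conditions as $X_I = X_{\rRe(\xi)}$ and $X_R = V$, but the corollary asks for the \emph{gradients} $\nabla I = X_{\rRe(\xi)}$ and $\nabla R = V$, which are rotated by $J$ away from the Hamiltonian vector fields. Your tautological choice $I := \rRe(\xi) - c$ is therefore not the right function. The cheap identity underlying this corollary is the Cauchy--Riemann relation $\nabla\rIm(\xi) = -JX_{\rIm(\xi)} = X_{\rRe(\xi)}$ (holomorphicity of $\xi$), so the function with the required gradient is $I = \rIm(\xi)$ — this is exactly what Corollary~\ref{c:compareI} records. The mislabelling propagates: your $I = \rRe(\xi) - c$ is the bounded coordinate in a neighbourhood of $\{\rRe(\xi)=c\}$, while your primitive $R$ of $\iota_{V'}\omega$ behaves like $\rIm(\xi)$ and is unbounded, so the pair lands in $\mathbb{R} \times (-2\varepsilon, 2\varepsilon)$ rather than the required $\mathbb{C}_{\varepsilon} = (-2\varepsilon, 2\varepsilon) \times \mathbb{R}$; you have, in effect, swapped $R$ and $I$. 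Remember that in the sectorial-hypersurface formalism of Definition~\ref{prelim:cornered-Liouville-sectors} the defining function $R$ satisfies $R^{-1}(0) = S$, so $R$ is the transverse, bounded coordinate and $I$ is the unbounded one along the characteristic foliation.

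The paper also takes a structurally different route to the existence of the two functions, and it is worth seeing why. Rather than attempting to make $V$ exact-Hamiltonian by a correction term $f X_{\rRe(\xi)}$ — a step you yourself flag as uncertain, and indeed the integrability condition $d\bigl(\iota_V \omega\bigr) \in \langle d\rRe(\xi)\rangle$ is not automatic — the paper simply \emph{integrates the flows} of $V$ and $X_{\rRe(\xi)}$ starting from $\{\xi = c\}$. This is precisely where the preceding work pays off: Lemma~\ref{sector:TrivialSymplecticSplitting} gives a uniformly bounded $V$ with a short-time flow, and Remark~\ref{r:all time} gives the flow of $X_{\rRe(\xi)}$ for all time, so the flow-box coordinates $(R,I)$ are defined on the full strip. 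The triviality of the symplectic fibration is then read off from the same Poisson-bracket computation you intend, $\omega(X_R,X_I) = \omega(-\nabla R,-\nabla I) = \omega(V,X_{\rRe(\xi)}) = 1$. Your last paragraph — commuting flows yielding a diffeomorphism $\Psi$, fibre $= \omega$-orthogonal complement — is consistent with the paper's conclusion, but it rests on the mislabeled $(R,I)$ and on the unverified existence of the corrector $f$, so as written the argument does not prove the stated corollary.
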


\begin{proof}
Since $\omega(V,X_{\rRe(\xi)})=1$ is a constant, the vector fields $V$ and $X_{\rRe(\xi)}$ commute and define an integrable 2-plane bundle transverse to $\{\xi=c\}$.
By Lemma \ref{sector:TrivialSymplecticSplitting}, the flow of $V$ is defined for some non-zero )(positive or negative) time.
On the other hand, the flow of $X_{\rRe(\xi)}$ is defined for all time (see Remark \ref{r:all time}).
Therefore, by integrating the vector fields $V$ and $X_{\rRe(\xi)}$, we get the functions $R$ and $I$ respectively. Since 
\[\omega(X_R,X_I)=\omega(JX_R,JX_I)=\omega(-\nabla R, -\nabla I)=\omega(V,X_{\rRe(\xi)})=1,\]
\eqref{eq:PComm} defines a trivial symplectic fibration.
\end{proof}

\begin{corollary}\label{c:compareI}
The function $I$ in Corollary \ref{c:TrivialSymplecticSplitting} agrees with $\rIm(\xi)$.
\end{corollary}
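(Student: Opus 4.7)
The plan is to verify that $\rIm(\xi)$ satisfies the defining properties of $I$ stipulated in Corollary \ref{c:TrivialSymplecticSplitting}, namely $\nabla I = X_{\rRe(\xi)}$ together with $I = 0$ on $(R, I)^{-1}(0) = \{\xi = c\}$. The vanishing of $\rIm(\xi)$ on $\{\xi = c\}$ is immediate, so the content of the argument is the K\"ahler-type identity
\[
\nabla \rIm(\xi) = X_{\rRe(\xi)}.
\]
Once this is in hand, $\nabla(I - \rIm(\xi)) = 0$, so $I - \rIm(\xi)$ is locally constant on $\Nbd(\{\rRe(\xi) = c\})$. The trivial symplectic fibration $(R, I)\colon\Nbd(\{\rRe(\xi) = c\}) \to \mathbb{C}_{\varepsilon}$ has connected base, hence every connected component of the domain meets the fiber $(R, I)^{-1}(0) = \{\xi = c\}$, on which both $I$ and $\rIm(\xi)$ vanish; the constant is therefore zero on each component, and $I = \rIm(\xi)$ throughout the neighborhood.

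For the identity $\nabla \rIm(\xi) = X_{\rRe(\xi)}$, I would work in the local coordinates of Lemma \ref{sector:Standardization}. Writing $\xi = \sum_{i=1}^d b_i z_i$ and $M := D + A^TA$, formula \eqref{eq:Xreal} gives $X_{\rRe(\xi)} = \tfrac{1}{4}(M^{-1}\vec{b})^T\partial_{\vec{y}}$. The K\"ahler compatibility $g = \omega(\cdot, J\cdot)$, combined with the block form \eqref{eq:symp} and the identification $J\partial_{x_i} = \partial_{y_i}$, $J\partial_{y_i} = -\partial_{x_i}$, shows that the Riemannian metric $g$ is block-diagonal with both the $\partial_{\vec{x}}$-block and the $\partial_{\vec{y}}$-block equal to $4M$. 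Solving $g(\nabla \rIm(\xi), \cdot) = d\rIm(\xi) = \sum_i b_i\,dy_i$ then yields $\nabla \rIm(\xi) = \tfrac{1}{4}(M^{-1}\vec{b})^T\partial_{\vec{y}}$, matching $X_{\rRe(\xi)}$. The identity is intrinsic (it is simply the statement that on the K\"ahler manifold $(M(\mathbb{V}), \omega, J)$ the Hamiltonian of the real part of a holomorphic function equals the gradient of the imaginary part), and the standardization charts of Lemma \ref{sector:Standardization} cover $M(\mathbb{V})$ after the rescaling of Lemma \ref{l:rescaling}, so the identity holds globally on $M(\mathbb{V})$.

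I do not anticipate any real obstacle. The corollary is essentially a consistency check that the function produced by the ``integrating vector fields'' construction of Corollary \ref{c:TrivialSymplecticSplitting} coincides with the natural candidate suggested by the holomorphic coordinate $\xi$. The only substantive input is the pointwise identity $\nabla \rIm(\xi) = X_{\rRe(\xi)}$, and this is a direct consequence of the explicit matrix formulas \eqref{eq:sympform}--\eqref{eq:Xreal} already established in Lemma \ref{sector:Standardization}.
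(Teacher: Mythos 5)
Your proposal is correct and follows essentially the same route as the paper: both hinge on the K\"ahler identity $\nabla\rIm(\xi)=X_{\rRe(\xi)}=\nabla I$ (the paper records it as $\nabla\rIm(\xi)=-JX_{\rIm(\xi)}=X_{\rRe(\xi)}$, while you derive it from the explicit matrix formulas of Lemma~\ref{sector:Standardization}). The paper pins down the constant of integration by first observing that $V$ is a multiple of $X_{\rIm(\xi)}$ and hence tangent to $\{\rIm(\xi)=0\}$, so that $I$ vanishes on that whole hypersurface, and then invoking uniqueness of integral curves; your argument --- that $I-\rIm(\xi)$ is locally constant and every component of $\Nbd\left(\{\rRe(\xi)=c\}\right)$ meets $(R,I)^{-1}(0)=\{\xi=c\}$, where both terms vanish --- is a slightly more elementary variant of the same idea.
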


\begin{proof}
The gradient vector field of $\rIm(\xi)$ is given by $\nabla \rIm(\xi)=-JX_{\rIm(\xi)}=X_{\rRe(\xi)}$, which is the same as the gradient vector field of $I$.

Recall from Lemma \ref{sector:TrivialSymplecticSplitting} that $V=-JX_{\rRe(\xi)}/\| X_{\rRe(\xi)}\|^2=-X_{\rIm(\xi)}/\| X_{\rRe(\xi)}\|^2$, which is tangent to $\{\rIm(\xi)=0\}$. As a result, $I|_{\{\rIm(\xi)=0\}}=0$. The uniqueness of integral curve with the same initial condition implies that $I=\rIm(\xi)$.
\end{proof}

\begin{lemma}\label{l:Iestimate}
Let $R$ and $I$ be functions as in Lemma \ref{sector:TrivialSymplecticSplitting} and Corollary \ref{c:TrivialSymplecticSplitting}. Suppose also that  
\begin{align*}
\bigcap_{\substack{j\in K\\ \vert K\vert=d+1}}\left\{\vert\ell_j\vert\leq 1\right\}=\emptyset.
\end{align*}
Then inside the open neighborhood $\Nbd\left(\{\rRe(\xi)=c\}\right)\cong F\times\mathbb{C}_\varepsilon$, we have a function $C(z)>0$ and two positive constants $c_1,c_2$ such that $c_1<C(z)<c_2$ and 
    \[
        -d^\mathbb{C}\phi (X_I)=C(z)I
    \]
    for $\vert I\vert$ sufficiently large. 
\end{lemma}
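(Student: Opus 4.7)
The plan is to reduce this lemma to a combination of two results already established earlier in the section, namely Corollary \ref{c:compareI} and Lemma \ref{l:Iestimate_pre}. Specifically, since the hypothesis $\bigcap_{|K|=d+1}\{|\ell_j|\le 1\} = \emptyset$ is the same as the one appearing in Lemma \ref{l:Iestimate_pre}, and the neighborhood $\Nbd(\{\rRe(\xi)=c\})$ is an open subset of $M(\mathbb{V})$, the estimate obtained in Lemma \ref{l:Iestimate_pre} can be restricted to this neighborhood without loss.

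First, I would invoke Corollary \ref{c:compareI} to replace $I$ by $\rIm(\xi)$ throughout, so that $X_I = X_{\rIm(\xi)}$ and the quantity we need to control becomes $-d^{\mathbb{C}}\phi(X_{\rIm(\xi)})$. The condition ``$|I|$ sufficiently large'' is then identified with ``$|\rIm(\xi)|$ sufficiently large''. Next, I would apply Lemma \ref{l:Iestimate_pre} directly, which yields a function $C_I(z)>0$ and constants $0 < c_1 < c_2$ such that $c_1 < C_I(z) < c_2$ and $-d^{\mathbb{C}}\phi(X_{\rIm(\xi)}) = C_I(z)\rIm(\xi)$ when $|\rIm(\xi)|$ is large. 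Setting $C(z) := C_I(z)$ completes the argument.

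There is essentially no new technical obstacle here; the content of the lemma is already packaged inside Lemma \ref{l:Iestimate_pre}. The only thing to verify is that the restriction to the neighborhood $\Nbd(\{\rRe(\xi)=c\})$ does not interfere with the uniform bounds $c_1, c_2$, but this is automatic because Lemma \ref{l:Iestimate_pre} provides bounds that hold globally on $M(\mathbb{V})$. The main purpose of isolating this statement as a separate lemma is to have the estimate phrased in the product coordinates $F \times \mathbb{C}_\varepsilon$ used for the trivial symplectic fibration in Corollary \ref{c:TrivialSymplecticSplitting}, which will be needed in the forthcoming construction of the Liouville deformation making $\{\rRe(\xi)=c\}$ sectorial.
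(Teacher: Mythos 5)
Your proposal matches the paper's argument exactly: the proof in the paper is literally ``This is a consequence of Lemma~\ref{l:Iestimate_pre} and Corollary~\ref{c:compareI},'' and you have simply spelled out the details of that reduction (identify $I$ with $\rIm(\xi)$ via Corollary~\ref{c:compareI}, then restrict the global estimate of Lemma~\ref{l:Iestimate_pre} to the neighborhood). No issues.
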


\begin{proof}
This is a consequence of Lemma \ref{l:Iestimate_pre} and Corollary \ref{c:compareI}.
\end{proof}

We will now start to prove Proposition \ref{sector:CompactWeinsteinDeformation}, the main result of this subsection, using Lemma \ref{l:Iestimate}.

\begin{proof}[Proof of Proposition \ref{sector:CompactWeinsteinDeformation}]
The proof is divided into six steps.
\begin{paragraph}{Step 0: preparation.}
Pick $c'\gg0$ large enough so that $(M(\mathbb{V}),c'\phi)$ is covered by the open subsets $M(\mathbb{V})_{J,1}$ (i.e. the assumption \eqref{eq:convercontrol} holds). 
By Lemma \ref{l:rescaling}, working with $(M(\mathbb{V}),c'\phi)$ does not change the Weinstein deformation type, so it suffices to prove the proposition for $(M(\mathbb{V}),c'\phi)$ instead.

Using the cover $\left\{M(\mathbb{V})_{J,1}\right\}$, and under the convention of Lemma \ref{sector:TrivialSymplecticSplitting}, we write $\xi =\displaystyle\sum_{i=1}^d b_i \ell_i=\displaystyle\sum_{i=1}^d b_i z_i$. 
Recall $R$ and $I$ from  Lemma \ref{sector:TrivialSymplecticSplitting} and Corollary \ref{c:TrivialSymplecticSplitting}.
Let $F=(R,I)^{-1} (0)$, we have a symplectic product decomposition
    \[
        \Nbd\left(\{\mathrm{Re}(\xi)=c\}\right)\cong F\times (-\varepsilon ,\varepsilon)\times\mathbb{R},
    \]
which is given by symplectic parallel transports along the second and the third factors.
Denote the Liouville $1$-form on $M(\mathbb{V})$ by $\lambda_\mathbb{V}$.
Let $\lambda_F =\lambda_\mathbb{V}\vert_{F\times\{0\}}$ and recall the definition of the $1$-form $\lambda_{\mathbb{C}}^{\alpha} $ from \eqref{eq:lambdaa}. For any $0<\alpha<1$, the difference between our Liouville $1$-form $\lambda_\mathbb{V}$ and $\lambda_F +\lambda_{\mathbb{C}}^{\alpha}$
is exact in $\Nbd\left(\{\mathrm{Re}(\xi)=c\}\right)$, so we can write
    \[
        \lambda_\mathbb{V}=\lambda_F +\lambda_{\mathbb{C}}^{\alpha} + df,
    \]
    where $f\colon\Nbd\left(\{\mathrm{Re}(\xi)=c\}\right)\to\mathbb{R}$ is a not necessarily compactly supported smooth function in $\Nbd\left(\{\mathrm{Re}(\xi)=c\}\right)$ which depends on $\alpha$.
\end{paragraph}

\begin{paragraph}{Step 1: an explicit deformation.}
Let $\rho\colon \mathbb{R}\to\mathbb{R}$ be a smooth function depicted on the right-hand side of Figure \ref{bump-function-construction}, which is zero near the origin, particularly $\rho'(0)=\rho''(0)=0$, and $\rho (x)=x -c''$ for some $c''>0$ near $x=\varepsilon$, $\rho (x)=x+c''$ near $x=-\varepsilon$ and $\rho(x)=x$ when $\vert x\vert \geq 2\varepsilon$. 
We assume that $0\le \rho' \le 1$ when $|x| \le \varepsilon$ and $1 \le \rho' \le 1+ \delta_{c''}$ when $|x| \ge \varepsilon$, where $\delta_{c''}>0$ is a small number depending on $c''$.
By possibly replacing $c''$ with a smaller one, we can make $\delta_{c''}$ as close to $0$ as we want.
Set 
\begin{align}\label{eq:explicit1}
\tilde{f} (x,p)\coloneqq f\left(x,\rho (R(p)),\rho'(R(p))I(p)\right)
\end{align}
 for $|R(p)|\leq\varepsilon$ and 
\begin{align}\label{eq:explicit2}
\tilde{f}(x,p)\coloneqq f\left(x,\rho (R(p)),I(p)\right)
\end{align}
for $\varepsilon\leq |R(p)|\leq 2\varepsilon$. This is well-defined because $\rho'=1$ near $|R|=\varepsilon$.
Moreover, $\tilde{f}=f$ when $|R| \ge 2\varepsilon$.

\begin{figure}[ht]
    \centering 
    \subfigure[Derivative of $\rho$]{\begin{tikzpicture}[scale=0.5]
        \draw [->] (-4,0) -- (4,0) node [below] {$x$};
        \draw [->] (0,-4) -- (0,4) node [left] {$y$};
        \draw (0,3) node [left] {$1$} -- (0.25,3); 
        \draw [red, thick] (-4,3) --(-3,3) .. controls (-2.5,3) and (-2.5,4) .. (-2,4) .. controls (-1.5, 4) and (-1.5, 3) .. (-1,3) .. controls (-0.5,3) and (-0.25,0) .. (0,0) .. controls (0.25,0) and (0.5,3) .. (1,3) .. controls (1.5,3) and (1.5,4) .. (2,4) .. controls (2.5,4) and (2.5,3) .. (3,3) -- (4,3);
        \draw (-1,0) node [below] {\small$-\varepsilon$} -- (-1,0.25);
        \draw (-2,0) node [below]{\small$-2\varepsilon$} -- (-2,0.25);
        \draw (1,0) node [below] {\small$\varepsilon$} -- (1,0.25);
        \draw (2,0) node [below] {\small$2\varepsilon$} -- (2,0.25);
    \end{tikzpicture}}\hspace{2cm}
    \subfigure[The Function $\rho$]{\begin{tikzpicture}[scale=0.5]
        \draw [->] (-4,0) -- (4,0) node [below] {$x$};
        \draw [->] (0,-4) -- (0,4) node [left] {$y$};
        \draw [red, thick] (-4,-4) -- (-3,-3) .. controls (-2.5,-2.5) and (-2.5,-1.5) .. (-2,-1) .. controls (-1,0) .. (0,0) .. controls (1,0) .. (2,1) .. controls (2.5, 1.5) and (2.5, 2.5) .. (3,3) -- (4,4);
        \draw (-2,0) node [below] {\small$-\varepsilon$} -- (-2,0.25);
        \draw (-3,0) node [below]{\small$-2\varepsilon$} -- (-3,0.25);
        \draw (2,0) node [below] {\small$\varepsilon$} -- (2,0.25);
        \draw (3,0) node [below] {\small$2\varepsilon$} -- (3,0.25);
    \end{tikzpicture}}
    \caption{\scshape The Bump Function $\rho$ and its Derivative.}
    \label{bump-function-construction}
\end{figure}
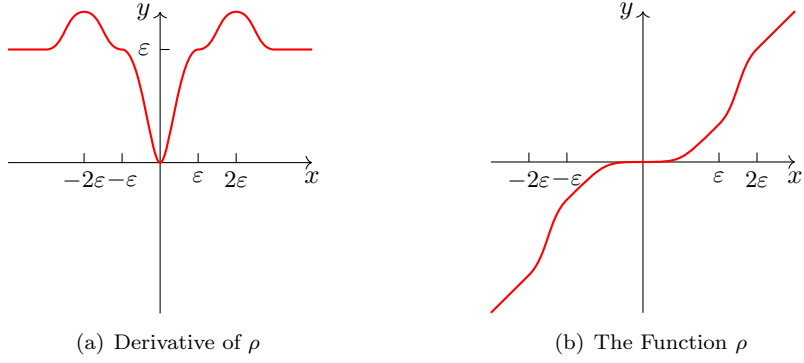

Let $\lambda_1 :=\lambda_F +\lambda_{\mathbb{C}}^{\alpha} + d\tilde{f}$. Since $\tilde{f}=0$ near $R=0$, the symplectic dual of $\lambda_1$ is tangent to the hypersurface $F\times\{0\}\times\mathbb{R}$ near infinity, such that $F\times\{0\}\times\mathbb{R}$ is sectorial with defining function $I$. 
For $0\leq t\leq 1$, consider the family of $1$-forms
    \[
        \lambda_t =t\lambda_1 +(1-t)\lambda_\mathbb{V} =\lambda_F +\lambda_{\mathbb{C}}^{\alpha} + d\left(t\tilde{f} +(1-t)f\right).
    \] 
which induces a Liouville homotopy from $(X,\omega ,\lambda_\mathbb{V})$ to $(X,\omega,\lambda_1)$. 
Our goal is to show that for an appropriate choice of $\alpha \in (0,1)$, the skeleton of this family of Liouvile $1$-forms is compact.  

Recall the notation $\mathbb{C}_{\varepsilon}=(-2\varepsilon ,2\varepsilon)\times\mathbb{R}$.
To show that the skeleton is compact, our strategy is first to compute
\[
d\tilde{f}\vert_{\{z\} \times \mathbb{C}_{\varepsilon}} \quad \text{ and } \quad df\vert_{\{z\} \times \mathbb{C}_{\varepsilon}}
\]
which shows that for every $z$, the zeros of $\lambda_t\vert_{\{z\} \times \mathbb{C}_{\varepsilon}}$ lie in a compact set.
Then we will argue that the zeros of $\lambda_t\vert_{F \times \{p\} }$ lie in a compact subset for every $p$ to conclude that the zeros of $\lambda_t$ lie in a compact subset.
\end{paragraph}

\begin{paragraph}{Step 2: analyzing the critical points using Lemma \ref{l:Iestimate}.}
We implement the strategy described above, and consider first the region where $|R(p)| \le \varepsilon$.

On $\{z\} \times \mathbb{C}_{\varepsilon}$, $\partial_R$ is by definition the unique vector field in $\operatorname{Span}\{V,X_{\rRe(\xi)}\}=\operatorname{Span}\{X_R,X_I\}$ such that $dR(\partial_R)=1$ and $dI(\partial_R)=0$.
In other words, $\omega(X_R,\partial_R)=-1$ and $\omega(X_I,\partial_R)=0$.
Therefore, $\partial_{R}=-X_I$ and similarly, $\partial_I=X_R$.
 Since $\partial_R \tilde{f}=\rho' \partial_R f+\rho'' I \partial_Rf$ and $\partial_I \tilde{f}=\rho' \partial_I f$, we have
    \begin{align}
     d\tilde{f}\vert_{\{z\} \times \mathbb{C}_{\varepsilon}}= \rho' \left(-df(X_I)dR+df(X_R)d I\right) -\rho'' Idf(X_I)dR. \label{eq:dftilde}
    \end{align}
Recall that
    \begin{align}
        \lambda_{\mathbb{C}}^{\alpha} =-\alpha I dR +(1-\alpha)Rd I, \label{eq:lamalphaRI}
    \end{align}
    so $df$ is given by
    \begin{align}
    df\vert_{\{z\} \times \mathbb{C}_{\varepsilon}} &=-\left. d^{\mathbb{C}}\phi\right\vert_{\{z\} \times \mathbb{C}_{\varepsilon}} -\lambda_{\mathbb{C}}^{\alpha} =\underbrace{\left(-d^\mathbb{C}\phi (X_R)-(1-\alpha)R\right)}_{df(X_R)}dI -\underbrace{\left(-d^\mathbb{C}\phi (X_I)-\alpha I\right)}_{df(X_I)}dR. \label{eq:df}
    \end{align}
    Consider the zeros of the restriction of the $1$-form $\lambda_t$ to $\{z\} \times \mathbb{C}_{\varepsilon}$ for $0\leq t\leq 1$. By \eqref{eq:lamalphaRI}, \eqref{eq:df} and \eqref{eq:dftilde},
    \begin{equation} \label{eq:DRI}
    \begin{split}
  &  \lambda_t\vert_{\{z\}\times\mathbb{C}_{\varepsilon}} \\
=&\lambda_{\mathbb{C}}^{\alpha} +td\tilde{f}\vert_{\{z\}\times\mathbb{C}_{\varepsilon}} +(1-t)df\vert_{\{z\}\times\mathbb{C}_{\varepsilon}}\\ 
     =&\left((1-\alpha)R+(1+t(\rho'-1))df(X_R)\right)dI -\left(\alpha I+t\rho'' Idf(X_R)+(1+t(\rho'-1))df(X_I)\right)dR.
     \end{split}
    \end{equation}
    It suffices to look at the zeroes of the coefficients of $dR$ and $dI$. For $dI$, the coefficient being zero implies that
    \[
        df(X_R)=-\frac{(1-\alpha)R}{1-t(1-\rho')}.
    \]
    When $|I|$ is sufficiently large, it follows from Lemma \ref{l:Iestimate} that the coefficient of $dR$ in \eqref{eq:DRI} being $0$  means 
    \begin{align*}
0&=\left(\alpha I+t\rho'' Idf(X_R) +(1+t(\rho'-1))df(X_I)\right)\\        
&=\left(\alpha I-t\rho'' I\frac{(1-\alpha)R}{1-t(1-\rho')} +(1+t(\rho'-1))\left(C(z)I-\alpha I\right)\right)\\
&=\left(t(1-\rho')\alpha -t\rho'' \frac{(1-\alpha)R}{1-t(1-\rho')} +(1-t(1-\rho'))C(z)\right)I.
    \end{align*}
   The only possible solution is 
\begin{equation} \label{eq:alpha}
\alpha =\dfrac{t\rho'' R-C(1-t(1-\rho'))^2}{t\rho'' R+t(1-\rho')(1-t(1-\rho'))}=1-\dfrac{C(1-t(1-\rho'))^2 +t(1-\rho')(1-t(1-\rho'))}{t\rho'' R+t(1-\rho')(1-t(1-\rho'))}. 
\end{equation}
\end{paragraph}

\begin{paragraph}{Step 3: analyzing the critical points by a direct calculation.}
Let $A=t(1-\rho')$, then the right-hand side of \eqref{eq:alpha} reads 
    \begin{equation}\label{eq:estimate-alpha}
        1-\dfrac{C(1-A)^2 +A(1-A)}{t\rho'' R+A(1-A)}.
    \end{equation}
Since we are considering the region where $|R| \le \varepsilon$ and $|\rho''|$ is bounded (cf. Figure \ref{bump-function-construction}(a)), we have $0 \le \rho' \le 1$, so $\dfrac{C(1-A)^2 +A(1-A)}{t\rho'' R+A(1-A)} \ge 0$.

When $A\to 0$, we have $\rho'\to 1$ or $t\to 0$, hence $\dfrac{C(1-A)^2+A(1-A)}{t\rho'' R+A(1-A)}\to\infty$, while if $A\to 1$, we must have $\rho'\to 0$ (thus also $\rho''\rightarrow0$) and $t\to 1$, and a variation of the formula for $\alpha$ gives \[
        \frac{1-\alpha}{1-A}=\frac{C(1-A)+A}{t\rho''R+A(1-A)},
    \] 
    which approaches $\infty$ as $A\to 1$. Looking back into the coefficient of $dI$, we have \[
        (1-A)df(X_R)=-(1-\alpha )R,
    \]
    which is equivalent to the equation $\dfrac{A(1-\alpha)}{1-A}R=d^\mathbb{C}\phi (X_R)$. Since $\vert d^\mathbb{C}\phi (X_R)\vert\leq C<\infty$ is bounded, $\dfrac{R}{1-A}$ must also be bounded. Traveling back to the coefficient for $dR$, we have
    \[
        1=\frac{C(1-A)+A}{t\left(\dfrac{1-\alpha}{1-A}R\right)\rho''+A(1-\alpha)}.
    \]
If $A\to 1$, the numerator converges to $1$, while $t\dfrac{1-\alpha}{1-A}R\rho''\to 0$ and $A(1-\alpha)<1$, so we derive that $1<1$ as $A\to 1$, a contradiction. Thus $A$ cannot converge to $1$, so there is some $0<M<1$ with $0\leq A\leq M<1$, such that \eqref{eq:estimate-alpha} is away from $1$. Therefore if $\alpha$ is sufficiently close to $1$, the equation (\ref{eq:alpha}) has no solution.
  
We conclude that there is an $\alpha\in(0,1)$ such that  the Liouville $1$-form $\lambda_t|_{\{z\}\times\mathbb{C}_{\varepsilon}}$ has no critical points in the region $|R| \le \varepsilon$ when $|I|$ is large.
Note that, Lemma \ref{l:Iestimate} and the estimates above are uniform in $z$ so there is an $\alpha\in(0,1)$ such that when $|I|$ is large, 
$\lambda_t|_{\{z\}\times\mathbb{C}_{\varepsilon}}$ has no critical points in the region $|R| \le \varepsilon$ for all $z \in F$.
\end{paragraph}

\begin{paragraph}{Step 4: the easier region.}
Next we consider the region where $|R(p)|\geq\varepsilon$. 
When $|R(p)|\geq\varepsilon$, we have 
\[ d\tilde{f}|_{\{z\} \times \mathbb{C}_{\varepsilon}} =-\rho'(R)df(X_I)dR +df(X_R)dI. \]
In this case,
    \begin{align*}
        \lambda_t\vert_{\{z\}\times\mathbb{C}_{\varepsilon}} &
=\lambda_{\mathbb{C}}^{\alpha} +td\tilde{f}\vert_{\{z\}\times\mathbb{C}_{\varepsilon}} +(1-t)df\vert_{\{z\}\times\mathbb{C}_{\varepsilon}}\\ 
        &=\left((1-\alpha)R+df(X_R)\right)dI -\left(\alpha I+(1+t(\rho'-1))df(X_I)\right)dR.
    \end{align*}
Applying Lemma \ref{l:Iestimate} again, the coefficient of $dR$ being zero means, for $|I|$ sufficiently large,
\begin{align*}
0&=-d^\mathbb{C}\phi(X_I)-t(\rho'-1)d^\mathbb{C}\phi(X_I) - t(\rho'-1) \alpha I \\
&=\left( (1+t(\rho'-1))C-t(\rho'-1) \right)\alpha I\\
&=\left(C+t(\rho'-1)(C-1)\right)\alpha I.
\end{align*}
Since $C$ is uniformly bounded from above, we can choose $\rho'$ to be very close to $1$ such that the equation above has no solution for all $t \in [0,1]$.
Therefore, $\lambda_t|_{\{z\}\times\mathbb{C}_{\varepsilon}}$ has no critical points in the region $|R(p)|\geq\varepsilon$ as well.
\end{paragraph}

\begin{paragraph}{Step 5: concluding the proof.}
We conclude that there is an $\alpha\in(0,1)$ such that the critical points of the Liouville $1$-form $\lambda_t$ lie in a compact subset of $\{z\}\times\mathbb{C}_{\varepsilon}$ for any $z\in F$, since the previous discussion does not depend on the choice of $z$. Compactness of the critical points of $\lambda_t$ along the fiber follows from the fact that we are deforming the Liouville structure so that it remains complete in the complement $M(\mathbb{V})$, which we know from Corollary \ref{generation:finiteness-critical-points} to have compact set of critical points.
\end{paragraph}
\end{proof}

We now generalize the argument above to the case where there are transversely intersecting hyperplanes $\xi_i^{-1} (c_i^{\pm})\subset\mathbb{C}^d$ with $c_i^- <c_i^+$ for $i=1, \cdots, d$. 

\begin{lemma}\label{l:productDe}
For any subcollection of hypersurfaces in $\{\rRe(\xi_i)^{-1} (c_i^{\pm})| i=1, \cdots, d\}$, their intersection is either coisotropic or empty.
\end{lemma}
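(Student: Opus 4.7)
The plan is to reduce the coisotropy statement to a Poisson-bracket computation among the defining functions, and then to perform that computation in the coordinate charts already constructed in Section \ref{ss:essential}. First, I would dispose of the empty case: if the chosen subcollection contains both $\rRe(\xi_i)^{-1}(c_i^-)$ and $\rRe(\xi_i)^{-1}(c_i^+)$ for some $i$, the intersection is empty since $c_i^-<c_i^+$. Otherwise, up to relabelling the subcollection has the form $\{\rRe(\xi_i)^{-1}(c_i)\}_{i\in I}$ for some $I\subseteq\{1,\dots,d\}$ and a choice $c_i\in\{c_i^-,c_i^+\}$. By the transversality hypothesis, the intersection $S$ is a smooth real submanifold of $M(\mathbb{V})$ of codimension $|I|\le d$, so $\dim S\ge d=\tfrac{1}{2}\dim_{\mathbb{R}}M(\mathbb{V})$, which meets the dimension requirement for coisotropy.

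Next, at each $p\in S$ the differentials $\{d\rRe(\xi_i)|_p\}_{i\in I}$ are linearly independent by transversality, so the Hamiltonian vector fields $\{X_{\rRe(\xi_i)}|_p\}_{i\in I}$ form a basis of the symplectic orthogonal $(T_pS)^{\omega}$. Consequently, the condition $(T_pS)^{\omega}\subseteq T_pS$, which is what is required for coisotropy, is equivalent to the vanishing $d\rRe(\xi_j)(X_{\rRe(\xi_i)})|_p=\{\rRe(\xi_i),\rRe(\xi_j)\}|_p=0$ for all $i,j\in I$ and all $p\in S$; granting this, $\ker\omega|_{T_pS}$ moreover has constant rank $|I|$.

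Finally, I would verify this Poisson identity globally on $M(\mathbb{V})$ by working chart-by-chart in the cover $\{M(\mathbb{V})_{J,1}\}$ used in Section \ref{ss:essential}. After identifying $\{\ell_j\}_{j\in J}$ with the coordinates $\vec{z}=\vec{x}+i\vec{y}$ and writing each $\xi_k=\sum_l b_{kl}z_l$ with $b_{kl}\in\mathbb{R}$, formula \eqref{eq:Xreal} shows that $X_{\rRe(\xi_i)}$ takes values only in $\partial/\partial\vec{y}$, whereas $d\rRe(\xi_j)=\sum_l b_{jl}\,dx_l$ lies in the span of $d\vec{x}$. The two therefore pair to zero identically on each chart, and hence on all of $M(\mathbb{V})$, giving the required $(T_pS)^{\omega}\subseteq T_pS$. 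I do not anticipate any serious obstacle; the key geometric content --- that Hamiltonian flows of real parts of complex-linear functionals move only in the imaginary directions --- is already encoded in the explicit formula for $X_{\rRe(\xi)}$ derived earlier, and the only bookkeeping is to check that the description is compatible across the charts in the cover.
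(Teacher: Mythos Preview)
Your proposal is correct and follows essentially the same approach as the paper: the paper's proof is a one-line observation that the $X_{\rRe(\xi_i)}$ are purely imaginary (formula \eqref{eq:Xreal}) and generate the characteristic foliations, which is exactly the content of your Poisson-bracket computation. Your version simply unpacks the argument more carefully, including the dimension check, the identification of $(T_pS)^\omega$, and the constant-rank conclusion; note also that a single coordinate change $\Phi$ from Lemma \ref{sector:Standardization} already covers all of $M(\mathbb{V})$, so the chart-by-chart patching is not strictly needed.
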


\begin{proof}
This is a direct consequence of the fact that the vector fields $X_{\mathrm{Re}(\xi_i)}$ are purely imaginary and the characteristic foliation of $\rRe(\xi_i)^{-1} (c_i^{\pm})$ is generated by $X_{\rRe(\xi_i)}$.
\end{proof}

Lemma \ref{l:productDe} verifies the first item of  Definition \ref{prelim:cornered-Liouville-sectors}.
To proceed, we need to show the orthogonality \eqref{eq:decomp} in the definition of sectorial collections (cf. Definition \ref{prelim:cornered-Liouville-sectors}). 
In fact, we can prove the following generalization of Lemma \ref{sector:TrivialSymplecticSplitting}.

\begin{lemma}\label{sector:GramSchmidtProcess}
There exists a $d$-tuple of vector fields $\{V_1^{\pm} ,\dotsb ,V_d^{\pm}\}$, defined in the product neighbourhood of $\bigcap_{i=1}^d\left\{\rRe(\xi_i)=c_i^{\pm}\right\}$, satisfying the following properties:
    \begin{enumerate}[\indent i)]
        \item $\omega (V_i^{\pm}, V_j^{\pm})=0$ for all $i,j$;
        \item $\omega\left(V_i^{\pm}, X_{\rRe(\xi_j)}\right)=\delta_{ij}$;
        \item The flow of $V_i^{\pm}$ exists for some positive time.
    \end{enumerate}
\end{lemma}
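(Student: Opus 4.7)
My plan is to extend the single-hyperplane construction of Lemma~\ref{sector:TrivialSymplecticSplitting} by a Gram-Schmidt-type procedure with respect to $\omega$. Let $M=(M_{ij})$ denote the Gram matrix $M_{ij}:= g\bigl(X_{\rRe(\xi_i)}, X_{\rRe(\xi_j)}\bigr)$ with respect to the K\"ahler metric $g=\omega(\cdot, J\cdot)$, and set
\[
V_i^{\pm} := -\sum_{j=1}^{d}(M^{-1})_{ij}\, J X_{\rRe(\xi_j)}.
\]
Transversality of the real hypersurfaces $\{\rRe(\xi_i)=c_i^{\pm}\}$ forces linear independence of $\{d\rRe(\xi_i)\}$, hence of $\{X_{\rRe(\xi_i)}\}$, so $M$ is symmetric positive definite throughout $M(\mathbb{V})$, and in particular invertible on a sufficiently small product neighborhood of $\bigcap_{i=1}^{d}\{\rRe(\xi_i)=c_i^{\pm}\}$.

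Property (ii) is then a one-line check: since $\omega(-JX,Y)=g(X,Y)$,
\[
\omega(V_i^{\pm}, X_{\rRe(\xi_k)}) = \sum_j (M^{-1})_{ij}\,M_{jk}=\delta_{ik}.
\]
Property (i) reduces to the pairwise Poisson commutativity of the real polarizations. By the explicit formula~\eqref{eq:Xreal}, each $X_{\rRe(\xi_j)}$ lies in $\mathrm{Span}\{\partial/\partial y_i\}_{i=1}^d$; by~\eqref{eq:symp}, $\omega$ vanishes identically on this subspace. Hence $\omega(X_{\rRe(\xi_k)}, X_{\rRe(\xi_l)})=0$ for all $k,l$, and the $J$-invariance of $\omega$ then yields
\[
\omega(V_i^{\pm}, V_j^{\pm}) = \sum_{k,l}(M^{-1})_{ik}(M^{-1})_{jl}\,\omega(X_{\rRe(\xi_k)},X_{\rRe(\xi_l)})=0.
\]

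For property (iii), I would adapt the completeness argument of Lemma~\ref{sector:TrivialSymplecticSplitting}. Corollary~\ref{c:extend} shows that each $JX_{\rRe(\xi_j)}$ extends smoothly to $\mathbb{C}^{d}$ and is tangent to every stratum of $\bigcup_i H_i$; the remaining step is to obtain a uniform bound on $\|M^{-1}\|$ on a sufficiently small product neighborhood of the intersection within $M(\mathbb{V})$. Given such a bound, $\|V_i^{\pm}\|$ is uniformly controlled, and Gr\"onwall's inequality~\cite[Theorem~2.8]{teschl2012ordinary} together with Lemma~\ref{sector:CompletenessCriterion} yield the existence of its flow for some positive time.

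The principal obstacle I anticipate is precisely this uniform bound on $M^{-1}$. Whereas the single-hyperplane case required bounding only the single scalar $\|X_{\rRe(\xi)}\|$ from below, here one must control the smallest eigenvalue of the entire Gram matrix, which degenerates as one approaches strata where $(D+A^TA)^{-1}$ loses rank. Handling this cleanly demands the same stratified $2\times 2$ block-matrix analysis used in the proof of Lemma~\ref{sector:TrivialSymplecticSplitting}, now applied uniformly across all $d$ transverse directions and combined with the full transversality hypothesis on the real hypersurfaces.
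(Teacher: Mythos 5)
Your construction is essentially identical to the paper's: your Gram matrix $M_{ij}=g(X_{\rRe(\xi_i)},X_{\rRe(\xi_j)})$ equals the paper's pairing matrix $\omega_{ij}=\omega(X_{\rRe(\xi_i)},X_{\rIm(\xi_j)})$ (since $g(X,Y)=\omega(X,JY)$ and $JX_{\rRe(\xi_j)}=X_{\rIm(\xi_j)}$), and your $V_i^{\pm}=-\sum_j(M^{-1})_{ij}JX_{\rRe(\xi_j)}=-\sum_j(M^{-1})_{ij}X_{\rIm(\xi_j)}$ is exactly the paper's $V_i$ obtained by inverting $\Omega$. Your verifications of (i) and (ii) are correct, and for (iii) you identify the right obstacle; the paper handles it by observing that the coefficients of $V_i$ are Laurent polynomials in the bounded quantities $\omega_{ij}$ whose denominator $\det\Omega$ stays bounded away from zero by Lemma~\ref{Prelim:linear-algebra-fact}, followed by a partition-of-unity patching across the charts near different strata — a step worth making explicit in your write-up.
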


\begin{proof}
With the same conventions as in Lemma \ref{sector:Standardization}, we write $\xi_i =\displaystyle\sum_{j=1}^d b_{ij} z_j$, then the Hamiltonian vector field of $\rRe(\xi_i)$ is given by 
    \[
        X_{\rRe(\xi_i)} =\frac{1}{4}\vec{b}_i^T(D+A^TA)^{-1}\frac{\partial}{\partial\vec{y}},
    \]
    and similarly
    \[
        X_{\rIm(\xi_i)} =-\frac{1}{4}\vec{b}_i^T(D+A^TA)^{-1}\frac{\partial}{\partial\vec{x}},
    \]
    so that the symplectic pairing between them is given by
    \[
        \omega_{ij}\coloneqq\omega\left(X_{\rRe(\xi_i)}, X_{\rIm(\xi_j)}\right) =4X_{\rRe(\xi_i)}^T(D+A^TA)X_{\rIm(\xi_j)}=\frac{1}{4} \vec{b}_i^T (D+A^TA)^{-1}\vec{b}_j,
    \]
    which is bounded for all $z\in M(\mathbb{V})$. Given a non-empty subset $J\subseteq\{1,\dotsb ,d\}$ with $\vert J\vert=r<d$, without loss of generality we may assume that $J=\{1,\dotsb ,r\}$. The restriction of the symplectic form $\omega$ to the subbundle of $T^\ast M(\mathbb{V})$ generated by $\left\{X_{\rRe(\xi_i)} ,X_{\rIm(\xi_i)}\right\}_{1\leq i,j\leq r}$ is given by the matrix 
    \[
        \Omega =\begin{pmatrix}
            0&(\omega_{ij})\\ 
            (-\omega_{ij})&0
        \end{pmatrix}.
    \]
    Let $\{V_i\}_{i=1}^r$ be a family of vector fields defined as linear combinations of $\left\{X_{\rIm(\xi_j)}\right\}$ such that $\omega\left(V_i ,X_{\rRe(\xi_i)}\right)=1$ and $\omega\left(V_i,X_{\rRe(\xi_j)}\right)=0$ for $i\neq j$. Such $V_i$'s can be obtained by taking the inverse matrix of $\Omega$. Also, since the coefficients of $\Omega$ are polynomials in $\omega_{ij}$, which are bounded for $z\in M(\mathbb{V})$, the coefficients of the vector field $V_i$ are Laurant polynomials in $\omega_{ij}$. These coefficients are bounded unless the determinant of $\Omega$ goes to $0$. However, Lemma \ref{Prelim:linear-algebra-fact} shows that this cannot happen near $\bigcap_{j \in J} H_{j}$. This allows us to apply the completeness criterion (Lemma \ref{sector:CompletenessCriterion}) to conclude that all such $V_i$'s admit flows in the complement of $\bigcup_{j=1}^dH_{j}$ such that they do not escape to $H_{j}$.
Now we have obtained $V_i$'s for each open neighborhood of the intersection $\displaystyle\bigcap_{j\in J}\rRe(\xi_j)^{-1} (c_j^{\pm})$, removing a neighborhood of some deeper strata, and we can patch them together using partition of unity to get vector fields $V_i$ defined in a neighborhood of $\rRe(\xi_i)^{-1} (c_i^{\pm})$, which still have well-defined flows in the complement of $\bigcup_{j=1}^dH_{j}$ such that they do not escape to $H_{j}$. 
There is a uniform upper bound for the norms $\|V_i\|$ so they are complete by Gr\"onwall's inequality. It follows from the definition that $\omega\left(V_i^{\pm}, X_{\rRe(\xi_j)}\right)=\delta_{ij}$, which completes the proof.
\end{proof}

As a consequence, by the same reasoning in Corollary \ref{c:TrivialSymplecticSplitting}, we can integrate the vector fields $V_i^{\pm}$ and $X_{\rRe(\xi_i)}$ to obtain functions $\{R_i,I_i\}$, and
a symplectic product decomposition of a neighborhood of the intersection $\displaystyle S^{\sigma}\coloneqq\bigcap_{i=1}^d\rRe(\xi_i)^{-1} (c_i^{\sigma_i})$
\[
\left((R_i ,I_i)_{i=1,\dots,d}\right):\Nbd (S^{\sigma})\rightarrow\mathbb{C}_{\varepsilon}^d
\]
for each $\sigma =(\sigma_1,\dotsb ,\sigma_d)\in\{\pm\}^d$.
This verifies that Definition \ref{prelim:cornered-Liouville-sectors}(ii) holds for the collection of hypersurfaces $\left\{\rRe(\xi_i)^{-1}(c_i^{\pm})\right\}_{i=1,\dots,d}$.  
Note that both $I_i$ and $\rIm(\xi_i)$ are obtained by integrating $X_{R_i}$.
This time, $I_i|_{\rIm(\xi_i)=0}$ is not necessarily $0$ because $X_{\rIm(\xi_j)}$ is not necessarily tangent to $\{\rIm(\xi_i)=0\}$.
However, $I_i$ and $\rIm(\xi_i)$ are arbitrarily $C^{\infty}$ close to each other when $\varepsilon$ is small because $I_i|_{S^{\sigma}}=0=\rIm(\xi_i)|_{S^{\sigma}}$ and they are obtained by integrating the same vector field.
The direct generalization of Lemma \ref{l:Iestimate} is therefore also true by replacing $\xi$ with $\xi_i$ and $I$ with $I_i$.


We can adapt the proof of Proposition \ref{sector:CompactWeinsteinDeformation} to the collection of hypersurfaces $\left\{\rRe(\xi_i)^{-1}(c_i^{\pm})\right\}_{i=1}^d$ because we can use an explicit product type deformation in Step 1 (with respect to the product decomposition of $\mathbb{C}_{\varepsilon}^d$), apply the generalization of Lemma \ref{l:Iestimate} in Step 2, and the rest follows by the same computations.
Thus we have

\begin{proposition}\label{sector:CompactWeinsteinDeformationMultiple}
There exists a Liouville deformation $\left(M(\mathbb{V}),\omega ,Z_t\right)$ with compact skeleton such that $Z_0=\nabla\phi$ and the real hypersurfaces $\rRe(\xi_i)^{-1} (c_i^{\pm})\subset M(\mathbb{V})$ are all sectorial with respect to the deformed Liouville structure $(M(\mathbb{V}),\omega ,Z_1)$.
\end{proposition}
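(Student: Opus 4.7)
The plan is to adapt the six-step proof of Proposition \ref{sector:CompactWeinsteinDeformation} to the multi-hypersurface setting, applying the bump function deformation simultaneously in each of the $d$ coordinate pairs $(R_i, I_i)$ supplied by the symplectic trivial fibration
\[
((R_i, I_i))_{i=1}^d : \Nbd(S^\sigma) \to \mathbb{C}_\varepsilon^d
\]
associated to each corner stratum $S^\sigma = \bigcap_{i=1}^d \rRe(\xi_i)^{-1}(c_i^{\sigma_i})$. The setup is already provided by Lemma \ref{sector:GramSchmidtProcess} and the subsequent discussion: the $V_i^\pm$'s commute symplectically with each $X_{\rRe(\xi_j)}$, so integrating them yields the $(R_i, I_i)$ coordinates, and the coisotropic condition from Lemma \ref{l:productDe} verifies item (i) of Definition \ref{prelim:cornered-Liouville-sectors}. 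After rescaling via Lemma \ref{l:rescaling} to enforce \eqref{eq:convercontrol}, I would write the Liouville form on the product neighborhood of $S^\sigma$ as
\[
\lambda_\mathbb{V} = \lambda_F + \sum_{i=1}^d \lambda_\mathbb{C}^{\alpha_i}(R_i, I_i) + df
\]
where $F = \bigcap_{i=1}^d (R_i, I_i)^{-1}(0)$ and the $\alpha_i \in (0,1)$ are to be chosen close to $1$.

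Next, taking the bump function $\rho$ from Step 1 of Proposition \ref{sector:CompactWeinsteinDeformation}, I would define a product-type deformation $\tilde f$ by substituting $(\rho(R_i), \rho'(R_i) I_i)$ for $(R_i, I_i)$ in each factor when $|R_i| \le \varepsilon$, and $(\rho(R_i), I_i)$ when $\varepsilon \le |R_i| \le 2\varepsilon$, then consider the interpolated family $\lambda_t = \lambda_F + \sum_i \lambda_\mathbb{C}^{\alpha_i} + d(t \tilde f + (1-t) f)$. Restricted to a fiber $\{z\} \times \mathbb{C}_\varepsilon^d$, the differentials $d\tilde f$ and $df$ split into contributions from each $(R_i, I_i)$-plane plus mixed partial terms. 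The key observation is that the dominant linear-in-$I_i$ piece in the coefficient of $dR_i$ comes only from the $i$-th factor of the deformation, so the fiberwise critical point analysis of Steps 2-4 can be run factor-by-factor. The generalization of Lemma \ref{l:Iestimate} --- giving $-d^\mathbb{C}\phi(X_{I_i}) = C_i(z) I_i$ with $C_i$ uniformly bounded away from $0$ and $\infty$ for $|I_i|$ large --- is exactly what lets the Step 2 and Step 4 estimates go through: the same algebra forcing $\alpha$ away from a unique value in \eqref{eq:estimate-alpha} now forces each $\alpha_i$ away from the corresponding value, so choosing each $\alpha_i$ close enough to $1$ eliminates fiberwise critical points outside a compact subset of $\mathbb{C}_\varepsilon^d$.

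Compactness of the critical locus globally then follows as in Step 5: along the base $F$ the Liouville homotopy remains complete on $M(\mathbb{V})$, and outside the product neighborhoods of the strata $S^\sigma$ we have $\tilde f = f$, so $\lambda_t = \lambda_\mathbb{V}$ and Corollary \ref{generation:finiteness-critical-points} gives compactness. To patch the deformations near different strata $S^\sigma$ for varying $\sigma \in \{\pm\}^d$, I would use the fact that the vector fields $V_i^\pm$ in Lemma \ref{sector:GramSchmidtProcess} were already constructed compatibly for the whole collection of hypersurfaces via partition of unity, so the functions $(R_i, I_i)$ extend to a neighborhood of each $\rRe(\xi_i)^{-1}(c_i^\pm)$ and the product-type deformation $\tilde f$ is well-defined globally on these neighborhoods.

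The hard part will be verifying that the fiberwise Steps 2-4 genuinely decouple across the $d$ pairs. While the forms $\lambda_\mathbb{C}^{\alpha_i}$ and the bump substitution manifestly have product structure, the ``remainder'' function $f$ and the function $\phi$ couple all coordinates, and one must check that the cross partial derivatives $\partial_{R_i} \partial_{R_j} f$ and $\partial_{R_i}\partial_{I_j} f$ do not contaminate the Step 2 computation \eqref{eq:DRI} in a way that destroys the conclusion. The uniform bound on $C_i(z)$ from the generalized Lemma \ref{l:Iestimate}, applied in each factor separately, is what saves the argument: it produces a single linear-growth term in $I_i$ in the coefficient of $dR_i$ and the cross terms remain lower order, so no harmful cross-contributions appear and the Step 3 cascade concluding $\alpha_i \not\to 1$ proceeds unchanged in each factor.
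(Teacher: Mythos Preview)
Your proposal is correct and follows essentially the same approach as the paper, which merely sketches that one should ``use an explicit product type deformation in Step 1 (with respect to the product decomposition of $\mathbb{C}_{\varepsilon}^d$), apply the generalization of Lemma \ref{l:Iestimate} in Step 2, and the rest follows by the same computations.'' Your writeup is in fact more detailed than the paper's own argument, and your concern about cross-coupling is handled correctly: only first partials of $f$ enter the fiberwise expression \eqref{eq:DRI}, and the generalized Lemma \ref{l:Iestimate} controls $-d^\mathbb{C}\phi(X_{I_i})$ uniformly in the remaining variables, so the Step 3 algebra indeed runs independently in each factor.
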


We can actually decompose this Liouville deformation into a finite number of steps, each of which makes one of the hypersurfaces $\rRe(\xi_i)^{-1} (c_i^{\pm})$ sectorial. By \cite[Proposition 11.8]{cieliebak2012stein}, we know that there exists an exact symplectomorphism
\begin{equation} \nonumber
f\colon (M(\mathbb{V}),\omega,\lambda_{\phi})\rightarrow (M(\mathbb{V}),\omega,\lambda_1)
\end{equation}
such that $f^{\ast}\lambda_1 =\lambda_{\phi} +dg$ where $g\colon X\to\mathbb{R}$ is a compactly supported function. This implies that the wrapped Fukaya category is unchanged up to quasi-equivalence under this deformation.

\subsection{Sectorial Decomposition.}\label{sec:sectorial decomposition} 
Given a polarized hyperplane arrangment $\mathbb{V}=(V, \eta, \xi)$, in this subsection we introduce sectorial cuts of the Weinstein manifold $M(\mathbb{V})$, so that it is divided into standard pieces with known generating Lagrangians. Note that the existence of sectorial cuts relies on the simplicity assumption of $\mathbb{V}$. The construction is divided into two steps.

\subsubsection*{Step 1:} Let $\{\xi_i\}_{1\leq i\leq d}$ be a basis of $V^*$ with $\xi_1 =\xi$. Note that each $\xi_i$ can be identified with a linear hyperplane in $V$, and we require that none of the linear hyperplanes associated to the $\xi_i$'s are parallel to any of the hyperplanes $H_{\mathbb{R},j}$ in the arrangement, where $1\leq j\leq n$. Let $N_\mathbb{V}$ be the number of $0$-dimensional strata in the hyperplane arrangement $\mathbb{V}$. Pick real numbers $(c_{1j})_{1\leq j\leq N_\mathbb{V}}$ and $(c_{ik})_{2\leq i\leq d, 1\leq k\leq 2N_\mathbb{V}}$ with the following properties:

\begin{enumerate}[\indent i)]
    \item For each $1\leq j\leq N_\mathbb{V}$, there exists a unique $q_{1j}$ so that $\rRe(\xi_1)^{-1} (q_{1j})$ intersects with the hyperplanes in $\mathbb{V}$ at a unique $0$-dimensional stratum. In this case, choose real numbers $(c_{1j})$ so that $c_{1,j-1}<q_{1j}<c_{1j}$ holds (when $j=1$, we set $c_{1,0}=-\infty$).
    \item For each $1\leq j\leq N_\mathbb{V}$ and $i>1$, there exists a $q_{ij}$ so that $\rRe(\xi_i)^{-1} (q_{ij})$ intersects with some hyperplanes in $\mathbb{V}$ at a $0$-dimensional stratum. Pick real numbers $(c_{ik})$ with $c_{i,2j-1} <q_{ij}<c_{i,2j}$, so that
    \begin{equation} \nonumber
    \left(\rRe(\xi_i)^{-1} (c_{i,2j-1})\cup\rRe(\xi_i)^{-1} (c_{i,2j})\right)\cap\rRe(\xi_1)^{-1}\left([c_{1j},c_{1,j+1}]\right)
    \end{equation}
    intersects transversely with at most $d-2$ hyperplanes in $\mathbb{V}$.
\end{enumerate}

By Proposition \ref{sector:CompactWeinsteinDeformationMultiple}, after a compactly supported Liouville deformation, we can arrange that all the real hypersurfaces $\left\{\mathrm{Re}(\xi_i)=c_{ik}\right\}_{i,k}$, including $i=1$, form a sectorial collection. Thus we can use them to cut the Weinstein manifold $M(\mathbb{V})$ into subsectors.

\subsubsection*{Step 2:} We first cut $M(\mathbb{V})$ using the hypersurfaces defined by $\xi_1$, which leads to the decomposition
\[
    M(\mathbb{V}) =\bigcup_{j=1}^{N_\mathbb{V}}\rRe(\xi_1)^{-1}\left([c_{1,j-1},c_{1j}]\right)\eqqcolon\bigcup_{j=1}^{N_\mathbb{V}} P_j,
\]
so that we get a total number of $N_\mathbb{V}$ sectors with similar behaviors as depicted in Figure \ref{sector:decompose-pair-of-pants 2}. See also the related discussions in the introduction (Section \ref{sec:ideaofproof}).

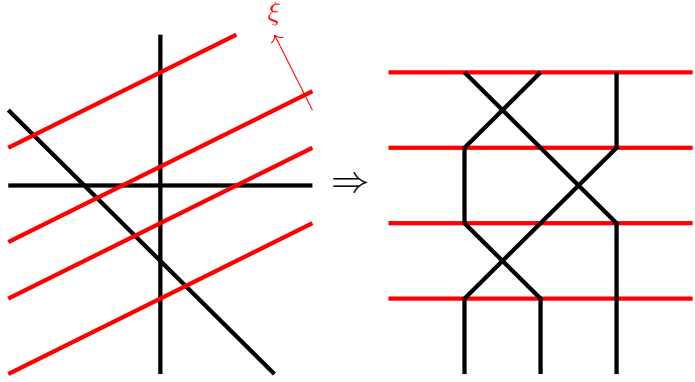
\begin{figure}[ht]
    \centering
    \begin{tikzpicture}[scale=0.5]
        \draw [ultra thick] (-4,0) -- (4,0);
        \draw [ultra thick] (0,-5) -- (0,4);
        \draw [ultra thick] (-4,2) -- (3,-5);
        \draw [->, red] (4,2) -- (3,4) node[above] {$\xi$};
        \draw [ultra thick, red] (-4,-5) -- (4,-1);
        \draw [ultra thick, red] (-4,-3) -- (4,1);
        \draw [ultra thick, red] (-4,-1.5) -- (4,2.5);
        \draw [ultra thick, red] (-4,1) -- (2,4);
        \node at (5,0) {\Large $\Rightarrow$};
        \draw [red, ultra thick] (6,1) -- (14,1);
        \draw [red, ultra thick] (6,-1) -- (14,-1);
        \draw [red, ultra thick] (6,3) -- (14,3);
        \draw [red, ultra thick] (6,-3) -- (14,-3);
        \draw [ultra thick] (8,3) -- (12,-1) -- (12,-5);
        \draw [ultra thick] (10,3) -- (8,1) -- (8,-1) -- (10,-3) -- (10,-5);
        \draw [ultra thick] (12,3) -- (12,1) -- (8,-3) -- (8,-5);
    \end{tikzpicture}
\caption{Decomposing $2$-dimensional pair-of-pants into sectors\label{sector:decompose-pair-of-pants 2}}
\end{figure}

Next, we perform further cuts on each subsector $P_j$ using the real hypersurfaces $\rRe(\xi_i)^{-1} (c_{i,2j-1})$ and $\rRe(\xi_i)^{-1} (c_{i,2j})$ for $i>1$, which gives the following decomposition
\[
    P_j =\bigcup_{\alpha\in\{-,0,+\}^{d-1}}\left(\bigcap_{i=2}^d\rRe(\xi_i)(\alpha_i)\right)\cap\rRe(\xi_1)^{-1} ([c_{1,j-1} ,c_{1j}])\eqqcolon\bigcup_{\alpha\in\{-,0,+\}^{d-1}}P_{j\alpha},
\]
where $\alpha =(\alpha_2, \dotsb ,\alpha_d)$ and
\begin{equation}\nonumber
\begin{split}
\rRe(\xi_i)(-)&=\rRe(\xi_i)^{-1} ((-\infty, c_{i,2j-1}]), \\
\rRe(\xi_i)(0)&=\rRe(\xi_i)^{-1} ([c_{i,2j-1}, c_{i,2j}]), \\
\rRe(\xi_i)(+)&=\rRe(\xi_i)^{-1} ([c_{i,2j},+\infty)). \\
\end{split}
\end{equation}
An intuitive picture in dimension two is shown in Figure \ref{sector:second-cut2}, where the green lines represent the second family of sectorial hypersurfaces that we introduced above, and the dashed parts indicate that we do not use these hypersurfaces to cut other irrelevant pieces.

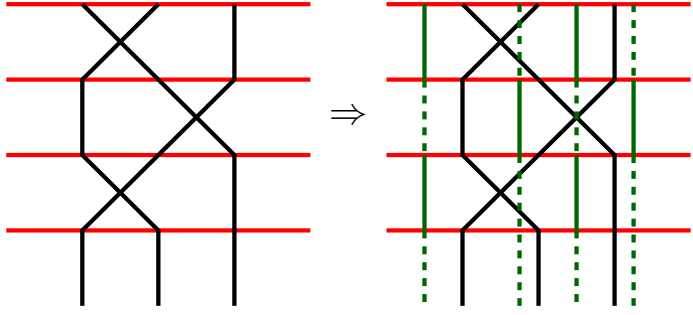
\begin{figure}[ht]
    \centering
    \begin{tikzpicture}[scale=0.5]
        \draw [red, ultra thick] (-4,1) -- (4,1);
        \draw [red, ultra thick] (-4,-1) -- (4,-1);
        \draw [red, ultra thick] (-4,3) -- (4,3);
        \draw [red, ultra thick] (-4,-3) -- (4,-3);
        \draw [ultra thick] (-2,3) -- (2,-1) -- (2,-5);
        \draw [ultra thick] (0,3) -- (-2,1) -- (-2,-1) -- (0,-3) -- (0,-5);
        \draw [ultra thick] (2,3) -- (2,1) -- (-2,-3) -- (-2,-5);
        \node at (5,0) {\Large $\Rightarrow$};
        \draw [red, ultra thick] (6,1) -- (14,1);
        \draw [red, ultra thick] (6,-1) -- (14,-1);
        \draw [red, ultra thick] (6,3) -- (14,3);
        \draw [red, ultra thick] (6,-3) -- (14,-3);
        \draw [ultra thick] (8,3) -- (12,-1) -- (12,-5);
        \draw [ultra thick] (10,3) -- (8,1) -- (8,-1) -- (10,-3) -- (10,-5);
        \draw [ultra thick] (12,3) -- (12,1) -- (8,-3) -- (8,-5);
        \draw [black!60!green, ultra thick] (7,3) -- (7,1);
        \draw [black!60!green, ultra thick, dashed] (7,1) -- (7,-1);
        \draw [black!60!green, ultra thick] (7,-1) -- (7,-3);
        \draw [black!60!green, ultra thick, dashed] (7,-3) -- (7,-5);
        
        \draw [black!60!green, ultra thick] (11,3) -- (11,1);
        \draw [black!60!green, ultra thick, dashed] (11,1) -- (11,-1);
        \draw [black!60!green, ultra thick] (11,-1)--(11,-3);
        \draw [black!60!green, ultra thick, dashed] (11,-3) -- (11,-5);
        
        \draw [black!60!green, ultra thick, dashed] (9.5, 3) -- (9.5, 1);
        \draw [black!60!green, ultra thick] (9.5,1) -- (9.5,-1);
        \draw [black!60!green, ultra thick, dashed] (9.5,-1) -- (9.5,-5);
        
        \draw [black!60!green, ultra thick, dashed] (12.5,3)--(12.5,1);
        \draw [black!60!green, ultra thick] (12.5,1) -- (12.5,-1);
        \draw [black!60!green, ultra thick, dashed] (12.5,-1)--(12.5,-5);
    \end{tikzpicture}
    \caption{Second cut of $P_j$'s into standard pieces.\label{sector:second-cut2}}
\end{figure}

Now we can apply the Sectorial Descent (Theorem \ref{Prelim:sectorial-descent}) with respect to the covering $\{P_{j\alpha}\}$, which gives the following.

\begin{proposition}
The partially wrapped Fukaya category $\mathcal{W} (M(\mathbb{V}),\xi)$ is generated by that of its covering sectors $\mathcal{W}(P_{j\alpha})$, where $1\leq j\leq N_\mathbb{V}$ and $\alpha\in\{-,0,+\}^{d-1}$.
\end{proposition}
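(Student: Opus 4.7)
The plan is to obtain the proposition as a direct consequence of Sectorial Descent (Theorem \ref{Prelim:sectorial-descent}) applied to the covering $\{P_{j\alpha}\}$ constructed in Steps 1 and 2. The main task is therefore to verify that after the compactly supported Liouville deformation provided by Proposition \ref{sector:CompactWeinsteinDeformationMultiple}, the collection of real hypersurfaces cutting $M(\mathbb{V})$ into the $P_{j\alpha}$ forms a Weinstein sectorial collection in the sense of Definition \ref{prelim:cornered-Liouville-sectors}, so that each $P_{j\alpha}$ is a cornered Weinstein sector.

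First, I would apply Proposition \ref{sector:CompactWeinsteinDeformationMultiple} simultaneously to the finite collection of real hypersurfaces $\{\mathrm{Re}(\xi_i) = c_{ik}\}_{i,k}$ to obtain a compactly supported Liouville deformation of $(M(\mathbb{V}),\omega,\nabla\phi)$ after which every such hypersurface is sectorial; the argument extends directly from the case of $d$ transversely intersecting hypersurfaces handled in the previous subsection, since the genericity conditions on $(c_{1j})$ and $(c_{ik})$ in Step 1 guarantee that at any point any subcollection meeting there consists of at most $d$ hypersurfaces (together with at most $d-2$ hyperplanes at infinity from $\mathbb{V}$, by condition (ii) on the $c_{ij}$'s). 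Because the resulting deformation is exact with compact skeleton, the wrapped Fukaya category $\mathcal{W}(M(\mathbb{V}),\xi)$ is unchanged up to quasi-equivalence.

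Second, I would check the three conditions of Definition \ref{prelim:cornered-Liouville-sectors} for the deformed collection. Condition (i) follows from Lemma \ref{l:productDe}, which shows that any non-empty intersection of the $\mathrm{Re}(\xi_i)^{-1}(c_{ik})$ is coisotropic; simplicity of $\mathbb{V}$ together with the transversality built into the choice of the $c_{ij}$'s ensures that these intersections are transverse. Condition (ii), i.e.\ the existence of the symplectic product decomposition \eqref{eq:decomp}, is supplied by the commuting vector fields $\{V_i, X_{\mathrm{Re}(\xi_i)}\}$ of Lemma \ref{sector:GramSchmidtProcess} and the integration to functions $(R_i, I_i)$ as in Corollary \ref{c:TrivialSymplecticSplitting}. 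Condition (iii), the product form of the Liouville $1$-form, is precisely what the explicit deformation in Propositions \ref{sector:CompactWeinsteinDeformation} and \ref{sector:CompactWeinsteinDeformationMultiple} achieves; moreover, since $M(\mathbb{V})$ is Weinstein and the deformation preserves this, each cornered piece $P_{j\alpha}$ inherits a Weinstein sectorial structure.

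Finally, with $\{P_{j\alpha}\}_{j,\alpha}$ established as a Weinstein sectorial covering of $M(\mathbb{V})$, Theorem \ref{Prelim:sectorial-descent} yields a pre-triangulated equivalence
\begin{equation*}
\hocolim_{\emptyset \neq I \subseteq \{(j,\alpha)\}} \mathcal{W}\!\left(\bigcap_{(j,\alpha)\in I} P_{j\alpha}\right) \xrightarrow{\ \simeq\ } \mathcal{W}(M(\mathbb{V}),\xi).
\end{equation*}
Since a homotopy colimit is generated (as a pre-triangulated category) by the images of its vertices under the structure functors, and the vertex categories at $|I|=1$ are precisely the $\mathcal{W}(P_{j\alpha})$, the stopped inclusion functors $\mathcal{W}(P_{j\alpha}) \to \mathcal{W}(M(\mathbb{V}),\xi)$ together generate the target, which is the claim. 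The main obstacle in this plan is the combinatorial one of arranging that a single set of constants $(c_{ij})$ can be chosen so that all the transversality conditions at every depth of the intersection stratification are simultaneously satisfied; this is where simplicity of $\mathbb{V}$ (Definition \ref{definition:simple}) is essential, since it forces $k$-fold intersections to have codimension $k$ and hence allows one to separate the $0$-dimensional strata by hyperplanes transverse to $\mathbb{V}$.
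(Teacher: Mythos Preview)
Your proposal is correct and takes essentially the same approach as the paper: the proposition is stated immediately after Steps 1 and 2 of the sectorial decomposition, and the paper's proof is a one-line invocation of Sectorial Descent (Theorem \ref{Prelim:sectorial-descent}) applied to the covering $\{P_{j\alpha}\}$. The verifications you spell out (sectoriality via Proposition \ref{sector:CompactWeinsteinDeformationMultiple}, coisotropy via Lemma \ref{l:productDe}, the product decomposition via Lemma \ref{sector:GramSchmidtProcess}) are precisely the content of the preceding subsections that the paper takes as already established by the time this proposition is stated.
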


This reduces the generation problem of $\mathcal{W}(M(\mathbb{V}),\xi)$ to that of the Fukaya categories $\mathcal{W}(P_{j\alpha})$.

\subsection{Generation for Each Piece.}\label{section:piece} 
In this subsection, we study the generation for the wrapped Fukaya categories of the Weinstein subsectors $P_{j\alpha}\subset M(\mathbb{V})$ defined in the last subsection. Notice that there are two types of such subsectors: those containing a crossing (i.e. a $0$-dimensional intersection point formed by $d$ complex hyperplanes in $\mathbb{V}$), denoted simply by $P_\times$, and those do not contain a crossing, which we denote by $P_\vert$.

\begin{proposition}\label{sector:GenerationResult}
    \begin{enumerate}[i)]
        \item The Liouville sector $P_\times$ is symplectomorphic to
        \[
            \left(\left(\mathbb{C}^{\ast}\right)^d, \bigcup_{i=1}^d\left(f_i\right)^{-1} (c_i^{\pm})\right),
        \]
        where $f_i$ is the pull-back of $\xi_i$ under the symplectic embedding $P_\times \hookrightarrow\left(\mathbb{C}^{\ast}\right)^d$ and $c_i^{\pm} \in \{c_{ik}:1 \le k \le 2N_{\mathbb{V}}\}$ 
        \item The other Liouville sectors $P_{\vert}$ are symplectomorphic to the stabilizations
        \[
            \left(P_{\vert}\cap (\xi_1)^{-1} (c_1^+)\right)\times T^{\ast} [c_1^-,c_1^+],
        \]
where $c_1^{\pm} \in \{c_{1j}: 1 \le j \le N_{\mathbb{V}}\}$.
    \end{enumerate}
\end{proposition}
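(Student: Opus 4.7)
The plan is to treat the two parts of the proposition separately, exploiting the specific choices of cut values $(c_{ik})$ made in Section \ref{sec:sectorial decomposition} which guarantee that $P_\times$ isolates a single crossing and $P_\vert$ isolates none.

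For part (i), the sectorial cut was designed so that $P_\times$ contains a unique $0$-dimensional stratum, formed by the transverse intersection of some $d$ complex hyperplanes $H_{i_1},\dots,H_{i_d}$, and is disjoint from all other hyperplanes $H_j$ with $j\not\in J:=\{i_1,\dots,i_d\}$. First I would apply the linear change of coordinates $\Phi$ from Lemma \ref{sector:Standardization} with this $J$, which turns the defining forms $\ell_{i_k}$ into the standard coordinates $z_k$ and embeds $P_\times$ as a cornered domain in $(\mathbb{C}^\ast)^d$ whose sectorial boundary consists of the hypersurfaces $\mathrm{Re}(f_i)^{-1}(c_i^\pm)$, where $f_i:=\xi_i\circ\Phi^{-1}$. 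Next I would compare the two Liouville structures: by Lemma \ref{sector:Standardization}, $\Phi^*\omega$ splits into the standard piece $\sum_k (1+|z_k|^{-4})\,dx_k\wedge dy_k$ on $(\mathbb{C}^\ast)^d$ plus contributions involving $\ell_j$ for $j\not\in J$; since $P_\times$ is disjoint from every $H_j$ with $j\not\in J$, these extra contributions are smooth and bounded on the closure of $P_\times$, so they contribute a compactly supported correction to the standard Liouville $1$-form on $(\mathbb{C}^\ast)^d$. A Moser-type argument, formally identical to the Liouville homotopy constructed in the proof of Proposition \ref{sector:CompactWeinsteinDeformation} (interpolating the two primitives and checking that critical points stay in a compact subset), then produces an exact symplectomorphism identifying $(P_\times,\lambda_\mathbb{V}|_{P_\times})$ with the claimed Liouville sector.

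For part (ii), the key geometric input is that by construction $P_\vert$ contains no $0$-dimensional stratum of the hyperplane arrangement. Consequently the function $\mathrm{Re}(\xi_1)$ admits no ``bad'' loci on the slab $\mathrm{Re}(\xi_1)^{-1}([c_1^-,c_1^+])\cap P_\vert$, and the construction of the commuting pair of vector fields $\{X_{\mathrm{Re}(\xi_1)},V\}$ from Lemma \ref{sector:TrivialSymplecticSplitting} (together with Corollary \ref{c:TrivialSymplecticSplitting}) extends from a neighborhood of one boundary level set to the whole of $P_\vert$. The plan is to verify completeness of $V$ throughout $P_\vert$ using Lemma \ref{sector:CompletenessCriterion}, invoking that $X_{\mathrm{Re}(\xi_1)}$ is uniformly bounded below on $P_\vert$ (the lower bound argument in Lemma \ref{sector:TrivialSymplecticSplitting} using the block-matrix inverse goes through verbatim because no $0$-dimensional stratum lies in $P_\vert$). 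Integrating the commuting pair provides a symplectic trivialization
\begin{equation*}
P_\vert \;\cong\; \bigl(P_\vert \cap \xi_1^{-1}(c_1^+)\bigr)\times\mathbb{C}_\varepsilon,
\end{equation*}
and after completing this product along the $\mathbb{C}_\varepsilon$ factor, the second factor becomes the Liouville sector $T^\ast[c_1^-,c_1^+]$ with the two sectorial boundaries given by $\mathrm{Re}(\xi_1)=c_1^\pm$.

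The main obstacle I expect is in part (i): the Liouville structure $\Phi^\ast\lambda_\mathbb{V}$ on the embedded copy of $P_\times$ is only a Liouville homotopy equivalent to (not literally equal to) the standard structure on $\bigl((\mathbb{C}^\ast)^d,\bigcup_i f_i^{-1}(c_i^\pm)\bigr)$, and one must check that the interpolating homotopy keeps the Liouville vector field tangent to each sectorial hypersurface $\mathrm{Re}(f_i)^{-1}(c_i^\pm)$ near infinity and keeps the skeleton compact. This requires re-running the fiberwise critical-point analysis of Steps 2--4 in the proof of Proposition \ref{sector:CompactWeinsteinDeformation} in the multi-$\xi_i$ setting (as already needed for Proposition \ref{sector:CompactWeinsteinDeformationMultiple}), together with a careful choice of the compactly supported interpolating function to absorb the bounded contributions from the $\ell_j$ with $j\not\in J$.
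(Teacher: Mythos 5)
Your part (ii) captures the paper's argument well: the paper verifies completeness and integrability via a partition-of-unity patching of locally defined vector fields rather than a single global formula, but the underlying idea — that $\mathrm{Re}(\xi_1)$ has no degenerate locus on $P_\vert$, so the trivializing commuting pair from Lemma \ref{sector:TrivialSymplecticSplitting}/Corollary \ref{c:TrivialSymplecticSplitting} extends over the whole slab — is the same.

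Part (i) contains a genuine mischaracterization. You frame the argument as a Liouville homotopy of the kind appearing in Proposition \ref{sector:CompactWeinsteinDeformation}, where the symplectic form $\omega$ is held fixed and only the primitive $\lambda_t$ is deformed, and you then say one should check that "critical points stay in a compact subset," i.e.\ that the skeleton remains compact. That criterion is the right one for a Liouville homotopy, but it is not applicable here, because $\Phi^\ast\omega$ and the standard form on $(\mathbb{C}^\ast)^d$ are \emph{different symplectic forms} — they differ by the non-vanishing $A^TA$ terms in formula \eqref{eq:sympform}. The paper instead performs a Moser isotopy of symplectic forms: it interpolates $\omega_t$ directly, identifies a primitive $\theta_t$ of $\dot\omega_t$, and proves that the Moser vector field $V$ (the $\omega_t$-dual of $\theta_t$, written explicitly via the block matrix $(D+(1-t)A^TA)^{-1}$) is complete, so that the time-one flow supplies the required symplectic embedding into $(\mathbb{C}^\ast)^d$. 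Completeness of the Moser vector field, not compactness of skeleta, is the condition one needs. Relatedly, your claim that the extra contributions from $\ell_j$ with $j\not\in J$ "contribute a compactly supported correction to the standard Liouville 1-form" is false: the primitive $\theta_t$ involves terms such as $\mathrm{Re}(\ell_j)\,d\,\mathrm{Im}(\ell_j)$, which grow linearly as $|z|\to\infty$ and are certainly not compactly supported. This is precisely why the paper needs the growth estimate $\|V\|\lesssim |z|$ and Gr\"onwall's inequality to obtain completeness — the argument would collapse if the correction were compactly supported, since then the Moser flow would be trivially complete. Re-running Steps 2–4 of Proposition \ref{sector:CompactWeinsteinDeformation}, as you suggest, would address the wrong question and would not yield the claimed symplectomorphism.
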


\begin{remark}
It follows from i) of the proposition above that the wrapped Fukaya category $\mathcal{W} (P_\times)$ is generated by a cotangent fiber of $\left(\mathbb{C}^{\ast}\right)^d\cong T^{\ast}{T}^d$, together with the linking disks associated to the stops $(\xi_i)^{-1} (c_i^{\pm})$. The cotangent fibre $L\subset P_\times$ will be called a \emph{standard Lagrangian} associated to $P_\times$. Order the crossing points from the bottom to the top according to the values of $\rRe(\xi)$, the sector $P_\times$ corresponds to a unique crossing point, say the $j$-th crossing, and we will write the corresponding standard Lagrangian as $L_j$. 

Note that here we use the term ``standard Lagrangian'' so that it is consistent with the term used in geometric representation theory, where the standard objects in the hypertoric category $\mathcal{O}$ (cf.\cite{BLPW2010}) correspond exactly (by our main theorem) to these Lagrangians.

On the other hand, ii) of the proposition above implies that the wrapped Fukaya category $\mathcal{W} (P_{\alpha})\cong\mathcal{W}\left((\xi_1)^{-1} (c_1^+)\cap P_{\alpha}\right)$ is generated by the linking disks associated to the stop $P_{\alpha}\cap(\xi_1)^{-1} (c_1^+)$.
\end{remark}

\begin{proof}[Proof of Proposition \ref{sector:GenerationResult} i)]
Let $J\subset\{1,\cdots,n\}$ be the set of hyperplanes going through the dimension $0$ crossing point contained in $P_\times$. We apply Lemma \ref{sector:Standardization} to get a corresponding symplectic structure on $P_\times$, regarded as a (codimension $0$) symplectic submanifold of $M(\mathbb{V}[J])$, where $\mathbb{V}[J]$ is the hyperplane arrangement whose first $d$ members are coordinate hyperplanes. Recall also the symplectomorphism $\Phi:M(\mathbb{V}[J])\rightarrow M(\mathbb{V})$ from Lemma \ref{sector:Standardization}. Consider the symplectic isotopy
    \[
      \omega_t\coloneqq 4\begin{pmatrix}
        0&D+(1-t)A^TA\\ 
        -D-(1-t)A^TA&0
        \end{pmatrix},
    \]
whose derivative (with respect to $t$) is
\begin{equation} \nonumber
\begin{split}
 \dot{\omega}_t &=4\begin{pmatrix}
            0&-A^TA\\ 
            A^TA&0
        \end{pmatrix} \\
    &=-4d\left(\sum_{j\not\in J}\left(1-\frac{1}{\vert\ell_j\vert^4}\right)\left(\rRe(\ell_j)d\rIm(\ell_j) -\rIm(\ell_j)d\rRe(\ell_j)\right)\right)=d\theta_t.
\end{split}
\end{equation}
Let $V$ be the symplectic dual to the $1$-form $\theta_t$, we need to prove that $V$ is complete.
    
Consider a partial compactification $M\left(\mathbb{V}_J\right)$ of $M(\mathbb{V})$, where $\mathbb{V}_J$ is hyperplane arrangement obtained by removing those with indices in $J$. The stratification of the complement $M\left(\mathbb{V}_J\right)\setminus M(\mathbb{V})$ is determined by equations $\{\ell_i =0\vert \forall i\in J'\}_{J' \subset J}$. With respect to the symplectic structure $\omega_t$, the vector field $V$ is given by the formula
\begin{equation}\label{eq:V}
V=-[d\phi_J](D+(1-t)A^TA)^{-1}\begin{pmatrix}
\frac{\partial}{\partial\vec{x}}\\ \frac{\partial}{\partial\vec{y}}
\end{pmatrix},
\end{equation}
    where $\phi_J =\displaystyle\sum_{j\not\in J}\left(\vert\ell_j\vert^2 +\frac{1}{\vert\ell_j\vert^2}\right)$ is the squared sum and $[d\phi_J]$ is the matrix corresponding to its differential, which is given by
    \[
        d\phi_J =\sum_{j\not\in J}\sum_{i=1}^d\left(1-\frac{1}{\vert\ell_j\vert^4}\right)\rRe(\ell_j)a_{ij}dx_i.
    \]

    In the sector $P_\times$, we have $\vert\ell_j\vert$ bounded away from zero and $\rRe(\ell_j)$ is bounded for all $j\notin J$. If $\vert z_i\vert\to 0$ for some $1\leq i\leq d$, the denominator of the coefficients of $\partial_{x_i}$ and $\partial_{y_i}$ in the expression (\ref{eq:V}) go to $\infty$, so the vector field $V$ extends to a vector field over the open strata $\{z_i=0\}$ and is tangent to it. To show the completeness of $V$, it suffices to look at the case when some $\rIm(\ell_i)\to\infty$. Under this limit, the coefficient of $\partial_{x_i}$ is always bounded. Since all $\vert\ell_j\vert$'s are bounded away from $0$, for any sequence of points in $\Phi (P_\times)$ converging to any strata at infinity,
    \[
        [d\phi_J] (D+(1-t)A^TA)^{-1}
    \]
    converges. It follows that near infinity, we have
    \[
        \left\Vert [d\phi_J](D+(1-t)A^TA)^{-1}\right\Vert\leq M
    \]
    for some constant $M>0$, which then implies that the coefficient of $\partial_{y_i}$ is controlled by $CM\vert z\vert$, where $C>0$ is another constant. It is a consequence of Gr\"onwall's inequality \cite[Theorem 2.8]{teschl2012ordinary} that the vector field $V$ is complete. Thus the time-$1$ flow of $V$ induces a symplectic embedding $\phi_V^1\colon \Phi (P_\times)\hookrightarrow\left(\mathbb{C}^{\ast}\right)^d$, which can be extended to an exact symplectomorphism. It implies that
    \begin{equation} \nonumber
    \mathcal{W}(P_\times)\simeq\mathcal{W} (\Phi (P_\times))\simeq\mathcal{W}\left(\left(\mathbb{C}^{\ast}\right)^d,\bigcup_{i=1}^d(f_i)^{-1} (c_i^{\pm})\right)
    \end{equation}
    is generated by a cotangent fiber of $T^\ast T^d$ and linking disks associated to the stops $(\xi_i)^{-1} (c_i^{\pm})$.
\end{proof}

\begin{proof}[Proof of Proposition \ref{sector:GenerationResult} ii)]
Recall that in the proof of Lemma \ref{sector:GramSchmidtProcess}, we only use the fact that the hypersurfaces $\left\{\rRe(\xi_1)^{-1} (c_1^{\pm})\right\}$ intersects with the removed hyperplanes transversely, and we cut the Liouville sector so that $\rRe(\xi_1)^{-1} (r)$, for $c_1^-\leq r\leq c_1^+$, intersect the removed hyperplanes in the closure of $P_{\alpha}$ transversely, so we conclude that for every $c_1^-\leq r\leq c_1^+$, there exists a product decomposition of the $Z$-invariant neighbourhood 
\begin{equation}\label{eq:decomposition}
\Nbd^Z\left(\{\rRe(\xi_1) =r\}\right) \cap P_\vert\xrightarrow{\simeq}\{\xi_1 =r\}\times T^{\ast} [-\varepsilon_r ,\varepsilon_r] \cap P_\vert
\end{equation}
for some small $\varepsilon_r >0$. The product decomposition (\ref{eq:decomposition}) for each $r$ gives us a locally defined vector field $X_{I_i^r}$, for some function $I_1^r:\Nbd^Z\left(\{\rRe(\xi_1) =r\}\right)\cap P_\vert\rightarrow\mathbb{R}$ from Lemma \ref{sector:GramSchmidtProcess}. The open intervals $(r-\varepsilon_c, r+\varepsilon_c)$ form a covering of $[c^-,c^+]$, so we can find a finite sub-collection of the $(c-\varepsilon_c, c+\varepsilon_c)$'s covering the same interval. Let $\{\psi_r\}$ be a partition of unity defined on $P_{\alpha} =\bigcup_r\Nbd^Z\left(\{\rRe(\xi_1)=r\}\right) \cap P_\vert$ that is subordinated to the finite cover, then the vector field $V=\sum_r\psi_r X_{I_1^r}$ satisfies $d\rRe(\xi_1) (V)=1$ everywhere and is locally complete by Lemma \ref{sector:CompletenessCriterion}. Therefore the vector fields $\left(X_{\rRe(\xi_1)}, V\right)$ induce the required product decomposition in the second part of the Proposition \ref{sector:GenerationResult}. 
\end{proof}

Proposition \ref{sector:GenerationResult} together with Sectorial Descent (Theorem \ref{Prelim:sectorial-descent}) imply that  the wrapped Fukaya category of $(M(\mathbb{V}), \xi=\xi_1)$ is generated by the union of all standard Lagrangians $\{L_{j}\}_{j=1}^{N_\mathbb{V}}$ and the image under the cup functors $\cup:\mathcal{W}\left(\xi_i^{-1} (c_{ik})\cap P_{j\alpha}\right)\rightarrow\mathcal{W} (P_{j\alpha}) \to \mathcal{W}(M(\mathbb{V}), \xi_1)$ for all possible $i$ and $j$ (i.e. the linking disks corresponding to the cornered stops $P_{j\alpha}$). Here we want to rule out the generators coming from the stops $\xi_i^{-1} (c_{ik})$ for $i\geq 2$ (see Theorem \ref{generation:generating-by-standard-Lags}). In the next section, contributions to the generating collection of $\mathcal{W} (M(\mathbb{V}),\xi)$ from the stops $\xi_1^{-1} (c_{1j})$ will also be ruled out based on several formulae provided by Lagrangian surgery, so that the wrapped Fukaya category $\mathcal{W} (M(\mathbb{V}) ,\xi)$ will be shown to be generated by the standard Lagrangian submanifolds. In particular, since the standard Lagrangians are quasi-isomorphic to iterated mapping cones of bounded-feasible chamber Lagrangians (Section \ref{sec:Decomposition of Lag}), this concludes the proof Theorem \ref{theorem:generation1}.

We end this subsection with the following theorem.

\begin{theorem}\label{generation:generating-by-standard-Lags}
The partially wrapped Fukaya category $\mathcal{W}(M(\mathbb{V}),\xi)$ is generated by the standard Lagrangians $\{L_{j}\}_{1\leq j\leq N_\mathbb{V}}$ constructed in Proposition \ref{sector:GenerationResult} and the linking disks associated to the stops $\xi_1^{-1}(c_{1j})$. 
\end{theorem}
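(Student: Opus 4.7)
The plan is to combine Sectorial Descent (Theorem \ref{Prelim:sectorial-descent}) applied to the covering $\{P_{j\alpha}\}$ from Section \ref{sec:sectorial decomposition} with a two-step elimination of the redundant generators. Descent together with Proposition \ref{sector:GenerationResult} produces an initial generating collection for $\mathcal{W}(M(\mathbb{V}),\xi)$ consisting of the standard Lagrangians $\{L_j\}$, one for each $P_\times$-type sector, together with all the linking disks associated to the sectorial hypersurfaces $\xi_1^{-1}(c_{1j})$ and $\xi_i^{-1}(c_{ik})$ for $i \geq 2$. It then remains to show that every linking disk associated to an auxiliary stop $\xi_i^{-1}(c_{ik})$ with $i \geq 2$ can be rewritten, within the generating collection, as a linking disk to some $\xi_1^{-1}(c_{1j})$.

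First I would dispose of the generators arising from $P_\vert$-type sectors. For such a sector, Proposition \ref{sector:GenerationResult}(ii) gives a symplectomorphism $P_\vert \cong F \times T^*[c_1^-, c_1^+]$ with $F := P_\vert \cap \xi_1^{-1}(c_1^+)$, and Example \ref{Prelim:generation-cotangent-fiber} provides the stabilization equivalence $\mathcal{W}(P_\vert) \simeq \mathcal{W}(F)$. Since $F$ sits inside $\xi_1^{-1}(c_{1j})$ for the appropriate index $j$, the inclusion-induced functor $\mathcal{W}(P_\vert) \to \mathcal{W}(M(\mathbb{V}),\xi)$ factors, up to this stabilization equivalence, as $\mathcal{W}(F) \to \mathcal{W}(\xi_1^{-1}(c_{1j})) \to \mathcal{W}(M(\mathbb{V}),\xi)$, the second arrow being the Orlov functor from Section \ref{prelim:Orlov-functor}. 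Consequently every generator coming from a $P_\vert$ sector, including the linking disks to its side-stops $\xi_i^{-1}(c_{ik})$, is realized in $\mathcal{W}(M(\mathbb{V}),\xi)$ as a linking disk associated to $\xi_1^{-1}(c_{1j})$.

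The remaining task is to reduce the $\xi_i$-stop linking disks inside $P_\times$-type sectors. The key observation is that every hypersurface $\xi_i^{-1}(c_{ik})$ with $i \geq 2$ is an \emph{internal} sectorial hypersurface of $(M(\mathbb{V}),\xi)$ rather than a boundary component of its ideal contact locus. By the construction in Section \ref{sec:sectorial decomposition}, the unique crossing inside any strip $P_j$ lies in the middle sector $\alpha=(0,\dots,0)$; hence the sector $P_{j\alpha'}$ adjacent to a $P_\times$ sector $P_{j\alpha}$ across an auxiliary hypersurface $\xi_i^{-1}(c_{ik})$ (so that $\alpha'$ differs from $\alpha$ only in the $i$-th coordinate, with $\alpha'_i \neq 0$) is necessarily of $P_\vert$-type. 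Since $P_{j\alpha} \cap P_{j\alpha'} = \xi_i^{-1}(c_{ik}) \cap P_{j\alpha} = \xi_i^{-1}(c_{ik}) \cap P_{j\alpha'}$, the sectorial hocolim forces the Orlov image in $\mathcal{W}(M(\mathbb{V}),\xi)$ of any object of $\mathcal{W}(P_{j\alpha} \cap P_{j\alpha'})$, computed through the $P_{j\alpha}$-side, to coincide with the Orlov image computed through the $P_{j\alpha'}$-side. The previous paragraph then expresses the latter as a $\xi_1^{-1}(c_{1j})$-linking disk, and we are done.

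I expect the main subtle point to be the precise identification of Orlov images across the two sides of the sectorial hocolim, which must hold not merely at the level of homotopy categories but as an honest equivalence of objects in $\mathcal{W}(M(\mathbb{V}),\xi)$. To make this rigorous I would invoke the explicit hocolim description of \cite{ganatra2018sectorial} for our covering $\{P_{j\alpha}\}$ and verify that no higher-morphism contributions obstruct the identification across the internal auxiliary hypersurfaces. Once this technicality is in place, combining the two reductions leaves precisely the generating collection stated in the theorem.
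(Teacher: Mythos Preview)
Your proposal is correct and follows essentially the same approach as the paper's proof: both start from Sectorial Descent applied to the covering $\{P_{j\alpha}\}$, handle the $P_\vert$-type sectors via the stabilization equivalence to push their generators into the image of the Orlov/cup functor from $\xi_1^{-1}(c_{1j})$, and then eliminate the auxiliary $\xi_i^{-1}(c_{ik})$-linking disks ($i\geq 2$) in the $P_\times$-type sector $P_{j0}$ by routing them through the adjacent $P_\vert$-type sector $P_{j,(0,\dots,0,\pm,0,\dots,0)}$ using the commutativity of the descent diagram. Your worry about higher-morphism obstructions is unnecessary: the identification of Orlov images across the two sides of an internal sectorial hypersurface is the homotopy commutativity built into the hocolim diagram of \cite{ganatra2018sectorial}, which the paper invokes in exactly the same way (its diagram (\ref{diag:gluing-horizontal-pieces}) and the subsequent diagrams are precisely this step).
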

\begin{proof}
For simplicity, we call a sector $P_{j\alpha}$ of type $\times$ if it contains a crossing point  (i.e. $P_\times$), and of type $\vert$ if it is a stabilization (i.e. $P_\vert$). The wrapped Fukaya categories of sectors of type $\vert$ are generated by the image of the cup functor $\cup\colon\mathcal{W}\left(P_{j\alpha}\cap \xi_1^{-1} (c_{1j})\right)\to\mathcal{W}(P_{j\alpha})$. 
On the other hand, for each $j$, the wrapped Fukaya categories of the type $\times$ sector $\mathcal{W}(P_{j0})$ is generated by the distinguished Lagrangian $L_j$ associated to the crossing point together with the images of the cup functors from $\mathcal{W}\left(P_{j\alpha}\cap\xi_i^{-1}(c_{i,2j-1})\right)$, $\mathcal{W}\left(P_{j\alpha}\cap\xi_i^{-1}(c_{i,2j})\right)$ and $\mathcal{W}\left(P_{j\alpha}\cap\xi_1^{-1}(c_{1j})\right)$.

To see that the  contributions from the functors $\cup\colon\mathcal{W} (P_{j0}\cap\xi_i^{-1} (c_{i,2j-1}))\to\mathcal{W} (P_{j0})\to\mathcal{W} (P_j)$ and $\cup\colon\mathcal{W} (P_{j0}\cap\xi_i^{-1} (c_{i,2j}))\to\mathcal{W} (P_{j0})\to\mathcal{W} (P_j)$ for $i\geq 2$ are redundant, we consider the following gluing diagram
\begin{equation}\label{diag:gluing-horizontal-pieces}
    \begin{tikzcd}
        \mathcal{W} (P_{j0}\cap\xi_i^{-1} (c_{i,2j-1}))\arrow[r]\arrow[d]&\mathcal{W} (P_{j,(0,\dotsb, 0,-,0,\dotsb, 0)})\arrow[d]\\ 
        \mathcal{W} (P_{j0})\arrow[r]&\mathcal{W} (P_{j0}\cup P_{j,(0,0,\dotsb, 0,-,0,\dotsb, 0)}),
    \end{tikzcd}
\end{equation}
where the top arrow and the left arrows are cup functors.
Since the sector $P_{j,(0,\dotsb, 0,-,0,\dotsb, 0)}$ is of type $\vert$ , the cup functor $\mathcal{W} (P_{j,(\dotsb)}\cap\xi_1^{-1} (c_{1j}))\to\mathcal{W} (P_{j,(\dotsb)})$ is a quasi-equivalence, and hence images of the top arrow of \eqref{diag:gluing-horizontal-pieces}
for $i\geq 2$, are generated by images of Lagrangians in $P_{j,(\dotsb)}\cap\xi_1^{-1} (c_{1j})$ in $\mathcal{W} (P_{j,(\dotsb)})$.

Therefore by taking homotopy colimit, the wrapped Fukaya category $\mathcal{W} (M(\mathbb{V}),\xi)$ is generated by the union of all images of the cup functor $\cup\colon\mathcal{W}\left(P_{j\alpha}\cap\xi_1^{-1}(c_{1j})\right)\to\mathcal{W} (P_{j\alpha})\to\mathcal{W} (M(\mathbb{V}),\xi)$ together with $\{L_j\}_{1\leq j\leq N_{\mathbb{V}}}$. 
The remaining task is to compare these cup functors with linking disks associated to $\xi_1^{-1}(c_{1j})$. 


Fix $j$, note that the union $\displaystyle\bigcup_{\alpha\in\{-,0,+\}} P_{j\alpha}\cap\xi_1^{-1} (c_{1j})=\xi_1^{-1} (c_{1j})$ provides a sectorial cover of $\xi_1^{-1} (c_{1j})$, so by Sectorial Descent (Theorem \ref{Prelim:sectorial-descent}) the wrapped Fukaya category $\mathcal{W}\left(\xi_1^{-1} (c_{1j})\right)$ is the homotopy colimit of the Fukaya categories $\mathcal{W}\left(P_{j\alpha}\cap\xi_1^{-1}(c_{1j})\right)$ via the inclusion functor $\mathcal{W}\left(P_{j\alpha}\cap\xi_1^{-1}(c_{1j})\right)\to\mathcal{W}\left(\xi_1^{-1}(c_{1j})\right)$. Since the maps of Liouville sectors form a commutative diagram
\begin{equation*}
    \begin{tikzcd}
        P_{j\alpha}\cap\xi_1^{-1} (c_{1j})\arrow[r, hook]\arrow[d, hook] & \xi_1^{-1} (c_{1j})\arrow[d, hook] \\
        P_{j\alpha}\arrow[r,hook]& P_j,
    \end{tikzcd}
\end{equation*}
the corresponding functors of wrapped Fukaya categories also fit into a homotopy commutative diagram 
\begin{equation*}
    \label{diag:commutative-fukaya-cat}
    \begin{tikzcd}
    \mathcal{W}\left(P_{j\alpha}\cap\xi_1^{-1}(c_{1j})\right)\arrow[r]\arrow[d]&\mathcal{W}\left(\xi_1^{-1}(c_{1j})\right)\arrow[d]\\ 
    \mathcal{W} (P_{j\alpha})\arrow[r]&\mathcal{W} (P_j).
    \end{tikzcd}
\end{equation*}
Taking the homotopy colimit for $\alpha$, we get a diagram 
\begin{equation*}
    \label{diag:commutative-fukaya-colim}
    \begin{tikzcd}
    \hocolim_{\alpha}\mathcal{W}\left(P_{j\alpha}\cap\xi_1^{-1}(c_{1j})\right)\arrow[r]\arrow[d]&\mathcal{W}\left(\xi_1^{-1}(c_{1j})\right)\arrow[d]\\
    \hocolim_{\alpha}\mathcal{W} (P_{j\alpha})\arrow[r]&\mathcal{W} (P_j),
    \end{tikzcd}
\end{equation*}
where the top and bottom functors are quasi-equivalences of $A_\infty$-categories, and the diagram is still commutative up to homotopy. The bottom left composition contributes linking disks associated to the sectorial corners to the generating collection, and we can invert the top arrow to see that these generating collections actually lie in the full $A_{\infty}$-subcategory of $\mathcal{W} (P_j)$ spanned by generating collections of $\mathcal{W}\left(\xi_1^{-1}(c_{1j})\right)$.
\end{proof}

\section{Exact triangle, Surgery and Isomorphic Objects}\label{Exact triangle, surgery and isomorphic objects}

In Theorem \ref{generation:generating-by-standard-Lags}, we showed that the partially wrapped Fukaya category $\mathcal{W} (M(\mathbb{V}),\xi)$ is generated by the standard Lagrangians together with the all the linking disks associated to the stop $\xi_1=\xi$. In this section, we will use surgery exact sequences to rule out the contributions of all the linking disks and show that the standard Lagrangians are generated by Lagrangians associated to bounded feasible chambers of $\mathbb{V}$. We start with a discussion about McLean type neighborhoods in the special case corresponding to a (complex) hyperplane arrangement, which is a slight modification of \cite{mclean2012growth}. 

\subsection{A McLean Type Neighborhood.}\label{sec:McLean}

Consider a projective completion of $M(\mathbb{V}) \subset \mathbb{C}^d$, denoted by $\overline{M}(\mathbb{V}) \subset \mathbb{CP}^d$. Let $D_\infty$ denote the complement $\mathbb{CP}^d \setminus \mathbb{C}^d \cong \mathbb{CP}^{d-1}$. By abuse of notations, we will still use $\{H_i\}_{i=1}^n$ to denote the collection of the completions of the complexified hyperplanes in the arrangement $\mathbb{V}$. The intersection loci $\bigcup_{i=1}^nH_i\cap D_\infty$ can be seen as a generic hyperplane arrangement $\mathbb{V}_\infty$ in $\mathbb{CP}^{d-1}$. In general, this contatins a singular locus of $\overline{M}(\mathbb{V})$, denoted by $\mathbb{V}_{sing}$. For example, singular locus will arise when there are parallel hyperplanes in $\mathbb{V}$.

One can desingularize $\overline{M}(\mathbb{V})$ by iterated blow-ups along $\mathbb{V}_{sing}$, starting from the deepest strata, which we describe as follows. First, blow up $\overline{M}(\mathbb{V})$ along all 0-dimensional strata of the intersections of the hyperplanes $H_i$ in $D_\infty$. The proper transforms of the hyperplanes can only intersect along dimension $1$ or higher. We then blow up along $1$-dimensional strata of the intersections of proper transforms of the hyperplanes $H_i$ and $D_\infty$, and so on. Since the blow-up loci do not intersect with each other, this iterated blow-up is independent of the choice of order among the strata of the same dimension in $\mathbb{V}_\infty$. Performing this procedure inductively until the $(d-2)$-th step, we obtain a non-singular projective variety $X_\mathbb{V}$ with a simple normal crossing divisor $D_\mathbb{V}$ such that $M(\mathbb{V})=X_\mathbb{V}\setminus D_\mathbb{V}$. The divisor $D_\mathbb{V}$ is linearly equivalent to 
\begin{equation} \nonumber
H_1 + \cdots + H_n - \sum{a_i E_i},
\end{equation}
for some positive integers $a_i\in\mathbb{Z}_{>0}$, where the $E_i$'s are exceptional divisors. This is because at each step, there are at least two proper transforms of the hyperplanes $H_i$ intersecting with the blow-up locus, hence the coefficient of the corresponding exceptional divisor of the proper transform of $H_i$ is always negative. It follows that $D_\mathbb{V}$ supports an ample divisor, after a rescaling of the coefficients if necessary. The corresponding ample line bundle $\mathcal{L}\rightarrow X_\mathbb{V}$ has a section $s$ with $s^{-1}(0)=D_\mathbb{V}$. We equip $X_\mathbb{V}$ with a symplectic form $\omega_X:=-dd^\mathbb{C}\log ||s||$, where $||\cdot||$ is a metric on $\mathcal{L}$ whose associated curvature form $F$ has the property that $iF$ is a positive $(1,1)$-form. 

Let $\omega_{\log}$ be the restriction of $\omega_X$ to $M(\mathbb{V})$, and write  $M(\mathbb{V})_{\log}$ for $M(\mathbb{V})$ equipped with the $2$-form $\omega_{\log}$. We remove a small regular neighborhood $N_D$ of $D_\mathbb{V}$ from $X_\mathbb{V}$, so that its complement is a Liouville domain with respect to the $1$-form $\lambda_{\log}:=-d^\mathbb{C}\log ||s||$. Denote its completion by $\widehat{M}(\mathbb{V})_{\log}$, which is a Liouville manifold.

Recall that the hyperplanes $H_i$ are complexification of real hyperplanes $H_{\mathbb{R},i}$ (see Definition \ref{def:hyparr}).
Therefore, the blow-ups considered above can be defined over $\mathbb{R}$ and $X_\mathbb{V}$ has an induced real structure.
In other words, there is a real algebraic variety $X_{\mathbb{V}}(\mathbb{R})$ such that $X_\mathbb{V}=X_{\mathbb{V}}(\mathbb{R}) \times_{\operatorname{Spec}(\mathbb{R})} \operatorname{Spec}(\mathbb{C})$, so $X_\mathbb{V}$ comes with an anti-holomorphic involution (lifting the complex conjugation on $\mathbb{CP}^d$), which we denote by $\tau$.
By construction, we have $\tau(D_\mathbb{V})=D_\mathbb{V}$.
By possibly replacing $||\cdot||$ with a $\tau$-equivariant metric on $\mathcal{L}$ (after lifting $\tau$ to $\mathcal{L}$),  we may assume that $\tau^*\lambda_{\log}=-\lambda_{\log}$ and $\tau^*\omega_X=-\omega_X$.
By choosing the neighborhood $N_D$ to be $\tau$-invariant and using the cone coordinates, we can extend $\tau$ to an anti-symplectic involution over $\widehat{M}(\mathbb{V})_{\log}$, which we continue to denote by $\tau$ by abuse of notations.

\begin{lemma}
For any two sufficiently small choices of the neighborhood $N_D$, the resulting Liouville manifolds $\widehat{M}(\mathbb{V})_{\log}$ are $\tau$-equivariantly symplectomorphic.
\end{lemma}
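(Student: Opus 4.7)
The plan is to construct the desired symplectomorphism using the Liouville flow on $M(\mathbb{V})$ itself, in a manifestly $\tau$-equivariant fashion. The basic observation is that both completions are obtained by truncating the single Liouville manifold $(M(\mathbb{V}),\lambda_{\log})$ along different hypersurfaces that are transverse to a common Liouville vector field, so standard Liouville completion theory will identify them canonically. First I would verify $\tau$-invariance of the Liouville vector field $Z$ on $M(\mathbb{V})$, defined by $\iota_Z \omega_X = \lambda_{\log}$: applying $\tau^{*}$ to this equation and using $\tau^{*}\omega_X = -\omega_X$ and $\tau^{*}\lambda_{\log} = -\lambda_{\log}$ gives $\iota_{\tau^{*}Z}\omega_X = \iota_Z \omega_X$, so by nondegeneracy $d\tau(Z) = Z \circ \tau$. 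Consequently the flow $\varphi^{t}_Z$ commutes with $\tau$.

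The central technical input is that for every sufficiently small tubular neighborhood $N$ of $D_{\mathbb{V}}$, the vector field $Z$ is transverse to $\partial N$ and points outward, away from $D_{\mathbb{V}}$. This reduces to a local computation in simple normal crossing coordinates $(z_1,\dots,z_d)$ with $D_{\mathbb{V}} = \{z_1\cdots z_k=0\}$ locally. Near such a chart we may write $\|s\|^{2} = |z_1|^{2}\cdots|z_k|^{2} e^{2f}$ for some smooth $f$, so
\begin{equation*}
\log \|s\| \;=\; \sum_{j=1}^{k} \log|z_j| \;+\; f,
\end{equation*}
and $Z$ is the $g_{\omega_X}$-gradient of $\log \|s\|$. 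Since $\omega_X$ extends smoothly across $D_{\mathbb{V}}$ as a K\"ahler form, the metrics $g_{\omega_X}$ and the flat model differ only by a bounded, smooth, nondegenerate factor, whereas the singular part $\sum_j d\log|z_j|$ contributes the radial covector $\sum_j dr_j/r_j$, whose gradient dominates the smooth contribution $df$ as $r_j \to 0$. Hence near $D_{\mathbb{V}}$, $Z$ has strictly dominant outward radial component $\sim \sum_{j=1}^{k} r_j^{-1}\partial_{r_j}$, which proves the transversality claim for all small enough $N$.

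With these ingredients, the proof proceeds as follows. Given two sufficiently small $\tau$-invariant neighborhoods $N$ and $N'$, by passing to $N\cap N'$ (still $\tau$-invariant, and small enough to apply the outward-pointing statement) we reduce to the case $\overline{N'} \subset N$. The compact cobordism $\overline{N}\setminus N' \subset M(\mathbb{V})$ is then foliated by trajectories of $Z$ exiting $\partial N'$ and reaching $\partial N$ in finite positive time, by compactness and transversality. Writing $\widehat W$ and $\widehat W'$ for the completions of $(X_{\mathbb{V}}\setminus N, \lambda_{\log})$ and $(X_{\mathbb{V}}\setminus N', \lambda_{\log})$, the Liouville flow $\varphi^{t}_Z$ provides an exact symplectic identification of a neighborhood of the cylindrical end of $\widehat W'$ with the corresponding region of $\widehat W$ (via the hitting-time function $T\colon \partial N' \to \mathbb{R}_{>0}$, $\varphi^{T(x)}_Z(x)\in \partial N$), and this extends by the flow on the cylindrical ends to a global exact symplectomorphism $\widehat W' \xrightarrow{\cong} \widehat W$. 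Since $N,N'$ are $\tau$-invariant and $Z$ is $\tau$-equivariant, this identification intertwines the extensions of $\tau$ to both completions.

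The hard part is really Step 2 above, and specifically the verification of outward-pointing at the deepest crossing strata of $D_{\mathbb{V}}$, where one must control the interaction between the singular radial terms from several components of $D_{\mathbb{V}}$ and the bounded smooth corrections coming from $f$ and the metric $g_{\omega_X}$; once this local estimate is in place, the rest (including $\tau$-equivariance) is formal and follows from the functorial nature of Liouville completion.
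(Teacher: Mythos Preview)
Your proposal is correct and follows essentially the same approach as the paper: reduce to the nested case via a common refinement, then identify the two completions using the Liouville flow, with $\tau$-equivariance following from the $\tau$-invariance of $Z$. The only minor quibbles are that $N\cap N'$ need not have smooth boundary (so, as the paper does, one should pass to a smaller $\tau$-invariant regular neighborhood contained in the intersection rather than use $N\cap N'$ itself), and your Step~2 is redundant in this context since the paper's setup already stipulates that $X_{\mathbb{V}}\setminus N_D$ is a Liouville domain.
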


\begin{proof}
Suppose that $N_{D,1}$ and $N_{D,2}$ are two different choices.
There is an even smaller $\tau$-invariant neighborhood  $N_D' \subset N_{D,1} \cap N_{D,2}$, so by transitivity of an equivalence relation, it suffices to assume that $N_{D,2}=N_D' \subset N_{D,1}$.
In this case, there is a canonical isomorphism between the respective completions, given by the identity outside of $N_{D,1}$, and matches with the Liouville flow starting from the boundary of $N_{D,1}$. It is clearly $\tau$-equivariant.
\end{proof}

\begin{lemma}[\cite{mclean2012growth} Lemma 5.18]\label{lem:conv deform equiv}
$\widehat{M}(\mathbb{V})_{\log}$ is $\tau$-equivariantly convex deformation equivalent to $M(\mathbb{V})$, where the latter is equipped with the Liouville structure specified in Section \ref{ss:essential}.
\end{lemma}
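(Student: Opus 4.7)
The plan is to realize both Liouville structures as coming from strictly plurisubharmonic exhaustion functions on the common smooth manifold $M(\mathbb{V}) = X_\mathbb{V} \setminus D_\mathbb{V}$, and then linearly interpolate between the potentials. Specifically, the original potential $\phi = \sum_{i=1}^n(|\ell_i|^2 + |\ell_i|^{-2})$ from Section \ref{ss:essential} and the log potential $\psi := -\log\|s\|\bigl|_{M(\mathbb{V})}$ are both $\tau$-invariant, strictly plurisubharmonic exhaustion functions on $M(\mathbb{V})$: the former by direct computation in Section \ref{ss:essential}, the latter because $iF$ is a positive $(1,1)$-form on $X_\mathbb{V}$ and $\|s\| \to 0$ along $D_\mathbb{V}$. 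Setting $\phi_t := (1-t)\phi + t\psi$ for $t \in [0,1]$ yields a smooth family of strictly plurisubharmonic exhaustion functions, with associated Liouville forms $\lambda_t = -d^\mathbb{C}\phi_t$ and symplectic forms $\omega_t = d\lambda_t$. At $t=0$ we recover the structure of Section \ref{ss:essential}, and at $t=1$ we recover $\lambda_{\log}\vert_{M(\mathbb{V})}$, whose Liouville completion is canonically identified with $\widehat{M}(\mathbb{V})_{\log}$ by flowing from $\partial(X_\mathbb{V}\setminus N_D)$.

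To show this is a convex deformation in the sense of \cite{cieliebak2012stein}, the main step is to verify that the critical loci of $\phi_t$ lie in a uniform compact subset $K \subset M(\mathbb{V})$ for all $t \in [0,1]$, which guarantees that the skeleta $\Skel(M(\mathbb{V}), \lambda_t)$ stay compact. Near any hyperplane $H_i$, both $\phi$ and $\psi$ blow up (the former like $|\ell_i|^{-2}$, the latter like $-\log|\ell_i|$ up to bounded terms), and an adaptation of the estimate in Lemma \ref{l:Iestimate_pre} shows that $d\phi_t$ is non-vanishing there for every $t$. Near the hyperplane at infinity $D_\infty \setminus \bigcup H_i$, $\phi$ grows quadratically while $\psi \sim -\log\|s\|$ also diverges, so the convex combination has non-vanishing differential outside a compact set. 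Near the exceptional divisors arising from blow-ups of $\mathbb{V}_{\mathrm{sing}}$, the function $\phi$ remains smooth and bounded on compact subsets of $X_\mathbb{V} \setminus D_\mathbb{V}$ while $\psi$ blows up, and the standard argument for log Liouville structures shows that the Liouville vector field of $\lambda_{\log}$ points outward along a collar of $D_\mathbb{V}$; hence $\phi_t = (1-t)\phi + t\psi$ inherits this property from $\psi$ when $t>0$, and from $\phi$ alone (via Corollary \ref{generation:finiteness-critical-points}) when $t=0$.

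For the $\tau$-equivariance, the defining forms $\ell_i$ have real coefficients, so $\tau^*\phi = \phi$; by the $\tau$-equivariant choice of metric on $\mathcal{L}$ we also have $\tau^*\psi = \psi$, hence $\tau^*\phi_t = \phi_t$. Since $\tau$ is anti-holomorphic ($\tau^*J = -J$), this gives $\tau^*(d^\mathbb{C}\phi_t) = -d^\mathbb{C}\phi_t$, i.e.\ $\tau^*\lambda_t = -\lambda_t$ for all $t$. Combining the uniform critical-point bound and the equivariance, the family $\{\lambda_t\}_{t \in [0,1]}$ provides a $\tau$-equivariant convex Liouville deformation between the two structures, and applying \cite[Proposition 11.8]{cieliebak2012stein} (which clearly goes through equivariantly) produces the desired $\tau$-equivariant exact symplectomorphism of the completions. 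The main obstacle is the uniform compactness of critical loci in Step 2: the estimate must be carried out carefully on each stratum at infinity (the hyperplanes $H_i$, their mutual intersections in $D_\infty$, and the exceptional divisors), since $\phi$ and $\psi$ have rather different asymptotic behaviors and the convex combination must be analyzed on overlapping regions using a cover analogous to $\{M(\mathbb{V})_{J,\epsilon}\}$ from Section \ref{ss:essential}.
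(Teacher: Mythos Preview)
The paper's own proof is a one-line citation to \cite[Lemma 5.18]{mclean2012growth}, noting only that McLean's argument generalizes verbatim to the $\tau$-equivariant setting. Your proposal is essentially an unpacking of what that cited lemma does: interpolate linearly between two strictly plurisubharmonic exhaustion potentials and check that the critical loci stay uniformly compact. The equivariance paragraph is correct and is the only thing the paper adds to McLean.

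There is, however, a genuine error in your stratum-by-stratum analysis. You write that near the exceptional divisors ``the function $\phi$ remains smooth and bounded on compact subsets of $X_\mathbb{V}\setminus D_\mathbb{V}$ while $\psi$ blows up.'' This is not the relevant statement, and the implied picture is wrong: the exceptional divisors arise from blowing up strata of $\mathbb{V}_\infty\subset D_\infty$, so approaching them from $M(\mathbb{V})$ means going to infinity in $\mathbb{C}^d$ along specific directions. There $\phi=\sum(|\ell_i|^2+|\ell_i|^{-2})$ blows up as well, via the $|\ell_i|^2$ terms. So you cannot argue that $d\phi_t$ is nonzero near those divisors by saying $\psi$ dominates; both functions diverge.

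The gap is fixable, and the fix is the actual content of McLean's argument: one shows that near each irreducible component of $D_\mathbb{V}$ the radial derivatives of $\phi$ and $\psi$ (with respect to a local defining coordinate for that component) are both strictly positive for large enough values, so their convex combination cannot vanish. Concretely, $\phi$ and $\psi$ are both exhausting and both increase toward every boundary component, hence $d\phi$ and $d\psi$ pair positively with any outward-pointing vector, which prevents cancellation in $(1-t)d\phi+t\,d\psi$. Your final paragraph correctly identifies this compactness verification as the main obstacle; you should carry it out with the corrected asymptotics rather than the ``bounded vs.\ blows up'' dichotomy.
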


\begin{proof}
The fact that $\widehat{M}(\mathbb{V})_{\log}$ is convex deformation equivalent to $M(\mathbb{V})$ is proved in \cite[Lemma 5.18]{mclean2012growth}. The proof directly generalizes to the $\tau$-equivariant setting.
\end{proof}

We recall the McLean type neighborhood theorem.

\begin{proposition}[\cite{mclean2012growth} Lemma 5.14]\label{prop:Mclean}
Let $\widehat{M}(\mathbb{V})_{\log}$ be defined as above. For each $i \in \{1,\dots,n\}$, there is an open neighborhood $U_i \subset \mathbb{C}^d$ of $H_i$, together with a projection $\pi_i: U_i \to H_i$,
such that the following statements are true:
\begin{enumerate}[(i)]
\item  For all $i,j$, we have $\pi_i \circ \pi_j=\pi_j \circ \pi_i$ in $U_i \cap U_j$.
\item For any $I \subset \{1,\dots,n\}$, the compositions of $(\pi_i)_i\in I$, which we denote by $\pi_{I}:U_{I}:=\bigcap_{i \in I} U_i \to H_{I}:=\bigcap_{i \in I} H_i$, has fibres symplectomorphic to 
\begin{align}\label{eq:localform2}
\left(\prod_{i \in I} \mathbb{D}, r_idr_i\wedge d \theta_i\right),
\end{align}
where $\mathbb{D}$ is the unit disk centered at the origin, and the intersection of a fibre with $U_{I} \cap H_j$ is mapped to $\{0\}_j \times \prod_{i \in I \setminus \{j\}} \mathbb{D}$ under the symplectomorphism.
\item For any $j \in I$, the restriction of the map $\pi_j$ to any fibre $\prod_{i \in I} \mathbb{D}$ of $\pi_{I}$ is the natural projection
\[\prod_{i \in I} \mathbb{D}_{\epsilon} \to \{0\}_j \times \prod_{i \in I \setminus \{j\}} \mathbb{D},
\]
where $\mathbb{D}_\epsilon$ is the disk of radius $\epsilon$ centered at the origin.
\item  The symplectic parallel transport map of  $\pi_{I}:U_{I} \to H_{I}$ has holonomy in $\prod_{i \in I} U(1)$.
\item Let $\tau:\mathbb{C}^d \to \mathbb{C}^d$ and $\tau_{\mathbb{D}}:\mathbb{D} \to \mathbb{D}$ be complex conjugations, and let $\tau_I:H_I \to H_I$ be the restriction of $\tau$.
For any $I \subset \{1,\dots,n\}$, 
there is an open $\tau_I$-invariant neighborhood $B_{I}$ of $L_{I}^{\circ}:=\Fix(\tau_I) \setminus (\bigcup_{I \subset J} U_{J}$) in $H_{I}$, together with a $(\tau_I \times \prod_{i \in I} \tau_{\mathbb{D}},\tau)$-equivariant fibre-preserving symplectomorphism
\begin{align}\label{eq:localTri}
\left(B_I \times \prod_{i \in I} \mathbb{D}, \omega_{H_{I}}|_{B_I}+\sum_{i \in I} r_idr_i\wedge d \theta_i \right) \simeq \left(U_I|_{B_I}, \omega_{M(\mathbb{V})}|_{U_{I}^{\circ}} \right),
\end{align}
where
$U_I|_{B_I} :=\pi_{I}^{-1}(B_I)$.
In particular, it implies that for any sign sequence $\alpha\in\mathscr{F}(\mathbb{V})$ with $U_I \cap L_{\alpha} \neq \emptyset$, after possibly rotating the disk factors by $e^{\sqrt{-1}\pi}$, the $(\tau_I \times \prod_{i \in I} \tau_{\mathbb{D}},\tau)$-equivariant symplectomorphism sends
\begin{equation} \nonumber
L_{I}^{\circ} \times \prod_{i \in I} \left\{(r_i,\theta_i) \in \mathbb{D}^* |\alpha(i) e^{\sqrt{-1}\theta_i}=1\right\},
\end{equation}
which is an open subset of $\Fix(\tau_I \times \prod_{i \in I} \tau_{\mathbb{D}})$, to $U_I|_{B_I} \cap L_{\alpha}$.
\end{enumerate}
\end{proposition}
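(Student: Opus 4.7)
The plan is to adapt McLean's argument (\cite[Lemma 5.14]{mclean2012growth}) and then enforce the $\tau$-equivariance and track the real chamber structure. By the simplicity of $\mathbb{V}$, for every non-empty $I \subset \{1,\dots,n\}$ with $H_I \neq \emptyset$, the intersection $H_I = \bigcap_{i \in I} H_i$ is a smooth complex submanifold of $\mathbb{C}^d$ of complex codimension $|I|$, and since each $H_i$ is defined over $\mathbb{R}$ these submanifolds are $\tau$-invariant, so the restricted involutions $\tau_I: H_I \to H_I$ make sense.

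First, I would establish (i)--(iv) via a relative Moser-type deformation. Near $H_I$, a unitary frame of the symplectic normal bundle built from the linear defining equations $\ell_i$ ($i \in I$) identifies a neighborhood of $H_I$ diffeomorphically with a disk subbundle of $B_I \times \prod_{i \in I} \mathbb{D}$, and endows it with a reference symplectic form $\omega_{\mathrm{ref}} = \omega_{H_I}|_{B_I} + \sum_{i \in I} r_i dr_i \wedge d\theta_i$. Since $\omega_{\log}$ and $\omega_{\mathrm{ref}}$ agree to first order along $H_I$ (both have vanishing pullback to $H_I$ and matching complexification in the normal directions), a relative Moser argument on $U_I|_{B_I}$ produces a fibre-preserving symplectomorphism between small neighborhoods. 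Running this construction for $I$ of maximal size first and extending inductively to smaller $|I|$, one achieves the strict commutativity $\pi_i \circ \pi_j = \pi_j \circ \pi_i$ in (i); items (ii)--(iv) then follow from the explicit product form of $\omega_{\mathrm{ref}}$, with the $\prod_{i \in I} U(1)$ holonomy coming from the rotational symmetry of each factor $r_i dr_i \wedge d\theta_i$.

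Next, to secure (v), I would make the Moser deformation $\tau$-equivariant. Both $\omega_{\log}$ and $\omega_{\mathrm{ref}}$ are anti-invariant under $\tau$: the former because we chose a $\tau$-equivariant metric on $\mathcal{L}$, and the latter because each $r_i dr_i \wedge d\theta_i$ is anti-invariant under $\tau_\mathbb{D}$, while the unitary frame built from the real linear forms $\ell_i$ intertwines $\tau$ with $\prod_{i \in I} \tau_\mathbb{D}$. The Moser vector field $\omega_t^{-1}(\beta)$ is then $\tau$-equivariant as soon as one chooses a $\tau$-anti-invariant primitive $\beta$ of $\omega_{\log} - \omega_{\mathrm{ref}}$, which can always be arranged by averaging any primitive with $-\tau^{\ast}$. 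Integrating produces the $\tau$-equivariant symplectomorphism $(\ref{eq:localTri})$. For the statement on the real chambers, note that $\Fix(\tau_I \times \prod_{i \in I} \tau_\mathbb{D})$ in a fibre consists of points with $\theta_i \in \{0,\pi\}$ for each $i \in I$; its $2^{|I|}$ connected components are naturally indexed by signs $\alpha(i) = \pm$, and after possibly rotating each disk factor by $e^{\sqrt{-1}\pi}$ this indexing matches the coorientation convention used to define $L_\alpha$.

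The hard part will be the simultaneous compatibility in the first step: arranging the Moser isotopies for different $I$'s so that the resulting projections agree strictly (rather than just up to homotopy), while at the same time being $\tau$-equivariant and preserving the product structure on the disk fibres. This should be resolvable by running the construction inductively from the deepest strata outward and $\tau$-averaging at each stage, so that when one extends from $H_I$ to $H_{I'}$ with $I' \subsetneq I$ the existing product decomposition on $U_I$ can be reused as the boundary data for the next Moser step; but the bookkeeping of signs needed for (v), together with the need to keep the unitary frame compatible with the real structure throughout the induction, requires careful coordination.
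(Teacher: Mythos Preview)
Your proposal is correct and takes essentially the same approach as the paper: an induction from the deepest strata outward, building the local product structure via a relative Moser argument and enforcing $\tau$-equivariance at each stage. The paper organizes the induction via an explicit total order on the index sets $I$ and singles out one ingredient you did not name: the connected components of $L_I^{\circ}$ are contractible, which is precisely what guarantees that the polydisk normal bundle trivializes over the $\tau_I$-invariant neighborhood $B_I$ and hence that the product form \eqref{eq:localTri} can actually be achieved equivariantly.
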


\begin{proof}
The proposition can be proved by generalizing \cite[Lemma 5.14]{mclean2012growth} to incorporate the anti-symplectic involutions. For the sake of completeness, we present here the details of proof.

Define a total order for subsets of $\{1,\dots,n\}$ as follows: if $|I_1|>|I_2|$, then $I_1 <I_2$, and if $|I_1|=|I_2|$ and $\min (I_1 \setminus I_2) < \min (I_2 \setminus I_1)$, then $I_1<I_2$. 
We are going to construct $U_i$, $\pi_i$ and $B_I$ inductively, starting from its description near $H_I$ for small $I$ and progressing to large $I$.

We will construct $U_i$ so that $U_I =\emptyset$ if $H_I=\emptyset$ so the base case is when $|I|=d$ and $H_I$ is a point. Define $B_{I}=H_I$.
Using a local $\prod_{i \in I} \tau_{\mathbb{D}}$-invariant metric and the corresponding exponential map, we can  find a small neighborhood $U_I$ of the point $H_I$ and a $(\prod_{i \in I} \tau_{\mathbb{D}}, \tau)$-equivariant diffeomorphism  $\phi_I: \prod_{i \in I} \mathbb{D} \to U_I$
such that $\phi_I(\{0\}_j \times \prod_{i \in I\setminus{j}} \mathbb{D})=U_I \cap H_j$ for all $j$.
Moreover, there are constants $a_i>0$ such that $\phi_I^*\omega_{\log}=\sum_{i \in I} a_ir_idr_i\wedge d \theta_i$.
By rescaling $\phi_I$ factor by factor, we may assume that $a_i=1$ for all $i$.
Since $\tau$ is an anti-symplectic involution, by possibly shrinking $\prod_{i \in I} \mathbb{D}$, we can isotope $\phi_I$ to another $(\prod_{i \in I} \tau_{\mathbb{D}}, \tau)$-equivariant symplectomorphism
$\phi_I: \prod_{i \in I} \mathbb{D} \to U_I$
such that $\phi_I(\{0\}_j \times \prod_{i \in I\setminus{j}} \mathbb{D})=U_I \cap H_j$ and
 $\phi_I^*\omega_{\log}$ is 
of the form \eqref{eq:localform2}.
We define $\pi_i:U_I \to H_I$ by $ \phi_I \circ p_i \circ \phi_I^{-1}$, where $p_i:\prod_{j \in I} \mathbb{D} \to \prod_{j \in I \setminus\{i\}} \mathbb{D}$ is the natural projection.
Since $\phi_I$ is $(\prod_{i \in I} \tau_{\mathbb{D}}, \tau)$-equivariant, it sends the fixed point set of $\prod_{i \in I} \tau_{\mathbb{D}}$ to that of $\tau$. By possibly rotating the disk factors by $e^{\sqrt{-1}\pi}$, it implies that  $\phi_I^{-1}(U_I \cap L_{\alpha})$ equals $\prod_{i \in I}\left\{(r_i,\theta_i) \in \mathbb{D}^* |\alpha(i) e^{\sqrt{-1}\theta_i}=1\right\}$. This completes the base case of the induction.

Let $I \subset \{1,\dots,n\}$. Assume the induction hypothesis holds for all $J<I$. More precisely, it means that we have constructed $\{\pi_i\}_{i=1}^n$ and $\{U_i\}_{i=1}^n$ near $H_J$ such that all the listed properties are satisfied. We need to extend the definitions over a neighborhood of $H_I$ such that all the listed properties continue to hold. In fact, we only need to extend $\pi_i$ and $U_i$ for $i \in I$ in a neighborhood of $H_I$.

First note that, as a complete intersection, the structure group of the normal bundle $\nu_{H_I}$ of $H_I\subset X_\mathbb{V}$ is $\prod_{i \in I} U(1)$. 
Using partition of unity, exponential map and the induction hypothesis, we can construct a diffeomorphism $\phi_I$ from a polydisk subbundle of $N_{H_I}$ to a neighborhood of $H_I$ such that near $\phi_I^{-1}(H_I \cap H_J)$, for $J<I$. $\phi_I^*\omega_{\log}$ and $\phi_I^*\pi_i$ already satisfy all the listed properties and $\phi_I^*\pi_I$ coincides with the projection $\nu_{H_I} \to H_I$.
By an abuse of notation, we denote the polydisk subbundle to be $\nu_{H_I}$.
Moreover, since every connected component of $L_{I}^{\circ}$ is contractible, we can choose a smooth trivialization of $\nu_{H_I} \to H_I$ over a $\tau_I$-invariant neighborhood $B_I$ of $L_{I}^{\circ}$, which is of the form $B_I \times \prod_{i \in I} \mathbb{D}$ (where the structure group $\prod_{i \in I} U(1)$ respects the product decomposition), and assume that $\phi_I$ is $(\tau_I \times \prod_{i \in I} \tau_{\mathbb{D}},\tau)$-equivariant over $B_I \times \prod_{i \in I} \mathbb{D}$.

On the other hand, we can construct a symplectic form $\omega'$ on $\nu_{H_I}$ such that
\begin{itemize}
\item $\omega'=\phi_I^*\omega_{\log}$ near $\phi_I^{-1}(H_I \cap H_J)$, for $J$ less than $I$, 
\item over $H_I^{\circ}:=H_{I}\setminus \left(\bigcup_{J<I} \phi_I^{-1}(H_I \cap H_J)\right)$, the bundle $\nu_{H_I}|_{H_I^{\circ}} \to H_I^{\circ}$ has fibres diffeomorphic to \eqref{eq:localform2}, with the structure group respects the product structure,
\item the symplectic parallel transport map of $\nu_{H_I}|_{H_I^{\circ}} \to H_I^{\circ}$ has holonomy in $\prod_{i \in I} U(1)$,
\item over the trivialization $B_I \times \prod_{i \in I} \mathbb{D}$, $\omega'$ is of the form \eqref{eq:localTri}.
\end{itemize}
The construction is called a `bundle compatible' symplectic form in \cite[Lemma 5.14]{mclean2012growth}.
Note that, in our setup, the action of $\tau_I \times \prod_{i \in I} \tau_{\mathbb{D}}$ on $B_I \times \prod_{i \in I} \mathbb{D}$  is anti-symplectic with respect to $\omega'$.

Since $\omega'|_{H_I^{\circ}}=\omega_{H_I^{\circ}}$ and $\omega'=\phi_I^*\omega_{\log}$ near $\phi_I^{-1}(H_I \cap H_J)$, for $J<I$, we can apply Moser's trick to the linear interpolation of $\omega'$ and $\phi_I^*\omega_{\log}$. The upshot is that we are able to modify $\phi_I$ so that $\phi_I^*\omega_{\log}=\omega'$.
Inside $N_{H_I}$, there is a natural projection map corresponding to the $i$-th factor of the structure group, denoted by $p_i$, which extends $\phi_I^*\pi_i$ from an open neighborhood of $\phi_I^{-1}(H_I \cap H_J)$ to the entire bundle.
We can therefore extend $U_i$, for $i \in I$, to a neighborhood of $H_I$ such that $U_I=\phi_I(N_{H_I})$.
The maps $\pi_i$, $i \in I$ can also be extended to a neighborhood of $H_I$ by $\phi_I \circ p_i \circ \phi_I^{-1}$. 
It is immediate to check that they satisfy all the listed properties.
\end{proof}

\subsection{Clean Surgery Formula and Lagrangian Cobordisms.\label{sec:clean-surgery}} 

In this subsection, we briefly review the work on clean surgery formulae for closed Lagrangian submanifolds \cite{mak2018dehn}, and extend it to cylindrical exact Lagrangian submanifolds. Combining this with the recent work of Bosshard \cite{bosshard2022lagrangian}, we show that clean surgery of cylindrical exact Lagrangian submanifolds will induce exact triangles in the wrapped Fukaya category. We begin with a review of Lagrangian cobordisms and Lagrangian surgeries.

A \emph{Lagrangian cobordism} $V\subseteq\mathbb{C}\times X$ is a cylindrical exact Lagrangian submanifold with respect to $\lambda +\lambda_{\mathbb{C}}$, where $\lambda$ is the Liouville $1$-form on $(X,\omega ,Z)$ dual to $Z$, and $\lambda_{\mathbb{C}}$ is the standard Liouville $1$-form on $\mathbb{C}$, with the following additional requirements: there exist (sectorial) Liouville subdomains $\mathbb{C}^{\mathrm{int}}\subseteq\mathbb{C}$ and $X^{\mathrm{int}}\subseteq X$ such that

\begin{enumerate}[(a)]
	\item $V$ is cylindrical outside $\mathbb{C}^{\mathrm{int}}\times X^{\mathrm{int}}$,
	\item $V\cap (\mathbb{C}\setminus\mathbb{C}^{\mathrm{int}})\times X$ consists only of finitely many submanifolds that are cylindrizations $\gamma_j\tilde{\times}L_j$ of product Lagrangian submanifolds of cylindrical rays $\gamma_j$ in $\mathbb{C}$ and exact Lagrangian submanifolds $L_j\subseteq X$. These cylindrized products are called \emph{ends} of the cobordism,
	\item $V$ is disjoint from $\mathbb{C}\times\partial X$ where $\partial X$ is the sectorial boundary of $X$. 
\end{enumerate}

In the above, the \emph{cylindrization} $\gamma_j\tilde{\times}L_j$ is a deformation of the product $\gamma_j\times L_j$, so that it becomes cylindrical in $\mathbb{C}\times X$. We refer the reader to \cite{ganatra2018sectorial}, Section 7.2 for its construction.

\begin{theorem}[\cite{bosshard2022lagrangian}, see also \cite{BC1, BC2, Tanaka18}]
	Let $V$ be a Lagrangian cobordism with ends $L_0, L_1, \dotsb ,L_n$ (oriented counterclockwisely), there is an iterated cone decomposition
	\[
		L_0\cong [L_n\to L_{n-1}\to\dotsb\to L_2\to L_1]
	\]
	in the derived wrapped Fukaya category $D^\mathit{perf}\mathcal{W} (X)$.
\end{theorem}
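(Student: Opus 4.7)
The plan is to adapt the Biran--Cornea ``Lagrangian cobordism yields an iterated cone'' strategy to the wrapped exact setting, following Bosshard.

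First, I would normalise $V$ so that in the complement of a compact set, its ends are the cylindrisations of products $\gamma_j \tilde{\times} L_j$, where $\gamma_0, \gamma_1, \dots, \gamma_n$ are disjoint horizontal rays in $\mathbb{C}$ at heights $h_0, h_1, \dots, h_n$, indexed counterclockwise, with $\gamma_0$ exiting in the opposite horizontal direction from the others. Under the Liouville structure $\lambda + \lambda_{\mathbb{C}}$ on $X \times \mathbb{C}$, the cobordism $V$ then defines an object of $\mathcal{W}(X \times \mathbb{C})$ (which one sees via the stopped inclusion theory of Ganatra--Pardon--Shende recalled in Section~\ref{Topology-of-Hyperplane-Arrangements}).

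Second, for each test object $K \in \mathcal{W}(X)$, I would probe $V$ using cylindrical arcs $\beta \subset \mathbb{C}$ and the associated cylindrised products $\beta \tilde{\times} K \in \mathcal{W}(X \times \mathbb{C})$. The Künneth embedding $\mathcal{W}(\mathbb{C}) \otimes \mathcal{W}(X) \hookrightarrow \mathcal{W}(X \times \mathbb{C})$ implies that $CW^{\ast}(\beta \tilde{\times} K, V)$ localises near intersections of $\beta$ with the $\gamma_j$: near each such intersection the local model is $T^{\ast}\mathbb{R} \times X$ with $\mathbb{R} \times L_j$ as one Lagrangian and a cotangent fibre times $K$ as the other, identifying the local contribution canonically with $CW^{\ast}(L_j, K)$. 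For $\beta$ pushed far above all the $h_j$, the resulting Floer complex vanishes after a small perturbation.

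Third, I would slide $\beta$ downwards in a sequence of positive isotopies that cross $\gamma_1$, then $\gamma_2$, $\dots$, then $\gamma_n$, and finally wrap around the level of $\gamma_0$. Each crossing of $\gamma_j$ creates one new batch of Hamiltonian chords contributing a copy of $CW^{\ast}(L_j, K)$, and the continuation map between the pre- and post-crossing probes fits into an exact triangle
\begin{equation*}
CW^{\ast}(\beta_{\text{pre}} \tilde{\times} K, V) \longrightarrow CW^{\ast}(\beta_{\text{post}} \tilde{\times} K, V) \longrightarrow CW^{\ast}(L_j, K).
\end{equation*}
Iterating produces a tower whose final term is quasi-isomorphic to $CW^{\ast}(L_0, K)$ (after the probe has swung around to detect only the $L_0$-end), and whose successive cones are $CW^{\ast}(L_j, K)$ for $j = n, n-1, \dots, 1$. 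This gives, naturally in $K$, a quasi-isomorphism of Yoneda modules $\mathcal{Y}(L_0) \simeq [\mathcal{Y}(L_n) \to \dots \to \mathcal{Y}(L_1)]$, hence the claimed quasi-isomorphism in $D^{\mathit{perf}}\mathcal{W}(X)$.

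The main technical obstacle is promoting the numerical cone picture to an honest statement at the $A_\infty$-module level: one must identify the connecting morphisms produced by sliding $\beta$ past $\gamma_j$ with the Floer-theoretic maps $L_{j+1} \to L_j$ induced by $V$ itself, and check coherence of the iterated cone structure across all probes simultaneously. This is exactly what Bosshard \cite{bosshard2022lagrangian} establishes by working with $V$ as a bimodule between $\mathcal{W}(\mathbb{C})$ and $\mathcal{W}(X)$ (or, equivalently, as a Yoneda module on the product category) and exploiting the fact that $\mathcal{W}(\mathbb{C})$ with the appropriate stops is generated by the arcs $\beta$; the $V$-bimodule then tautologically decomposes as the iterated cone read off from its ends, and restricting to $\beta$ corresponding to a single end $L_0$ yields the result.
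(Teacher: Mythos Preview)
The paper does not provide its own proof of this statement: it is cited as a known result from \cite{bosshard2022lagrangian} (with antecedents in \cite{BC1, BC2, Tanaka18}) and used as a black box. Your sketch is a faithful outline of the Biran--Cornea probe-Lagrangian argument, adapted to the wrapped/Liouville-sector setting exactly as Bosshard carries it out, so there is nothing to compare against in the present paper itself.
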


We recall how to construct Lagrangian cobordisms from surgeries. We follow the terminology in \cite{mak2018dehn}. A function $\nu_{\lambda}\colon [0,\varepsilon]\to [0,\lambda]$ is called \emph{$\lambda$-admissible} if
\begin{enumerate}[(1)]
	\item $\nu_{\lambda} (0)=\lambda >0$ and $\nu_{\lambda}'(r)<0$ for $r\in (0,\varepsilon)$;
	\item $\nu_{\lambda}^{-1} (r)$ and $\nu_{\lambda} (r)$ have vanishing derivatives of all orders at $r=\lambda$ and $r=\varepsilon$, respectively.
\end{enumerate}
The graph of a $\lambda$-admissible function is depicted in Figure \ref{sec:admissiblefunction}.

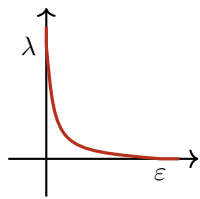
\begin{figure}[ht]
	\centering
	\begin{tikzpicture}[scale=0.5]
		\draw [->, thick] (-1,0) -- (3,0) node [below] {$\varepsilon$} -- (4,0);
		\draw [->, thick] (0,-1) -- (0,3) node [left] {$\lambda$} -- (0,4);
		\draw [BrickRed, very thick] (0,3.5) -- (0,3) .. controls (0.25,0.25) .. (3,0) -- (3.5,0);
	\end{tikzpicture}
	\caption{$\lambda$-admissible Function\label{sec:admissiblefunction}}
\end{figure}

The following lemma is a generalization of Weinstein neighborhood for cylindrical Lagrangian submanifolds.

\begin{lemma}\label{l:nbhd}
Let $L$ be a properly embedded exact Lagrangian submanifold of $(X,\omega ,Z)$ that is cylindrical near infinity. There exists an open neighbourhood $W$ of $L$ that is invariant under the $Z$-action outside of a compact subset and is symplectomorphic to an open neighbourhood of $L$ in $T^{\ast} L$.
\end{lemma}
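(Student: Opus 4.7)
The plan is to patch two local models: the standard Weinstein Lagrangian tubular neighborhood along a compact piece of $L$, and a $Z$-invariant neighborhood of the cylindrical end obtained by symplectizing the contact Weinstein neighborhood theorem applied to the Legendrian at infinity. Since $L$ is cylindrical near infinity, we can write $L = K_L \cup ([R_0, \infty) \times \Lambda)$, where $K_L \subset L$ is a compact submanifold with boundary and $\Lambda = \partial_\infty L \subset \partial_\infty X$ is a Legendrian submanifold, with the cylindrical coordinate on $[R_0, \infty) \times \Lambda$ given by the Liouville flow of $Z$.

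First I apply the standard Weinstein Lagrangian neighborhood theorem to a slightly larger compact piece $K_L' \supset K_L$ of $L$, getting a symplectomorphism $\Psi_0 : W_0 \to U_0$ from an open neighborhood $W_0$ of $K_L'$ in $X$ onto an open neighborhood $U_0$ of the zero section over $K_L'$ in $T^{\ast}L$, with $\Psi_0|_L = \mathrm{id}$. Next, applying the contact Weinstein neighborhood theorem to $\Lambda \subset \partial_\infty X$ produces a contactomorphism between a neighborhood of $\Lambda$ and a neighborhood of the zero section in the $1$-jet bundle $J^1\Lambda$; taking symplectizations yields a $Z$-equivariant symplectomorphism $\Psi_\infty : W_\infty \to U_\infty$ from a $Z$-invariant open neighborhood $W_\infty$ of the cylindrical end of $L$ in $X$ onto an open neighborhood of the zero section of $T^{\ast}L$ over the cylindrical end, where the $Z$-action on $T^{\ast}L$ corresponds to the canonical cylindrical structure induced by $L$ itself.

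The last step is to glue $\Psi_0$ and $\Psi_\infty$ across the overlap, which corresponds to an annular region $[R_0, R_1] \times \Lambda$ of $L$. On this overlap, the composition $\Psi_\infty \circ \Psi_0^{-1}$ is a symplectomorphism between two open neighborhoods of the zero section in $T^{\ast}L$ that restricts to the identity on the zero section. By the relative Weinstein neighborhood theorem (a Moser argument applied to the linear interpolation of the two pullbacks of the canonical symplectic form on $T^{\ast}L$), this composition is Hamiltonian isotopic to the identity through symplectomorphisms fixing the zero section and supported in an arbitrarily thin fibrewise neighborhood of it. Conjugating $\Psi_0$ by a cut-off version of this Hamiltonian isotopy, supported within the overlap region, produces the desired global symplectomorphism $\Psi : W \to U \subset T^{\ast}L$, with $W = W_0 \cup W_\infty$ coinciding with $W_\infty$ outside a compact set and hence $Z$-invariant there. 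The main technical delicacy lies in cutting off the Moser isotopy so as not to disturb either the symplectic property of $\Psi_0$ or the $Z$-invariance of $W_\infty$; this is handled by arranging the isotopy to be supported strictly inside the compact overlap annulus, which is the standard step in relative normal form arguments.
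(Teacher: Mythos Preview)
Your proposal is correct and follows essentially the same strategy as the paper: use the Legendrian neighborhood theorem for $\Lambda=\partial_\infty L$, symplectize to obtain a $Z$-invariant model near infinity, and patch with the standard Weinstein neighborhood over the compact part via a Moser argument. The only organizational difference is that the paper builds the cylindrical neighborhood first and then runs a single compactly supported Moser flow to extend it over the interior, whereas you build both pieces separately and glue on an annular overlap; these are equivalent.

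Two technical points you pass over that the paper spells out: first, the contactomorphism $f:U\to V\subset J^1\Lambda$ only satisfies $f^*\alpha_{\mathrm{can}}=g\alpha$ for some positive function $g$, so the symplectization map must be taken as $(r,u)\mapsto (g(u)r,f(u))$ to be symplectic (it then automatically intertwines the Liouville fields); second, the identification of the symplectization of $J^1\Lambda$ with a neighborhood of the zero section in $T^*(\mathbb{R}_{>0}\times\Lambda)$ is itself a short Moser computation, which the paper isolates as a separate lemma. Your sentence ``taking symplectizations yields a $Z$-equivariant symplectomorphism \ldots onto an open neighborhood of the zero section of $T^*L$'' compresses both of these steps.
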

\begin{proof}
Consider the Lagrangian submanifold with boundary $\overline{L}=L\cap\overline{X}$ in the Liouville domain $\overline{X}$. 
Then $\partial \overline{L}$ is a closed Legendrian submanifold in the contact manifold 
$(\partial \overline{X}, \alpha:=\lambda|_{\partial \overline{X}})$.
By the Legendrian neighborhood theorem (see \cite[Corollary 2.5.9]{geiges}), there is a neighborhood $U$ of $\partial \overline{L}$ such that $U$ is contactomorphic to a neighborhood $V$ of $\partial \overline{L}$ in $J^1(\partial \overline{L})=\mathbb{R} \times T^*(\partial \overline{L})$ with the canonical contact form $\alpha_{can}$.
Denote the contactomorphism by $f:U \to V$ so we have $f^*\alpha_{can}=g\alpha$, where $g:U \to \mathbb{R}_{>0}$ is a positive function.
The map $F:\mathbb{R}_{>0} \times U \to \mathbb{R}_{>0} \times V$ given by $(r,u) \mapsto (g(u)r,f(u))$ satisfies $F^*d(r\alpha_{can})=d(\frac{r}{g}g\alpha)=d(r\alpha)$ so it is a symplectomorphism between the respective symplectizations.
Note that there is a symplectomorphism from the symplectization of $J^1(\partial \overline{L})$ to $T^*(\mathbb{R}_{>0} \times \partial \overline{L})$ which is identity on $\mathbb{R}_{>0} \times \partial \overline{L}$ (see the Lemma \ref{l:symplectizationproduct} below).
Therefore, the neighborhood $\mathbb{R}_{>0} \times U$ of $\mathbb{R}_{>0}\times \partial \overline{L}$ is symplectomorphic to a neighborhood of the zero section in  $T^*(\mathbb{R}_{>0} \times \partial \overline{L})$.
Let $W_{>1}:=\mathbb{R}_{>1} \times U$ which is a $Z$-invariant neighborhood of $L \setminus \overline{L}$ and is symplectomorphic to  a neighborhood of the zero section in  $T^*(\mathbb{R}_{>1} \times \partial \overline{L})$.
We can extend $W_{>1}$ to a neighborhood $W$ of $L$ and apply the Moser trick to get a Weinstein neighborhood of $L$. Since we have already built a symplectomorphism from $W_{>1}$ to a neighborhood of the zero section in $T^*(\mathbb{R}_{>1} \times \partial \overline{L})$, we can ensure that the Moser flow in the Moser trick is compactly supported so the result follows.
\end{proof}

\begin{lemma}\label{l:symplectizationproduct}
Let $K$ be a closed manifold. Then there is a symplectomorphism from the symplectization $\left(\mathbb{R}_{>0} \times J^1(K), d(r\alpha_{can})\right)$ to  the cotagent bundle $T^*(\mathbb{R}_{>0} \times K)$ sending $\mathbb{R}_{>0} \times K$ to the zero section.
\end{lemma}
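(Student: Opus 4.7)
The plan is to write down a simple explicit diffeomorphism and verify the claimed properties by direct computation. The statement is essentially the well-known observation that the symplectization of a $1$-jet bundle is a cotangent bundle, so there is no deep obstruction and the argument amounts to a one-line check of Liouville forms.

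First I would fix the natural identification $J^1(K) = \mathbb{R}_z \times T^*K$ with contact form $\alpha_{can} = dz - \theta_K$, where $\theta_K$ is the tautological $1$-form on $T^*K$; and the splitting $T^*(\mathbb{R}_{>0} \times K) \cong T^*\mathbb{R}_{>0} \times T^*K$ with Liouville form $\sigma\,ds + \theta_K'$, where $(s,\sigma)$ are base/fibre coordinates on $T^*\mathbb{R}_{>0}$. Then I would define
\[
\Phi \colon \mathbb{R}_{>0} \times J^1(K) \longrightarrow T^*(\mathbb{R}_{>0} \times K), \qquad (r, z, \eta) \longmapsto \bigl((r,\,-z),\ -r\,\eta\bigr),
\]
where $-r\eta$ denotes the fibrewise rescaling of $\eta \in T^*K$ by $-r$. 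The map $\Phi$ is manifestly a diffeomorphism, with inverse $((s,\sigma),\mu) \mapsto (s,\,-\sigma,\,-s^{-1}\mu)$, well-defined because $s > 0$. By construction, the natural inclusion $\mathbb{R}_{>0} \times K \hookrightarrow \mathbb{R}_{>0} \times J^1(K)$ corresponds to the locus $\{z = 0,\ \eta = 0\}$, and $\Phi$ sends it into the zero section of $T^*(\mathbb{R}_{>0} \times K)$.

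It then remains to verify that $\Phi$ is a symplectomorphism. The key calculation, in local Darboux coordinates $(q,p)$ on $T^*K$ with $\theta_K = p\,dq$, reads
\[
\Phi^*\bigl(\sigma\,ds + p'\,dq\bigr) \;=\; -z\,dr - rp\,dq \;=\; r\alpha_{can} - d(rz),
\]
so the pullback of the Liouville form on $T^*(\mathbb{R}_{>0} \times K)$ differs from $r\alpha_{can}$ by the exact $1$-form $-d(rz)$; taking exterior derivatives gives $\Phi^*d(\sigma\,ds + p'\,dq) = d(r\alpha_{can})$, which is exactly the symplectomorphism property. The only subtlety to watch is the sign convention for $\alpha_{can}$: if one instead takes $\alpha_{can} = dz + \theta_K$, one must replace $\eta \mapsto -r\eta$ by $\eta \mapsto r\eta$ in the definition of $\Phi$, and the corresponding computation goes through in the same way.
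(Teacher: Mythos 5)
Your proof is correct, and it takes a genuinely different route from the paper. The paper proves this lemma by a Moser argument: it identifies $\mathbb{R}_{>0}\times\mathbb{R}\times T^*K$ with $T^*\mathbb{R}_{>0}\times T^*K$ by the obvious diffeomorphism, then interpolates between the two symplectic forms via
\[
\omega_t = dz\wedge dr + (1-t)\left(dr\wedge\lambda_{can} + r\,d\lambda_{can}\right) + t\,d\lambda_{can},
\]
and verifies that the time-dependent vector field $X_t = \frac{1-r}{(1-t)r+t}Z$ (with $Z$ the Liouville vector field on $T^*K$) satisfies $d\iota_{X_t}\omega_t = \tfrac{d}{dt}\omega_t$ and is integrable. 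You instead write down the symplectomorphism $\Phi(r,z,\eta) = \bigl((r,-z),\,-r\eta\bigr)$ in closed form and check directly that $\Phi^*\lambda_{T^*} = r\alpha_{can} - d(rz)$, which is cleaner: it sidesteps entirely the issue of completeness of the Moser flow (which the paper only asserts, with the slightly vague phrase "$X_t$ is integrable"), and it makes the correspondence between the two spaces explicit rather than merely abstract. Your remark about the dependence on the sign convention for $\alpha_{can}$ is apt — the paper uses $\alpha_{can} = -dz + \lambda_{can}$ rather than either of the two conventions you consider, but as you note the only change needed is flipping signs in the formula for $\Phi$ (here $\Phi(r,z,\eta) = \bigl((r,z),\,r\eta\bigr)$ works, with $\Phi^*\lambda_{T^*} = r\alpha_{can} + d(rz)$), and the computation is the same.
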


\begin{proof}
We can wrtie $J^1(K)$ as $\mathbb{R} \times T^*K$ and $\alpha_{can}$ as $-dz+\lambda_\mathit{can}$ where $\lambda_\mathit{can}$ is the canonical Liouville form on $T^*K$. So 
\[
d(r\alpha_{can}))=dz \wedge dr+dr \wedge d\lambda_\mathit{can} + r d\lambda_\mathit{can}.
\]
Use the obvious diffeomorphism to identify $\mathbb{R}_{>0} \times \mathbb{R} \times T^*K$ with $T^* \mathbb{R}_{>0} \times T^*K=T^*(\mathbb{R}_{>0} \times K)$ so the cotagent bundle symplectic form is given by
$dz \wedge dr+ d\lambda_\mathit{can}$.
For $t \in [0,1]$, let 
\begin{align*}
\omega_t&:=dz \wedge dr+(1-t)(dr \wedge d\lambda_\mathit{can} + r d\lambda)+td\lambda_\mathit{can} \\
X_t&:= \frac{1-r}{(1-t)r+t}Z
\end{align*}
where $Z$ is the Liouville vector field satisfying $\iota_{Z} d\lambda_\mathit{can}=\lambda_\mathit{can}$.
It is clear that $d\iota_{X_t}\omega_t=d((1-r)\lambda_\mathit{can})=\frac{d}{dt}\omega_t$ and $X_t$ is integrable.
The result follows from the Moser argument.
\end{proof}

Using Lemma \ref{l:nbhd}, we can identify an open neighbourhood of $L$ in $X$ as a codisk bundle over $L$ of radius $\varepsilon >0$, with respect to the Riemannian metric on $T^{\ast} L$ induced by an appropriately chosen Riemannian metric on $L$ (one can use the symplectomorphism in Lemma \ref{l:symplectizationproduct} to show that there is a strictly positive $\varepsilon$ even though $L$ is non-compact). Let $D\subseteq L$ be a submanifold that is also cylindrical near infinity, and write $N_L^{\ast} D$ to be the conormal bundle of $D$ in $T^{\ast} L$.

\begin{definition}[\cite{mak2018dehn}, Definition 2.20]
We define \emph{flow handle} $H_{\lambda}(L,D)$ associated to the pair $(L,D)$ to be the space 
	\[
		H_{\lambda} (L,D)\coloneqq\left\{\varphi_{\nu_{\lambda} (\Vert\zeta\Vert)}^{\Vert\zeta\Vert} (\zeta)\middle\vert \zeta\in N^{\ast}_{L,\varepsilon} D\setminus D\right\},
	\]
where $\varphi_{\nu_{\lambda} (\Vert\zeta\Vert)}^{\Vert\zeta\Vert}$ is the time-$1$ Hamiltonian flow of a function $\tilde{\nu}_\lambda(||\zeta||)$, with $\tilde{\nu}_\lambda'(r)=\nu_\lambda(r)$.
\end{definition}

Let $L_1, L_2$ be two exact cylindrical Lagrangian submanifolds cleanly intersecting along a submanifold $D$. In \cite{mak2018dehn}, they showed that there is a symplectic automorphism of $T^{\ast} L_1$ so that $L_2$ becomes $N^{\ast}_{L_1} D$ in a given neighbourhood of $L_1$. The \emph{flow surgery} of $L_1$ and $L_2$ along $D$ is then the submanifold given by attaching the flow handle $H_{\lambda} (L,D)$ defined in an open neighbourhood $U$ of $D$ to $(L_1\cup L_2)\setminus U$. It follows from \cite[Corollary 2.22 and Lemma 6.2]{mak2018dehn} (see also \cite[Lemma 3.10]{bosshard2022lagrangian}) that we have

\begin{proposition}\label{prop:flow-surgery-cobordism}
Let $L=L_1\#_D L_2$ be the flow surgery of $L_1$ and $L_2$ along $D$, then there is a Lagrangian cobordism (the trace cobordism) from $L_1$ and $L_2$ to $L$.
\end{proposition}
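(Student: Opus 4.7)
The plan is to build the trace cobordism $V\subset\mathbb{C}\times X$ explicitly as a one-parameter family of flow-handle attachments, following the construction of \cite[Section 6]{mak2018dehn} in the closed case and grafting on the cylindricity arguments of \cite[Section 3]{bosshard2022lagrangian}. Concretely, using Lemma \ref{l:nbhd} I would first identify an open neighbourhood $W$ of $L_1$ in $X$ with an open neighbourhood of the zero section in $T^\ast L_1$, in such a way that $L_2\cap W$ is the restriction of the conormal bundle $N^\ast_{L_1}D$ to a neighbourhood of $D$. The flow handle $H_\lambda(L_1,D)$ then lives inside $W$ by construction.

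Next I would choose a smooth weakly monotone function $s\mapsto \lambda(s)$ with $\lambda(s)=0$ for $s\leq -1$ and $\lambda(s)=\lambda_0>0$ for $s\geq 1$, and define the fibrewise slice
\begin{equation*}
V_s:=\bigl((L_1\cup L_2)\setminus U\bigr)\,\cup\, H_{\lambda(s)}(L_1,D),
\end{equation*}
where $U$ is a small tubular neighbourhood of $D$ in $X$. The cobordism $V$ is then obtained by taking $V\cap(\{s+it\}\times X)=V_s$, smoothing at the corners $s=\pm 1$, and finally applying a standard profile function on $\mathbb{C}$ to bend the strip $\mathbb{R}_s\times i\mathbb{R}_t$ so that the two ends lie along horizontal rays in the sense of the definition of a Lagrangian cobordism. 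The Lagrangian and exactness properties of each $V_s$, and the fact that the total $V$ is Lagrangian in $(\mathbb{C}\times X,\lambda_\mathbb{C}+\lambda_X)$, are purely local in the flow-handle region and are handled by the same computation as in \cite[Lemma 6.2]{mak2018dehn}; by construction the ends at $s\to -\infty$ reproduce (a cylindrisation of) $L_1\cup L_2$, while the end at $s\to +\infty$ reproduces (a cylindrisation of) $L=L_1\#_D L_2$.

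The part that requires genuine care, and which distinguishes this from the closed setting treated in \cite{mak2018dehn}, is verifying the behaviour of $V$ near infinity in $X$, i.e.\ properties (a)--(c) in the definition of a Lagrangian cobordism. Because $L_1$, $L_2$ and $D$ are cylindrical near infinity and $L_2$ meets $L_1$ cleanly along $D$, the Weinstein neighbourhood produced by Lemma \ref{l:nbhd} can be chosen to be $Z$-invariant outside a compact subset of $X$, and inside this cylindrical region the Hamiltonian used to define the flow handle can be chosen to be invariant under the Liouville flow. Consequently $V$ is automatically cylindrical outside a product $\mathbb{C}^{\mathrm{int}}\times X^{\mathrm{int}}$ of compact sectorial subdomains, and disjoint from $\mathbb{C}\times\partial X$ provided $L_1,L_2$ are.

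The main obstacle I expect is the interplay between the cylindrisation of the ends in the $\mathbb{C}$-direction and the cylindricity in the $X$-direction: the naive product $\gamma_j\times L_j$ fails to be Lagrangian near the Liouville infinity of $X$, so a Moser-type deformation localised near infinity (as in \cite[Section 7.2]{ganatra2018sectorial}) is needed to replace the product ends by their cylindrisations $\gamma_j\tilde{\times}L_j$. Once this bookkeeping is in place, all conditions are satisfied and the resulting $V$ is the desired trace cobordism, so Proposition \ref{prop:flow-surgery-cobordism} follows.
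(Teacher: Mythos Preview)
Your proposal is correct and follows essentially the same route as the paper: the paper does not give an independent proof but simply cites \cite[Corollary 2.22 and Lemma 6.2]{mak2018dehn} together with \cite[Lemma 3.10]{bosshard2022lagrangian}, which is precisely the construction you have outlined. Your sketch is in fact more detailed than what the paper provides, and your identification of the cylindricity bookkeeping (handled via Lemma \ref{l:nbhd} and the cylindrisation of ends from \cite[Section 7.2]{ganatra2018sectorial}) as the only new ingredient beyond the closed case is exactly right.
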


In particular, taking flow surgery of two cylindrical Lagrangians $L_1, L_2\subset X$ along their clean intersection loci $D$ induces an exact triangle
\[
	L_1\to L_1\#_D L_2\to L_2\xrightarrow{+1} L_1[1]
\]
in $D^\mathit{perf}\mathcal{W}(X)$.

\subsection{Gluing Adjacent Chambers}\label{sec:gluingchambers}

We keep the notations in the previous sections and consider the Weinstein manifold $(M(\mathbb{V}), \omega)$, with the symplectic form $\omega$ defined in Section \ref{ss:essential}. The real locus $M(\mathbb{V}) \cap \mathbb{R}^d$ consists of (the interiors of) feasible chambers $\{\Delta_\alpha\}_{\alpha \in \mathscr{F}(\mathbb{V})}$ (cf. Section \ref{sec:hyparr}). It is straightforward that the interior of each chamber $\Delta_\alpha^\circ$, as a connected component of the real locus, is a Lagrangian submanifold of $(M(\mathbb{V}), \omega)$. We call such a Lagrangian submanifold a \emph{chamber Lagrangian}, denoted by $L_\alpha$. Also, such Lagrangians are preserved under the (complete) convex deformation from $(M(\mathbb{V}), \omega)$ to $(M(\mathbb{V}),\omega_{\log})$ in Lemma \ref{lem:conv deform equiv}, since they are $\tau$-equivariant. We shall abuse notations and denote the corresponding Lagrangians in $(M(\mathbb{V}),\omega_{\log})$ by the same notation $L_\alpha$.

For two sign sequences $\alpha_1, \alpha_2 \in \mathscr{F}(\mathbb{V})$, we say the chamber Lagrangian $L_{\alpha_1}$ is adjacent to $L_{\alpha_2}$ if $\alpha_1$ differs from $\alpha_2$ only at one element. By Lemma \ref{lem:conv deform equiv} and Proposition \ref{prop:Mclean}, we can glue two adjacent chamber Lagrangians via clean surgery. Suppose that $L_{\alpha_1}$ and $L_{\alpha_2}$ are two adjacent chamber Lagrangians with $\alpha_1(k)\neq\alpha_2(k)$ for some $1\leq k\leq n$. Take a neighborhood $U_k$ of $H_k$ as in Proposition \ref{prop:Mclean} and a fiberwise Hamiltonian function $\rho(r)$ supported in $U_k$ with Hamiltonian vector field $X_{\rho(r)}=\frac{\rho'(r)}{r}\frac{\partial}{\partial\theta}$, where $(r,\theta)$ are polar coordinates on the disk fibers of $U_k\rightarrow H_k$. Let $\tilde{L}_{\alpha_1}:=\phi^1_{X_{\rho(r)}}(L_{\alpha_1})$ be the time-$1$ Hamiltonian flow of $L_{\alpha_1}$. If there is another hyperplane $H_j$ intersecting with $H_k$ and bounding both $L_{\alpha_1}$ and $L_{\alpha_2}$ as depicted in Figure \ref{fig:4chamberLag}, we do not perform a modification along the $H_j$-coordinate in the McLean type neighborhood $U_{k,j}$ of the corner $H_{k,j}$. By Proposition \ref{prop:Mclean}(v), up to rotations in the disk fibers of the McLean neighborhood $U_k$, the Lagrangians $L_{\alpha_1}$ and $L_{\alpha_2}$ are given by $\mathbb{R}_+\cap\mathbb{D}$ and $\mathbb{R}_-\cap\mathbb{D}$, respectively, and are identified as the fixed locus of the anti-symplectic involution $\tau_k$ in the hyperplane $H_k$. Thus by choosing the fiberwise function $\rho:U_k\rightarrow\mathbb{R}$ so that $X_{\rho(r)}$ rotates $L_{\alpha_1}$ to $L_{\alpha_2}$ in the subdisk fibers of some shrink of $U_k$, we can achieve that $\tilde{L}_{\alpha_1}$is cylindrical, $\tilde{L}_{\alpha_1}\cap L_{\alpha_2}$ is connected and the intersection is clean. The clean surgery of $\tilde{L}_{\alpha_1}$ with $L_{\alpha_2}$ along their intersection, as introduced in the previous section, gives us a new Lagrangian $L_{\alpha_1} \# L_{\alpha_2}$, which is disjoint from a small tubular neighborhood of the hyperplane $H_k$, and the locus near the other hyperplanes are unchanged. We will consider the image of the Lagrangian $L_{\alpha_1} \# L_{\alpha_2}$ under the (completed) convex deformation from $(M(\mathbb{V}), \omega_{\log})$ back to $(M(\mathbb{V}), \omega)$, and (by the same rule of abusing notations as above) denote the corresponding Lagrangian in $(M(\mathbb{V}), \omega)$ is still denoted by $L_{\alpha_1} \# L_{\alpha_2}$. 

Moreover, by Proposition \ref{prop:Mclean}, the Lagrangian surgery we considered above is consistent in the following sense: consider four nearby feasible sign sequences $\{\alpha_1,\alpha_2,\alpha_3,\alpha_4\}$, where $\alpha_1$ (resp. $\alpha_4$) differs from $\alpha_2$ (resp. $\alpha_3$) at $k$, and differs from $\alpha_3$ (resp. $\alpha_2$) at $j$. See Figure \ref{fig:4chamberLag}. Over the intersection $H_{jk}$, we have a polydisk bundle $U_{jk}$ with fiber $(\mathbb{D}^2, r_jdr_j\wedge d\theta_j+r_kdr_k\wedge d\theta_k)$. With such a choice of fiberwise coordinates, the clean surgery $(L_{\alpha_1} \# L_{\alpha_2}) \# (L_{\alpha_3} \# L_{\alpha_4})$ is Hamiltonian isotopic to $(L_{\alpha_1} \# L_{\alpha_3}) \# (L_{\alpha_2} \# L_{\alpha_4})$.

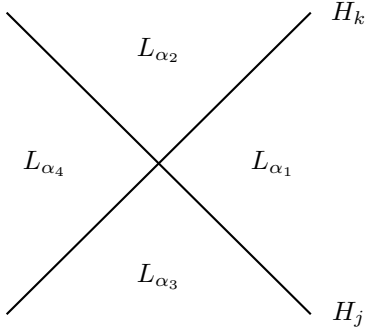
\begin{figure}[ht]
	\centering
	\begin{tikzpicture}
	
	\draw[thick] (-2,2) -- (2,-2);
	\draw[thick] (-2,-2) -- (2,2);
	\fill (2.5,2) node {$H_k$};
	\fill (2.5,-2) node {$H_j$};
	\fill (1.5,0) node {$L_{\alpha_1}$};
	\fill (0,1.5) node {$L_{\alpha_2}$};
	\fill (0,-1.5) node {$L_{\alpha_3}$};
	\fill (-1.5,0) node {$L_{\alpha_4}$};

	\end{tikzpicture}
	\caption{Lagragians $L_{\alpha_1}, L_{\alpha_2}, L_{\alpha_3}, L_{\alpha_4}$ \label{fig:4chamberLag}}
\end{figure}

\begin{proposition}
There is an exact triangle
	\[
		L_{\alpha_1}\to L_{\alpha_1}\# L_{\alpha_2}\to L_{\alpha_2}\xrightarrow{[1]}
	\]
	in $D^\mathit{perf}\mathcal{W}(M(\mathbb{V}),\xi)$, where $L_{\alpha_1}$ and $L_{\alpha_2}$ are adjacent chamber Lagrangians.
\end{proposition}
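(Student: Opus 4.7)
The plan is to realize the clean surgery $L_{\alpha_1}\#L_{\alpha_2}$ as the trace of a Lagrangian cobordism in $\mathbb{C}\times M(\mathbb{V})$ and then invoke the cobordism theorem of \cite{bosshard2022lagrangian} recalled at the start of Section \ref{sec:clean-surgery}. Since $L_{\alpha_1}$ and $L_{\alpha_2}$ differ only at the index $k$, Proposition \ref{prop:Mclean}(v) realizes both Lagrangians near $H_k$ as cylindrized families of real rays on opposite sides of $H_k$ in the disk-fibres of $U_k\to H_k$. The Hamiltonian perturbation $\tilde{L}_{\alpha_1}=\phi^1_{X_{\rho(r)}}(L_{\alpha_1})$ constructed in Section \ref{sec:gluingchambers} rotates $L_{\alpha_1}$ onto $L_{\alpha_2}$ in a shrunk subneighborhood of $H_k$ while leaving it unchanged outside $U_k$. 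Because $\rho$ is supported in $U_k$ and because the involution $\tau_k$ of Proposition \ref{prop:Mclean}(v) places both real rays in a common fibrewise model, the resulting intersection $D:=\tilde{L}_{\alpha_1}\cap L_{\alpha_2}$ is connected, cleanly immersed, and cylindrical at infinity.

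Next I would apply the flow-surgery construction of Section \ref{sec:clean-surgery}, which is the cylindrical version of the Mak--Wu formula \cite{mak2018dehn}, to the pair $(\tilde{L}_{\alpha_1},L_{\alpha_2})$ along $D$. This produces a cylindrical exact Lagrangian $\tilde{L}_{\alpha_1}\#_{D}L_{\alpha_2}$ which is Hamiltonian isotopic to the $L_{\alpha_1}\#L_{\alpha_2}$ defined in Section \ref{sec:gluingchambers}; the passage between the Liouville structure $\omega_{\log}$ where the McLean neighborhoods of Proposition \ref{prop:Mclean} naturally live and the original structure $\omega$ used to define the partially wrapped category is handled by the $\tau$-equivariant convex deformation equivalence of Lemma \ref{lem:conv deform equiv}. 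Proposition \ref{prop:flow-surgery-cobordism} then provides a Lagrangian cobordism $V\subset\mathbb{C}\times M(\mathbb{V})$ whose three ends are (cylindrizations of) $\tilde{L}_{\alpha_1}$, $L_{\alpha_2}$, and $\tilde{L}_{\alpha_1}\#_{D}L_{\alpha_2}$.

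Finally, invoking the cobordism theorem stated at the start of Section \ref{sec:clean-surgery}, the cobordism $V$ yields an iterated cone decomposition in $D^{\mathit{perf}}\mathcal{W}(M(\mathbb{V}),\xi)$; with only two non-trivial input ends this collapses to the exact triangle
\[
\tilde{L}_{\alpha_1}\longrightarrow\tilde{L}_{\alpha_1}\#_{D}L_{\alpha_2}\longrightarrow L_{\alpha_2}\xrightarrow{\ [1]\ }.
\]
Since $\tilde{L}_{\alpha_1}$ is obtained from $L_{\alpha_1}$ by a compactly supported Hamiltonian isotopy, they are quasi-isomorphic in $\mathcal{W}(M(\mathbb{V}),\xi)$, and replacing $\tilde{L}_{\alpha_1}$ by $L_{\alpha_1}$ yields the desired exact triangle.

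The main subtlety to monitor is the interaction of the cobordism with the stop $\xi\subset\partial_\infty M(\mathbb{V})$: the cobordism theorem of \cite{bosshard2022lagrangian} is stated for Liouville sectors, so I would need to check that $V$ avoids $\mathbb{C}\times\xi$ and that all ends are properly cylindrical with respect to the sectorial structure. This should follow from the local character of the perturbation (supported in the compact region $U_k\Subset M(\mathbb{V})$, away from the ideal boundary where $\xi$ lives) together with the fact that the chamber Lagrangians $L_{\alpha_i}$ are already cylindrical and disjoint from $\xi$ at infinity. A second bookkeeping issue is the compatibility of the two Liouville structures $\omega$ and $\omega_{\log}$; this is where Lemma \ref{lem:conv deform equiv} must be applied carefully so that the surgery, performed in the $\omega_{\log}$ picture where McLean neighborhoods are available, descends to a well-defined exact triangle in the wrapped category of $(M(\mathbb{V}),\omega,\xi)$.
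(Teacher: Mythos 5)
Your proposal is correct and follows the paper's approach essentially verbatim: both perform the clean surgery of the Hamiltonian push-off $\tilde{L}_{\alpha_1}$ with $L_{\alpha_2}$ along their clean intersection using the McLean-type neighborhood of Proposition \ref{prop:Mclean}, transport back to $(M(\mathbb{V}),\omega)$ via the $\tau$-equivariant convex deformation equivalence of Lemma \ref{lem:conv deform equiv}, and then apply the trace cobordism of Proposition \ref{prop:flow-surgery-cobordism} together with Bosshard's cobordism exact triangle. Your closing remarks on avoiding the stop and reconciling $\omega$ with $\omega_{\log}$ are precisely the bookkeeping points the paper handles implicitly through the cylindrical-surgery setup of Section \ref{sec:clean-surgery} and Lemma \ref{lem:conv deform equiv}.
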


\begin{remark}\label{rem:surgery in the positive imaginary part}
By possibly replacing $\rho(r)$ with $-\rho(r)$ we can achieve that $L_{\alpha_1} \# L_{\alpha_2}=\tilde{L}_{\alpha_1} \# L_{\alpha_2}$ lie in the region where $\mathrm{Im}(\xi)\geq0$.   
\end{remark}

\subsection{Lagrangians in a Liouville Sector.}\label{sec:construction of Lagrangian}

In Section \ref{sec:sectorial decomposition}, we have constructed the Liouville sectors $\{P_j\}_{1 \leq j\leq N_\mathbb{V}}$, each of which contains a unique 0-dimensional strata in the hyperplane arrangement $\mathbb{V}$. From now on, we simply write $P$ for such a sector $P_j$, and denote the $0$-dimensional intersection point by $\beta$. We write $\xi^{-1}(r) \cup \xi^{-1}(r')$ for the stops coming from the polarization $\xi$, where $-\infty<r'<r$.

We regard the real locus $P(\mathbb{R}):=P \cap \mathbb{R}^d$ as the union of the (open) chambers $\Delta_\bullet^\circ$ bounded by the hyperplanes and the stops $\xi^{-1}(r) \cup \xi^{-1}(r')$. For later purposes, we denote by $\Delta_{\beta^+}$ (resp. $\Delta_{\beta^-}$) the chamber with $\beta$ as one of its vertices and the real projection of $\xi^{-1}(r)$ (resp. $\xi^{-1}(r')$) as its facet. We also denote the facet $\Delta_{\beta^+} \cap \xi^{-1}(r)$ of $\Delta_{\beta^+}$ (resp. the facet $\Delta_{\beta^-} \cap \xi^{-1}(r')$ of $\Delta_{\beta^-}$) by $\Delta^r_\beta$ (resp. $\Delta^{r'}_{\beta}$). See Figure \ref{Lagrangians-Sector}.

\begin{figure}[ht]
	\centering
	\begin{tikzpicture}
	\draw[red, very thick] (0,0) -- (10,0);
	\draw[red, very thick] (0,3) -- (10,3);
	\draw (1,-0.5) -- (5,3.5);
	\draw (1,3.5) -- (5,-0.5);
	\draw (5,-0.5) -- (7,4);
	\draw (6,4) -- (10,-1);
	\fill (11,0) node {\color{red} $\xi (z)=r'$};
	\fill (11,3) node {\color{red} $\xi (z)=r$};
	\fill (2.6, 1.5) node {\color{black} $\beta$};
	\draw[black, very thick] (3,1.5)[point];
	\fill (3,2.3) node {\color{black} $\Delta_{\beta^+}$};
	\fill (3,0.6) node {\color{black} $\Delta_{\beta^-}$};
	\fill (4.5, 1.5) node {\color{black} $\Delta_{\gamma}$};
	\draw[black, very thick] (1.5,3)[point] -- (4.5,3)[point];
	\fill (3,3.3) node {\color{black} $\Delta_\beta^r$};
	\end{tikzpicture}
	\caption{Description of the real locus $P(\mathbb{R})$ of the Liouville sector $P$ \label{Lagrangians-Sector}}
\end{figure}
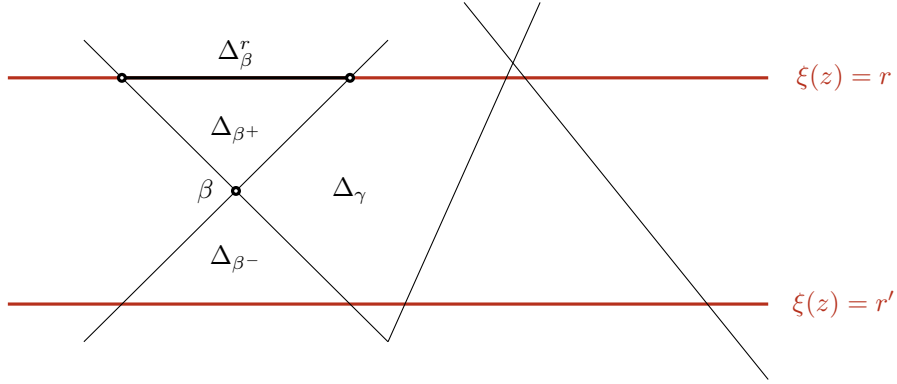

We describe the properly embedded Lagragians in $P$ (disjoint from the sectorial boundary) lying over the interior of each chamber $\Delta_\bullet$. First, consider the case $\Delta_\bullet=\Delta_{\beta^+}$. We further cut $P$ into a union of cornered Liouville sectors described in Section \ref{sec:sectorial decomposition} and use the same notation. By Proposition \ref{sector:GenerationResult}, the cornered Liouville sector $P=P_\times$ is symplectomorphic to $\left((T^*T^d),\bigcup_{i=1}^d(f_i)^{-1}(c_i^\pm)\right)$, where $f_i$ is the pullback of the functions $\mathrm{Re}(\xi_i)$ and $\xi_1=\xi$. Since this symplectomorphism preserves the real locus, the chamber $\Delta_{\beta^+}^\circ$ gets mapped to a cotangent fiber (or equivalently, standard Lagrangian) in $T^*T^d$ under this symplectomorphism, denoted by $T^*_{\beta^+}T^d$. However, the contangent fiber $T^*_{\beta^+}T^d$ intersects the sectorial hypersurface $(f_1)^{-1}(c^+_1)$ corresponding to $\xi^{-1}(r)$, hence one needs to modify it near the stop $F_{c^+_1} \subset (f_1)^{-1}(c^+_1)$ to get an object in the wrapped Fukaya category $\mathcal{W} (P)$. Note that $T^*_{\beta^+}T^d$ is disjoint from other added stops by our assumptions on $P_\times$. Choose a product decomposition of a small neighborhood of the sectorial hypersurface
\begin{equation} \nonumber
(R_1,I_1):\Nbd\left((f_1)^{-1}(c_i^\pm)\right)\cong F_{c_1^\pm} \times \C_{-\epsilon< \mathrm{Re}(z)\leq 0}, 
\end{equation} 
and take the Hamiltonian vector field $X_{\rho (R_1)}$, where $\rho (x)$ is a smooth function that is $0$ near $x=-\varepsilon$ and equals $x^{\alpha}$, $-1<\alpha<0$ near $x=0$, see Figure \ref{fig:function-rho}. We can slightly perturb $T^*_{\beta^+}T^d$ by pushing it off along $X_{\rho (R_1)}$. Note that the function $R_1$ is the pull-back of $\rRe(\xi_1)$ under the symplectomorphism, and we know that 
\begin{equation*}
X_{\rRe(\xi_1)} =\frac{1}{4}\vec{b}^T(D+A^TA)^{-1}\frac{\partial}{\partial\vec{y}},
\end{equation*}
which is bounded as $\vert z_j\vert ,\vert\ell_k\vert\to 0$. Hence the vector field $X_{\rho (R_1)}$ is also complete. Define $L^\pm_{\Delta_{\beta^+}}\subset P_\times$ to be the corresponding Lagrangian submanifold under the perturbation of $X_{\rho (R_1)}$, where the sign $\pm$ indicates that the cotangent fiber $T^*_{\beta^+}T^d$ is pushed off to a positive (resp. negative) direction of $X_{\rho (R_1)}$. When it is clear from the context, we will simply denote these Lagrangian submanifolds as $L^\pm_{\beta^+}$. Note that the Lagrangian $L^+_{\beta^+}$ (resp. $L^-_{\beta^+}$) lies in the area where $\mathrm{Im}(\xi) \geq 0$ (resp. $\mathrm{Im}(\xi) \leq 0$).

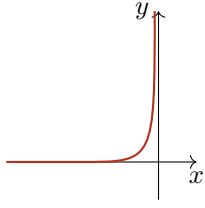
\begin{figure}[ht]
	\centering
	\begin{tikzpicture}
		\draw[->] (-2,0) -- (0.5,0) node [below] {$x$};
		\draw [->] (0,-0.5) -- (0,2) node[left] {$y$};
		\draw [red, thick] (-2,0) -- (-1,0) .. controls (-0.05,0) .. (-0.05,2);
	\end{tikzpicture}
	\caption{The Function $\rho$\label{fig:function-rho}}
\end{figure}

More generally, for any other chamber $\Delta_\gamma\subset P(\mathbb{R})$ which is different from $\Delta_{\beta^{\pm}}$, it intersects non-trivially with other sectorial hypersurfaces $(f_i)^{-1}(c^\pm_i)$. In this case, we choose a compatible product decomposition of the sectorial hypersurfaces
\begin{equation} \nonumber
(R_i,I_i):\Nbd\left((f_i)^{-1}(c_i^\pm)\right) \cong F_{c_i^\pm} \times \C_{-\epsilon<\mathrm{Re}(z)<\epsilon}
\end{equation}
and take the Hamiltonian vector field of $R=\sum R_i$. Then a small perturbation of the cotangent fiber $T^*_{\gamma}T^d$ along $X_{\rho(R)}$ defines a Lagrangian $L^\pm_{\Delta_\gamma}\subset P_\times$ as before. Similar constructions work for the other Liouville subsectors $P_\vert$ by Proposition \ref{sector:GenerationResult} (ii), therefore we can glue these Lagrangians via surgery to obtain a globally defined Lagrangian $L^{\pm}_{\Delta_\gamma}\subset\left(M(\mathbb{V}),\xi\right)$. We will simplify the notation and denote it by $L^{\pm}_{\gamma}$. Again, the convention here is that $L_\gamma^+$ lies in the area where $\mathrm{Im}(\xi)\geq 0$. 

The following lemma will be used frequently in the next subsection.

\begin{lemma}\label{lemma:zero}
Let $L$ be an exact cylindrical Lagrangian submanifold in $(M(\mathbb{V}),\xi)$, such that its closure $\bar{L}$ in $\mathbb{C}^d$ does not contain any $0$-dimensional intersections of the hyperplanes in $\mathbb{V}$, and it lies entirely in the half-space $\{\rIm(\xi)\geq 0\}$, then $L$ is the zero object in the derived Fukaya category $D^\mathit{perf}\mathcal{W}(M(\mathbb{V}),\xi)$.
\end{lemma}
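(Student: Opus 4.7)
The strategy is to prove $\mathit{HW}^\ast(L, L) = 0$, which implies that $L$ is the zero object in $D^\mathit{perf}\mathcal{W}(M(\mathbb{V}), \xi)$. By Section \ref{section:pr-wrap}, this wrapped Floer cohomology is computed as $\varinjlim\mathit{HF}^\ast(L_n, L)$ along any cofinal sequence of positive isotopies $L \leadsto L_1 \leadsto L_2 \leadsto \cdots$. I will exhibit such a sequence with $L_n \cap L = \emptyset$ for all $n \geq 1$, forcing each $\mathit{HF}^\ast(L_n, L) = 0$ and hence the vanishing of the colimit.

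The core construction is a rotational wrapping Hamiltonian. Consider $H_0 = \tfrac{1}{2}|\xi|^2$ as a smooth function on $\mathbb{C}^d$ restricted to $M(\mathbb{V})$. By Corollary \ref{c:extend}, the Hamiltonian vector field $X_{H_0}$ with respect to the Liouville form $\omega$ extends smoothly over $\mathbb{C}^d$ and is tangent to every stratum of $\bigcup_i H_i$, so the flow $\phi^t_{H_0}$ is complete and preserves $M(\mathbb{V})$. A direct computation using \eqref{eq:Xreal}--\eqref{eq:Xim} gives
\begin{equation*}
d\xi(X_{H_0}) = \tfrac{i}{4}\,\kappa(z)\,\xi(z),\qquad \kappa(z) := \vec{b}^T(D+A^TA)^{-1}\vec{b},
\end{equation*}
so the flow rotates the $\xi$-coordinate counterclockwise at position-dependent rate $\kappa/4$. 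By the block-matrix analysis in the proof of Lemma \ref{Prelim:linear-algebra-fact}, the matrix $(D+A^TA)^{-1}$ vanishes precisely at the $0$-dimensional strata of the arrangement, and is bounded below by a positive multiple of the identity on any closed subset avoiding those strata. The hypothesis that $\bar{L}$ avoids every $0$-dimensional stratum, combined with the fact that the flow preserves $|\xi|$ and fixes the $0$-dim strata pointwise, yields a uniform lower bound $\kappa|_{\phi^t_{H_0}(L)} \geq c_0 > 0$ for all $t \geq 0$.

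To make the isotopy positive at every end of $\partial_\infty M(\mathbb{V}) \setminus \Nbd(\xi)$, I use Lemma \ref{l:Iestimate_pre} to check that $\lambda(X_{H_0}) \sim C_R\,\rRe(\xi)^2 + C_I\,\rIm(\xi)^2 \geq c|\xi|^2$ at infinity in the direction $|\xi| \to \infty$, which is already strictly positive. In the remaining infinity directions (where $|\xi|$ stays bounded) I set $H := H_0 + \varepsilon \widetilde H$ for a suitable admissible wrapping Hamiltonian $\widetilde H$ that is supported on those directions and vanishes near the stop $\xi$; this modification does not affect the rotation rate on $L$, since $\widetilde H$ is supported away from $L$. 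The resulting isotopy $L_t := \phi^t_H(L)$ is therefore positive, and $\min_{\partial_\infty L_t}\alpha(\partial_t \partial_\infty L_t)$ is bounded below by a positive constant, yielding the cofinality condition of Section \ref{section:pr-wrap}.

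Finally, at time $t_* := 5\pi/c_0$, every point $p \in L$ has accumulated $\xi$-rotation greater than $\pi$, placing $\xi(\phi^{t_*}_H(p))$ in the open lower half plane. Since $L \subset \{\rIm(\xi) \geq 0\}$, the flowed Lagrangian $\phi^{t_*}_H(L)$ is disjoint from $L$. Setting $L_n := \phi^{n t_*}_H(L)$ gives the desired cofinal sequence of positive isotopies with $L_n \cap L = \emptyset$ for all $n \geq 1$, and therefore $\mathit{HW}^\ast(L,L) = 0$. The main technical obstacle is the construction of the correction $\widetilde H$: it must make the isotopy positive at all remaining asymptotic ends without disturbing the uniform rotation rate on $L$, which requires using the structural features of $\partial_\infty M(\mathbb{V})$ developed in Section \ref{ss:essential} to separate the ``rotational'' infinity (where $H_0$ already dominates) from the ``fibre'' infinity (where $\widetilde H$ takes over).
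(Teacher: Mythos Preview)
Your rotational approach has two gaps that break the displacement argument. First, the flow of $H_0=\tfrac12|\xi|^2$ fixes every point of $L\cap\{\xi=0\}$, since there $X_{H_0}=\rRe(\xi)X_{\rRe(\xi)}+\rIm(\xi)X_{\rIm(\xi)}=0$; nothing in the hypotheses excludes such points (the surgeries of chamber Lagrangians to which the lemma is applied have $\rIm(\xi)=0$ on their real part, while $\rRe(\xi)$ changes sign across the arrangement), and your correction $\widetilde H$, living only near infinity, does not move them. Second, even on $L\cap\{\xi\neq 0\}$ the implication ``rotation $>\pi \Rightarrow$ open lower half-plane'' is false: a point with $\arg\xi(p)=\pi$ (i.e.\ $\xi(p)\in\mathbb{R}_{<0}$, which lies on the boundary of the closed upper half-plane) rotated by your $5\pi/4$ lands at argument $9\pi/4\equiv\pi/4$, back in the upper half-plane. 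Since $\arg\xi$ can range over all of $[0,\pi]$ on $L$, no choice of $t_*$ giving uniform rotation $>\pi$ can work, and the problem only compounds for the iterates $L_n$. There is also no justification for the claim that $\widetilde H$ is supported away from $L$: the cylindrical ends of $L$ may well lie in the fibre-infinity region near the hyperplanes $H_i$, which is exactly where $\widetilde H$ must live.

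The paper's proof sidesteps all of this by \emph{translating} rather than rotating. The vector field $X_{\rRe(\xi)}$ is purely imaginary and satisfies $d\rIm(\xi)(X_{\rRe(\xi)})=\omega(X_{\rRe(\xi)},X_{\rIm(\xi)})=\|X_{\rRe(\xi)}\|^2=\kappa/4$, which by precisely the block-matrix estimate you quote is bounded below on $\overline L$ (the matrix $(D+A^TA)^{-1}$ degenerates only at the $0$-dimensional strata). Its flow therefore pushes $L$ monotonically into $\{\rIm(\xi)\ge c\}$ for arbitrary $c>0$, hence off every compact subset of $(M(\mathbb{V}),\xi)$, and the conclusion follows from the criterion in \cite{ganatra2018sectorial} that a Lagrangian displaceable from any compact set is a zero object. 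This uses the same lower bound on $\kappa$ you identified, but requires no cofinal sequence, no angular bookkeeping, and no auxiliary Hamiltonian.
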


\begin{proof}
Note that the vector field $X_{\rRe(\xi)}$ is purely imaginary and only converges to $0$ when $z$ goes to the $0$-dimensional strata in the hyperplane arrangement.  Flowing along $X_{\rRe(\xi)}$, $L$ is sent to a subspace $\left\{\rIm (z)\geq c\right\}$ for some constant $c>0$. It follows that we can use Hamiltonian isotopy to move $L$ away from any compact subset of $(M(\mathbb{V}),\xi)$, which implies that $L$ is the zero object by \cite{ganatra2018sectorial}.
\end{proof}

\subsection{Chamber Moves.}\label{s:chamber_moves}
In this subsection, we prove several results about chamber moves that will be used in the proof of Theorem \ref{theorem:generation1}.

\subsubsection{Wrapping Exact Triangles.}\label{sec:wrappingtriangle}
Let $L_\beta^{\pm}=L^\pm_{\beta^+}$ be the Lagrangians constructed in Section \ref{sec:construction of Lagrangian}. We want to describe an exact triangle of Lagrangians 
\begin{equation}\label{eq:wet}
L_\beta^+\to L_\beta^-\to D_\beta \xrightarrow{[1]}
\end{equation}
in the wrapped Fukaya category $\mathcal{W}^\mathit{perf}(P)$, where $D_\beta$ is the linking disk corresponding to the facet $\Delta_\beta^r$, see Figure \ref{Lagrangians-Sector}. A similar exact triangle was proved in \cite{ganatra2018sectorial}, which we will refer to as the ``wrapping exact triangle''. In our situation, we do not know a priori whether $L_{\beta}^+$ is obtained from $L_{\beta}^-$ via wrapping through the stop, so we cannot directly apply the result in \cite{ganatra2018sectorial}. Instead, we will prove (\ref{eq:wet}) using clean surgery described in Section \ref{sec:clean-surgery}. Consider two Lagrangians $L_\beta^+$ and $L_\beta^-$ intersecting along the interior of a chamber in the real loci, except near the sectorial boundary, where they are shifted upwards and downwards in the direction of $\mathrm{Im}(\xi)$, respectively, and denote the clean surgery of them by $L_\beta^+ \# L_\beta^-$. Near the sectorial boundary that is identified with $\xi^{-1}(r)\times T^\ast[-\epsilon, 0]$, the Lagrangian  $L_\beta^+ \# L_\beta^-$ cleanly intersects with $L_{\Delta_\beta} \subset P(\mathbb{R})$ along $L_{\Delta^r_\beta} \times \{-\epsilon/2\}$, see Figure \ref{Lagrangians-Sector}. This follows from our choice of the product structure near the sectorial boundary, where $R=\mathrm{Re}(\xi)$. By definition, one can see that $L_{\beta}^+\# L_{\beta}^-$ can be identified with the linking disk associated to the stop $\xi^{-1} (r)$.

\subsubsection{Flipping Lagrangians.}\label{sec:flippingLagrangians}

The Lagrangian submanifold $L^+_{\beta^+}$ is Hamiltonian isotopic to $L^-_{\beta^-}$, since by construction, the corresponding Lagrangians in $T^*T^d$ are Hamiltonian isotopic to each other. Similarly, $L^-_{\beta^+}$ is Hamiltonian isotopic to $L^+_{\beta^-}$.

\subsubsection{Identifying Linking Disks.}\label{sec:Identifying linkingdisc}

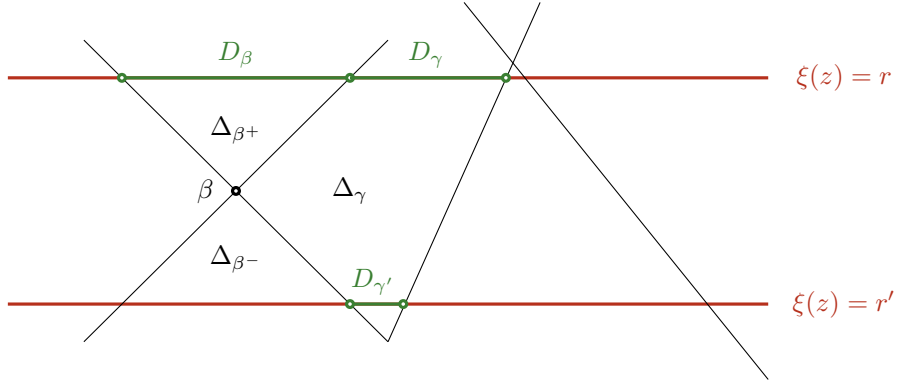
\begin{figure}[ht]
	\centering
	\begin{tikzpicture}
	\draw[red, very thick] (0,0) -- (10,0);
	\draw[red, very thick] (0,3) -- (10,3);
	\draw (1,-0.5) -- (5,3.5);
	\draw (1,3.5) -- (5,-0.5);
	\draw (5,-0.5) -- (7,4);
	\draw (6,4) -- (10,-1);
	\fill (11,0) node {\color{red} $\xi (z)=r'$};
	\fill (11,3) node {\color{red} $\xi (z)=r$};
	\fill (3,2.3) node {\color{black} $\Delta_{\beta^+}$};
	\fill (3,0.6) node {\color{black} $\Delta_{\beta^-}$};
	\fill (2.6, 1.5) node {\color{black} $\beta$};
	\fill (4.5, 1.5) node {\color{black} $\Delta_{\gamma}$};
	\draw[OliveGreen, very thick] (1.5,3)[point] -- (4.5,3)[point];
	\fill (3,3.3) node {\color{OliveGreen} $D_{\beta}$};
	\draw[black, very thick] (3,1.5)[point];
	\draw[OliveGreen, very thick] (4.5,3) -- (6.55,3)[point];
	\fill (5.5,3.3) node {\color{OliveGreen} $D_\gamma$};
	\draw[OliveGreen, very thick] (4.5,0)[point] -- (5.20,0)[point];
	\fill (4.8,0.3) node {\color{OliveGreen} $D_{\gamma'}$};
	\end{tikzpicture}
	\caption{Linking Disks in $\mathcal{W}(P)$ \label{fig:indentifying Linking disks}}
\end{figure}

Let $\Delta_\gamma\subset P(\mathbb{R})$ be a chamber adjacent to both $\Delta_{\beta^+}$ and $\Delta_{\beta^-}$. To this chamber, we have associated two Lagrangians $L_\gamma^{\pm}$ (cf. Section \ref{sec:construction of Lagrangian}), and two linking disks $D_\gamma$ and $D_{\gamma'}$ (cf. Section \ref{sec:wrappingtriangle}). An illustration is given in Figure \ref{fig:indentifying Linking disks}. We claim that $D_{\gamma'}$ is generated by $D_\gamma$ and $D_\beta$ in $\mathcal{W}(P)$. 

We have proved in Section \ref{sec:wrappingtriangle} that the Lagrangian $L^+_\gamma \# L^-_\gamma$ is Hamiltonain isotopic to $D_\gamma \oplus {D}'_\gamma$. Moreover, we have $L_{\beta^+}^+ \# L_{\beta^+}^- \sim D_\beta$. By our discussions in Section \ref{sec:gluingchambers}, we can glue the Lagrangians $L^+_{\beta^+}$ and $L^+_\gamma$ via surgery. It follows from Lemma \ref{lemma:zero} that the Lagrangian $L^+_{\beta^+}\#L^+_\gamma$, which is disjoint from the crossing $\beta$, can be pushed off to infinity, hence becoming the zero object in the derived Fukaya category $D^\mathit{perf}\mathcal{W}(P)$. A parallel argument holds for $L^-_{\beta^+}$ and $L^-_\gamma$, which implies that $D'_\gamma$ is generated by $D_\gamma$ and $D_\beta$. 

On the other hand, if a chamber $\Delta_\gamma$ is disjoint from the crossing $\beta$, the linking disks $D_\gamma$ and $D'_\gamma$ can be identified by the decomposition argument (see Proposition \ref{sector:GenerationResult}).

\subsubsection{Decomposing Chamber Lagrangians.}\label{sec:Decomposition of Lag}
Let $I_\beta \subset \{1, \dots, n\}$ be the index set of the hyperplanes in $\mathbb{V}$ passing through the crossing $\beta$. Consider the corresponding polarized hyperplane arrangement $\mathbb{V}_\beta$, which consists of $d$ hyperplanes $\{H_i\}_{i \in I_\beta}$ in $\R^d$ by the simplicity of $\mathbb{V}$. In other words, $\mathbb{V}_\beta$ is the iterated deletion of $\mathbb{V}$ by the hyperplanes $\{H_j\}_{j \notin I_\beta}$ (see Section \ref{sec:Delandres HypArr}). There exists a unique sign sequence $\tilde{\beta}:I_\beta \to \{+,-\}$, which singles out the bounded and feasible chamber of $\mathbb{V}_\beta$ with respect to the polarization $\xi$. In the Liouville sector $P$, this chamber is given by $\Delta_{\beta^-}$. Let $L_{\beta^-}\subset M(\mathbb{V})$ be the pushforward of $L^-_{\beta^-}\subset P$ under the sectorial gluing.  Consider the restriction map on the set of sign sequences $\mathrm{Res}_{I_\beta}:\{\pm\}^{|I|} \to \{\pm\}^{|I_\beta|}$, there is a collection of chamber Lagrangians $\{L_{\alpha}\}$ with $\mathrm{Res}_{I_\beta}(\alpha)=\tilde{\beta}$. We can glue these Lagrangians together via clean surgery to obtain an object in the Fukaya category $\mathcal{W}(M(\mathbb{V}), \xi)$, which is has the form of an iterated mapping cone built out of the $L_\alpha$'s. We denote the object by $\bigodot_{\alpha: \mathrm{Res}_{I_\beta}(\alpha)=\tilde{\beta}}L_\alpha$. The polarization $\xi$ induces a partial order on the set of sign sequences $\mathrm{Res}^{-1}_{I_\beta}(\tilde{\beta})$: two sign sequences $\alpha_1,\alpha_2\in\mathrm{Res}^{-1}_{I_\beta}(\tilde{\beta})$ satisfy $\alpha_1<\alpha_2$ if $d_{\alpha_1\beta}<d_{\alpha_2\beta}$, i.e. $\alpha_2$ differs with $\beta$ in more entries than $\alpha_1$. We iteratively apply the surgery procedure described in Section \ref{sec:gluingchambers} such that all clean surgeries are carried from the smaller chamber to the larger one. Note that the result of the surgery belongs to the subset $\left\{\mathrm{Im}(\xi)\geq 0\right\}$, see Remark \ref{rem:surgery in the positive imaginary part}.

\begin{proposition}\label{surgery:decomposing-standard-Lag}
For each $0$-dimensional crossing $\beta$, the Lagrangian 
\begin{equation} \nonumber
L:=L_{\beta^-} \# \bigodot_{\alpha: \mathrm{Res}_{I_\beta}(\alpha)=\tilde{\beta}}L_\alpha
\end{equation}
obtained by surgery is a trivial object in $D^\mathit{perf}\mathcal{W}(M(\mathbb{V}), \xi)$.
\end{proposition}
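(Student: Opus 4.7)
The strategy is to apply Lemma \ref{lemma:zero}: if an exact cylindrical Lagrangian in $M(\mathbb{V})$ lies in $\{\mathrm{Im}(\xi)\geq 0\}$ and its closure in $\mathbb{C}^d$ avoids every $0$-dimensional crossing of the hyperplane arrangement, then it is zero in $D^\mathit{perf}\mathcal{W}(M(\mathbb{V}),\xi)$. The plan is to exhibit a Hamiltonian isotopy putting $L$ into such a form, with the zero-dimensional vertex $\beta$ being the only crossing that requires the surgery with $L_{\beta^-}$ to remove.

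First, I would analyze the behaviour of $\bigodot_{\alpha:\mathrm{Res}_{I_\beta}(\alpha)=\tilde{\beta}}L_\alpha$ near a $0$-dimensional crossing $\beta'\neq\beta$. Using the McLean model of Proposition \ref{prop:Mclean} to trivialize a neighbourhood of $\beta'$ as a polydisk bundle, the iterated clean surgery of chamber Lagrangians becomes a fibrewise construction, consistent across strata thanks to the commutation $\pi_i\circ\pi_j=\pi_j\circ\pi_i$. Simplicity forces $I_{\beta'}\neq I_\beta$, so the constraint $\mathrm{Res}_{I_\beta}(\alpha)=\tilde{\beta}$ pins the sign of $\alpha$ along at least one hyperplane in $I_{\beta'}$, and only a proper subset of the $2^d$ chambers meeting at $\beta'$ participates in the fibrewise surgery. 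Combined with Remark \ref{rem:surgery in the positive imaginary part}, which pushes every iterated surgery into $\{\mathrm{Im}(\xi)\geq 0\}$, the resulting fibrewise arc system has endpoints on the polydisk boundary and its closure avoids the fibre origin $\beta'$.

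At $\beta$ itself every subchamber of $\Delta_{\tilde{\beta}}$ contributes, so the closure of $\bigodot L_\alpha$ still contains $\beta$. The surgery with $L_{\beta^-}$ resolves this obstruction: in the McLean neighbourhood of $\beta$, $L_{\beta^-}$ represents the same real quadrant as the copy of $\Delta_{\beta^-}$ appearing inside $\bigodot L_\alpha$, but pushed into the opposite imaginary half-space via the function $\rho(\mathrm{Re}(\xi))$ from Section \ref{sec:construction of Lagrangian}; the two intersect cleanly along $\Delta_{\beta^-}$. The flow-handle surgery of Proposition \ref{prop:flow-surgery-cobordism} then attaches a handle through the polydisk fibre joining the two opposite push-offs, producing a Lagrangian whose closure encircles but does not contain $\beta$. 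This is the higher-dimensional counterpart of the one-dimensional picture in which surgering the upper and lower radial approaches at a puncture yields a loop around it. Finally, the small negative-imaginary-part portion of $L$ coming from $L_{\beta^-}$ is compactly supported, and having avoided every $0$-dim crossing, can be flowed back into $\{\mathrm{Im}(\xi)\geq 0\}$ by a compactly supported Hamiltonian isotopy along $X_{\mathrm{Re}(\xi)}$, which is positively proportional to $\partial_{\mathrm{Im}(\xi)}$ off the $0$-dim strata. Lemma \ref{lemma:zero} then gives the conclusion.

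The main technical obstacle is the fibrewise local analysis at $\beta'\neq\beta$: one must verify that the iteration of clean surgeries, performed in the partial order of Section \ref{sec:Decomposition of Lag} with the positive-imaginary-part convention, genuinely produces an arc system in each polydisk fibre whose closure avoids the origin. The compatibility and product descriptions in items (i), (ii), and (v) of Proposition \ref{prop:Mclean} reduce this to a purely combinatorial check on the sign patterns of $\alpha\in\mathrm{Res}_{I_\beta}^{-1}(\tilde{\beta})$ relative to $I_{\beta'}\cap I_\beta$, and carrying out this bookkeeping is the heart of the argument.
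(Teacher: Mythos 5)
Your proof pursues the same strategy as the paper: reduce to Lemma~\ref{lemma:zero} by showing that the iterated surgery avoids every $0$-dimensional stratum and lies (after an isotopy) in $\{\mathrm{Im}(\xi)\geq 0\}$. The paper's own proof is terser: it first performs the clean surgery $L_{\beta^-}\#L_\beta$ with the minimal chamber $L_\beta$, then glues the result with the remaining $L_\alpha$'s in the partial order, asserting directly that the outcome lies in $\{\mathrm{Im}(\xi)\geq 0\}$ and is disjoint from the $0$-dimensional strata. It does not spell out the fibrewise check at the other crossings $\beta'\neq\beta$ that you carry out via Proposition~\ref{prop:Mclean}, so your McLean-model analysis is a genuine (and welcome) expansion of what the paper leaves implicit.

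One point you should repair: the claim that the negative-imaginary portion of $L$ coming from $L_{\beta^-}$ is ``compactly supported'' is not accurate. By the construction in Section~\ref{sec:construction of Lagrangian}, $L_{\beta^-}=L^{-}_{\beta^{-}}$ is obtained by pushing the cotangent fibre off with $X_{\rho(R_1)}$, where $\rho'(R_1)\to\infty$ as $R_1\to 0$, so this Lagrangian escapes to $\mathrm{Im}(\xi)\to -\infty$ near the sectorial hypersurface and the resulting negative-imaginary end is non-compact; the flow surgery with $\bigodot L_\alpha$ does not automatically excise it. A compactly supported isotopy along $X_{\rRe(\xi)}$ therefore cannot bring all of $L$ into $\{\mathrm{Im}(\xi)\geq 0\}$. (The paper's own one-line assertion that $L$ ``also lies in $\{\mathrm{Im}(\xi)\geq 0\}$'' elides the same issue.) To close this, you should argue instead that the surgery closes up the negative end of $L_{\beta^-}$ against the corresponding end of $\bigodot L_\alpha$ near the stop — this is precisely the mechanism of the wrapping exact triangle in Section~\ref{sec:wrappingtriangle}, where $L^+\# L^-$ is identified with the linking disk — or extend Lemma~\ref{lemma:zero} to Lagrangians that are eventually contained in $\{\mathrm{Im}(\xi)\geq c\}$ up to a non-compactly-supported but admissible isotopy. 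With either adjustment, your argument goes through and aligns with the paper's.
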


\begin{proof}
We first perform a surgery between $L_{\beta^-}$ and $L_\beta$. Since they intersect cleanly, we can perform a clean surgery as we did in Section \ref{sec:gluingchambers}. Then we glue the result $L_{\beta^-} \# L_\beta$ with other chamber Lagrangians with labels in $\mathrm{Res}^{-1}_{I_\beta}(\tilde{\beta})$. As all the glued objects lie in the region $\left\{\mathrm{Im}(\xi) \geq 0\right\}$, the resulting Lagrangian $L$ also lies in $\left\{\mathrm{Im}(\xi) \geq 0\right\}$. As a surgery, $L$ is also disjoint from the $0$-dimensional intersections of the complex hyperplanes and the stop, so it becomes the zero object by Lemma \ref{lemma:zero}.
\end{proof}

\subsection{Proof of Theorem \ref{theorem:generation1}.} We are ready to prove one of our main theorems (Theorem \ref{theorem:generation1}), restated below for convenience. 

\begin{theorem}\label{thm:generation}
The partially wrapped Fukaya category $\mathcal{W}(M(\mathcal{\mathbb{V}}), \xi)$ is generated by the collection of chamber Lagrangians $\{L_\alpha\}_{\alpha \in \mathscr{P}(\mathbb{V})}$. 
\end{theorem}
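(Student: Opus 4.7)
The plan is to combine Theorem \ref{generation:generating-by-standard-Lags} with the various surgery and chamber-move results established in Section \ref{Exact triangle, surgery and isomorphic objects} to cut the generating set down from standard Lagrangians and linking disks to just the bounded-feasible chamber Lagrangians $\{L_\alpha\}_{\alpha\in\mathscr{P}(\mathbb{V})}$. By Theorem \ref{generation:generating-by-standard-Lags}, it already suffices to show that each standard Lagrangian $L_j$ and each linking disk associated to a stop $\xi^{-1}(c_{1j})$ lies in the triangulated subcategory of $D^{\mathit{perf}}\mathcal{W}(M(\mathbb{V}),\xi)$ generated by $\{L_\alpha\}_{\alpha\in\mathscr{P}(\mathbb{V})}$.

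First I will dispose of the standard Lagrangians. For a fixed crossing $\beta$, Proposition \ref{surgery:decomposing-standard-Lag} exhibits a trivial object of the form $L_{\beta^-} \# \bigodot_{\alpha:\mathrm{Res}_{I_\beta}(\alpha)=\tilde{\beta}} L_\alpha$ in $D^{\mathit{perf}}\mathcal{W}(M(\mathbb{V}),\xi)$, where (under the sectorial gluing and the flipping identification of Section \ref{sec:flippingLagrangians}) the Lagrangian $L_{\beta^-}$ is quasi-isomorphic to the standard Lagrangian $L_j$ indexed by $\beta$. Iterating the surgery exact triangle of Proposition \ref{prop:flow-surgery-cobordism} converts this vanishing surgery into an iterated cone decomposition of $L_j$ in terms of the $L_\alpha$ with $\mathrm{Res}_{I_\beta}(\alpha)=\tilde{\beta}$. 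Any such $\alpha$ is automatically bounded and feasible for $\mathbb{V}$, since its chamber $\Delta_\alpha$ is contained in the unique bounded feasible chamber of the sub-arrangement $\mathbb{V}_\beta$. Hence every standard Lagrangian is generated by $\{L_\alpha\}_{\alpha\in\mathscr{P}(\mathbb{V})}$.

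Next I handle the linking disks, proceeding inductively on the stop index from the topmost sector downwards. At each crossing $\beta$ sitting in a sector $P_j$, the wrapping exact triangle $L_\beta^+ \to L_\beta^- \to D_\beta$ of Section \ref{sec:wrappingtriangle} presents the linking disk $D_\beta$ above $\beta$ as the cone of the two ``shifted cotangent-fibre'' Lagrangians $L_\beta^\pm$. By the flipping identification of Section \ref{sec:flippingLagrangians}, each $L_\beta^\pm$ is quasi-isomorphic to the analogous shifted Lagrangian attached to the opposite stop of the same sector, which via a parallel wrapping triangle is itself an extension of a standard Lagrangian by a linking disk at the next lower stop. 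The adjacent linking disk identification of Section \ref{sec:Identifying linkingdisc} then reduces every other linking disk at the $j$-th stop to $D_\beta$ together with linking disks lying over chambers disjoint from $\beta$, and the latter are handled by a direct product decomposition argument. The induction terminates at the bottommost sector, which by Proposition \ref{sector:GenerationResult}(ii) is symplectomorphic to a product with $T^*[c_1^-, c_1^+]$; an argument analogous to Lemma \ref{lemma:zero} displaces all of its linking disks off to infinity, so they vanish in $\mathcal{W}(M(\mathbb{V}),\xi)$. Combined with the previous paragraph, this shows every generating linking disk also lies in the subcategory generated by $\{L_\alpha\}_{\alpha\in\mathscr{P}(\mathbb{V})}$, completing the proof.

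The main technical obstacle will be coordinating these chamber moves across the many sectors and crossings: the wrapping triangle, the flipping identification, and the adjacent linking disk identification each require a compatible choice of product neighborhoods near the sectorial hypersurfaces $\xi^{-1}(c_{1j})$, and the quasi-isomorphisms $\widetilde{L}_\beta^\pm\simeq L_\beta^\pm$ must be tracked coherently through the sectorial gluing maps. The McLean type neighborhood theorem of Section \ref{sec:McLean}, the standardization of $P_\times$ given in Proposition \ref{sector:GenerationResult}, and the clean-surgery cobordism framework of Section \ref{sec:clean-surgery} are set up exactly to make these compatibilities automatic; once they are in place, the two-step reduction above runs without further complication.
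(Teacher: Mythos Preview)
Your proposal follows essentially the same approach as the paper: invoke Theorem \ref{generation:generating-by-standard-Lags} to reduce to standard Lagrangians and linking disks, handle the standard Lagrangians via Proposition \ref{surgery:decomposing-standard-Lag} together with the flipping identification, and then inductively trade linking disks at one stop for those at the adjacent stop using the wrapping triangle, flipping, and adjacent-disk moves of Section \ref{s:chamber_moves}. The paper packages the same reduction as an upward induction on $P_{\leq i}$ (Lemma \ref{lem:thefirstInductionStep} followed by the inductive step in the proof of Theorem \ref{thm:generation}), whereas you phrase it as a downward sweep through the stops; these are two presentations of the same inductive mechanism.

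There is, however, a genuine gap in your termination step. You assert that the bottommost sector is symplectomorphic to a product with $T^*[c_1^-, c_1^+]$ via Proposition \ref{sector:GenerationResult}(ii), but in the decomposition of Section \ref{sec:sectorial decomposition} every sector $P_j$, including $P_1$, contains a crossing; there is no additional crossing-free sector at the bottom (the extra sector $X_{45}$ in the introduction is an expository device, not part of the formal setup). Rather, $P_1=\rRe(\xi)^{-1}\big((-\infty,c_{1,1}]\big)$ is open below. The paper resolves this in Lemma \ref{lem:thefirstInductionStep} by inserting an auxiliary stop $\xi^{-1}(r')$ with $r'<r_1$, running the same wrapping/flipping argument inside the resulting two-stop sector $P_0'$ to identify the linking disks at $r_1$ with those at $r'$, and then observing that the latter become trivial under the stop-removal functor $\mathcal{W}(P_0')\to\mathcal{W}(P_1)$. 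Your appeal to a Lemma \ref{lemma:zero}-type displacement is the right spirit, but it does not apply directly to the linking disks at $r_1$; you need the auxiliary stop (or an equivalent device) to carry the induction through the final step.
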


Let $P_j\subset(M(\mathbb{V}),\xi)$ be the Liouville subsector containing the 0-dimensional intersection $\beta_j$ defined in Section \ref{sec:sectorial decomposition}, where $1 \leq j \leq N_\mathbb{V}$. Due to the genericity of the polarization $\xi$, the stop $\xi^{-1}(r_{j})$ consists of preimages of the open chambers in $\mathbb{R}^{d-1}$ given by the restriction of the chambers of $\mathbb{V}$. To the open chambers in $\xi^{-1}(r_{j})$, we associate the linking disks, which are denoted by $D_\bullet^{r_{j}}$, where the subscript indicates to which open chamber in the stop the linking disk is associated. For example, we write $D^{r_j}_{\beta_j}$ for the linking disk associated to the chamber corresponding to the $0$-dimensional stratum $\beta_j$.  See Figure \ref{Description of objects in the sector} for a description.

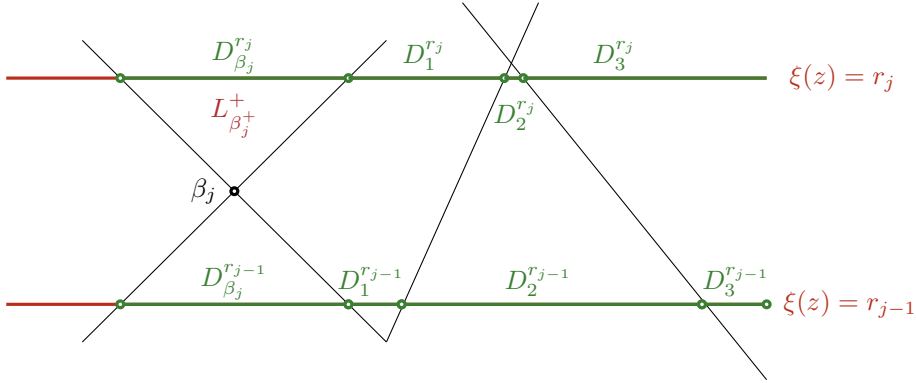
\begin{figure}[ht]
	\centering
	\begin{tikzpicture}
	\draw[red, very thick] (0,0) -- (10,0);
	\draw[red, very thick] (0,3) -- (10,3);
	\draw (1,-0.5) -- (5,3.5);
	\draw (1,3.5) -- (5,-0.5);
	\draw (5,-0.5) -- (7,4);
	\draw (6,4) -- (10,-1);
	\fill (11.1,0) node {\color{red} $\xi (z)=r_{j-1}$};
	\fill (11,3) node {\color{red} $\xi (z)=r_{j}$};
	\fill (3,2.5) node {\color{Maroon} $L^+_{\beta_j^+}$};
	\fill (2.6, 1.5) node {\color{black} $\beta_{j}$};
	\draw[black, very thick] (3,1.5)[point];
	\draw[OliveGreen, very thick] (1.5,3)[point] -- (4.5,3)[point];
	\fill (3,3.3) node {\color{OliveGreen} $D^{r_{j}}_{\beta_{j}}$};
	\draw[OliveGreen, very thick] (4.5,3) -- (6.55,3)[point];
	\fill (5.5,3.3) node {\color{OliveGreen} $D^{r_{j}}_1$};
	\draw[OliveGreen, very thick] (6.55,3) -- (6.8,3)[point];
	\fill (6.7, 2.5) node {\color{OliveGreen} $D^{r_{j}}_2$};
	\draw[OliveGreen, very thick] (6.8,3) -- (10,3);
	\fill (8,3.3) node {\color{OliveGreen} $D^{r_{j}}_3$};
	\draw[OliveGreen, very thick] (1.5,0)[point] -- (4.5,0)[point];
	\fill (3,0.3) node {\color{OliveGreen} $D^{r_{j-1}}_{\beta_{j}}$};
	\draw[OliveGreen, very thick] (4.5,0)[point] -- (5.20,0)[point];
	\fill (4.8,0.3) node {\color{OliveGreen} $D^{r_{j-1}}_1$};
	\draw[OliveGreen, very thick] (5.20,0)[point] -- (9.15,0)[point];
	\fill (7,0.3) node {\color{OliveGreen} $D^{r_{j-1}}_2$};
	\draw[OliveGreen, very thick] (9.15,0)[point] -- (10,0)[point];
	\fill (9.6,0.3) node {\color{OliveGreen} $D^{r_{j-1}}_3$};
	\end{tikzpicture}
	\caption{Description of objects in $P_j$\label{Description of objects in the sector}}
\end{figure}

Recall that we have proved in Section \ref{section:piece} that the wrapped Fukaya category $\mathcal{W}(P_j)$ is generated by all the linking disks $D^{r_j}_\bullet$ and the standard Lagrangian $L^+_{\beta^+_{j}}$. When $j=1$, we can further simplify the set of generating Lagrangians. 

\begin{lemma}\label{lem:thefirstInductionStep}
The wrapped Fukaya category $\mathcal{W}(P_1)$ is generated by a single chamber Lagrangian $L^+_{\beta_1}$.
\end{lemma}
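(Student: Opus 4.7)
The plan is to apply Theorem \ref{generation:generating-by-standard-Lags} to $P_1$, which gives that $\mathcal{W}(P_1)$ is generated by the standard Lagrangian $L^+_{\beta_1^+}$ together with all the linking disks $D^{r_1}_\bullet$ at the top sectorial stop $\xi^{-1}(r_1)\cap P_1$, and then to show that every such linking disk either vanishes or is quasi-isomorphic to a shift of $L^+_{\beta_1^+}$ in the derived wrapped Fukaya category. The essential geometric input is that, by construction, $P_1$ has no lower sectorial hypersurface in the $-\rRe(\xi)$ direction, and $\beta_1$ is the unique $0$-dimensional intersection of the complexified hyperplanes contained in $P_1$.

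First, for a chamber $\Delta^{r_1}_\gamma$ at the top stop whose associated real chamber $\Delta_\gamma\subset P_1(\mathbb{R})$ does not have $\beta_1$ as a vertex, I would argue that both chamber Lagrangians $L^\pm_\gamma$ are zero in $\mathcal{W}(P_1)^\mathit{perf}$. For $L^+_\gamma$, the closure $\overline{L^+_\gamma}$ contains no $0$-dimensional stratum of the arrangement in $P_1$, and the Hamiltonian flow of $\rRe(\xi)$ preserves the top sectorial hypersurface $\{\rRe(\xi)=c_{1,1}\}$; hence Lemma \ref{lemma:zero} applies verbatim inside $P_1$. The symmetric statement for $L^-_\gamma$ follows from an analogous argument using the flow of $-X_{\rRe(\xi)}$, which pushes Lagrangians into $\{\rIm(\xi)\leq -c\}$. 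The wrapping exact triangle of Section \ref{sec:wrappingtriangle},
\[
L^+_\gamma\to L^-_\gamma\to D^{r_1}_\gamma\xrightarrow{[1]},
\]
then forces $D^{r_1}_\gamma\simeq 0$.

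Second, for the unique chamber $\Delta_{\beta_1^+}$ at the top stop touching $\beta_1$, I would invoke the wrapping exact triangle
\[
L^+_{\beta_1^+}\to L^-_{\beta_1^+}\to D^{r_1}_{\beta_1}\xrightarrow{[1]},
\]
reducing the problem to the vanishing $L^-_{\beta_1^+}\simeq 0$, which would immediately give $D^{r_1}_{\beta_1}\simeq L^+_{\beta_1^+}[1]$. By the flipping isomorphism of Section \ref{sec:flippingLagrangians}, this is equivalent to showing $L^+_{\beta_1^-}\simeq 0$, where $L^+_{\beta_1^-}$ is the Lagrangian over the chamber $\Delta_{\beta_1^-}$, which in $P_1$ is unbounded in the $-\rRe(\xi)$ direction because $P_1$ has no lower sectorial hypersurface. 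To establish the vanishing, I would first perform a small compactly supported Hamiltonian perturbation of $L^+_{\beta_1^-}$, supported in the McLean neighborhood of $\beta_1$ (Proposition \ref{prop:Mclean}) and contained in $\{\rIm(\xi)\geq 0\}$, that pushes the Lagrangian off the vertex $\beta_1$ so that its closure becomes disjoint from every $0$-dimensional stratum while retaining cylindricity at infinity. Lemma \ref{lemma:zero} then applies to the perturbed Lagrangian and gives the required vanishing.

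The main obstacle is the last step: carrying out the perturbation near $\beta_1$ in a controlled way. Concretely, inside the product neighborhood $B_I\times\prod_{i\in I_{\beta_1}}\mathbb{D}$ provided by Proposition \ref{prop:Mclean}, $L^+_{\beta_1^-}$ is described as a product of rays along $\{e^{\sqrt{-1}\theta_i}=\pm 1\}$ approaching the origin, and one must rotate these rays by a small angle using a fiberwise Hamiltonian of the form $\rho(r_i)$ (similar to those used in Section \ref{sec:gluingchambers}), chosen so that the rotated Lagrangian avoids the vertex and stays in $\{\rIm(\xi)\geq 0\}$, without creating new intersections with the other hyperplanes $H_j$ for $j\notin I_{\beta_1}$. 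This is possible thanks to the simplicity of the arrangement and the anti-symplectic involution $\tau$-compatibility of the McLean model, after which Lemma \ref{lemma:zero} yields the conclusion.
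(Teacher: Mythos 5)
Your reduction to ``$L^-_{\beta_1^+}\simeq 0$'' is the wrong conclusion, and the perturbation argument proposed to justify it cannot work. On the first point: in $P_1$ there is no lower sectorial hypersurface, so the flipping quasi-isomorphisms of Section~\ref{sec:flippingLagrangians} collapse to $L^+_{\beta_1^+}\cong L^-_{\beta_1^-}\cong L^+_{\beta_1^-}\cong L^-_{\beta_1^+}$; in particular $L^-_{\beta_1^+}$ is quasi-isomorphic to the generator $L^+_{\beta_1^+}$ and hence nonzero. What is actually true is that the absence of a lower stop makes the continuation morphism in the wrapping exact triangle a quasi-isomorphism, so $D^{r_1}_{\beta_1}\simeq 0$ rather than $\simeq L^+_{\beta_1^+}[1]$. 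On the second point: any Hamiltonian perturbation of $L^+_{\beta_1^-}$ that is compatible with its cylindrical structure along the divisors $H_i$, $i\in I_{\beta_1}$, must act near $\beta_1$ by rotations in the disk factors $\prod_{i\in I_{\beta_1}}\mathbb{D}$ of the McLean model of Proposition~\ref{prop:Mclean}, and these preserve each radial coordinate $r_i$. Since $L^+_{\beta_1^-}$ contains points with all $r_i\to 0$, the same is true after perturbation, so the closure of the perturbed Lagrangian still contains $\beta_1$ and the hypothesis of Lemma~\ref{lemma:zero} is never met. This is exactly the obstruction that the paper's auxiliary stop $\xi^{-1}(r')$ is introduced to circumvent: it bounds $\Delta_{\beta_1^-}$, makes $L^\pm_{\beta_1^-}$ genuinely distinct objects, and translates the required vanishing of $D^{r_1}_\bullet$ into the triviality of $D^{r'}_\bullet$ under the stop-removal functor.

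There is also a coverage gap. For $d\geq 2$ the top stop $\xi^{-1}(r_1)$ contains, besides $\Delta_\beta^{r_1}$, several chambers whose associated real chamber in $P_1(\mathbb{R})$ also has $\beta_1$ as a vertex (the wedges flanking $\Delta_{\beta_1^+}$). Your first step requires the closure to avoid $\beta_1$, so it does not apply to these, and your second step only treats $\Delta_{\beta_1^+}$; the linking disks of these adjacent chambers are therefore not addressed. In the paper these are handled via the surgery argument of Section~\ref{sec:Identifying linkingdisc}, after which they are identified with linking disks at the auxiliary stop and killed by stop removal.
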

\begin{proof}
We add an auxiliary stop $\xi^{-1}(r')$ for some $-\infty< r'<r_1$, and denote by $P_0'$ the resulting Liouville sector. This gives rise to additional linking disks $D^{r'}_{\beta_1}$ and $D^{r'}_{\bullet}$, which become trivial objects under the stop removal functor $\mathcal{W}(P'_0) \to \mathcal{W}(P_0)$. It is therefore enough to show that the old linking disks $D^{r_1}_{\beta_1}$ and $D^{r_1}_{\bullet}$ are generated by the Lagrangian $L^+_{\beta_1^+}$ and the new linking disks $D^{r'}_{\beta_1}$ and $D^{r'}_{\bullet}$. First, it follows from our discussions in Section \ref{sec:wrappingtriangle} that we have $D^{r'}_{\beta_1^+} \cong\mathit{Cone}(L^+_{\beta_1^+} \to L^-_{\beta_1^+})$. Applying the flipping argument in Section \ref{sec:flippingLagrangians}, this is isomorphic to $\mathit{Cone}(L^-_{\beta_1^-} \to L^+_{\beta_1^-}) \cong D^{r'}_{\beta_1}$. It follows that $D^{r_1}_{\beta_1} \simeq D^{r'}_{\beta_1}$ in $\mathcal{W}(P_1)^\mathit{perf}$. Next, take a linking disk $D^{r_1}_1$ adjacent to $D^{r_1}_{\beta_1}$. By Section \ref{sec:Identifying linkingdisc}, we know that $D^{r_1}_{\beta_1} \oplus D^{r_1}_{1} \simeq D^{r'}_1$. For other linking disks $D^{r_1}_{\bullet}$, we can directly identify them with the corresponding ones in $\xi^{-1}(r')$ by Proposition \ref{sector:GenerationResult} (ii). 
\end{proof}

\begin{proof}[Proof of Theorem \ref{thm:generation}]
We prove the theorem by induction. For $i \geq 1$, we write $P_{\leq i}$ for the union of all subsectors $P_j$ with $1 \leq j \leq i$. We claim that  the wrapped Fukaya category $\mathcal{W} (P_{\leq i})$ is generated by the chamber Lagrangians $L_\alpha\cap P_{\leq i}$, where $\alpha\in\mathscr{P}(\mathbb{V})$. 
	
When $i=1$, the claim follows from Lemma \ref{lem:thefirstInductionStep}. In other words, by the flipping argument in Sections \ref{sec:flippingLagrangians} and \ref{sec:Decomposition of Lag} (cf. Proposition \ref{surgery:decomposing-standard-Lag}), we have quasi-isomorphisms $L^+_{\beta_1^+} \cong L^-_{\beta_1^-} \cong L_{\beta_1}$. Suppose that the statement holds for $i=k$. Since the wrapped Fukaya category $\mathcal{W}(P_{\leq k+1})$ can be realized as the sectorial gluing of $\mathcal{W}(P_{\leq k})$ and $\mathcal{W}(P_{k+1})$, it is enough to show that $\mathcal{W}(P_{k+1})$ is generated by the chamber Lagrangians $\left\{L_\alpha\cap P_{\leq i}\right\}_{\alpha\in\mathscr{P}(\mathbb{V})}$. First, take the  generator  $L^+_{\beta_k^+}$, arguing in the same way as in the proof of Lemma \ref{lem:thefirstInductionStep}, we have
\begin{equation*}
L^+_{\beta_{k+1}^+} \simeq L^-_{\beta_{k+1}^-} \simeq \bigodot_{\alpha: \mathrm{Res}_{I_{\beta_{k+1}}}(\alpha)=\widetilde{\beta}_{k+1}} L_\alpha [1],
\end{equation*}
which verifies the generation of $L^+_{\beta_{k+1}^+}$ by the Lagrangians in $\left\{L_\alpha\cap P_{\leq i}\right\}_{\alpha\in\mathscr{P}(\mathbb{V})}$. Exactly the same argument can be applied to $L^-_{\beta_{k+1}^+}$ and $D^{r_{k+1}}_{\beta_{k+1}}$, which gives the same conclusion. Moreover, in Lemma \ref{lem:thefirstInductionStep} we have proved that each linking disk $D^{r_{k+1}}_\bullet$ can be identified with the corresponding linking disk $D^{r_{k}}_\bullet$ up to quasi-isomorphism in $\mathcal{W}(P_{\leq k+1})$. (For an adjacent one, we take the mapping cone with $D^{r_{k+1}}_\beta$, as in Section \ref{sec:Identifying linkingdisc}.) Then the induction hypothesis implies that the linking disks $D^{r_{k}}_\bullet$ are generated by chamber Lagrangians. 
\end{proof}

\section{Computing Floer Cohomologies}\label{sec:computation}

In this section, we compute Fukaya $A_\infty$-algebra of the Lagrangians $\{L_\alpha\}_{\alpha \in\mathscr{P}(\mathbb{V})}$ in $\mathcal{W}\left(M(\mathbb{V}),\xi\right)$ and prove Theorem \ref{theorem:conv1}. 

\subsection{The Convolution Algebra $\widetilde{B}(\mathbb{V})$.}

We first recall the definition of the convolution algebra  $\widetilde{B}(\mathbb{V})$ associated to a polarized hyperplane arrangement $\mathbb{V}=(V,\eta,\xi)$ introduced in \cite{BLPPW}. For $S=\{i_1,\cdots,i_m\}\subset\{1,\cdots,n\}$, let $u_S$ be the monomial $u_{i_1}\cdots u_{i_m}\in\mathbb{Z}[u_1,\cdots,u_n]$, and let $H_{\mathbb{R},S}\subset V+\eta$ be the intersection $\bigcap_{i\in S}H_{\mathbb{R},i}$ of hyperplanes indexed by the elements of $S$. For $\alpha,\beta\in\mathscr{P}(\mathbb{V})$, introduce the ring
\begin{equation*}
\widetilde{R}_{\alpha\beta}:=\frac{\mathbb{Z}[u_1,\cdots,u_n]}{(u_S,\Delta_\alpha\cap\Delta_\beta\cap H_{\mathbb{R},S}=\emptyset)}.
\end{equation*}
Let $f_{\alpha\beta}\in\widetilde{R}_{\alpha\beta}$ be the element corresponding to $1\in\mathbb{Z}[u_1,\cdots,u_n]$. For $\alpha,\beta,\gamma\in\mathscr{P}(\mathbb{V})$, we introduce the notation
\begin{equation*}
S(\alpha\beta\gamma):=\left\{i\in\{1,\cdots,n\}|\alpha(i)=\gamma(i)\neq\beta(i)\right\}.
\end{equation*}
Define
\begin{equation*}
\widetilde{B}(\mathbb{V}):=\bigoplus_{\alpha,\beta\in\mathscr{P}(\mathbb{V})}\widetilde{R}_{\alpha\beta},
\end{equation*}
with multiplication
\begin{equation}\label{eq:multiplication}
f_{\alpha\beta}\cdot f_{\beta\gamma}=u_{S(\alpha\beta\gamma)}f_{\alpha\gamma},
\end{equation}
which is extended bilinearly over $\mathbb{Z}[u_1,\cdots,u_n]$. The algebra $\widetilde{B}(\mathbb{V})$ admits a $\mathbb{Z}$-grading given by
\begin{equation*}
|f_{\alpha\beta}|=d_{\alpha\beta},\textrm{ }|u_i|=2,
\end{equation*}
where the number $d_{\alpha\beta}\in\mathbb{N}$ is defined by (\ref{eq:d}). This grading can be refined to a multi-grading by $\mathbb{Z}\langle e_1,\cdots,e_n\rangle$, in which case 
\begin{equation*}
|f_{\alpha\beta}|=e_{i_1}+\cdots+e_{i_k},\textrm{ }|u_i|=2e_i,
\end{equation*}
if $\beta$ is obtained from $\alpha$ by changing the signs in positions $i_1,\cdots,i_k$. Thus $\widetilde{B}(\mathbb{V})$ can be viewed as an algebra over $\mathbb{Z}[u_1,\cdots,u_n]$. Note that the single grading on $\widetilde{B}(\mathbb{V})$ is recovered by setting all $e_i$ to be 1. 

\begin{remark}
In \cite[Section4]{BLPW2010}, the geometric description of this convolution algebra comes from the relative core $\mathcal{X}$ in the associated hypertoric variety $\mathfrak{M}_\mathbb{V}$ associated to the polarized hyperplane arrangement $\mathbb{V}$. Note that $\mathcal{X}$ is a union of toric varieties $\{X_\alpha\}_{\alpha \in \mathscr{P}(\mathbb{V})}$ associated to each (closed) chamber $\Delta_\alpha$. Take a normalization of $\mathcal{X}$, denoted by $\pi:\widetilde{\mathcal{X}} \to \mathcal{X}$. Then the equivariant cohomology $H^*_T(\widetilde{\mathcal{X}} \times_\pi \widetilde{\mathcal{X}})$ becomes a $\mathbb{Z}$-graded algebra equipped with the usual convolution product with respect to the orientation on $\widetilde{\mathcal{X}} \times_\pi \widetilde{\mathcal{X}}$, twisted by the combinatorial signs for $\mathbb{V}$. It turns out that there is an $\mathbb{Z}$-algebra isomorphism 
\begin{equation*}
\widetilde{B}(\mathbb{V}) \cong H_T^*(\widetilde{\mathcal{X}} \times_\pi \widetilde{\mathcal{X}}).
\end{equation*}
\end{remark}

\subsection{Proof of Theorem \ref{theorem:conv1}.}

We first compute the partially wrapped Floer cohomologies of the generating chamber Lagrangians $\{L_\alpha\}_{\alpha\in\mathscr{P}(\mathbb{V})}$ in the Liouville sector $(M(\mathbb{V}),\xi)$. To do so, it would be more convenient to view $M(\mathbb{V})$ as the complement of $n+1$ hyperplanes in $\mathbb{CP}^d$. By abuse of notations, we denote these hyperplanes by $H_{0},H_{1},\cdots,H_{n}\subset\mathbb{CP}^d$, where $H_{0}$ is the hyperplane at infinity. After taking the iterated blow-ups as in Section \ref{sec:McLean}, we obtain a non-singular projective variety $X_\mathbb{V}$ with a simple normal crossing divisor $D_\mathbb{V}$ such that $M(\mathbb{V})=X_\mathbb{V} \setminus D_\mathbb{V}$.  We may write $D_\mathbb{V}=\bigcup_{i=0}^{n+m}D_i$, where $D_i=\widetilde{H}_{i}$ is a strict transform of $H_i$ for $0\leq i\leq n$, and the divisors $D_{n+1},\cdots,D_{n+m}$ are exceptional divisors. For $I\subset\{0,\cdots,n+m\}$, denote by $D_I$ the intersection of the divisors $D_i$ for $i\in I$, and introduce the notation
\begin{equation*}
D_I^\circ:=D_I\setminus\bigcup_{j\notin I}D_j.
\end{equation*}

As in Section \ref{sec:McLean}, we can equip $M(\mathbb{V})$ with the structure of a Liouville manifold so that for each $i\in\{0,\cdots,n+m\}$ there is a tubular neighborhood $U_i$ of $D_i$, and the $|I|$-fold intersection of these tubular neighborhoods, $U_I:=\bigcap_{i\in I}U_i$, admits the structure of a symplectic disk bundle with structure group $U(1)^{|I|}$ over $D_I$ (cf. Proposition \ref{prop:Mclean}). Let $\pi_I: U_I \to D_I$ be the projection map, and let $S_I\rightarrow D_I$ be the associated $T^{|I|}$-bundle. Denote by $U_I^\circ$ the restriction of $U_I$, and by $S_I^\circ$ the restriction of $S_I$, to the open stratum $D_I^\circ\subset D_I$. As a convention, we set
\begin{equation*}
D_\emptyset=X_\mathbb{V}, D_\emptyset^\circ=S_\emptyset=S_\emptyset^\circ=M(\mathbb{V}). 
\end{equation*} 

In order to compute the wrapped Floer cohomology with respect to the stop defined by the polarization $\xi$, we need to understand the hypersurface $\rRe(\xi)^{-1}(C)$ for $C \gg 1$ in the McLean neighborhood of the divisor $D_\mathbb{V}$. Let $\bar{\xi}:\mathbb{CP}^d \rightarrow \mathbb{C}$ be the homogenization of the polarization $\xi:M(\mathbb{V})\rightarrow\mathbb{C}$, and let $\ell_{\infty}:\mathbb{CP}^d \rightarrow \mathbb{C} $ be the linear function defining the hyperplane $H_0\subset\mathbb{CP}^d$. We have a rational map $\phi:\mathbb{CP}^d \dashrightarrow \mathbb{CP}^1$, given by $\phi(z):=\left[\bar{\xi}(z):\ell_{\infty}(z)\right]$ with the base locus $B=\{\bar{\xi}=0\} \cap H_0$. Composing $\phi$ with the blow-up map $\pi: X_\mathbb{V} \to \mathbb{CP}^d$, we get a rational map $\phi_\pi:=\pi \circ \phi : X_\mathbb{V} \dashrightarrow \mathbb{CP}^1$ with the base locus $\pi^*B$. 
Notice that
$\pi^*B \subset \bigcup_{i \in \{0,n+1,\dots,n+m\}} D_i$,  
 $\phi_{\pi}^{-1}([1:0])=\bigcup_{i \in \{0,n+1,\dots,n+m\}} D_i\setminus \pi^*B$  and $\overline{\phi_{\pi}^{-1}([1:0])}=\bigcup_{i \in \{0,n+1,\dots,n+m\}} D_i$. 
In particular, a punctured McLean-type neighborhood $\Nbd^\circ(D_\mathbb{V}):=\Nbd(D_\mathbb{V})\setminus D_\mathbb{V}$ lies in $X_\mathbb{V}\setminus \pi^*B$. It follows that we have a well-defined morphism $\phi_\pi: \Nbd^\circ(D_\mathbb{V}) \to \mathbb{CP}^1$. 
By possibly replacing $\xi$ with $\xi+c$ for some $c \in \mathbb{R}$, we may assume that $\xi^{-1}(0) \cap L_{\alpha}= \emptyset $, so that
$\overline{L}_{\alpha} \cap \pi^*B= \emptyset$ for all $\alpha \in \mathscr{P}(\mathbb{V})$.

For $C$ sufficiently large and $I \subset \{0,n+1,\dots,n+m\}$, the restriction of the bundle projection $\pi_{I}|_{\xi^{-1}(C) \cap U_I}: \xi^{-1}(C) \cap U_I \to D_I$ is submersive onto its image.
In fact, the fibres of $\pi_{I}|_{\xi^{-1}(C) \cap U_I}$ are diffeomorphic to $T^{|I|-1}$.
The closure $\overline{\xi^{-1}(C)}$ in $X_{\mathbb{V}}$ is given by $\overline{\xi^{-1}(C)}=\xi^{-1}(C) \cup \pi^*B$.
As a result, $\overline{\xi^{-1}(C)}$ can be written as a union of the image of sections $s_i:\pi_{i}\left(\xi^{-1}(C)\cap U_i\right) \to U_i|_{\pi_{i}\left(\xi^{-1}(C) \cap U_i\right)}$ for $i=0,n+1,\dots,n+m$ such that $s_i(z)=0$ if and only if $z \in \pi^*B \cap D_i$.
To compute the wrapped Floer cohomologies of the chamber Lagrangians with respect to the sectorial hypersurface $\rRe(\xi)^{-1}(C)$, it suffices to compute the wrapped Floer cohomologies with respect to the stop $\mathfrak{s}_\mathbb{V}:=\xi^{-1}(C)=\bigcup_{i=0,n+1,\dots,n+m} \im(s_i) \cap M(\mathbb{V})$.


%

For $\epsilon>0$, let $\rho_{\epsilon}:(0,\infty) \to [0,\infty)$ be a smooth function such that
\begin{align}
 \rho_{\epsilon}(r)=0 \text{ for } r> \epsilon \text{ and } \rho_{\epsilon}(r)=\frac{1}{r}. \label{eq:rhoe}
\end{align}
We choose a Hamiltonian $h:M(\mathbb{V})\rightarrow\mathbb{R}$ so that it takes the form $h_I=\sum_{i\in I}\rho_{\epsilon}(h_{i})$ in the tubular neighborhood $U_I^\circ$ for each $I$, where each $h_{i}$ generates the $S^1$-action which rotates the fibers of $S_i^\circ\rightarrow D_i^\circ$. It follows that the Hamiltonian orbits of $X_h$ near $D_I^\circ$ are given by orbits of the circle actions, therefore they appear in families and the collection of them can then be identified with the torus fibers of $S_I^\circ\rightarrow D_I^\circ$. 
To make sure that the Hamiltonian chords between the chamber Lagrangians are nondegenerate, we further perturb the Hamiltonian $h$ with a $C^2$-small Morse function on the chamber $\Delta_\alpha\subset\mathbb{R}^d$, which reaches its maximum at the corners (i.e. $0$-dimensional strata) and whose restriction to each stratum $\Delta_\alpha\cap D_{I}^\circ$ has a single critical point which is a minimum for any  $I\subset\{1,\cdots,n\}$. Since $\Delta_\alpha$ is contractible, such a function is easy to construct. We denote such a (perturbed) Hamiltonian by $\tilde{h}:M(\mathbb{V})\rightarrow\mathbb{R}$. 

To determine the gradings of the generators of the wrapped Floer cochain complexes, we need to choose a trivialization of the canonical bundle $K_{M(\mathbb{V})}$. The trivialization that we shall use here is the one induced by the restriction of the trivialization of the canonical bundle $K_{\mathbb{C}^d}$. Note that this is different from the grading convention of \cite{lekili2020homological}, in the case of higher-dimensional of pair-of-pants, where one uses the trivialization coming from the restriction of that of the canonical bundle of $(\mathbb{C}^\ast)^d$.

\begin{proposition}\label{proposition:aa}
Given $\alpha\in\mathscr{P}(\mathbb{V})$, we have an isomorphism
	\begin{equation}\label{eq:aa}
	\mathit{HW}^\ast(L_\alpha,L_\alpha)\cong\widetilde{R}_{\alpha\alpha}
	\end{equation}
	as $\mathbb{Z}$-graded rings.
\end{proposition}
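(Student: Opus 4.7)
The plan is to compute $\mathit{HW}^\ast(L_\alpha,L_\alpha)$ directly using the Hamiltonian $\tilde h$ constructed just before the proposition statement, and to exhibit a $\mathbb{Z}$-basis identification with $\widetilde{R}_{\alpha\alpha}$ that intertwines the products. By Proposition~\ref{prop:Mclean}, near every non-empty stratum $L_\alpha\cap D_I^\circ$ the pair $(M(\mathbb{V}),L_\alpha)$ is symplectically modelled on $D_I^\circ\times\prod_{i\in I}(\mathbb{D},r_i\,dr_i\wedge d\theta_i)$ with $L_\alpha$ restricting to $(L_\alpha\cap D_I^\circ)\times\prod_{i\in I}R_i$ for rays $R_i$ determined by the signs $\alpha(i)$. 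The fibrewise rotation part $h_I=\sum_{i\in I}\rho_\varepsilon(h_i)$ of $\tilde h$ then produces a clean Morse--Bott family of time-$1$ chords over $L_\alpha\cap D_I^\circ$ for each wrapping multi-index $\vec n\in\mathbb{Z}_{\geq 1}^I$ (with $n_i$ the winding in the $i$-th disk factor), and the small Morse perturbation collapses each family to a single minimum. Including the interior minimum for $I=\emptyset$, the generators of $\mathit{CW}^\ast(L_\alpha,L_\alpha)$ are therefore in bijection with monomials $u^{\vec n}=\prod u_i^{n_i}$ whose support $S=\{i:n_i>0\}$ satisfies $\Delta_\alpha\cap H_{\mathbb{R},S}\neq\emptyset$, which is precisely the standard $\mathbb{Z}$-basis of $\widetilde{R}_{\alpha\alpha}$. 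A Conley--Zehnder calculation for a $2\pi n_i$-rotation in each disk factor, using the trivialization of $K_{M(\mathbb{V})}$ pulled back from $K_{\mathbb{C}^d}$, contributes $2n_i$ to the degree, matching $|u^{\vec n}|$ in $\widetilde{R}_{\alpha\alpha}$.

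Since all generators have even degree and the Floer differential has degree $+1$, it vanishes identically at the chain level, so $\mathit{HW}^\ast(L_\alpha,L_\alpha)$ coincides with $\mathit{CW}^\ast(L_\alpha,L_\alpha)$ as a graded abelian group. To compute the product $u^{\vec m}\cdot u^{\vec n}$ I would choose a cylindrical almost complex structure $J$ that is integrable on every McLean neighbourhood and makes every projection $\pi_I$ pseudo-holomorphic. The standard maximum principle applied to $\pi_I\circ T$ for a rigid pseudo-holomorphic triangle $T$ whose three asymptotes lie in stratum $I$ then confines $T$ to $\pi_I^{-1}(L_\alpha\cap D_I^\circ)$, where the count reduces to the classical toric model of a real positive chamber inside $(\mathbb{C}^\ast)^{|I|}\cong T^\ast T^{|I|}$, in the spirit of \cite{Auroux-H,auroux2017speculations}. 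That local count yields $u^{\vec m}\cdot u^{\vec n}=u^{\vec m+\vec n}$ whenever the right-hand side remains a generator, and produces no output otherwise, exactly mirroring the multiplication rule in $\widetilde{R}_{\alpha\alpha}$.

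The principal technical obstacle will be excluding rigid pseudo-holomorphic triangles whose image escapes a single McLean neighbourhood, since a priori these could contribute unexpected corrections to the naive concatenation product. The strategy I would employ is two-fold. First, the pseudo-holomorphicity of the projections $\{\pi_I\}$ together with the inclusions $U_J\subset U_I$ for $I\subset J$ yields a tower of integrated maximum principles, which trap any triangle whose three asymptotes lie in stratum-$I$ chords inside $\overline{U_I}$. Second, once inside $\overline{U_I}$, the integrability of $J$ and the $U(1)^{|I|}$-symmetry of the local model force the moduli count to agree with the toric count on the fibre $\prod_{i\in I}\mathbb{D}^\ast$ with its real-positive Lagrangian. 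Together, these reduce the entire product computation to its local toric model and complete the identification of $\mathit{HW}^\ast(L_\alpha,L_\alpha)$ with $\widetilde{R}_{\alpha\alpha}$ as graded rings.
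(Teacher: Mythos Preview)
Your overall strategy matches the paper's: enumerate generators via wrapping around the divisors $D_i$, observe that the complex is concentrated in even degree so the differential vanishes, and then argue that Floer triangles are confined to a local product model where the count reduces to the cylinder. The differences lie in two technical devices the paper uses that you do not.

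First, the paper exploits the topological grading by $H_1(M(\mathbb{V});\mathbb{Z})\cong\mathbb{Z}^n$: each generator of $\mathit{CW}^\ast(L_\alpha,L_\alpha)$ carries a unique multi-degree $(k_1,\dots,k_n)$ recording its winding numbers, and since the Floer product is additive on these classes, the output of $\mu^2\left(\prod u_i^{k_i},\prod u_i^{l_i}\right)$ is forced to be an integer multiple of $\prod u_i^{k_i+l_i}$ before any curve-counting. This reduces the entire product computation to determining a single integer coefficient, and only for that coefficient does one need a confinement argument.

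Second, and this is what dissolves the obstacle you flag, the paper does not use the McLean projections $\pi_I$ for confinement. Instead it picks $J\supset I$ with $|J|=d$, uses a Moser argument (via Lemma~\ref{sector:Standardization}) to make the symplectic form an honest product in a neighbourhood $N_J$ of $\bigcup_{i\in J}D_i$, and then applies the open mapping theorem to the globally defined linear coordinates $\ell_i:M(\mathbb{V})\to\mathbb{C}$. Since $\ell_i\circ u$ is holomorphic wherever $|\ell_i\circ u|>\varepsilon$ and its asymptotes and Lagrangian boundary all sit in $\{|\ell_i|\le\varepsilon\}$, the image of $\ell_i\circ u$ cannot meet $\{|\ell_i|>\varepsilon\}$ at all; no prior knowledge that $u$ stays inside some $U_I$ is needed. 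This sidesteps exactly the ``tower of integrated maximum principles'' you propose. By contrast, your formulation ``maximum principle applied to $\pi_I\circ T$'' is problematic as written: $\pi_I$ lands in the positive-dimensional complex manifold $D_I$, so the classical maximum principle does not apply directly, and you would in any case still owe an argument that $T$ lies in the domain of $\pi_I$ before you can compose.
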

\begin{proof}
The proof is divided into three steps.

\paragraph{Step 1: graded module structure}
We first show that (\ref{eq:aa}) is an isomorphism of $\mathbb{Z}\langle e_1,\cdots,e_n\rangle$-graded modules. For each $I\subset\{1,\cdots,n\}$, denote the local coordinates on the fibers of the symplectic disk bundle $U_I^\circ\rightarrow D_{I}^\circ$ by $x_{I,i}$, where $i\in I$. Assume that $D_{I}\cap\Delta_\alpha\neq\emptyset$. By our choice of the wrapping Hamiltonian $\tilde{h}$, for each $|I|$-tuple of positive integers $(k_i)_{i\in I}$, there is a single non-degenerate time-1 chord of $X_{\tilde{h}}$ from $L_\alpha$ to itself which wraps $k_i$ times around the hyperplane $D_{i}\subset X_\mathbb{V}$, which is locally defined by the equation $x_{I,i}=0$. We label the corresponding generator of $\mathit{CW}^\ast(L_\alpha,L_\alpha)$ by $\prod_{i\in I}u_i^{k_i}$, reflecting the fact that it is a product of the generators $u_i^{k_i}$, which wrap $k_i$ times along a single hyperplane $D_{i}$, a fact which will be established shortly. By our grading conventions, each generator $u_i$ of $\mathit{CW}^\ast(L_\alpha,L_\alpha)$ has degree 2. On the other hand, if $D_{I}\cap\Delta_\alpha=\emptyset$, then for $i\in I$ there is no time-1 chord of $X_{\tilde{h}}$ which wraps around each of the hyperplane $D_{i}$ by a positive number of times. Note also that by our choice of the stop $\mathfrak{s}_\mathbb{V}\subset\partial_\infty M(\mathbb{V})$, no wrapping can occur along the compactifying divisors $D_{0},D_{n+1},\cdots,D_{n+m}\subset X_\mathbb{V}$. 

Observe that $H_0(L_\alpha;\mathbb{Z})$ injects into $H_0(M(\mathbb{V});\mathbb{Z})$ so we have the isomorphisms
	\begin{equation}\label{eq:isom}
	H_1(M(\mathbb{V}),L_\alpha)\cong H_1(M(\mathbb{V});\mathbb{Z})\cong\mathbb{Z}^n=\mathbb{Z}\langle e_1,\cdots,e_n\rangle,
	\end{equation}
whose generators correspond to meridian loops around the hyperplanes $H_{1},\cdots,H_{n}\subset\mathbb{C}^d$.
Under this isomorphism, the generator $\prod_{i\in I}u_i^{k_i}\in\mathit{CW}^\ast(L_\alpha,L_\alpha)$ represents the (relative) homology class $(k_1,\cdots,k_n)\in\mathbb{Z}^n$, where $k_i=0$ if $i\notin I$.
This enables to give a topological $\mathbb{Z}\langle e_1,\cdots,e_n\rangle$-grading on the generators of $\mathit{CW}^\ast(L_\alpha,L_\alpha)$. As a $\mathbb{Z}$-graded module, $\mathit{CW}^\ast(L_\alpha,L_\alpha)$ is supported in even degrees, so the Floer differential vanishes and (\ref{eq:aa}) holds as an isomorphism of $\mathbb{Z}\langle e_1,\cdots,e_n\rangle$-graded modules.


\paragraph{Step 2: locality of the Floer product}

Next we consider the triangle product on the Floer cochain complexes $\mathit{CW}^\ast(L_\alpha,L_\alpha)$. Whenever there is a perturbed holomorphic curve contributing to the triangle product
\begin{equation*}
\mu^2:\mathit{CW}^\ast(L_\alpha,L_\alpha)\otimes\mathit{CW}^\ast(L_\alpha,L_\alpha)\rightarrow\mathit{CW}^\ast(L_\alpha,L_\alpha),
	\end{equation*}
	the relative homology class of the output Hamiltonian chord must be equal to the sum of those of the input chords. Any generator which appears in the expression of the product of the generators $\prod_{i\in I}u_i^{k_i}$ and $\prod_{i\in I}u_i^{l_i}$, where $(l_i)_{i\in I}$ is another $|I|$-tuple of positive integers, must have grading $\sum_{i\in I}(2k_i+2l_i)$ and represents the (relative) homology class $(k_1+l_1,\cdots,k_n+l_n)\in\mathbb{Z}^n$ under the isomorphism (\ref{eq:isom}). By our grading convention, we conclude that such a generator must be an integer multiple of $\prod_{i\in I}u_i^{k_i+l_i}$.

To determine the integer coefficients of the Floer product, it is more convenient to modify the wrapping (with the same stop $\mathfrak{s}_\mathbb{V}$) according to the generators.
Indeed, by \cite[Lemma 3.29]{ganatra2020covariantly}, the Floer cochain complex $\mathit{CW}^\ast(L_\alpha,L_\alpha)$ can be computed as the direct limit $\varinjlim_t \mathit{CF}^\ast((L_\alpha)_{t},L_\alpha)$ for a confinal Lagrangian isotopy $(L_\alpha)_{t \ge 0}$ starting from $(L_{\alpha})_0=L_{\alpha}$.
As explained in Step 1, no two generators in $\mathit{CW}^\ast(L_\alpha,L_\alpha)$ have the same $\mathbb{Z}\langle e_1,\cdots,e_n\rangle$-grading, so if we choose a confinal Lagrangian isotopy such that no two generators in 
$\varinjlim_t \mathit{CF}^\ast((L_\alpha)_{t},L_\alpha)$ have the same $\mathbb{Z}\langle e_1,\cdots,e_n\rangle$-grading, then we can guanratee that under the quasi-isomorphism
$\mathit{CW}^\ast(L_\alpha,L_\alpha) \to \varinjlim_t \mathit{CF}^\ast((L_\alpha)_{t},L_\alpha)$, a generator of the former cochain complex is sent to the unique generator of the latter cochain complex with the same $\mathbb{Z}\langle e_1,\cdots,e_n\rangle$-grading.

We can determine the product $\mu^2\left(\prod_{i\in I}u_i^{k_i},\prod_{i\in I}u_i^{l_i}\right)$ by working with a local model near  $\bigcup_{i \in J} D_i$, where $|J|=d$ and $I\subset J\subset\{1,\cdots,n\}$.
By relabeling, we may assume that $J=\{1,\dots,d\}$. Consider the map $\Phi$ from Lemma \ref{sector:Standardization}, under the action of which the symplectic form and its primitive become \eqref{eq:sympform} and \eqref{eq:dphic} respectively. Since the difference between \eqref{eq:dphic} and the Liouville $1$-form
\begin{equation*}
\lambda_0:=-\sum_{i=1}^d2\left(\left(1-\frac{1}{\vert z_i\vert^4}\right)x_i\right)dy_i +\sum_{i=1}^d2\left(\left(1-\frac{1}{\vert z_i\vert^4}\right)y_i\right)dx_i
\end{equation*}
is bounded near $\bigcup_{i \in J} D_i$, by taking a linear interpolation between \eqref{eq:dphic} and $\lambda_0$, based on Lemmas \ref{Prelim:linear-algebra-fact} and \ref{sector:CompletenessCriterion}, we can use Moser's trick to show that $M(\mathbb{V})$ can be equipped with a symplectic form that is symplectomorphic to $\omega$, and equals $d\lambda_0$ in a neighborhood $N_J$ of $\bigcup_{i \in J} D_i$.

Note that locally the symplectic structure $d\lambda_0$ is of product type and for each $i \in J$. We can choose a Hamiltonian $\hbar_i$ which is defined to be $\rho_{\epsilon}(|z_i|)$ (cf. \eqref{eq:rhoe}) near $D_i$ and has its support in $N_j$. The Hamiltonian flow of $\hbar=\sum_{i=1}^d\hbar_i$ wraps $L_{\alpha}$ around $\bigcup_{i=1}^dD_i$. We use a product type perturbation of the form as before so that for each $(n_1,\dots,n_d) \in \mathbb{N}$, there is exactly one time-$1$ Hamiltonian chord with multi-grading $(n_1,\dots,n_d,0,\dots,0)$, where the zeros indicate that there are no wrappings around $D_i$ for $i \notin J$. These Hamiltonian chords correspond to the generators $\prod_{i\in J}u_i^{n_i}$.
Since $I \subset J$, we can compute $\mu^2\left(\prod_{i\in I}u_i^{k_i},\prod_{i\in I}u_i^{l_i}\right)$ under this wrapping.
Since $d\ell_i (X_{\hbar_j})=0$ when $i,j \in J$ and $i \neq j$, for any Floer triangle $u:S \to M(\mathbb{V})$ contributing to $\mu^2\left(\prod_{i\in I}u_i^{k_i},\prod_{i\in I}u_i^{l_i}\right)$, we can show that the image of $u$ is contained in $N_J$ using the open mapping theorem. See for example \cite[Remark 4.4.3]{BC1}. More precisely, choose strip-like ends
\[\varepsilon_{0,+},\varepsilon_{1,+}:[0,\infty)\rightarrow S\textrm{ and }\varepsilon_-:(-\infty,0]\rightarrow S\]
and consider the map $\ell_i \circ u:S\rightarrow\mathbb{C}$. Since
\begin{itemize}
	\item $\ell_i \circ u$ is holomorphic at $z \in S$ as long as $|\ell_i \circ u(z)| > \epsilon$ (recall that $\hbar_i=\rho_{\epsilon}(|z_i|)$, so the $\ell_i$-projection of the support of the perturbation $\hbar_i$ lies in $\left\{|z_i| \le \epsilon\right\}$);
	\item the limits $\lim_{s\rightarrow\infty}\varepsilon_{j,+}^\ast(\ell_i\circ u)(s,t)$ for $j=0,1$ and $\lim_{s\rightarrow-\infty}\varepsilon_-^\ast(\ell_i\circ u)(s,t)$ exist, where $s,t$ are coordinates on the strip-like ends, and they lie inside the neighborhood $\left\{|z_i| \le \epsilon\right\}$;
	\item the projections under $\ell_i$ of the Lagrangian boundaries are contained in $\left\{|z_i| \le \epsilon\right\}$,
\end{itemize}
we conclude that $\mathit{im}(\ell_i\circ u)\cap\{|z_i|>\epsilon\}=\emptyset$. In other words, the image of $\ell_i \circ u$ is contained in $\{|z_i| \le \epsilon\}$.

\paragraph{Step 3: Coefficients of the product structure}
Near the tubular neighborhood $N_J$, the wrapping Hamiltonian is modeled on a standard product Hamiltonian in a neighborhood of the origin in $(\mathbb{C}^\ast)^d$, and the Lagrangian $L_\alpha$ maps locally to one of the orthants in the real locus (see Example \ref{Prelim:generation-algebraic-torus}). As argued above, any perturbed holomorphic curve $u$ contributing to the product $\mu^2$ must remain entirely within the local chart $N_J$. Moreover, the projection of $u$ to each coordinate factor $z_i$ is a perturbed holomorphic curve in a neighborhood of the origin in $\mathbb{C}^\ast$ with boundary on the appropriate arcs. Conversely, every tuple of index 0 perturbed holomorphic curve in the coordinate factors lifts to an index 0 perturbed holomorphic curve in the local chart $N_J$. Note that the generator $\prod_{i\in I}u_i^{k_i}$ projects to the generator $x_{-k_j}$ of the wrapped Floer complex of $\mathbb{R}\times\{1\}\subset\mathbb{R}\times S^1$ on the $z_j$ factor; see Figure \ref{fig:wrapping} and the reference therein. On the cylinder, the generators $x_{-k_j}$ and $x_{-l_j}$ are the inputs of a unique triangle contributing to the Floer product, whose output can be identified with $x_{-k_j-l_j}$. We have proved that
	\begin{equation*}
	\mu^2\left(\prod_{i\in I}u_i^{k_i},\prod_{i\in I}u_i^{l_i}\right)=\prod_{i\in I}u_i^{k_i+l_i},
	\end{equation*}
	hence the ring structure on $\mathit{HW}^\ast(L_\alpha,L_\alpha)$ is as expected.
\end{proof}

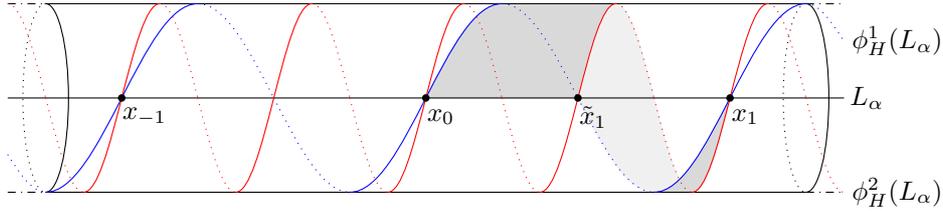
\begin{figure}
	\centering
	\begin{tikzpicture}
    \filldraw [draw=black, color={black!15}] (7.5,0) cos (7,-1.25) sin (5,-1.25) sin (6,0);
    \filldraw [draw=black,color={black!6}] (7,-1.25) sin (7.5,0) cos (8,-1.25) sin (8.5,-2.5)--(8,-2.5) cos (7,-1.25);
    \filldraw [draw=black,color={black!15}] (8,-2.5) cos (9,-1.25) sin (8.5,-2.5)--(8,-2.5);
	\draw (0,0) to (10,0);
	\draw (0,-2.5) to (10,-2.5);
	\draw (-0.5,-1.25) to (10.5,-1.25);
	\draw [dash dot] (-0.5,0) to (0,0);
	\draw [dash dot] (10,0) to (10.5,0);
	\draw [dash dot] (-0.5,-2.5) to (0,-2.5);
	\draw [dash dot] (10,-2.5) to (10.5,-2.5);
	\draw (10.8,-1.25) node {$L_\alpha$};
	\draw [dotted] (0,0) arc(90:270:0.3 and 1.25);
	\draw (0,-2.5) arc(-90:90:0.3 and 1.25);
	\draw [dotted] (10,0) arc(90:270:0.3 and 1.25);
	\draw (10,-2.5) arc(-90:90:0.3 and 1.25);
	\draw [blue,dotted] (-0.5,-2) sin (0,-2.5);
	\draw [blue] (0,-2.5) cos (1,-1.25);
	\draw [blue] (1,-1.25) sin (2,0);
	\draw [blue,dotted] (2,0) cos (3,-1.25);
	\draw [blue,dotted] (3,-1.25) sin (4,-2.5);
	\draw [blue] (4,-2.5) cos (5,-1.25);
	\draw [blue] (5,-1.25) sin (6,0);
	\draw [blue,dotted] (6.,0) cos (7,-1.25);
	\draw [blue,dotted] (7,-1.25) sin (8,-2.5);
	\draw [blue] (8,-2.5) cos (9,-1.25);
	\draw [blue] (9,-1.25) sin (10,0);
	\draw [blue,dotted] (10,0) cos (10.5,-0.5);
	
	\draw [red,dotted] (-0.5,0) cos (0,-1.25);
	\draw [red,dotted] (0,-1.25) sin (0.5,-2.5);
	\draw [red] (0.5,-2.5) cos (1,-1.25);
	\draw [red] (1,-1.25) sin (1.5,0);
	\draw [red,dotted] (1.5,0) cos (2,-1.25);
	\draw [red,dotted] (2,-1.25) sin (2.5,-2.5);
	\draw [red] (2.5,-2.5) cos (3,-1.25);
	\draw [red] (3,-1.25) sin (3.5,0);
	\draw [red,dotted] (3.5,0) cos (4,-1.25);
	\draw [red,dotted] (4,-1.25) sin (4.5,-2.5);
	\draw [red] (4.5,-2.5) cos (5,-1.25);
	\draw [red] (5,-1.25) sin (5.5,0);
	\draw [red,dotted] (5.5,0) cos (6,-1.25);
	\draw [red,dotted] (6,-1.25) sin (6.5,-2.5);
	\draw [red] (6.5,-2.5) cos (7,-1.25);
	\draw [red] (7,-1.25) sin (7.5,0);
	\draw [red,dotted] (7.5,0) cos (8,-1.25);
	\draw [red,dotted] (8,-1.25) sin (8.5,-2.5);
	\draw [red] (8.5,-2.5) cos (9,-1.25);
	\draw [red] (9,-1.25) sin (9.5,0);
	\draw [red,dotted] (9.5,0) cos (10,-1.25);
	\draw [red,dotted] (10,-1.25) sin (10.5,-2.5);
	\draw (1,-1.25) node[circle,fill,inner sep=1pt] {};
	\draw (9,-1.25) node[circle,fill,inner sep=1pt] {};
	\draw (5,-1.25) node[circle,fill,inner sep=1pt] {};
	\draw (7,-1.25) node[circle,fill,inner sep=1pt] {};
	\draw (1.3,-1.5) node {$x_{-1}$};
	\draw (5.2,-1.5) node {$x_0$};
	\draw (9.2,-1.5) node {$x_1$};
	\draw (7.2,-1.5) node {$\tilde{x}_1$};
	\draw (11.2,-0.5) node {$\phi_H^1(L_\alpha)$};
	\draw (11.2,-2.5) node {$\phi_H^2(L_\alpha)$};
	\end{tikzpicture}
	\caption{\cite[Figure 12]{Auroux2014Intro} Wrapping in the cylinder, the shaded region is a triangle contributing to the triangle product between $x_0$ and $x_1$. Under the identification $\mathit{CW}^\ast\left(L_\alpha,\phi_H^1(L_\alpha)\right)\cong\mathit{CW}^\ast\left(L_\alpha,\phi_H^2(L_\alpha)\right)$, the generator $x_1$ maps to $\tilde{x}_1$.\label{fig:wrapping}}
\end{figure}

\begin{proposition}\label{proposition:ab}
Given $\alpha,\beta\in\mathscr{P}(\mathbb{V})$, we have an isomorphism
\begin{equation}\label{eq:ab}
\mathit{HW}^\ast(L_\alpha,L_\beta)\cong\widetilde{R}_{\alpha\beta}\cdot f_{\alpha\beta}
\end{equation}
as $\mathbb{Z}$-graded $\left(\mathit{HW}^\ast(L_\alpha,L_\alpha),\mathit{HW}^\ast(L_\beta,L_\beta)\right)$ and $(\widetilde{R}_{\alpha\alpha},\widetilde{R}_{\beta\beta})$-bimodules.
\end{proposition}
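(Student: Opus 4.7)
The plan is to adapt the three-step strategy of Proposition~\ref{proposition:aa} to the off-diagonal setting, the principal new ingredient being the ``basic chord'' $f_{\alpha\beta}$ connecting $L_\alpha$ to $L_\beta$ through their common facet. Using the same wrapping Hamiltonian $\tilde{h}$ (given locally by $\sum_{i \in I} \rho_\epsilon(h_i)$ near each stratum $D_I^\circ$, perturbed by a small Morse function on each chamber), I would first enumerate the time-$1$ Hamiltonian chords from $L_\alpha$ to $L_\beta$. Let $D_{\alpha\beta} := \{i \mid \alpha(i) \neq \beta(i)\}$, so that $\Delta_\alpha \cap \Delta_\beta \subset \bigcap_{i \in D_{\alpha\beta}} H_{\mathbb{R},i}$. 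The smallest-action chord, which I label $f_{\alpha\beta}$, arises from the ``half-winding'' of $L_\alpha$ around each $H_i$ with $i \in D_{\alpha\beta}$ under $X_{\tilde h}$, after which it lands on $L_\beta$. Further wrappings add $k_i$ full turns around each $H_i$, producing for each multi-index $(k_i)_{i \in I}$ with $I \subset \{1,\dots,n\}$ a chord labeled $u_I^{(k_I)} f_{\alpha\beta} := \bigl(\prod_{i \in I} u_i^{k_i}\bigr) f_{\alpha\beta}$. By the same local analysis as in Proposition~\ref{proposition:aa}, such a chord exists if and only if the stratum $D_I$ is adjacent to the common face $\Delta_\alpha \cap \Delta_\beta$, equivalently $\Delta_\alpha \cap \Delta_\beta \cap H_{\mathbb{R},I} \neq \emptyset$, which matches precisely the vanishing relation in $\widetilde{R}_{\alpha\beta}$.

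For the grading, I would use relative homology classes in $H_1\bigl(M(\mathbb{V}),\, L_\alpha \cup L_\beta;\, \mathbb{Z}\bigr)$ to set up the $\mathbb{Z}\langle e_1, \dots, e_n\rangle$-multi-grading. With $f_{\alpha\beta}$ as the reference chord, any other chord differs from it by an element of $H_1(M(\mathbb{V});\mathbb{Z}) \cong \mathbb{Z}^n$, generated by meridians around $H_1, \dots, H_n$. A full meridional wrap around $H_i$ contributes $2e_i$ to the multi-degree, while the basic chord $f_{\alpha\beta}$ itself carries multi-degree $\sum_{i \in D_{\alpha\beta}} e_i$, corresponding to Maslov index $d_{\alpha\beta}$ under the trivialization of $K_{M(\mathbb{V})}$ inherited from $K_{\mathbb{C}^d}$; this matches the grading of $\widetilde{R}_{\alpha\beta} \cdot f_{\alpha\beta}$. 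Since distinct generators carry distinct multi-gradings and the Floer differential preserves the multi-grading, the differential vanishes and $\mathit{HW}^\ast(L_\alpha,L_\beta) \cong \widetilde{R}_{\alpha\beta}\cdot f_{\alpha\beta}$ as multi-graded modules.

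To identify the bimodule structures, observe that for any pair of input generators on either side the multi-grading forces a unique candidate output generator, so only the integer coefficients of $\mu^2$ need to be determined. Using confinal wrappings (cf.\ \cite[Lemma~3.29]{ganatra2020covariantly}) and the Moser argument of Proposition~\ref{proposition:aa}, I would normalize the symplectic structure to be of product type near each depth-$d$ stratum $\bigcup_{i \in J} D_i$, then apply the open mapping theorem to each coordinate projection $\ell_j \circ u$ to confine every Floer triangle contributing to $\mu^2$ into the tubular neighborhood $N_J$. The product decomposition reduces the triangle count to a product of cylinder computations (cf.\ Figure~\ref{fig:wrapping}), each yielding $1$. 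The resulting action matches the bimodule structure on $\widetilde{R}_{\alpha\beta}\cdot f_{\alpha\beta}$ coming from \eqref{eq:multiplication}: since $S(\alpha\alpha\beta) = S(\alpha\beta\beta) = \emptyset$, both the left $\widetilde{R}_{\alpha\alpha}$- and right $\widetilde{R}_{\beta\beta}$-actions reduce to plain scalar multiplication by $u_S$.

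The main obstacle I anticipate is in the first step: carefully justifying the bijection between Hamiltonian chords and the monomial basis of $\widetilde{R}_{\alpha\beta}\cdot f_{\alpha\beta}$, together with the Maslov index computation for $f_{\alpha\beta}$. In particular, one must verify that the defining relations $u_S = 0$ in $\widetilde{R}_{\alpha\beta}$ correspond exactly to the non-existence of Hamiltonian chords whose wrapping pattern would require the hyperplanes $\{H_i\}_{i \in S}$ to support a non-empty common face of $\Delta_\alpha$ and $\Delta_\beta$. Once this chord-to-monomial dictionary is established and the Maslov index of $f_{\alpha\beta}$ is pinned down, the remaining grading and coefficient computations proceed by direct reduction to the strategy of Proposition~\ref{proposition:aa}.
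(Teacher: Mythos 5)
Your proposal follows essentially the same route as the paper's proof: enumerate Hamiltonian chords stratum by stratum using the wrapping Hamiltonian localized near each $D_I^\circ$, observe that chords exist exactly when $\Delta_\alpha\cap\Delta_\beta\cap H_{\mathbb{R},I}\neq\emptyset$ (with half-integer winding in the $D_{\alpha\beta}$-directions giving $f_{\alpha\beta}$ degree $d_{\alpha\beta}$), kill the differential by a multi-grading argument, and reduce the bimodule structure to the local product triangle count. The only cosmetic difference is that you phrase the multi-grading via $H_1\bigl(M(\mathbb{V}),L_\alpha\cup L_\beta;\mathbb{Z}\bigr)$ relative to the reference chord $f_{\alpha\beta}$, whereas the paper uses base points and reference paths to land in $H_1\bigl(M(\mathbb{V});\tfrac{1}{2}\mathbb{Z}\bigr)$; these carry the same information and both settle the vanishing of the differential.
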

\begin{proof}
	We argue similarly as in the proof of Proposition \ref{proposition:aa}. As before, if $\Delta_\alpha\cap\Delta_\beta\cap H_{I}=\emptyset$, which means the closures of $L_\alpha$ and $L_\beta$ in $X_\mathbb{V}$ do not intersect with each other near $U_I^\circ$, then there is no generator of $\mathit{CW}^\ast(L_\alpha,L_\beta)$ in $U_I^\circ$. 
	
	Now assume that $\Delta_\alpha\cap\Delta_\beta\cap H_{I}\neq\emptyset$. In this case, the local coordinates $z_i=\ell_i$ define a local projection $U_I\rightarrow\mathbb{C}^{|I|}$, under which $L_\alpha$ and $L_\beta$ map to orthants in the real locus. These orthants correspond to real points whose coordinates have the same signs for those $i\in I$ with $\alpha(i)=\beta(i)$, and have different signs otherwise. Let $K\subset I$ be the subset defined by the elements $i\in I$ with $\alpha(i)=\beta(i)$. Given any tuple $(k_i)_{i\in I}$, where $k_i\in\mathbb{N}$ for $i\in K$, and $k_i\in\mathbb{Z}_{\geq0}+\frac{1}{2}$ for $i\in I\setminus K$, near $D_{I}^\circ$ there is a family of time-1 trajectories of $X_h$ from $L_\alpha$ to $L_\beta$ that wraps $k_i$ times around the hyperplane $D_{i}$ for each $i\in I$. After perturbing the Hamiltonian slightly from $h$ to $\tilde{h}$, there is a single non-degenerate such trajectory, and we label the corresponding generator of $\mathit{CW}^\ast(L_\alpha,L_\beta)$ by $\prod_{i\in I}u_i^{k_i}$. Note that a key difference between the situation of Proposition \ref{proposition:aa} and the situation here is that some of the $k_i$ are half-integers, and we introduce the notation $f_{\alpha\beta}$ by requiring
	\begin{equation*}
	\prod_{i\in I}u_i^{k_i}=\prod_{i\in I}u_i^{\lfloor{k_i}\rfloor}\cdot f_{\alpha\beta}.
	\end{equation*}
	Note that by our grading conventions, the generator $u_i$ has degree 2, therefore $u_i^{1/2}$ has degree $1$. As a consequence, $f_{\alpha\beta}$ has degree $d_{\alpha\beta}$. Note that when $I=\{0\}$, $L_\alpha$ and $L_\beta$ represent the same orthant in the local projection to $\mathbb{C}$, so by our choice of the stop $\mathfrak{s}_\mathbb{V}$, wrapping at infinity does not add any new generators to $\mathit{CW}^\ast(L_\alpha,L_\beta)$. We have proved that as a graded vector space, $\mathit{CW}^\ast(L_\alpha,L_\beta)$ is isomorphic to the right-hand side of (\ref{eq:ab}).
	
	We need to prove the vanishing of the Floer differential. Since $L_\alpha$ and $L_\beta$ are contractible, by choosing base points $\ast_\alpha\in L_\alpha$, $\ast_\beta\in L_\beta$, and $\ast\in M(\mathbb{V})$, and reference path from $\ast$ to $\ast_\alpha$, and from $\ast$ to $\ast_\beta$, we can complete any arc connecting $L_\alpha$ to $L_\beta$ into a closed loop in $M(\mathbb{V})$, which is unique up to homotopy. We can use this to assign classes in $H_1(M(\mathbb{V});\mathbb{Z})\cong\mathbb{Z}^n$ to the generators of $\mathit{CW}^\ast(L_\alpha,L_\beta)$. In fact, in our case it would be more convenient to consider $H_1(M(\mathbb{V});\frac{1}{2}\mathbb{Z})\cong(\frac{1}{2}\mathbb{Z})^n$ so that the class associated to the generator $\prod_{i\in I}u_i^{k_i}\in\mathit{CW}^\ast(L_\alpha,L_\beta)$ is $(k_1,\cdots,k_n)\in(\frac{1}{2}\mathbb{Z})^n$, where $k_i=0$ if $i\notin I$. Any two generators of $\mathit{CW}^\ast(L_\alpha,L_\beta)$ related by the Floer differential must represent the same class in $(\frac{1}{2}\mathbb{Z})^n$, and their gradings differ by 1, which is impossible unless the differential vanishes identically.
	
	The claim about the module structures is a consequence of the following proposition.
\end{proof}

\begin{proposition}
Indexing the generators of $\mathit{HW}^\ast(L_\alpha,L_\beta)$ by monomials in $u_1,\cdots,u_n$ and $f_{\alpha\beta}$ as in Proposition \ref{proposition:ab}, the Floer product
\begin{equation*}
\mathit{HW}^\ast(L_\beta,L_\gamma)\otimes\mathit{HW}^\ast(L_\alpha,L_\beta)\rightarrow\mathit{HW}^\ast(L_\alpha,L_\gamma)
\end{equation*}
coincides with the product in the algebra $\widetilde{B}(\mathbb{V})$.
\end{proposition}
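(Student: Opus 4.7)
The plan is to run the three-step strategy of Proposition \ref{proposition:aa} with three distinct Lagrangians in place of one. First, I would refine the multi-grading argument. Each generator $\prod_{i\in I}u_i^{k_i}\cdot f_{\alpha\beta}$ of $\mathit{CW}^\ast(L_\alpha,L_\beta)$ is assigned a multi-degree in $\tfrac{1}{2}\mathbb{Z}\langle e_1,\ldots,e_n\rangle$ via its relative homology class in $H_1(M(\mathbb{V}),L_\alpha\cup L_\beta;\tfrac{1}{2}\mathbb{Z})\cong(\tfrac{1}{2}\mathbb{Z})^n$, where $f_{\alpha\beta}$ itself has degree $\tfrac{1}{2}\sum_{i:\alpha(i)\neq\beta(i)}e_i$. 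The Floer product preserves this grading, so a direct case analysis on the indices $i\in\{1,\ldots,n\}$ (with cases $\alpha(i)=\beta(i)=\gamma(i)$, $\alpha(i)=\gamma(i)\neq\beta(i)$, etc.) shows that $\mu^2(f_{\beta\gamma},f_{\alpha\beta})$ is forced to be an integer multiple of the unique generator $u_{S(\alpha\beta\gamma)}f_{\alpha\gamma}$ having the correct multi-grading. By the same argument combined with the bimodule structure already established in Proposition \ref{proposition:ab}, the product $\mu^2\bigl(\prod u_i^{l_i}\,f_{\beta\gamma},\prod u_i^{k_i}\,f_{\alpha\beta}\bigr)$ is an integer multiple of $\prod u_i^{k_i+l_i}\,u_{S(\alpha\beta\gamma)}\,f_{\alpha\gamma}$, which matches the formula \eqref{eq:multiplication}.

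Second, I would establish locality for the relevant triangles. Let $J\subseteq\{1,\ldots,n\}$ collect all indices where any of the three signs differ, and enlarge it to a $d$-element set $J'$ whose corresponding hyperplanes meet in a $0$-stratum belonging to the closure of $\Delta_\alpha\cap\Delta_\beta\cap\Delta_\gamma$ (which is non-empty whenever the coefficient in question is non-zero, by the definition of $\widetilde{R}_{\alpha\beta}$). Using Lemma \ref{sector:Standardization} and Moser's trick, straighten the symplectic form in a neighborhood $N_{J'}$ of this stratum to the standard form on $(\mathbb{C}^\ast)^d$, and replace the wrapping Hamiltonian by a product Hamiltonian $\hbar=\sum_i\rho_\epsilon(|z_i|)$ supported in $N_{J'}$ whose time-$1$ chords between $L_\alpha$, $L_\beta$, $L_\gamma$ are in bijection with the multi-graded generators inside $N_{J'}$. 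The open mapping theorem applied to each coordinate projection $\ell_i\circ u$ (as in Step 2 of Proposition \ref{proposition:aa}) then confines every perturbed holomorphic triangle contributing to the product to the polydisk $N_{J'}$.

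Third, I would perform the local computation. Because the Lagrangians, the almost complex structure, and the Hamiltonian all split as products across the coordinates $z_1,\ldots,z_d$, every contributing triangle factors into $d$ perturbed holomorphic triangles in $\mathbb{C}^\ast$. For $i$ with $\alpha(i)=\beta(i)=\gamma(i)$, all three local arcs lie on the same ray and the count is $1$. For $i\in S(\alpha\beta\gamma)$, the inputs are half-wraps on opposite sides of the puncture and the output is a full wrap $u_i$, yielding a $\pm1$ count from the classical cylinder computation of Figure \ref{fig:wrapping}. For $i$ with exactly one of $\alpha(i)\neq\beta(i)$ or $\beta(i)\neq\gamma(i)$, one local triangle has a half-wrap output matching the corresponding half-integer factor of $f_{\alpha\gamma}$. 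Multiplying these $d$ unit-coefficient counts gives $\mu^2(f_{\beta\gamma},f_{\alpha\beta})=\pm u_{S(\alpha\beta\gamma)}\,f_{\alpha\gamma}$.

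The hardest part will be fixing signs over $\mathbb{Z}$. I expect to absorb the ambiguity by rescaling each $f_{\alpha\beta}$ by a unit $\varepsilon_{\alpha\beta}\in\{\pm1\}$ and showing that a consistent choice exists. This reduces to checking a cocycle condition $\varepsilon_{\alpha\beta}\varepsilon_{\beta\gamma}=\varepsilon_{\alpha\gamma}$ for all triples, which follows once associativity of $\mu^2$ (automatic on cohomology) is combined with the fact that $f_{\alpha\alpha}=1$, exactly as in the analogous integral sign checks performed in \cite{lekili2020homological, auroux2017speculations}. With this normalization in place, the formula matches \eqref{eq:multiplication} on the nose and, together with Proposition \ref{proposition:ab} and the vanishing of the differential, yields the claimed $A_\infty$-algebra quasi-isomorphism (formality then follows because $\widetilde{\mathcal{B}}_\mathbb{V}$ is concentrated in even degrees, forcing all higher $A_\infty$-products to vanish for degree parity reasons, identically to the argument used in Section 5 of \cite{lekili2020homological}).
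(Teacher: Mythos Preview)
Your three-step plan (multi-grading forces the output generator, open-mapping-theorem locality confines triangles to a product chart near a $0$-stratum, and the coefficient reduces to a cylinder computation in each $\mathbb{C}^\ast$ factor) is exactly the paper's argument; the paper's proof is just a terse version of what you wrote, emphasizing only the new case $\alpha(j)=\gamma(j)\neq\beta(j)$ where the half-wraps $x_{-1/2}$ and $x_{1/2}$ combine to give the extra $u_j$.

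One genuine error, though it lies in your extraneous closing parenthetical rather than in the proof of the proposition itself: your claimed formality argument is wrong. The algebra $\widetilde{\mathcal{B}}_\mathbb{V}$ is \emph{not} concentrated in even degrees --- the generators $f_{\alpha\beta}$ have degree $d_{\alpha\beta}$, which is odd whenever $\alpha$ and $\beta$ differ in an odd number of signs. Degree parity therefore does not kill the higher $\mu^j$. The paper's actual formality argument (Theorem~\ref{thm:formality}) instead uses the topological $(\tfrac{1}{2}\mathbb{Z})^n$-multigrading you set up in Step~1: any output of $\mu^j$ must have multi-degree equal to the sum of the input multi-degrees, hence $\mathbb{Z}$-degree equal to the sum of the input $\mathbb{Z}$-degrees, whereas $\mu^j$ shifts degree by $2-j$; this forces $j=2$. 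You already have the right grading in hand, you just invoked the wrong consequence of it.
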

\begin{proof}
The argument is similar to the special case of $\alpha=\beta=\gamma$ treated in Proposition \ref{proposition:aa}, except the relation (\ref{eq:multiplication}) involving half-integer powers of $u_i$. We again work near $\bigcup_{i \in J} D_i$, where $|J|=d$ and $I\subset J\subset\{1,\cdots,n\}$. By a Moser argument as in the proof of Proposition \ref{proposition:aa}, the neighborhood $N_J$ of $\bigcup_{i \in J} D_i$ is modeled on the standard product $(\mathbb{C}^\ast)^d$ and the wrapping Hamiltonian $\sum_{i \in J}\hbar_i$ has been chosen accordingly. Now suppose that $\alpha(j)=\gamma(j)\neq\beta(j)$. On the $j$-th coordinate factor $z_j$, the Lagrangians $L_\alpha$, $L_\beta$, and $L_\gamma$ project to the arcs $\mathbb{R}_+$, $\mathbb{R}_-$ and $\mathbb{R}_+$ in  $\mathbb{C}^\ast$, respectively. The generators $f_{\alpha\beta}\in\mathit{CW}^\ast(L_\alpha,L_\beta)$, $f_{\beta\gamma}\in\mathit{CW}^\ast(L_\beta,L_\gamma)$, and $f_{\alpha\gamma}\in\mathit{CW}^\ast(L_\alpha,L_\gamma)$ are projected to $x_{-\frac{1}{2}}\in\mathit{CW}^\ast(\mathbb{R}_+,\mathbb{R}_-)$, $x_{\frac{1}{2}}\in\mathit{CW}^\ast(\mathbb{R}_-,\mathbb{R}_+)$ and $x_{-1}\in\mathit{CW}^\ast(\mathbb{R}_+,\mathbb{R}_+)$, respectively, where the wrapped Floer cochain complexes $\mathit{CW}^\ast$ are taken inside $\mathbb{C}^\ast$. Since there is a unique holomorphic triangle with inputs $x_{-\frac{1}{2}}$, $x_{\frac{1}{2}}$, and output $x_{-1}$, we get an additional $u_j$ factor before $f_{\alpha\gamma}$ in the expression of $f_{\alpha\beta}\cdot f_{\beta\gamma}$. In terms of half-integer powers of $u_i$, this simply corresponds to the identity $u_i^{1/2}\cdot u_i^{1/2}=u_i$. Taking into account of all such $j\in J$ proves (\ref{eq:multiplication}).
\end{proof}

We have proved that the partially wrapped Floer cohomology algebra is isomorphic to $\widetilde{B}(\mathbb{V})$, i.e.
\begin{equation}\nonumber
\widetilde{B}(\mathbb{V})\cong\bigoplus_{\alpha,\beta\in\mathscr{P}(\mathbb{V})}\mathit{HW}^\ast(L_\alpha,L_\beta).
\end{equation}
In order to obtain a statement on the level of $A_\infty$-algebras, we need the following formality result.

\begin{theorem}\label{thm:formality}
	The partially wrapped Fukaya $A_\infty$-algebra
	\begin{equation}\nonumber
	\widetilde{\mathcal{B}}_{\mathbb{V}}:=\bigoplus_{\alpha,\beta\in\mathscr{P}(\mathbb{V})}\mathit{CW}^\ast(L_\alpha,L_\beta)
	\end{equation}
	is formal. As a consequence, it is quasi-isomorphic to the $\mathbb{Z}$-graded associative algebra $\widetilde{B}(\mathbb{V})$.
\end{theorem}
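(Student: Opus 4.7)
The plan is to prove formality by a weight argument, upgrading the $\mathbb{Z}$-grading on $\widetilde{\mathcal{B}}_\mathbb{V}$ to a refined multi-grading and ruling out $\mu^k$ for $k\neq 2$ on degree grounds. This mimics the strategy used by Auroux in \cite{auroux2017speculations} and by Lekili--Polishchuk in \cite{lekili2020homological} for the cyclic case.

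The first step is to introduce a $\tfrac{1}{2}\mathbb{Z}\langle e_1,\dots,e_n\rangle$-valued multi-grading on $\widetilde{\mathcal{B}}_\mathbb{V}$. A half-integer lattice is needed because, by the proof of Proposition \ref{proposition:ab}, a generator of $\mathit{CW}^\ast(L_\alpha,L_\beta)$ winds half-integrally around the hyperplane $D_i$ whenever $\alpha(i)\neq\beta(i)$. Concretely, each Hamiltonian chord from $L_\alpha$ to $L_\beta$ represents a class in $H_1(M(\mathbb{V}),L_\alpha\cup L_\beta;\tfrac{1}{2}\mathbb{Z})\cong(\tfrac{1}{2}\mathbb{Z})^n$ via the meridian basis of \eqref{eq:isom}, which one pins down by choosing once and for all a basepoint together with reference paths to each of the contractible Lagrangians $L_\alpha$. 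The crucial structural observation, already implicit in the proofs of Propositions \ref{proposition:aa} and \ref{proposition:ab}, is that the single degree of any generator $a$ satisfies $|a|=\sum_{i=1}^n m_i(a)$, where $(m_1(a),\dots,m_n(a))$ is its multi-degree; this matches the corresponding identity in $\widetilde{B}(\mathbb{V})$ where $|u_i|=2$ comes with $2e_i$ and $|f_{\alpha\beta}|=d_{\alpha\beta}$ comes with the indicator multi-degree for the sign changes.

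The second step is to verify that every $A_\infty$-operation $\mu^k$ preserves this multi-grading. A perturbed holomorphic polygon contributing to $\mu^k(a_k,\dots,a_1)=\sum_b n_b\,b$ has boundary that, after closing up with the chosen reference paths, represents a null-homologous loop in $M(\mathbb{V})$; consequently the class of $b$ in $(\tfrac{1}{2}\mathbb{Z})^n$ equals the sum of the classes of the inputs. One small bookkeeping issue is to make sure the wrapping Hamiltonian $\tilde h$ and its Morse-type perturbation on each $\Delta_\alpha$ do not disturb this picture; this is routine since both are chosen so that every time-$1$ chord has a well-defined winding vector.

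The third step is the degree count. Since $\mu^k$ has degree $2-k$ with respect to the total $\mathbb{Z}$-grading while preserving multi-degree, any nonzero contribution to $\mu^k(a_k,\dots,a_1)=\sum_b n_b\,b$ forces
\begin{equation*}
\sum_{i=1}^k |a_i|+(2-k)=|b|=\sum_{i=1}^n m_i(b)=\sum_{j=1}^k\sum_{i=1}^n m_i(a_j)=\sum_{j=1}^k |a_j|,
\end{equation*}
so $k=2$. Hence $\mu^k\equiv 0$ for $k\neq 2$, which is the desired formality; the surviving $\mu^2$ is identified with the algebra structure of $\widetilde{B}(\mathbb{V})$ by Propositions \ref{proposition:aa} and \ref{proposition:ab}, producing the required $A_\infty$ quasi-isomorphism. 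The main obstacle I anticipate is the careful setup of the multi-grading on chains (not just cohomology), especially checking that the chosen perturbation scheme produces only chords whose half-integer winding vectors assemble into a single well-defined class; once this is in hand, the formality itself is an immediate degree calculation.
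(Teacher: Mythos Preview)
Your proposal is correct and follows essentially the same route as the paper: both use the topological $(\tfrac{1}{2}\mathbb{Z})^n$-grading coming from the homology classes of Hamiltonian chords (set up in Propositions~\ref{proposition:aa} and~\ref{proposition:ab}), observe that the cohomological $\mathbb{Z}$-degree is the sum of the multi-degree components, and deduce from the additivity of multi-degree under $\mu^k$ together with the $(2-k)$ degree shift that only $\mu^2$ survives. Your write-up is more explicit about the bookkeeping (reference paths, compatibility with the perturbation scheme), but the argument is the same weight/degree count that the paper gives in a single paragraph.
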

\begin{proof}
	As before, label the generators of $\widetilde{\mathcal{B}}_{\mathbb{V}}$ by monomials in the variables $u_1,\cdots,u_n$ with half-integral powers. Suppose that there is an $A_\infty$-operation
	\begin{equation}\nonumber
	\mu^j\left(\prod_{i\in I_1}u_i^{k_{1,i}},\cdots,\prod_{i\in I_j}u_i^{k_{j,i}}\right)\in\widetilde{\mathcal{B}}_{\mathbb{V}}
	\end{equation}
	with non-trivial output in $\widetilde{\mathcal{B}}_{\mathbb{V}}$. Homological considerations as in the proof of Proposition \ref{proposition:ab} tells us that the right-hand side of the above expression must have the same homology class in $H_1(M(\mathbb{V});\frac{1}{2}\mathbb{Z})\cong(\frac{1}{2}\mathbb{Z})^n$ as the sum of the inputs on the left-hand side. This, however, forces the grading of the right-hand side of $\mu^j$ to be the sum of the gradings of the inputs. Since $\mu^j$ has degree $2-j$, we then conclude that $\mu^j$ vanishes unless $j=2$.
\end{proof}

\begin{corollary}\label{identify-perfect-complex}
There is a quasi-equivalence between $A_\infty$-categories:
\begin{equation*}
\label{eq:Equivalence cat}
\mathcal{W}^\mathit{perf}(M(\mathbb{V}), \xi) \cong \mathit{Perf}\left(\widetilde{B}(\mathbb{V})\right),
\end{equation*}
where $\mathit{Perf}\left(\widetilde{B}(\mathbb{V})\right)$ is the dg category of perfect modules over the graded algebra $\widetilde{B}(\mathbb{V})$.
\end{corollary}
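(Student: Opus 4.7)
The plan is to deduce the equivalence purely formally from the two main results already established: Theorem \ref{thm:generation} (generation by the chamber Lagrangians) and Theorem \ref{thm:formality} (formality of $\widetilde{\mathcal{B}}_{\mathbb{V}}$ together with the identification $\widetilde{\mathcal{B}}_{\mathbb{V}}\simeq\widetilde{B}(\mathbb{V})$). The key tool is the $A_\infty$-Yoneda embedding applied to the full subcategory $\mathcal{L}\subset\mathcal{W}(M(\mathbb{V}),\xi)$ whose objects are the chamber Lagrangians $\{L_\alpha\}_{\alpha\in\mathscr{P}(\mathbb{V})}$.

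First, I would observe that the total morphism algebra of $\mathcal{L}$ is by definition the Fukaya $A_\infty$-algebra
\[
\bigoplus_{\alpha,\beta\in\mathscr{P}(\mathbb{V})}\mathit{CW}^\ast(L_\alpha,L_\beta)=\widetilde{\mathcal{B}}_{\mathbb{V}},
\]
so Theorem \ref{thm:formality} supplies a zig-zag of quasi-isomorphisms of $A_\infty$-algebras $\widetilde{\mathcal{B}}_{\mathbb{V}}\simeq\widetilde{B}(\mathbb{V})$, where the right-hand side is viewed as a formal $A_\infty$-algebra (concentrated in $\mu^2$). Regarding $\widetilde{\mathcal{B}}_{\mathbb{V}}$ and $\widetilde{B}(\mathbb{V})$ as $A_\infty$-categories with objects $\mathscr{P}(\mathbb{V})$ (or passing through their underlying small $A_\infty$-categories), this produces a quasi-equivalence of pretriangulated completions
\[
\mathit{Perf}(\widetilde{\mathcal{B}}_{\mathbb{V}})\xrightarrow{\;\simeq\;}\mathit{Perf}(\widetilde{B}(\mathbb{V})).
\]

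Next, the $A_\infty$-Yoneda functor $\mathcal{Y}\colon \mathcal{L}\hookrightarrow\mathit{Perf}(\widetilde{\mathcal{B}}_{\mathbb{V}})$ sends $L_\alpha$ to the right $\widetilde{\mathcal{B}}_{\mathbb{V}}$-module it represents, and is cohomologically fully faithful by standard $A_\infty$-homological algebra. Extending $\mathcal{Y}$ to modules (twisted complexes) gives an $A_\infty$-functor
\[
\mathcal{Y}^{\mathit{perf}}\colon \mathcal{L}^{\mathit{perf}}\to\mathit{Perf}(\widetilde{\mathcal{B}}_{\mathbb{V}}),
\]
which is again fully faithful. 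On the other hand, Theorem \ref{thm:generation} asserts that $\{L_\alpha\}_{\alpha\in\mathscr{P}(\mathbb{V})}$ generate $\mathcal{W}(M(\mathbb{V}),\xi)$, so that the canonical inclusion $\mathcal{L}^{\mathit{perf}}\hookrightarrow\mathcal{W}^{\mathit{perf}}(M(\mathbb{V}),\xi)$ is itself a quasi-equivalence; composing the inverse of this inclusion with $\mathcal{Y}^{\mathit{perf}}$ and the formality equivalence then yields the desired quasi-equivalence $\mathcal{W}^{\mathit{perf}}(M(\mathbb{V}),\xi)\simeq\mathit{Perf}(\widetilde{B}(\mathbb{V}))$.

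The only point requiring care, and hence the ``hard part'' of the argument, is essential surjectivity of $\mathcal{Y}^{\mathit{perf}}$ onto $\mathit{Perf}(\widetilde{\mathcal{B}}_{\mathbb{V}})$. This is handled by noting that the representable modules $\mathcal{Y}(L_\alpha)$ are, up to quasi-isomorphism, precisely the indecomposable projective $\widetilde{\mathcal{B}}_{\mathbb{V}}$-modules $\widetilde{\mathcal{B}}_{\mathbb{V}}\cdot e_\alpha$ associated with the idempotents $e_\alpha=f_{\alpha\alpha}$, and these split-generate $\mathit{Perf}(\widetilde{\mathcal{B}}_{\mathbb{V}})$ because their direct sum is a compact generator. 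Combined with full faithfulness, this shows $\mathcal{Y}^{\mathit{perf}}$ is a quasi-equivalence and completes the argument.
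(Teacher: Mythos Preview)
Your proposal is correct and follows exactly the approach implicit in the paper: the corollary is stated there without proof, as an immediate formal consequence of Theorem \ref{thm:generation} (generation by the chamber Lagrangians) and Theorem \ref{thm:formality} (formality and the identification $\widetilde{\mathcal{B}}_{\mathbb{V}}\simeq\widetilde{B}(\mathbb{V})$). You have simply spelled out the standard Yoneda/Morita argument that the paper leaves to the reader.
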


\section{Functoriality}\label{functoriality-section}

In this section, we study the geometric interpretation of the combinatorial operations, such as deletion and restriction, on the polarized hyperplane arrangement $\mathbb{V}$. 
\subsection{Deletion and Restriction.}\label{sec:Delandres HypArr}

We recall the notions from \cite{LLM2020}.

\begin{definition} 
Let $\mathbb{V}=(\mathbb{R}^d,\eta ,\xi)$ be a polarized hyperplane arrangement and $H_{\mathbb{R},i}$ be one of the hyperplanes in $\mathbb{V}$. 
	\begin{enumerate}[\indent (i)]
		\item the \emph{($i$-th) deletion} of $\mathbb{V} =(\mathbb{R}^d,\eta ,\xi)$ is the polarized hyperplane arrangement 
		\begin{equation*}
		\mathbb{V}_i:=(\pi_i\left(\mathbb{R}^d\right),\pi_i (\eta),\xi\circ\pi_i^{-1}),
		\end{equation*}
		where $\pi_i:\R^d \to \R^{d-1}$ is the $i$-th projection. Equivalently, this is obtained by deleting $H_{\mathbb{R},i}$ from the set of hyperplanes in $\mathbb{V}$.  
		\item the \emph{($i$-th) restriction} of  $\mathbb{V} =(\mathbb{R}^d,\eta ,\xi)$ is the polarized hyperplane arrangement 
		\begin{equation*}
		\mathbb{V}^i:=(\iota^{-1}_i\mathbb{R}^d,\iota_i^{-1}\eta,\iota_i^{\ast}\xi),
		\end{equation*}
		where $\iota_i\colon\mathbb{R}^{d-1}\to\mathbb{R}^d$ is the inclusion into the $i$-th coordinate hyperplane. Equivalently, this is obtained by taking the intersection with $H_{\mathbb{R},i}$. 
	\end{enumerate}
\end{definition}

Fix $i$ and choose a sign $s \in \{+,-\}$. Given a sign sequence $\alpha \in \mathscr{P}(\mathbb{V}_i)$, we define $\alpha^i_s$ to be the sign sequence with $s$ inserted at $i$-th position. Consider the map 
\begin{equation*}
\begin{aligned}
\mathrm{del}_i^s:\mathscr{P}(\mathbb{V}_i) &\to && \mathscr{P}(\mathbb{V}) \\
\alpha & \mapsto && \begin{cases}
\alpha^i_s & \alpha^i_s \in \mathscr{P}(\mathbb{V}) \\
0 & \alpha^i_s \notin \mathscr{P}(\mathbb{V})
\end{cases}.
\end{aligned}
\end{equation*}  
Similarly, for $\alpha \in \mathscr{P}(\mathbb{V})$, we define $\alpha^{(i)}$ to be the sign sequence with the $i$-th sign removed. Consider the map 
\begin{equation*}
\begin{aligned}
\rest_i^s:\mathscr{P}(\mathbb{V}) &\to && \mathscr{P}(\mathbb{V}^i) \\
\alpha & \mapsto && \begin{cases}
\alpha^{(i)} & \alpha(i)=s \\
0 & \alpha(i) \neq s
\end{cases}.
\end{aligned}
\end{equation*}  
These maps on the sign sequences induce well-defined graded algebra morphisms between the corresponding convolution algebras.
\begin{proposition}[\cite{BLPW2010}, \cite{LLM2020}]
	Let $\mathbb{V}$ be a polarized hyperplane arrangement as above. For each $i$ and sign $s\in \{+,-\}$,
	\begin{enumerate}[\indent (i)]
		\item the map $\mathrm{del}_i^s:\mathscr{P}(\mathbb{V}_i) \to \mathscr{P}(\mathbb{V}) $ induces a graded algebra morphism 
		\begin{equation*}
		\mathrm{del}_i^s:\widetilde{B}(\mathbb{V}_i) \to \widetilde{B}(\mathbb{V}),
		\end{equation*}
		\item the map $\rest_i^s:\mathscr{P}(\mathbb{V}) \to \mathscr{P}(\mathbb{V}^i) $ induces a graded algebra morphism 
		\begin{equation*}
		\rest_i^s:\widetilde{B}(\mathbb{V}) \to \widetilde{B}(\mathbb{V}^i).
		\end{equation*}
	\end{enumerate}
\end{proposition}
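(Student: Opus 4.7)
The proof is a direct algebraic verification, so the plan is to unpack both sides of the algebra-morphism conditions and check well-definedness, multiplicativity and degree-preservation case by case. The two statements are essentially parallel, so I would treat deletion first and then indicate the (dual) modifications for restriction.

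For the deletion map $\mathrm{del}_i^s$, the underlying map on polynomial rings is the canonical inclusion $\mathbb{Z}[u_j : j \neq i] \hookrightarrow \mathbb{Z}[u_1,\ldots,u_n]$, together with the assignment $f_{\alpha\beta} \mapsto f_{\alpha^i_s \beta^i_s}$ (interpreted as $0$ whenever either $\alpha^i_s$ or $\beta^i_s$ fails to be bounded-feasible in $\mathbb{V}$). First I would check well-definedness on the quotient: if $S \subset \{1,\ldots,n\}\setminus\{i\}$ satisfies $\Delta_\alpha \cap \Delta_\beta \cap H_{\mathbb{R},S} = \emptyset$ inside $\mathbb{V}_i$, then since $\Delta_{\alpha^i_s} \subseteq \Delta_\alpha$ (and similarly for $\beta$) when viewed after reinserting the $i$-th coordinate hyperplane, the same intersection stays empty in $\mathbb{V}$. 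Next, for multiplicativity I would compare $S(\alpha^i_s\,\beta^i_s\,\gamma^i_s)$ with $S(\alpha\beta\gamma)$: since all three sign sequences have value $s$ in the $i$-th slot, the index $i$ is excluded from the former, and outside of $i$ the two sets coincide, so (\ref{eq:multiplication}) is preserved on the nose. Degree preservation reduces to $d_{\alpha^i_s \beta^i_s} = d_{\alpha\beta}$, which is immediate from the definition of $d_{\alpha\beta}$.

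For restriction $\mathrm{rest}_i^s$, the polynomial map is the quotient $u_i \mapsto 0$, $u_j \mapsto u_j$ for $j \neq i$, combined with $f_{\alpha\beta} \mapsto f_{\alpha^{(i)} \beta^{(i)}}$ when $\alpha(i) = \beta(i) = s$ and $0$ otherwise. Well-definedness on the quotient comes from the identification $\Delta_{\alpha^{(i)}} \cap \Delta_{\beta^{(i)}} \cap H^{\mathbb{V}^i}_{\mathbb{R},S} = \Delta_\alpha \cap \Delta_\beta \cap H_{\mathbb{R},i} \cap H^{\mathbb{V}}_{\mathbb{R},S}$, which is contained in the empty set whenever $u_S$ is a relation in $\widetilde{R}_{\alpha\beta}(\mathbb{V})$ and $i \notin S$; if $i \in S$ the relation $u_S$ gets killed by the quotient anyway. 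Multiplicativity splits into cases according to the triple $(\alpha(i),\beta(i),\gamma(i))$: when all three equal $s$, then $i \notin S(\alpha\beta\gamma)$ and the computation matches that of deletion; when $\beta(i) \neq s$ but $\alpha(i) = \gamma(i) = s$, the left-hand side vanishes because a middle factor is zero, and the right-hand side vanishes because $i \in S(\alpha\beta\gamma)$ introduces a factor of $u_i$ which is killed; in all remaining cases both sides vanish trivially. Degree preservation follows from $d_{\alpha^{(i)}\beta^{(i)}} = d_{\alpha\beta}$ when $\alpha(i) = \beta(i)$.

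The plan is essentially bookkeeping, and I do not anticipate a real obstacle — the only minor subtlety is tracking the zero-map conventions in both directions consistently so that multiplicativity holds automatically on all the degenerate branches. Once these cases are laid out, the two statements follow at once. If desired, one can also verify compatibility with the multi-grading by $\mathbb{Z}\langle e_1,\ldots,e_n\rangle$ (for deletion) and $\mathbb{Z}\langle e_j : j \neq i\rangle$ (for restriction), which again follows directly from the definitions.
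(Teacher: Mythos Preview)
The paper does not give its own proof of this proposition; it is stated with citations to \cite{BLPW2010} and \cite{LLM2020} and used as input. Your direct algebraic verification is the natural approach and is essentially correct.

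Two small points you gloss over but which fit under your ``zero-map conventions'' caveat: for deletion, when $\alpha^i_s,\gamma^i_s\in\mathscr{P}(\mathbb{V})$ but $\beta^i_s\notin\mathscr{P}(\mathbb{V})$, the left side of the product relation maps to $0$ and you must check that $u_{S(\alpha\beta\gamma)}f_{\alpha^i_s\gamma^i_s}=0$ as well. This holds because $\Delta_{\alpha^i_s}\cap\Delta_{\gamma^i_s}\cap H_{\mathbb{R},S(\alpha\beta\gamma)}=\Delta_\alpha\cap\Delta_\beta\cap\Delta_\gamma\cap\{s\text{-halfspace of }H_i\}\subseteq\Delta_{\beta^i_s}=\emptyset$, using the identity $\Delta_\alpha\cap\Delta_\gamma\cap H_{\mathbb{R},S(\alpha\beta\gamma)}=\Delta_\alpha\cap\Delta_\beta\cap\Delta_\gamma$. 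For restriction, if $\alpha(i)=s$ but $\Delta_\alpha\cap H_{\mathbb{R},i}=\emptyset$ then $\alpha^{(i)}\notin\mathscr{F}(\mathbb{V}^i)$; in that case the target summand is absent, but since $\Delta_{\alpha^{(i)}}=\emptyset$ forces $1=u_\emptyset$ to be a relation, the map is consistently zero there and no issue arises.
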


Note that these graded algebra morphisms induce natural functors on the categories of perfect modules. For instance, the restriction morphism $\rest_i^s$ induces a functor $\rest_i^s:\mathit{Perf}\left(\widetilde{B}(\mathbb{V})\right) \to \mathit{Perf}\left(\widetilde{B}(\mathbb{V}^i)\right)$. By Corollary \ref{identify-perfect-complex}, there is also an $A_{\infty}$-functor
\begin{equation} \nonumber
\rest_i^s:\mathcal{W} (M(\mathbb{V}),\xi)\to\mathcal{W}\left(M(\mathbb{V}^i),\iota_i^*\xi\right)
\end{equation}
given as follows. For any chamber Lagrangian $L_{\alpha}$, if $\alpha (i)=-s$, then $\rest_i^s (L_{\alpha})=0$; if $\alpha (i)=s$, then $\rest_i^s (L_{\alpha})=\left(\bar{L}_{\alpha}\cap H_i\right)\setminus\bigcup_{j\neq i} H_j$ is the chamber Lagrangian in $H_i$ which is the boundary strata of $\bar{L}_{\alpha}$. Given two chamber Lagrangians $L_{\alpha}, L_{\beta}$, we can define a chain map $\mathit{CW}^{\ast} (L_{\alpha},L_{\beta})\to\mathit{CW}^{\ast}\left(\rest_i^s(L_{\alpha}), \rest_i^s(L_{\beta})\right)$ as the homomorphism $\rest_i^s\colon\widetilde{R}_{\alpha\beta}\to\widetilde{R}_{\alpha^{(i)}\beta^{(i)}}$. The formality of the Fukaya $A_\infty$-algebra $\widetilde{\mathcal{B}}_\mathbb{V}$, together with the generation of the Fukaya category $\mathcal{W} (M(\mathbb{V}),\xi)$ by chamber Lagrangians ensures that this gives a well-defined $A_{\infty}$-functor. It follows directly from the construction that 

\begin{corollary}\label{restriction-functor-strict}
The functor $\rest_i^s\colon\mathcal{W}(M(\mathbb{V}),\xi)\to\mathcal{W}\left(M(\mathbb{V}^i),\iota^*_i\xi\right)$ is strict.
\end{corollary}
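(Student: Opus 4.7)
The plan is to exploit the formality established in Theorem \ref{thm:formality} together with the generation by chamber Lagrangians from Theorem \ref{thm:generation}. After choosing formal $A_\infty$-quasi-isomorphisms identifying $\widetilde{\mathcal{B}}_\mathbb{V}$ with its cohomology algebra $\widetilde{B}(\mathbb{V})$, and similarly $\widetilde{\mathcal{B}}_{\mathbb{V}^i}$ with $\widetilde{B}(\mathbb{V}^i)$, both sides become $A_\infty$-algebras with $\mu^j=0$ for $j\geq 3$. It then suffices to construct $\rest_i^s$ as a strict $A_\infty$-functor between the formal models, and to check that this construction agrees with the geometric chain-level map described in the paragraph preceding the corollary.

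First, I would define $F^1$ on objects by $L_\alpha \mapsto L_{\alpha^{(i)}}$ when $\alpha(i)=s$ and $L_\alpha \mapsto 0$ otherwise, and on morphism spaces by the graded algebra morphism $\widetilde{R}_{\alpha\beta} \to \widetilde{R}_{\alpha^{(i)}\beta^{(i)}}$ coming from $\rest_i^s\colon\widetilde{B}(\mathbb{V})\to\widetilde{B}(\mathbb{V}^i)$. Setting $F^k = 0$ for all $k \geq 2$, the $A_\infty$-functor equations for an input of length $n$ collapse to
\[
F^1\bigl(\mu^n(a_1,\ldots,a_n)\bigr) = \mu^n\bigl(F^1(a_1),\ldots,F^1(a_n)\bigr).
\]
This is automatic for $n \geq 3$ since both sides vanish by formality, and for $n=2$ it is precisely the statement that $\rest_i^s$ is an algebra morphism, which has already been established.

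Second, I would verify that this strict functor coincides with the geometric construction on the chain level. Since the chamber Lagrangians $\{L_\alpha\}_{\alpha\in\mathscr{P}(\mathbb{V})}$ generate $\mathcal{W}(M(\mathbb{V}),\xi)$, it suffices to check agreement on morphism spaces between these generators. Propositions \ref{proposition:aa} and \ref{proposition:ab} identify $\mathit{HW}^*(L_\alpha, L_\beta)$ with $\widetilde{R}_{\alpha\beta}\cdot f_{\alpha\beta}$, and by construction the geometric restriction map sends the generator $\prod u_i^{k_i}\cdot f_{\alpha\beta}$ to the corresponding generator of $\widetilde{R}_{\alpha^{(i)}\beta^{(i)}}\cdot f_{\alpha^{(i)}\beta^{(i)}}$, matching $F^1$ exactly.

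The main obstacle is to ensure that the formality quasi-isomorphisms on the two sides can be made coherent enough that the transported geometric functor is genuinely strict rather than merely $A_\infty$-quasi-equivalent to one. This reduces to repeating the multi-grading argument from the proof of Theorem \ref{thm:formality}: any $A_\infty$-functor component $F^k$ would have internal degree $1-k$, but must preserve the $\mathbb{Z}\langle e_1,\ldots,e_n\rangle$-grading induced by the relative homology classes in $H_1(M(\mathbb{V}),L_\alpha)$. The half-integral topological grading leaves no room for a non-trivial $F^k$ when $k \geq 2$, exactly parallel to the vanishing of $\mu^k$ for $k \geq 3$, and this yields strictness for the chosen formality model.
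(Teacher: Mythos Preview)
Your proposal is correct and follows essentially the same approach as the paper: the functor $\rest_i^s$ is \emph{defined} as the algebra morphism $\widetilde{B}(\mathbb{V})\to\widetilde{B}(\mathbb{V}^i)$ transported through the formality quasi-isomorphisms and the generation by chamber Lagrangians, so strictness is immediate from the construction. The paper's treatment is very terse (``It follows directly from the construction that\ldots''), whereas you have spelled out the $A_\infty$-functor equations explicitly; your third paragraph's worry about coherence of the formality models is handled in the paper simply by taking the algebraic definition as the definition of $\rest_i^s$ on the Fukaya side via Corollary~\ref{identify-perfect-complex}, so no separate comparison with a ``geometric chain-level map'' is needed.
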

If it is clear from the context, we simply write $\iota_i^*\xi$ as $\xi$. The deletion functor $\mathrm{del}_i^s:\mathcal{W}(M(\mathbb{V}_i), \xi) \to \mathcal{W}(M(\mathbb{V}), \xi)$ can be defined in a similar way. We leave the precise construction for the reader. 

\begin{remark}
It is interesting to ask whether the restriction functor $\rest_i^s$ defined in \cite{LLM2020} has an interpretation in terms of symplectic geometry. However, it turned out that all functors we obtained from the restriction of hyperplane arrangements in the following sections are neither $\rest_i^s$ nor the adjoints to it. It remains to see whether $\rest_i^s$ has geometric interpretation or not.
\end{remark}

\subsection{Geometric Functors from Restriction and Deletion.} In this subsection, we discuss several functors arising from restriction and deletion of polarized hyperplane arrangements. Let $\mathcal{H}_\mathbb{C}=\{H_1,\cdots,H_n\}$ be the collection of the complexifications of the hyperplanes in $\mathbb{V}$. Fix a hyperplane $H_i\in\mathcal{H}_\mathbb{C}$, and let $\ell_i$ be the defining function of $H_i$. Then for $\varepsilon >0$, after a possible Liouville deformation, we have a stopped inclusion
\[
\rRe(\ell_i)^{-1} ([-\varepsilon ,\varepsilon]) \cap (M(\mathbb{V}), \xi)\hookrightarrow (M(\mathbb{V}),\xi),
\]
where $\rRe(\ell_i)^{-1} ([-\varepsilon ,\varepsilon])\cap (M(\mathbb{V}), \xi)$ splits into a product
\begin{equation*}
\begin{aligned}
\rRe(\ell_i)^{-1} ([-\varepsilon ,\varepsilon])\cap (M(\mathbb{V}), \xi) &\simeq \left(H_i\setminus\bigcup_{H\in\mathcal{H}_\mathbb{C}} H_i \cap H,\xi\vert_{H_i}\right)\times \left(T^{\ast} [-\varepsilon ,\varepsilon]^\times \right) \\
&\simeq (M(\mathbb{V}^i), \xi)\times \left(T^{\ast} [-\varepsilon ,\varepsilon]^\times \right),
\end{aligned}
\end{equation*}
so the K\"unneth functor gives us a quasi-isomorphism 
\[
	\mathcal{W}\left(\rRe(\ell_i)^{-1} ([-\varepsilon ,\varepsilon])\cap (M(\mathbb{V}),\xi)\right)\xrightarrow{\simeq}\mathcal{W}\left(M(\mathbb{V}^i),\xi\right)\otimes\mathcal{W} (T^{\ast} [-\varepsilon ,\varepsilon]^{\times}).
\]
\begin{figure}[ht]
    \centering
    \subfigure[Cocore and Linking Disks of the Sector.]{\hspace{2cm}\begin{tikzpicture}[scale=0.5]
        \draw [red, ultra thick] (1,-3) -- (1,3);
        \draw [red, ultra thick] (-1,-3) -- (-1,3);
        \draw [ultra thick, green!50!black] (0,0.25) -- (0,3) node [above,green!50!black] {$L^+$};
        \draw [ultra thick, green!50!black] (0,-0.25) -- (0,-3) node [below, green!50!black] {$L^-$};
        \draw [ultra thick] (0,0) circle [radius=0.25];
		\draw [ultra thick, black!50!white] (-0.75, 3) -- (-0.75,-3) node [below left] {$D^+$};
		\draw [ultra thick, black!50!white] (0.75, 3) -- (0.75,-3) node [below right] {$D^-$};
    \end{tikzpicture}\hspace{2cm}}\hspace{1.5cm}\subfigure[Core of the Sector.]{\hspace{1.5cm}\begin{tikzpicture}[scale=0.5]
        \draw [ultra thick, orange] (-0.75,0) -- (-1,0);
        \draw [ultra thick, orange] (0.75,0) -- (1,0);
        \draw [red, ultra thick] (1,-3) -- (1,3);
        \draw [red, ultra thick] (-1,-3) -- (-1,3);
        \draw [ultra thick] (0,0) circle [radius=0.25];
        \draw [ultra thick, orange] (0,0) circle [radius=0.75];
    \end{tikzpicture}\hspace{1.5cm}}
    \caption{Core, Cocore and Linking Disks of the Sector $T^{\ast} [-\varepsilon,\varepsilon]^{\times}$.\label{func:cotangent-bundle-interval-removing-origin}}
\end{figure}
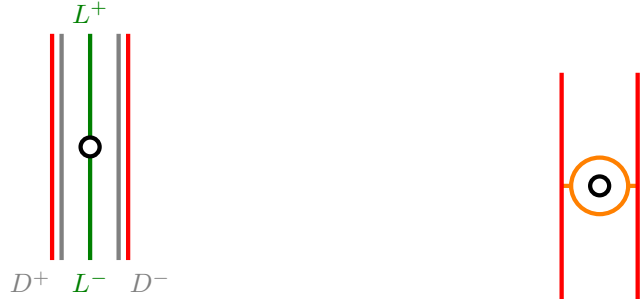
\noindent Here we regard $T^*[-\varepsilon, \varepsilon]$ as a Lioville sector with two disjoint sectorial hypersurfaces corresponding to the cotangent fibers at $\pm \varepsilon$, and $T^*[-\varepsilon, \varepsilon]^\times:=T^*[-\varepsilon, \varepsilon] \setminus \{(0,0)\}$. Therefore, there are two linking disks in $T^{\ast} [-\varepsilon ,\varepsilon]^{\times}$, one for the left end and the other for the right end. We write $D^s$ to be the left or right linking disks, depending on the chosen normal vectors of the hyperplane $H_i$. Since the normal vector defines the ``positive'' and ``negative'' part of the two half-spaces separated by $\rRe(\ell_i)^{-1} (0)$, we let $s$ be the corresponding sign. 

Note that $T^{\ast} [-\varepsilon ,\varepsilon]^{\times}$ has four natural Lagrangians: two generating Lagrangians $L^s$ and two linking disks $D^s$ as depicted in Figure \ref{func:cotangent-bundle-interval-removing-origin}, so we get four functors by tensoring any chamber Lagrangians in $\mathcal{W}\left(M(\mathbb{V}^i),\xi \right)$ with anyone of them (on the level of morphisms, it is given by tensoring with the unit), denoted by $\Psi_i^{s,\ast}\colon\mathcal{W}\left(M(\mathbb{V}^i),\xi\right)\to\mathcal{W}\left(\rRe(\ell_i)^{-1} ([-\varepsilon ,\varepsilon])\cap (M(\mathbb{V}), \xi)\right)$. Note that $\Psi_i^{s,L}$ and $\Psi_i^{s,D}$ would determine $\Psi_i^{-s,D}$ and $\Psi_i^{-s,L}$ via mapping cones, it suffices to compute two of them, and without loss of generality, we consider $\Psi_i^{+,L}$ and $\Psi_i^{s,D}$. Composing them with the stopped inclusion $\mathcal{W}\left(\rRe(\ell_i)^{-1} ([-\varepsilon ,\varepsilon])\cap (M(\mathbb{V}), \xi)\right)\to\mathcal{W} (M(\mathbb{V}),\xi)$, we obtain the restriction functors
\begin{equation}
\Res_i^{s,\ast}:\mathcal{W}\left(M(\mathbb{V}^i),\xi \right) \to \mathcal{W}(M(\mathbb{V}),\xi). 
\end{equation}

Given a standard Lagrangian $L_j$ in $\mathcal{W}\left(M(\mathbb{V}^i),\xi \right)$, there is a unique standard Lagrangian $\bar{L}_j$ in $\mathcal{W}(M(\mathbb{V}),\xi)$, whose closure in $\mathbb{C}^d$ contains $L_j$ as one of its face and $\rIm(\bar{L}_j) \geq 0$ corresponding to the choice  $\Psi_i^+$. Figure \ref{func:restriciton-functor} shows one such pairs in two-dimensional pairs-of-pants.
	
	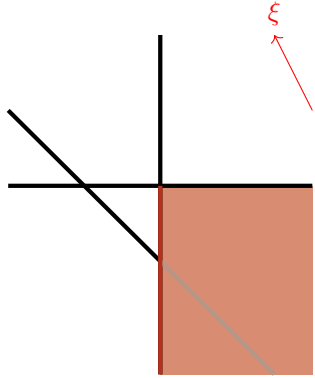
\begin{figure}[ht]
		\centering
		\begin{tikzpicture}[scale=0.5]
        \fill [BrickRed!50] (0,0) -- (0,-5) -- (4,-5) -- (4,0);
		\draw [ultra thick] (-4,0) -- (4,0);
        \draw [ultra thick] (0,-5) -- (0,4);
        \draw [ultra thick] (-4,2) -- (0,-2);
        \draw [ultra thick, black!50!BrickRed!50] (0,-2) -- (3,-5);
        \draw [->, red] (4,2) -- (3,4) node[above] {$\xi$};
		\draw [ultra thick, red] (0,0) -- (0,-5);
		\end{tikzpicture}
		\caption{The Restriction Functor.\label{func:restriciton-functor}}
	\end{figure}

	Proposition \ref{surgery:decomposing-standard-Lag} then implies that $\mathrm{Res}_i^{+,L} (L_j) =\bar{L}_j$ (on the level of morphisms, this functor is faithful but not full). Thus this functor sends standard Lagrangians to standard Lagrangians, although it does not necessarily preserve chamber Lagrangians. 
	
\subsubsection*{The Deletion Functor.}
 Let $H_i\in\mathcal{H}_\mathbb{C}$ be a hyperplane with the defining equation $\ell_i$, and let $\mathbb{V}_i$ be the hyperplane arrangement obtained by deleting $H_i$ from $\mathbb{V}$. Similar as before, we consider a product decomposition 

\[
\rRe(\ell_i)^{-1} ([-\varepsilon ,\varepsilon])\cong\ell_i^{-1} (0)\times T^{\ast} [-\varepsilon ,\varepsilon].
\]
We describe the operation of deleting the hyperplane $H_i$ from $\mathbb{V}$ as the following ``shopping bag construction''.

\begin{lemma}\label{lem:shopping bag}
There exists a Weinstein cobordism $\Phi_s$ from sector $T^{\ast} [-1,1]$ to $T^{\ast} [-1,1]^\times$, depending on the parity $s \in \{+,-\}$.
\end{lemma}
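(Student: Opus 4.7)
My plan is to construct the Weinstein cobordism $\Phi_s$ explicitly by attaching a single critical Weinstein $1$-handle (in dimension $2$) to the trivial Liouville cobordism. The topological motivation is clear: $T^{\ast}[-1,1]^\times$ is obtained from $T^{\ast}[-1,1]$ by removing an interior point, which decreases the Euler characteristic by $1$ and hence should correspond to attaching a handle of index equal to half the dimension, i.e. a critical Weinstein handle. The parity $s$ will encode the choice of Legendrian attaching locus in the contact boundary, namely whether the handle attaches across the top half or the bottom half of the cotangent bundle (relative to the zero section), producing the ``shopping bag'' Lagrangian on the corresponding side.

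First I would take the trivial Liouville cobordism $(T^{\ast}[-1,1]) \times T^{\ast}[0,1]$, equipped with its product Liouville structure and sectorial hypersurfaces $\{\pm 1\} \times T^{\ast}[0,1]$ inherited from the sectorial boundary of $T^{\ast}[-1,1]$. Next, I would select a Darboux ball $B$ around the origin $(0,0)$ in the positive end $T^{\ast}[-1,1] \times \{1\}$, disjoint from both sectorial boundaries. In the contact boundary of this ball I would pick a Legendrian $S^0$, i.e.\ a pair of points, located symmetrically on the upper half $\{q > 0\}$ of $\partial B$ for $s = +$, or on the lower half $\{q < 0\}$ for $s = -$, where $q$ is the fibre coordinate on $T^{\ast}[-1,1]$. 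Using the standard Weinstein $1$-handle model of Cieliebak--Eliashberg \cite{cieliebak2012stein}, I would then attach a critical $1$-handle along this Legendrian $S^0$ and obtain a Weinstein cobordism $\Phi_s$. The core of the attached handle is an arc joining the two attaching points, which together with the segment of $\{q = 0\}$ cut out by them bounds a disk around the origin; the co-core is a transverse arc, which in the Liouville completion becomes precisely the Lagrangian $L^s$ of Figure \ref{func:cotangent-bundle-interval-removing-origin}(a).

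The verification then has two parts. First, one must check that the negative end of $\Phi_s$ is $T^{\ast}[-1,1]$ and the positive end is $T^{\ast}[-1,1]^\times$. The negative end is unchanged by the handle attachment, so this is automatic. For the positive end, one sees that after handle attachment the skeleton near the origin becomes the original zero section union the core arc, which retracts onto a wedge of the zero section with a loop enclosing the origin. This is exactly the Weinstein skeleton of $T^{\ast}[-1,1]^\times$ (cf.\ Figure \ref{func:cotangent-bundle-interval-removing-origin}(b)), and the induced Liouville structure on the complement of the skeleton is determined by the same skeleton up to Liouville homotopy. Second, one must verify that the sectorial hypersurfaces $\{\pm 1\} \times T^{\ast}[0,1]$ extend to sectorial hypersurfaces of $\Phi_s$; this is immediate because the handle is attached in a Darboux ball whose closure is disjoint from $\{\pm 1\} \times T^{\ast}[0,1]$, so the product sectorial structure survives undisturbed.

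The main obstacle I anticipate is ensuring the Liouville $1$-form agrees with the split form $\lambda_{F} + \lambda_{\mathbb{C}}^{\alpha} + df$ of Definition \ref{Prelim:Liouville-sectors} in a collar of each sectorial boundary throughout the cobordism, especially across the region where the handle is being attached. Since the handle is supported away from the sectorial boundary this is only a local issue at the corners where the handle meets the positive end, and it should be resolved by a standard Moser-type argument or by taking the handle attachment sufficiently localized and appealing to the flexibility of Weinstein structures near subcritical skeleta. A mild additional check is the compatibility of the co-core with the Lagrangian $L^s$ as an object in $\mathcal{W}(T^{\ast}[-1,1]^\times)$; this follows from the generalized cocore description of generating Lagrangians for Weinstein sectors (cf.\ Example \ref{Prelim:generation-cotangent-fiber}).
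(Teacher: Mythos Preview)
Your proposal is correct and takes essentially the same approach as the paper: both construct $\Phi_s$ by attaching a single Weinstein $1$-handle to $T^\ast[-1,1]$ along a Legendrian $S^0$ on one side of the zero section, with the sign $s$ recording the choice of side. The paper's proof is a one-line sketch (``attach a $1$-handle to $[-1,1]\times[-1,1]$ on the top/bottom edge'') accompanied by a picture, whereas you supply more detail about the sectorial boundaries and the identification of the cocore with $L^s$; but the underlying construction is the same shopping-bag picture.
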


\begin{proof}
Attach a $1$-handle to $[-1,1]\times [-1,1]$ on the top edge $(s=-)$,  the resulting space is isomorphic to $T^{\ast} [-1,1]^\times \cong (\mathbb{C}^{\ast} ,\pm\infty)$. See Figure \ref{sector:ShoppingbagConstruction}, where the red line segments are sectorial boundaries, the black line segments are ideal boundaries for $T^{\ast} [-1,1]$, while the blue arcs denote the $1$-handles. Similarly, one can attach a 1-handle to $[-1,1] \times [-1,1]$ at the bottom edge $(s=+)$.  
\end{proof}
	
\begin{figure}[ht]
    \centering
    \begin{tikzpicture}
    \draw [ultra thick, red] (-1,-1) -- (-1,1);
    \draw [ultra thick, red] (1,-1) -- (1,1);
    \draw [ultra thick] (-1,1) -- (-0.75,1);
	\draw [ultra thick, dashed] (-0.75,1) -- (-0.25,1);
	\draw [ultra thick] (-0.25, 1) -- (0.25,1);
	\draw [ultra thick, dashed] (0.25, 1) -- (1,1);
	\draw [ultra thick] (0.75, 1) -- (1,1);
    \draw [ultra thick] (-1,-1) -- (1,-1);
    \draw [ultra thick, black!60!blue] (0.75,1) arc [start angle=0, end angle =180, radius=0.75];
    \draw [ultra thick, black!60!blue] (0.25,1) arc [start angle=0, end angle =180, radius=0.25];
    \end{tikzpicture}
    \caption{The Shoppingbag Construction.\label{sector:ShoppingbagConstruction}}
\end{figure}
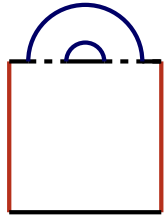

By Lemma \ref{lem:shopping bag}, for $s \in \{+,-\}$, we define the deletion functor
\begin{equation*}
\mathrm{Del}_i^s:\mathcal{W} (M(\mathbb{V}),\xi)\to\mathcal{W}\left(M(\mathbb{V}_i),\xi \right),
\end{equation*}
as the gluing of the Viterbo restriction functor 
\[
\Phi_s:\mathcal{W}\left(\ell^{-1} (0)\times (\mathbb{C}^{\ast} ,\pm\infty)\right)\to\mathcal{W}\left(\ell^{-1} (0)\times T^{\ast} [-\varepsilon ,\varepsilon]\right)\cong\mathcal{W}\left(\ell^{-1} (0)\right)
\]
with the identity functors of the remaining parts. More precisely, note that we have the sectorial decompositions 
\[
	(M(\mathbb{V}_i),\xi) =\left(\ell^{-1}_i (-\infty ,-\varepsilon],\xi\right)\cup\left(\ell^{-1}_i (0)\times T^{\ast} [-\varepsilon ,\varepsilon],\xi\right)\cup\left(\ell^{-1}_i [\varepsilon,+\infty),\xi\right),
	\]and\[
		(M(\mathbb{V}),\xi) =\left(\ell^{-1}_i (-\infty,-\varepsilon],\xi\right)\cup\left(\ell^{-1}_i (0)\times (\mathbb{C}^{\ast} ,\pm\infty),\xi\right)\cup\left(\ell^{-1}_i [\varepsilon,+\infty),\xi\right).
	\]We then consider the following diagram
	 \[
		\begin{tikzcd}
			&\mathcal{W}\left(\ell^{-1}_i (-\infty,-\varepsilon],\xi\right)\arrow[r,"\Id"]\arrow[ld]&\mathcal{W}\left(\ell^{-1}_i (-\varepsilon,-\varepsilon],\xi\right)\arrow[rd]&\\ 
			\mathcal{W} (M(\mathbb{V}),\xi)&\mathcal{W}\left(\ell^{-1}_i [-\varepsilon ,\varepsilon],\xi\right)\arrow[r,"\Phi_s"]\arrow[l]&\mathcal{W}\left(\ell^{-1}_i (0),\xi\right)\arrow[r]&\mathcal{W} (M(\mathbb{V}_i),\xi)\\ 
			&\mathcal{W}\left(\ell^{-1}_i (\varepsilon,+\infty],\xi\right)\arrow[r,"\Id"]\arrow[lu]&\mathcal{W}\left(\ell^{-1}_i [\varepsilon, +\infty),\xi\right)\arrow[ru]&
		\end{tikzcd}.
		\]Since the left and right-hand diagrams are homotopy colimit diagrams and the functors commute with the colimit diagrams, this diagram induces a well-defined $A_{\infty}$-functor $\mathrm{Del}_i^s\colon\mathcal{W} (M(\mathbb{V}),\xi)\to\mathcal{W} (M(\mathbb{V}_i),\xi)$. Let us compute this cobordism functor explicitly. Recall from the construction of Viterbo restriction functor in \cite{ganatra2018sectorial} that if we consider the product $\left(T^{\ast} [-1,1]^\times \right)\times\mathbb{C}_{\rRe\geq 0}$ of the larger sector, then the inclusion $T^{\ast} [-1,1]^\times \hookrightarrow T^{\ast} [-1,1]$ induces a backward stopped inclusion 
\[\left(\left(T^{\ast} [-1,1]^{\times}\right)\times\mathbb{C}_{\rRe\geq 0}, T^{\ast} [-1,1]^{\times}\times\mathbb{C}_{\rRe =+\infty}\right)\hookrightarrow\left(T^{\ast} [-1,1]^{\times}\times\mathbb{C}_{\rRe\geq 0} ,T^{\ast} [-1,1]\times\mathbb{C}_{\rRe=+\infty}\right).\]
\noindent Note that the left-hand side is the stabilization of $T^{\ast} [-1,1]^{\times}$, so the wrapped Fukaya category is quasi-equivalent to that of $T^{\ast} [-1,1]^{\times}$, which is generated by the two purely imaginary rays as shown in Figure \ref{func:cotangent-bundle-interval-removing-origin}.

It is easier to keep track of the Viterbo restriction functor using relative cores: depending on the parity, the relative core of $T^{\ast} [-1,1]$ includes into the upper or lower part of the relative core of $T^{\ast} [-1,1]^{\times}$, and the Viterbo restriction functor would send the imaginary ray that does not intersect with the relative core of $T^{\ast} [-1,1]$ to $0$, while the other imaginary ray would get sent to the canonical generator of $T^{\ast} [-1,1]$ (see Example \ref{Prelim:generation-cotangent-fiber}). Together with the K\"uneth functor, we are able to compute the image of deletion functor on objects. For each standard Lagrangian $L$ in $\ell_i^{-1} (0)$, we write $L^+$ for the product of $L$ with positive imaginary ray, $L^-$ for product with negative imaginary ray, and $L^0$ for product with the cotangent fiber at $0$ of $T^{\ast} [-1,1]$.

\begin{lemma}
    $\Phi_s$ sends $L^s$ to $L^0$ and $L^{-s}$ to $0$.
\end{lemma}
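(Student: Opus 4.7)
The plan is to reduce the lemma to a computation on the two-dimensional factor $T^*[-1,1]^\times \to T^*[-1,1]$ by K\"unneth, and then identify the images of the generators via their incidence with the relative cores. First, I would invoke from \cite{ganatra2018sectorial} the following principle governing the Viterbo restriction functor $\Phi$ associated to a Weinstein (stopped) embedding $W \hookrightarrow W'$: a generalized cocore of $W'$ attached to a component of $\mathfrak{c}_{W'}$ disjoint from the image of $\mathfrak{c}_W$ is sent to $0$ (such a Lagrangian admits a positive isotopy escaping any compact subset of $W$), whereas a generalized cocore meeting a component of the image of $\mathfrak{c}_W$ transversely at a single point is sent to the canonical generator of $\mathcal{W}(W)$ corresponding to that component.

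Next, I would unpack the relative cores in our situation. The relative core $\mathfrak{c}_{T^*[-1,1]}$ consists of the zero section together with the two linking rays to the sectorial boundaries (compare Figure \ref{Prelim:cotangent-bundle-of-interval}), while $\mathfrak{c}_{T^*[-1,1]^\times}$ is the small circle around the puncture together with the two horizontal linking arcs (Figure \ref{func:cotangent-bundle-interval-removing-origin}(b)). In the shopping-bag picture of Lemma \ref{lem:shopping bag}, attaching the $1$-handle on the top edge (convention $s=-$) realises a stopped embedding $T^*[-1,1] \hookrightarrow T^*[-1,1]^\times$ whose induced map on relative cores identifies $\mathfrak{c}_{T^*[-1,1]}$ with the \emph{upper} half of $\mathfrak{c}_{T^*[-1,1]^\times}$, namely the upper arc of the encircling circle together with the upper halves of the two linking arcs; symmetrically, $s=+$ gives the lower half. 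Now $L^{-s}$, which is the imaginary ray pointing to the side \emph{away} from where the handle was attached, meets $\mathfrak{c}_{T^*[-1,1]^\times}$ only at the single puncture-side point that lies \emph{outside} the embedded relative core, so by the principle recalled above it maps to $0$. Dually, $L^s$ meets the relative core transversely at a single point that is contained in the image of $\mathfrak{c}_{T^*[-1,1]}$, and so is sent to the canonical cotangent-fiber generator of $\mathcal{W}(T^*[-1,1])$, namely $L^0$.

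Finally, to pass from this local two-dimensional model to the lemma as stated, I would invoke the K\"unneth equivalence
\[
\mathcal{W}\bigl(\ell_i^{-1}(0)\times T^*[-1,1]^{\times}\bigr) \simeq \mathcal{W}\bigl(\ell_i^{-1}(0)\bigr)\otimes\mathcal{W}\bigl(T^*[-1,1]^{\times}\bigr)
\]
(and the analogous one without the puncture), under which $L^\pm$ corresponds to $L\otimes(\text{imaginary ray}^\pm)$ and $L^0$ corresponds to $L\otimes(\text{cotangent fiber at }0)$. Because the Viterbo restriction was built as a K\"unneth product of $\Phi_s$ with the identity on the $\ell_i^{-1}(0)$ factor, the two-dimensional computation directly yields $\Phi_s(L^s)=L^0$ and $\Phi_s(L^{-s})=0$. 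The main technical step will be verifying that the shopping-bag Weinstein cobordism really does realise the embedding of relative cores described above with the advertised parity, and that the backward-stopped inclusion defining $\Phi_s$ in \cite{ganatra2018sectorial} matches the sign convention for $s$; once these combinatorial bookkeeping points are nailed down, the lemma follows from the generalized-cocore generation criterion together with K\"unneth.
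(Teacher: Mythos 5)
Your proposal is correct and follows essentially the same route as the paper: the paper's justification for the lemma is precisely the preceding paragraph, which tracks relative cores under the shopping-bag embedding and uses the fact that the Viterbo restriction kills the imaginary ray missing the embedded relative core and sends the other to the canonical cotangent-fiber generator, finishing with K\"unneth. Your write-up merely makes the underlying generalized-cocore principle from \cite{ganatra2018sectorial} more explicit, which is a fair elaboration rather than a different argument.
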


To determine $\mathrm{Del}_i^s$, we need to keep track of the Lagrangians $L^0$ and $L^s$ after gluing. Recall from the previous discussions that the inclusion $\left(\ell_i^{-1} (0)\times T^{\ast} [-1,1]^{\times},\xi\right)\hookrightarrow (M(\mathbb{V}),\xi)$ sends the Lagrangian $L^s$ to the standard Lagrangian $\tilde{L}$ whose closure contains $L$ as one of its faces, and the Lagrangian $L^0$ is away from all crossing points of $M(\mathbb{V}_i)$. Therefore we know that $L^0$ gets sent to $0$ after gluing back the remaining pieces, and hence the deletion functor sends all the standard Lagrangians $\tilde{L}$ to $0$. See Figure \ref{func:deletion-functor} for an example in the case of two-dimensional pairs-of-pants.

\begin{figure}[ht]
	\centering
	\begin{tikzpicture}[scale=0.5]
		\fill [BrickRed!50] (0,0) -- (0,-5) -- (4,-5) -- (4,0);
		\draw [ultra thick] (-4,0) -- (4,0);
        \draw [ultra thick,dashed] (0,-5) -- (0,4);
        \draw [ultra thick] (-4,2) -- (0,-2); 
        \draw [ultra thick, black!50!BrickRed!50] (0,-2) -- (3,-5);
        \draw [->, red] (4,2) -- (3,4) node[above] {$\xi$};
		\end{tikzpicture}
		\caption{The Deletion Functor.\label{func:deletion-functor}}
\end{figure}
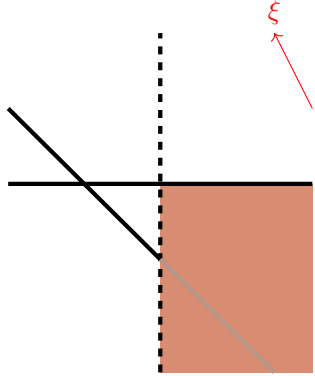

\begin{remark}
The restriction and deletion functors discussed here do not arise directly from the algebraic restriction and deletion operations outlined in Section \ref{sec:Delandres HypArr}. This is because they send standard Lagrangians to standard Lagrangians, whereas their algebraic counterparts do not do so in general. To describe the corresponding algebraic framework, one needs to generalize the restriction and deletion operators by imposing signs on each chamber separately, rather than uniformly for all chambers. This induces a well-defined morphism of graded algebra on the relevant convolution algebras $\widetilde{B}$. However, the natural pullback functor on the module categories still does not send standard objects to standard objects. Instead, one needs to construct a functor at the 0-categorical level by ensuring that the standard object is sent to the desired standard object. Then our generation and formality results will provide functors we want. We refrain from elaborating on this approach in this paper because it may appear somewhat unnatural from an algebraic viewpoint.
\end{remark}

%


{\small

\medskip
\noindent Sukjoo Lee\\
\noindent Center for Geometry and Physics, Institute for Basic Science (IBS), Pohang 37673, South Korea\\
\noindent {\it e-mail:} Sukjoo216@ibs.re.kr

\noindent 

\medskip
\noindent Yin Li\\
\noindent Department of Mathematics, Uppsala University, 753 10 Uppsala, Sweden \\
{\it e-mail:} yin.li@math.uu.se

\medskip
\noindent Si-Yang Liu\\
\noindent Department of Mathematics, Dornsife College of Letters, Arts and Sciences, University of Southern California, Los Angeles, United States\\ 
{\it e-mail:} liusiyan@usc.edu

\medskip
\noindent Cheuk Yu Mak\\
\noindent School of Mathematical and Physical Sciences, University of Sheffield, S10 2TN, United Kingdom\\
{\it e-mail:} c.mak@sheffield.ac.uk

}

\end{document}